\documentclass[11pt,reqno]{amsart}

\usepackage{amsmath,amsfonts,amsthm,amssymb}
\usepackage{graphicx}
\usepackage{verbatim}
\usepackage{color}
\usepackage{amscd}
\usepackage{verbatim}

\begin{document}
\newcommand{\M}{{\mathcal M}}
\newcommand{\loc}{{\mathrm{loc}}}
\newcommand{\core}{C_0^{\infty}(\Omega)}
\newcommand{\sob}{W^{1,p}(\Omega)}
\newcommand{\sobloc}{W^{1,p}_{\mathrm{loc}}(\Omega)}
\newcommand{\merhav}{{\mathcal D}^{1,p}}
\newcommand{\be}{\begin{equation}}
\newcommand{\ee}{\end{equation}}
\newcommand{\mysection}[1]{\section{#1}\setcounter{equation}{0}}
\newcommand{\laplace}{\Delta}
\newcommand{\pl}{\laplace_p}
\newcommand{\grad}{\nabla}
\newcommand{\pd}{\partial}
\newcommand{\bo}{\pd}
\newcommand{\csub}{\subset \subset}
\newcommand{\sm}{\setminus}
\newcommand{\ssm}{:}
\newcommand{\diver}{\mathrm{div}\,}
\newcommand{\bea}{\begin{eqnarray}}
\newcommand{\eea}{\end{eqnarray}}
\newcommand{\bean}{\begin{eqnarray*}}
\newcommand{\eean}{\end{eqnarray*}}
\newcommand{\thkl}{\rule[-.5mm]{.3mm}{3mm}}
\newcommand{\cw}{\stackrel{\rightharpoonup}{\rightharpoonup}}
\newcommand{\id}{\operatorname{id}}
\newcommand{\supp}{\operatorname{supp}}
\newcommand{\wlim}{\mbox{ w-lim }}
\newcommand{\mymu}{{x_N^{-p_*}}}
\newcommand{\R}{{\mathbb R}}
\newcommand{\N}{{\mathbb N}}
\newcommand{\Z}{{\mathbb Z}}
\newcommand{\Q}{{\mathbb Q}}
\newcommand{\abs}[1]{\lvert#1\rvert}
\newtheorem{theorem}{Theorem}[section]
\newtheorem{corollary}[theorem]{Corollary}
\newtheorem{lemma}[theorem]{Lemma}
\newtheorem{notation}[theorem]{Notation}
\newtheorem{definition}[theorem]{Definition}
\newtheorem{remark}[theorem]{Remark}
\newtheorem{proposition}[theorem]{Proposition}
\newtheorem{assertion}[theorem]{Assertion}
\newtheorem{problem}[theorem]{Problem}
\newtheorem{conjecture}[theorem]{Conjecture}
\newtheorem{question}[theorem]{Question}
\newtheorem{example}[theorem]{Example}
\newtheorem{Thm}[theorem]{Theorem}
\newtheorem{Lem}[theorem]{Lemma}
\newtheorem{Pro}[theorem]{Proposition}
\newtheorem{Def}[theorem]{Definition}
\newtheorem{Exa}[theorem]{Example}
\newtheorem{Exs}[theorem]{Examples}
\newtheorem{Rems}[theorem]{Remarks}
\newtheorem{Rem}[theorem]{Remark}

\newtheorem{Cor}[theorem]{Corollary}
\newtheorem{Conj}[theorem]{Conjecture}
\newtheorem{Prob}[theorem]{Problem}
\newtheorem{Ques}[theorem]{Question}
\newtheorem*{corollary*}{Corollary}
\newtheorem*{remark*}{Remark}
\newtheorem*{theorem*}{Theorem}
\newcommand{\pf}{\noindent \mbox{{\bf Proof}: }}

\renewcommand{\theequation}{\thesection.\arabic{equation}}
\catcode`@=11 \@addtoreset{equation}{section} \catcode`@=12
\newcommand{\Real}{\mathbb{R}}
\newcommand{\real}{\mathbb{R}}
\newcommand{\Nat}{\mathbb{N}}
\newcommand{\ZZ}{\mathbb{Z}}
\newcommand{\CC}{\mathbb{C}}
\newcommand{\Pess}{\opname{Pess}}
\newcommand{\Proof}{\mbox{\noindent {\bf Proof} \hspace{2mm}}}
\newcommand{\mbinom}[2]{\left (\!\!{\renewcommand{\arraystretch}{0.5}
\mbox{$\begin{array}[c]{c}  #1\\ #2  \end{array}$}}\!\! \right )}
\newcommand{\brang}[1]{\langle #1 \rangle}
\newcommand{\vstrut}[1]{\rule{0mm}{#1mm}}
\newcommand{\rec}[1]{\frac{1}{#1}}
\newcommand{\set}[1]{\{#1\}}
\newcommand{\dist}[2]{$\mbox{\rm dist}\,(#1,#2)$}
\newcommand{\opname}[1]{\mbox{\rm #1}\,}
\newcommand{\mb}[1]{\;\mbox{ #1 }\;}
\newcommand{\undersym}[2]
 {{\renewcommand{\arraystretch}{0.5}  \mbox{$\begin{array}[t]{c}
 #1\\ #2  \end{array}$}}}
\newlength{\wex}  \newlength{\hex}
\newcommand{\understack}[3]{%
 \settowidth{\wex}{\mbox{$#3$}} \settoheight{\hex}{\mbox{$#1$}}
 \hspace{\wex}  \raisebox{-1.2\hex}{\makebox[-\wex][c]{$#2$}}
 \makebox[\wex][c]{$#1$}   }%
\newcommand{\smit}[1]{\mbox{\small \it #1}}
\newcommand{\lgit}[1]{\mbox{\large \it #1}}
\newcommand{\scts}[1]{\scriptstyle #1}
\newcommand{\scss}[1]{\scriptscriptstyle #1}
\newcommand{\txts}[1]{\textstyle #1}
\newcommand{\dsps}[1]{\displaystyle #1}
\newcommand{\dx}{\,\mathrm{d}x}
\newcommand{\dy}{\,\mathrm{d}y}
\newcommand{\dz}{\,\mathrm{d}z}
\newcommand{\dt}{\,\mathrm{d}t}
\newcommand{\dr}{\,\mathrm{d}r}
\newcommand{\du}{\,\mathrm{d}u}
\newcommand{\dv}{\,\mathrm{d}v}
\newcommand{\dV}{\,\mathrm{d}V}
\newcommand{\ds}{\,\mathrm{d}s}
\newcommand{\dS}{\,\mathrm{d}S}
\newcommand{\dk}{\,\mathrm{d}k}

\newcommand{\dphi}{\,\mathrm{d}\phi}
\newcommand{\dtau}{\,\mathrm{d}\tau}
\newcommand{\dxi}{\,\mathrm{d}\xi}
\newcommand{\deta}{\,\mathrm{d}\eta}
\newcommand{\dsigma}{\,\mathrm{d}\sigma}
\newcommand{\dtheta}{\,\mathrm{d}\theta}
\newcommand{\dnu}{\,\mathrm{d}\nu}

\def\ga{\alpha}     \def\gb{\beta}       \def\gg{\gamma}
\def\gc{\chi}       \def\gd{\delta}      \def\ge{\epsilon}
\def\gth{\theta}                         \def\vge{\varepsilon}
\def\gf{\phi}       \def\vgf{\varphi}    \def\gh{\eta}
\def\gi{\iota}      \def\gk{\kappa}      \def\gl{\lambda}
\def\gm{\mu}        \def\gn{\nu}         \def\gp{\pi}
\def\vgp{\varpi}    \def\gr{\rho}        \def\vgr{\varrho}
\def\gs{\sigma}     \def\vgs{\varsigma}  \def\gt{\tau}
\def\gu{\upsilon}   \def\gv{\vartheta}   \def\gw{\omega}
\def\gx{\xi}        \def\gy{\psi}        \def\gz{\zeta}
\def\Gg{\Gamma}     \def\Gd{\Delta}      \def\Gf{\Phi}
\def\Gth{\Theta}
\def\Gl{\Lambda}    \def\Gs{\Sigma}      \def\Gp{\Pi}
\def\Gw{\Omega}     \def\Gx{\Xi}         \def\Gy{\Psi}

\renewcommand{\div}{\mathrm{div}}
\newcommand{\red}[1]{{\color{red} #1}}


\title[Optimal Hardy Weight]{Optimal Hardy Weight for Second-Order Elliptic Operator: an answer to a problem of Agmon}

\author{Baptiste Devyver}
\address{Baptiste Devyver, Department of Mathematics,  Technion - Israel Institute of Technology, Haifa 32000, Israel}
\email{baptiste.devyver@univ-nantes.fr}
\author{Martin Fraas}
\address{Martin Fraas, Theoretische Physik, ETH Zurich, 8093 Zurich, Switzerland}
\email{martin.fraas@gmail.com}
\author{Yehuda Pinchover}
\address{Yehuda Pinchover,
Department of Mathematics, Technion - Israel Institute of
Technology,   Haifa 32000, Israel}

\email{pincho@techunix.technion.ac.il}
 \maketitle
\newcommand{\dnorm}[1]{\thkl #1 \thkl\,}

\begin{abstract}
For a general subcritical second-order elliptic operator $P$ in a domain $\Gw\subset \mathbb{R}^n$ (or noncompact manifold), we construct Hardy-weight $W$ which is {\em optimal} in the following sense. The operator
$P - \lambda W$ is subcritical in $\Gw$ for all $\lambda < 1$, null-critical in $\Gw$ for $\lambda = 1$, and supercritical near any neighborhood of infinity in $\Gw$ for any $\lambda > 1$. Moreover, if $P$ is symmetric and $W>0$, then the
spectrum and the essential spectrum of $W^{-1}P$ are equal to $[1,\infty)$, and the corresponding Agmon metric is complete.

Our method is based on the theory of positive solutions and applies to both symmetric and nonsymmetric operators. The constructed Hardy-weight is given by an explicit simple formula involving two distinct positive solutions of the equation $Pu=0$, the existence of which depends on the subcriticality of $P$ in $\Omega$.
\\[2mm]
\noindent  2000  \! {\em Mathematics  Subject  Classification.}
Primary  \! 35B09; Secondary  35J08, 35J20, 35P05.\\[1mm]
\noindent {\em Keywords.} Agmon metric, ground state, Hardy inequality, logarithmic Caccioppoli inequality, minimal growth, positive solutions, Rellich inequality, weighted Poincar\'{e} inequality.
\end{abstract}

\mysection{Introduction}
Let $P$ be a {\em symmetric} and {\em nonnegative} second-order linear elliptic operator with real coefficients which is defined on a domain  $\Omega \subset \mathbb{R}^n$  or on a noncompact manifold $\Gw$, and let  $q$ be the associated quadratic form defined on $C_0^\infty(\Omega)$. A {\em Hardy-type inequality} with a weight $W \geq 0$ has the form
\begin{equation}
\label{HardyType}
q(\vgf) \geq \lambda \int_\Omega W(x) |\varphi(x)|^2 \dx \quad \mbox{for all } \varphi \in C_0^\infty(\Omega),
\end{equation}
where $\lambda  > 0$ is a constant. Such an inequality aims to quantify the positivity of $P$: for instance, if \eqref{HardyType} holds with $W\equiv \mathbf{1}$ it means that the bottom of the spectrum of the corresponding operator is positive. A nonnegative operator $P$ is called {\it critical} in $\Gw$ if the inequality $P \geq 0$ cannot be improved, meaning that (\ref{HardyType}) holds true if and only if $W \equiv 0$. On the other hand, when (\ref{HardyType}) holds with a nontrivial $W$, then the operator is {\it subcritical} in $\Gw$. Given a subcritical operator $P$  in $\Omega$, there is a huge set of weights $W$ satisfying the inequality \eqref{HardyType}; We will call these weights, \textit{Hardy-weights}. A natural question is to find ``large" Hardy-weights.

The search for Hardy-type inequalities with ``as large as possible" weight function $W$  was proposed by Agmon \cite[Page 6]{Ag82}, and we feel that it deserves the name {\em Agmon's problem}. Agmon raised this problem in connection with his theory of exponential decay for solutions of second-order elliptic equations. Given a Hardy-type inequality \eqref{HardyType}, there is an associated Agmon metric; if this Riemannian metric turns out to be {\em complete}, then Agmon's theory gives the exponential decay at infinity (with respect to the Agmon metric) of solutions of the equation $Pu=f$.

Before proceeding, we recall a classical Hardy-type inequality, in order to motivate the concept of ``large" Hardy-weights:
\begin{example}\label{Ex1} {\em
For $\Omega=\R^n\setminus \{0\}$, $n\geq3$, the following Hardy-type inequality for $P=-\Delta$ holds
\begin{equation}\label{eq:1}
\int_{\R^n\setminus\{0\}} |\grad \vgf|^2\dx \geq \left(\frac{n-2}{2}\right)^2\int_{\R^n\setminus\{0\}} \frac{|\vgf(x)|^2}{|x|^2} \dx \qquad \forall \vgf \in C_0^\infty(\R^n\setminus\{0\}).
\end{equation}
 }
\end{example}
In Example \ref{Ex1}, the Hardy-weight $W$ decays to zero at infinity and blows up at zero, and furthermore its behavior is borderline for the Hardy-type inequalities under consideration. Perhaps the easiest way to illustrate this is the following: for any $\varepsilon\in \R$, define a smooth positive weight $W_\varepsilon$ which is equal to
$$W_\varepsilon:=|x|^{-2+\varepsilon}$$
outside the unit ball. If $\varepsilon<0$, then $W_\varepsilon$ is a short-range potential, while if $\varepsilon\geq0 $, then $W_\varepsilon$ is long-range. More precisely, if $\varepsilon<0$, then for any constant $C>0$ there exists $R>0$ such that
\begin{equation}\label{eq_sr}
\int_{\R^n} |\nabla \vgf|^2\dx\geq C \int_{\R^n} W_\varepsilon (x)|\vgf|^2\dx \quad \forall \vgf \in C_0^\infty(\{|x|> R\}),
\end{equation}
and the operator $W_\varepsilon^{-1}P$ has a discrete positive spectrum. In particular, the corresponding Rayleigh-Ritz variational problem
admits a minimizer. On the other hand, for any $\varepsilon>0$, there are no constants $C>0$ and $R>0$ such that \eqref{eq_sr} holds true, and the bottom of the (essential) spectrum of the operator $W_\varepsilon^{-1}P$ equals $0$. Therefore, $W_0$, which agrees with $|x|^{-2}$ outside the unit ball, is the only long-range potential in the family $\{W_\varepsilon\}_{\varepsilon\in \R}$ such that the Hardy-type inequality \eqref{HardyType} holds. Moreover, $\gl=C_H:=(n-2)^2/4$ is the best constant
for \eqref{HardyType} not only in the punctured space, but in a fixed neighborhood of either zero or infinity. On the other hand, the corresponding Rayleigh-Ritz variational problem
does not admit a minimizer.

This indicates -- in a very rough way -- that the weight $C_H|x|^{-2}$ is a ``large" Hardy-weight for $P=-\Delta$ on $\R^n\setminus\{0\}$.

Agmon's theory gives the following (almost optimal) a priori decay estimates for nongrowing solutions $u$ of the Poisson equation in $\R^n$: for every $\varepsilon>0$, there is a constant $C=C(f,\varepsilon)$ such that for every $x$ outside the unit ball,
$$|u(x)|\leq  C|x|^{2-n-\varepsilon}.$$
 We thus might expect that the construction of good Hardy-weights will lead to valuable spectral information about $P$.

In this article we use a general albeit simple construction of Hardy-weights which allows one to recover practically \textit{all} classical Hardy inequalities in a unified way. We use this construction to study Agmon's problem. In particular, in some important cases we find an optimal Hardy weight; This includes the case of a general {\em nonselfadjoint} operator $P$ defined on a punctured domain.

\mysection{Our results}
In this section, we describe in more detail the main results of the paper.
\subsection{The supersolution construction}

The first result that we obtain is the aforementioned general construction of Hardy-weights $W$ satisfying the inequality \eqref{HardyType}. We first recall that the relationship between spectral (or equivalently, functional) properties of a symmetric operator $P$ to properties of positive solutions of the equation $Pu=0$ is well understood using the Agmon-Allegretto-Piepenbrink (AAP) theory \cite{Agmon,PReview}. In particular, the existence of a positive supersolution $v$ of the equation $(P-\lambda W)u=0$ in $\Gw$ is equivalent to the Hardy-type inequality \eqref{HardyType}, and hence (assuming $W>0$ in $\Gw$) it is equivalent to the inclusion of the spectrum of the associated symmetric operator $W^{-1}P$ in $[\lambda, \infty)$. Moreover, the existence of a positive supersolution $v$ of the equation $(P-\lambda W)u=0$ in a neighborhood of infinity in $\Gw$ is equivalent to the inclusion  of the corresponding essential spectrum in $[\lambda, \infty)$ \cite{Baptiste}.

Our construction relies on two observations, which are both well known. First, using (AAP) theory, we will see that there is a correspondence between positive supersolutions of $P$ and nonnegative Hardy-weights. Explicitly, to every positive supersolution $v$ of $P$, we associate the weight $W:=Pv/v$, which satisfies \eqref{HardyType} with $\lambda=1$. The second step (that we call the \textit{supersolution construction}) is a way of producing positive supersolutions of $P$ -- hence Hardy-weights. The construction is the following: let  $v_0$ and $v_1$ be  two linearly independent positive (super)solutions of the equation $Pu=0$ in $\Gw$. Then for $0\leq \ga \leq 1$, the function
$$
v_\ga := v_0^{1-\ga} v_1^\ga
$$
is a positive supersolution of the equation $Pu=0$ in $\Gw$, thus yielding a Hardy-weight $W_\ga:=Pv_\ga/v_\ga$.
We will find that all these weights are proportional,
$$
W_\alpha = 4\alpha (1-\alpha) W(v_0,v_1), \quad W(v_0,v_1) = \frac{1}{4}\left|\nabla \log\left(\frac{v_0}{v_1}\right)\right|_A^2,
$$
and the prefactor $4\alpha (1-\alpha)$ achieves its maximum $1$ at $\alpha = 1/2$.
In particular, if the equation $Pu=0$ admits two linearly independent positive (super)solutions in $\Gw$, then $P$ is subcritical in $\Gw$. Moreover, with the freedom of choosing $v_0$ and $v_1$, this construction allows us in fact, to recover in a unified way all the classical Hardy inequalities. It is also a very easy method for producing new examples.
\subsection{Agmon's problem and optimal weights}

The aim of the paper is to show that with a careful choice of $v_0$ and $v_1$, the preceding construction gives rise to Hardy-weights $W(v_0,v_1)$ which deserve the title of \textit{optimal weights}. We first give a \textit{temporary} definition of optimal weights.

\begin{definition}\label{temp_def}
{\em
Consider a symmetric subcritical operator in $\Gw$, and let $W$ be a nonzero nonnegative weight satisfying the Hardy inequality
 \begin{equation}
\label{HardyType_def}
q(\vgf) \geq \lambda \int_\Omega W(x) |\varphi(x)|^2 \dx \quad \mbox{for all } \varphi \in C_0^\infty(\Omega),
\end{equation}
with $\gl>0$. We  denote by $\lambda_0=\lambda_0(P,W,\Gw)$ the best constant satisfying \eqref{HardyType_def}; $\gl_0$ is called the {\em generalized principal eigenvalue}.
The weight $\gl_0 W$ is said to be an \textit{optimal Hardy-weight} for the operator $P$ in $\Gw$ if the following properties hold:
\begin{enumerate}
\item[(a)] The operator   $P-\lambda_0 W$ is \textit{critical} in $\Gw$; that is, the inequality
$$q(\vgf) \geq  \int_{\Gw} V(x)\vgf^2(x) \dx \qquad \forall \vgf \in C_0^\infty(\Omega)$$
is not valid for any $V \gneqq \lambda_0W$.

\item[(b)] The constant $\lambda_0$ is also the best constant for \eqref{HardyType_def} with test functions supported in the exterior of any fixed compact set in $\Omega$.
\item[(c)] The operator $P - \lambda_0 W$ is {\it null-critical} in $\Gw$; that is, the corresponding Rayleigh-Ritz variational problem
\begin{equation}\label{RRVP}
\inf_{\vgf\in \mathcal{D}_P^{1,2}(\Gw)}\left\{\frac{q(\varphi)}{\int_{\Gw} W(x) |\vgf(x)|^2 \dx}\right\}
\end{equation}
 admits no minimizer. Here $\mathcal{D}_P^{1,2}(\Omega)$ is the completion of $C_0^\infty(\Omega)$ with respect to the norm $u\mapsto \sqrt{q(u)}$.
\end{enumerate}
}
\end{definition}
Properties (b) and (c) indicates in a way that $W$ is  ``long range". Note that contrary to the ``short range" case, the validity of (a) in the case of a ``long range" potential is quite delicate. Indeed, it is known \cite{Ag2,ky2,P3} that $P-\lambda_0 W$ is \text{always} critical when property (b) does not hold (see also \cite{FT}). On the other hand, in the ``long range" case $P-\gl_0 W$ is in general subcritical.

In order to motivate the definition, let us mention that the weight $C_H|x|^{-2}$ of Example \ref{Ex1} is an optimal Hardy-weight (see Section~\ref{short_pf} for a short proof for this special case).
\subsection{The result for operators in punctured domains}

Motivated by Example~\ref{Ex1}, we study in detail the case of a general (nonsymmetric) subcritical operator $P$ in the \textit{punctured domain} $\Gw^\star:=\Gw\setminus \{0\}$: Theorem~\ref{thm_nsa} states that if one chooses two positive solutions $v_0$, and $v_1$ appropriately in $\Gw^\star$, then for $\ga=1/2$, the corresponding weight $W(v_0,v_1)$ constructed by the supersolution construction is an {\em optimal Hardy-weight in $\Gw^\star$}. The following theorem states the result for {\em symmetric} operators.

\begin{theorem}\label{thm*}
Let $P$ be a symmetric subcritical operator in $\Gw$, and let $G(x):=G_P^\Gw(x,0)$ be its minimal positive Green function with a pole at $0\in \Gw$. Let $u$ be a positive solution of the equation $Pu=0$ in $\Gw$ satisfying
\begin{equation}\label{u1u0}
\lim_{x\to\infty} \frac{G(x)}{u(x)}=0,
\end{equation}
where $\infty$ is the ideal point in the one-point compactification of $\Gw$. Consider the supersolution
$ v := \sqrt{G u}$. Then $$W:=\frac{Pv}{v}=\frac{1}{4}\left|\nabla \log\left(\frac{G}{u}\right)\right|_A^2$$ is  an  optimal Hardy-weight with respect to $P$ and the punctured domain $\Gw^\star=\Gw\setminus \{0\}$. If furthermore $W>0$, then the spectrum and the essential spectrum of the Friedrichs extension of the operator $-W^{-1}\Gd$ on $L^2(\Gw, W\dx)$ are equal to  $[\lambda_0,\infty)$ and the corresponding Agmon metric is complete.

\end{theorem}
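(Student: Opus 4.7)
The plan is to apply the supersolution construction from the preceding subsection to the two linearly independent positive solutions $v_0 := G$ and $v_1 := u$ of $Pu=0$ in $\Omega^\star$, with parameter $\alpha = 1/2$. Since both are genuine solutions, the identity $Pv_\alpha = 4\alpha(1-\alpha)W(v_0,v_1)\,v_\alpha$ yields that $v := \sqrt{Gu}$ is itself a positive \emph{solution} (not just a supersolution) of $(P-W)\phi = 0$ in $\Omega^\star$. By the Agmon-Allegretto-Piepenbrink (AAP) principle, the Hardy inequality $q(\vgf) \geq \int_{\Omega^\star} W\vgf^2\,\dx$ for $\vgf \in C_0^\infty(\Omega^\star)$ follows immediately, giving $\lambda_0 \geq 1$.

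The core step is then to show that $v$ is a positive solution of $(P-W)\phi=0$ of \emph{minimal growth at infinity in $\Omega^\star$}, i.e.\ a global ground state of $P-W$ on the punctured domain. By AAP, this is equivalent to criticality of $P-W$ in $\Omega^\star$ and hence forces $\lambda_0 = 1$ and yields property (a). The punctured domain has two classes of ideal end: the original ends of $\Omega$ and the new singular point $0$. Near $0$, where $G \to \infty$ while $u$ stays positive and bounded, $v/G = \sqrt{u/G} \to 0$, so $v$ is dominated by the canonical positive $P$-solution singular at $0$; near the original infinity, assumption \eqref{u1u0} gives $v/u = \sqrt{G/u} \to 0$, so $v$ is dominated by $u$. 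A standard comparison argument then upgrades these asymptotic inequalities to genuine minimal growth of $v$ for $P-W$ at each ideal end. Property (b) is an immediate consequence: for every compact $K \subset \Omega^\star$, the restriction $v|_{\Omega^\star \setminus K}$ retains minimal growth at infinity in $\Omega^\star \setminus K$, so $P-W$ is critical on that subdomain and $\lambda_0 = 1$ is still the best constant there.

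For null-criticality (property (c)), I would show that $v = \sqrt{Gu}$ does not belong to $L^2(\Omega^\star, W\,\dx)$, which precludes a minimizer of the Rayleigh-Ritz quotient \eqref{RRVP}. Setting $\xi := \log(G/u)$, the relevant integrand is $v^2 W = \tfrac{1}{4}Gu\,|\nabla \xi|_A^2$; a co-area argument along the level sets $\{\xi = t\}$ (on which $G = u\,e^t$), combined with the fact that $\xi$ ranges over all of $\R$ (since $\xi \to +\infty$ as $x\to 0$ and $\xi \to -\infty$ along the ends of $\Omega$ by \eqref{u1u0}), should force the integral to diverge. I expect this to be the principal obstacle: null-criticality is the subtle ``long-range'' feature of the construction, and proving the divergence requires exploiting both the explicit product form of $v^2 W$ and global information on the range and level sets of $\xi$.

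Finally, when $W>0$, the combination of criticality with the optimality at infinity from (b), together with the Persson-type characterization of $\inf\sigma_{\mathrm{ess}}$, yields $\sigma(W^{-1}P) = \sigma_{\mathrm{ess}}(W^{-1}P) = [1,\infty)$. Completeness of the associated Agmon metric is essentially immediate from the gradient structure of $W$: along any rectifiable curve $\gamma$, its Agmon length satisfies $\int_\gamma \sqrt{W}\,\ds = \tfrac{1}{2}\int_\gamma |\nabla \xi|_A\,\ds \geq \tfrac{1}{2}\bigl|\xi(\gamma_{\mathrm{end}}) - \xi(\gamma_{\mathrm{start}})\bigr|$, and since $\xi$ is unbounded at both types of ideal end, every path exiting a compact set has infinite Agmon length.
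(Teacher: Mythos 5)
Your overall architecture (supersolution construction, criticality via minimal growth, null-criticality via a coarea argument, Persson's principle, and the gradient computation for the Agmon length) tracks the paper's plan, and the null-criticality and completeness steps are essentially correct. However, there are three genuine gaps.

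First, the criticality argument does not close as stated. You compare $v=\sqrt{Gu}$ to $G$ near $0$ and to $u$ near the ends of $\Omega$, but $G$ and $u$ are positive solutions of $Pu=0$, not of $(P-W)u=0$; in fact $(P-W)G=-WG\leq 0$, so $G$ is a \emph{sub}solution of $P-W$ and cannot serve as the comparison function in a maximum-principle argument for $P-W$. The comparison device the paper uses (Proposition~\ref{minimal}) requires \emph{two solutions of the same operator} with a vanishing ratio. The missing ingredient is the second solution $w:=-\sqrt{Gu}\log(G/u)$ of $(P-W)u=0$ produced in Lemma~\ref{lem_logu}; since $v/w\to 0$ at both ideal ends of $\Omega^\star$, Proposition~\ref{minimal} applies to the pair $(v,w)$ and yields minimal growth, hence criticality.

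Second, the argument for property (b) is wrong. When $P-W$ is critical in $\Omega^\star$ and $K\subset\Omega^\star$ is a nonempty compact set, the restriction of $P-W$ to $\Omega^\star\setminus K$ is necessarily \emph{sub}critical, not critical; removing a compactum from a critical domain always produces a Green function on the complement. The inequality $\lambda_\infty\geq 1$ is free, but the reverse inequality $\lambda_\infty\leq 1$ (and the stronger statement that $P-\lambda W$ is supercritical in any neighborhood of infinity for $\lambda>1$) needs the complex-exponent ansatz of Lemma~\ref{lem_construction1}: the oscillatory solutions $\mathfrak{Re}\big(G^{1/2+\mathrm{i}\xi}u^{1/2-\mathrm{i}\xi}\big)$, whose sign changes violate the generalized maximum principle near $0$ and near infinity.

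Third, the spectral conclusion outruns the inputs you cite. Criticality forces $\inf\sigma(W^{-1}P)=1$, and Persson's principle combined with $\lambda_\infty=1$ gives $\inf\sigma_{\mathrm{ess}}=1$, but together these only show $\sigma\subset[1,\infty)$. To obtain \emph{equality} $\sigma=\sigma_{\mathrm{ess}}=[1,\infty)$ the paper isolates the invariant ``radial'' subspace $L^2_{\mathrm{rad}}$ of functions proportional to $u$ on level sets of $G/u$, shows via the coarea formula (Lemma~\ref{radspace}) that $W^{-1}P$ restricted there is unitarily equivalent to the one-dimensional Euler-type operator $-4t^2\,\mathrm{d}^2/\mathrm{d}t^2$, and diagonalizes it by the Mellin transform, obtaining absolutely continuous spectrum filling $[1,\infty)$. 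Without this explicit spectral representation, you cannot exclude gaps in $\sigma_{\mathrm{ess}}$.

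Your coarea argument for null-criticality and your curve-length estimate for completeness of the Agmon metric are, by contrast, correct and coincide in substance with Corollary~\ref{Estimate W} and Lemma~\ref{lem_complete} of the paper.
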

One can look at a punctured domain as a domain with two singular points or more precisely a domain with two ends. In Theorem \ref{thm_bs}, we will treat the case where the two singular points are at the boundary of $\Omega$, and in Theorem \ref{lem_sever} we will treat the case of several ends. We will illustrate our results in a variety of examples: see in particular Examples \ref{ex1} and \ref{ex1a} for explicit examples of optimal Hardy inequalities.

\vskip 3mm

As mentioned, our results essentially also hold in the general case of a (not necessarily symmetric) second-order, linear, subcritical differential operator. Criticality theory, which is the qualitative theory of positive solutions of the equation $Pu=0$ in $\Gw$  for a general nonsymmetric second-order elliptic operator $P$ with real coefficients (see Section~\ref{1}), extends the  functional/spectral formulation of nonnegativity. We show that when Agmon's problem is interpreted in the terminology of criticality theory, our results apply to a general nonsymmetric second-order elliptic operator $P$ in a domain $\Gw$. In particular, all statements of Theorem~\ref{thm*} (interpreted in the terminology of criticality theory), excluding the statement about the whole spectrum hold true for such a general $P$. Note that the relation to the {\em integral} Hardy-type inequality (\ref{HardyType}) is lost in the nonsymmetric case.
\subsection{Comparison with previous results}

The classical positive supersolution approach to spectral problems and variational inequalities was studied by many authors
\cite[and the references therein]{Ag82,Barta, Kasue,MMP,NP,crit2}. The idea of using the Green function to get, albeit via integral identities, Hardy-type inequalities appears in a few recent papers for a {\em symmetric} divergence form operator (without a potential), see for example \cite[and references therein]{AS,Carron,CZ,C,Dou}. For the Laplace-Beltrami operator on Riemannian manifolds such an approach was used by Carron in \cite{Carron} to provide Hardy-type inequalities in particular for minimal hypersurfaces of a Euclidean space, and for submanifolds of Cartan-Hadamard manifolds. Carron's results have been later rediscovered by Li and Wang \cite{LW}, where the authors give also applications to
structure theorems for complete manifolds.
Compared to these results, we provide a novel nonvariational method that applies to a general operator $P$ satisfying minimal regularity assumptions.
Furthermore, the (null)-criticality of $P - W$, the optimality near infinity, and the characterization of the (essential) spectrum of the weighted operator, seem to be new even in the case of the Laplace (let alone Schr\"odinger) operators on domains in $\R^n$, and the Laplace-Beltrami operator on Riemannian manifolds. For  recent results concerning sharp Hardy inequalities see for example \cite[and references therein]{BEL,BFT,GM,LLL}.

We note that some results of the present paper have been recently announced by the authors in \cite{CR}.
\subsection{The organization of the article}

The outline of the present paper is as follows. In Section~\ref{short_pf} we provide a short proof of Theorem~\ref{thm*} for the classical Hardy inequality \eqref{eq:1}. This will illuminate the main ideas and steps of the proof in the general case. In Section~\ref{1} we review the theory of positive solutions and formulate precisely our main result in the nonsymmetric case (Theorem~\ref{thm_nsa}). Section~\ref{sec:construction} explains in detail the supersolution construction of Hardy-weights. Sections \ref{sec2}, \ref{sec22}, \ref{sec_nc} and \ref{sec_essential} are then devoted to the four-steps proof of Theorem~\ref{thm_nsa} (see, theorems~\ref{thm_Sch}, \ref{thm_Sch7}, \ref{thm_null-critical}, and \ref{essential_spectrum}). In Section~\ref{sec_agmon_m} we prove the completeness of Agmon's metric induced by our optimal Hardy-weight in $\Omega^\star$, derive Rellich-type inequalities, and obtain decay estimates for solutions of the equation $Pu=f$.

Our main result deals with a weight $W$ which has an isolated singularity in $\Gw$, in Section~\ref{sec_bound_sing} we describe how our methods and results can be extended to the case of positive solutions with boundary singularities. In Section~\ref{sec_sever}, we study the case of a symmetric subcritical operator which is defined on a manifold $M$ with $N$ ends, where $N\geq 2$. It turns out that in this case, the supersolution construction produces an $(N-1)$-parameter family of {\em critical} Hardy-weights  (see Theorem~\ref{lem_sever}). Finally, in Section~\ref{examples} we discuss some examples, extensions, and applications. In particular, we discuss some generalizations to quasilinear equations.

The proofs of the main results of the present paper hinges on a one-variable approach. Indeed, it is based on a thorough analysis of a space of ``radial" generalized eigenfunctions.  This is particularly evident in Section~\ref{sec_essential}, where the appropriate ``radial" space is defined. To further elaborate this point, we consider in the appendix, the class of Schr\"odinger operators with radially symmetric potentials defined on radially symmetric domains and study the corresponding radial solutions, and present a purely ODE proof of some of our results for this important case.

\vskip 3mm

\paragraph{\bf Notation:} Throughout the paper and without loss of generality, we assume that $0\in \Omega$ and denote $\Gw^\star:=\Gw\setminus \{0\}$. In addition, we fix a reference point  $x_1 \in \Omega$, $x_1\neq 0$. When there is no danger of confusion we will omit indices. In particular, for a matrix $A(x)=\big[a^{ij}(x)\big]$ and a vector field $b(x)$ we denote
$$
(A(x) \xi)^i = \sum_{j=1}^n a^{ij}(x) \xi_j,\;\; b(x) \cdot \xi  = \sum_{j=1}^n b^j(x) \xi_j , \; \mbox{where } \xi=(\xi_1,\ldots,\xi_n)\in \mathrm{R}^n .
$$
Moreover, for $x\in \Gw$ we introduce a norm on $\mathbb{R}^n$ associated to a positive definite symmetric  matrix $A(x)$,
$$ |\xi|_A^2 := \xi \cdot A \xi \,.$$
We write $\Omega_1 \Subset \Omega_2$ if $\Omega_2$ is open, $\overline{\Omega_1}$ is
compact and $\overline{\Omega_1} \subset \Omega_2$.

\mysection{A short proof of Theorem~\ref{thm*} for the classical Hardy inequality}\label{short_pf}
Before embarking to the general setting and proofs, we give a short proof of Theorem~\ref{thm*} for the case of the classical Hardy inequality \eqref{eq:1}. This will illuminate the main ideas and steps of the proof in the general case.

\begin{example}[Example~\ref{Ex1} continued] \label{H1}{\em
Let $P=-\Gd$ be the Laplace operator on $\Gw^\star:= \mathbb{R}^n\setminus \{0\}$, where $n \geq 3$, and denote by
$G(x):=|x|^{2-n}$ the corresponding positive minimal Green function with a pole at zero (up to a multiplicative constant).

Consider the positive superharmonic function in $\Gw^\star$
$$v(x) := \sqrt{G(x)\mathbf{1}}=G(x)^{1/2}= |x|^{(2-n)/2}.$$ We obtain the Hardy-weight $Pv/v=C_H|x|^{-2}$, and by the (AAP) theory we get the classical Hardy inequality (\ref{eq:1}).

 To prove that we indeed obtain an {\em optimal} Hardy-weight, we analyze the oscillatory properties of the corresponding radial equation
\begin{equation}\label{eq_r1}
-u''-\frac{n-1}{r}u'-\gh \frac{C_H }{r^2}u=0\qquad r\in (0,\infty),
\end{equation}
where $\gh\in \R$. Note that \eqref{eq_r1} is Euler's equation.
Consequently, for $\gh\neq 1$ two linearly independent solutions of \eqref{eq_r1} are given by
\begin{equation}\label{upm}
u_\pm(r)=r^{(2-n)/2}\big(r^{(2-n)/2}\big)^{\pm \sqrt{1-\gh}},
\end{equation}
while  for $\gh= 1$ two linearly independent solutions of \eqref{eq_r1} are expressed by
\begin{equation}\label{upm1}
 u_+(r) =r^{(2-n)/2}, \;\;\;  u_-(r) =r^{(2-n)/2}\log (r^{2-n}).
\end{equation}
The difference in the structure of the solutions for $\gh<1$, $\gh = 1$ and $\gh > 1$ cannot be over-stressed.

For $\gh < 1$ both solutions are positive, and therefore, the operator  $P - \gh C_H|x|^{-2}$ is
subcritical in $\Gw^\star$.

On the other hand, for $\gh = 1$ only $u_+(r) =r^{(2-n)/2}$ is positive, and moreover,
it is dominated by $|u_-|$ near both ends $r=0$ and $r=\infty$. By Proposition~\ref{minimal} we infer that $u_+$ is a \textit{ground state} and the operator $P -  W$ is critical in $\Gw^\star$, where $W:=-\Gd(u_+)/u_+=C_H |x|^{-2}$ is the corresponding Hardy-weight.

Furthermore, an elementary calculation shows that for $\gh= 1$ we have that the ground state $u_+$ is not in  $L^2(\Gw^\star, W\dx)$, which shows the null-criticality of the Hardy operator
$-\Delta-C_H|x|^{-2}$ in $\Gw^\star$.

Finally,  for $\gh>1$ the solution of \eqref{eq_r1} given by
\begin{equation}\label{osc}
\mathfrak{Re} \{u_+(r)\}=r^{(2-n)/2}\cos \left[\frac{\sqrt{\gh-1}}{2}\log (r^{2-n})\right]
 \end{equation}
oscillates near zero and near infinity, and therefore, the best possible constant for the validity of the Hardy inequality in any neighborhood of either the origin or infinity is also $C_H$. In particular, the bottom of the spectrum and the bottom of the essential spectrum of the corresponding weighted Laplacian (with weight $W=C_H^{-1}|x|^2$) is equal $1$.

The entire (essential) spectrum of the operator $\tilde{P}:=C_H^{-1}|x|^2(-\Delta)$ is obtained by an explicit  spectral representation of the operator $\tilde{P}$ restricted to the radial functions, using the \textit{Mellin transform}. Denote by $L_{\mathrm{rad}}^2(\Gw^\star,W\dx)$ the subspace of radially symmetric functions in $L^2(\Gw^\star,W\dx)$. Recall that the  Mellin transform $\mathcal{M} : L^2(0,\infty)\longrightarrow L^2(\R)$ is the unitary operator defined by
$$\mathcal{M}f(\xi):=\frac{1}{\sqrt{2\pi}}\int_0^\infty f(r)r^{\mathrm{i}\xi-\frac{1}{2}}\dr.$$
In fact, the composition of the unitary operator
$$L^2\Big((0,\infty),r^{n-1}\frac{C_H}{r^2}\dr\Big)\rightarrow L^2(0,\infty);  \quad f(r) \mapsto \frac{\sqrt{|n-2|}}{2}f(r^{1/(n-2)}),$$
and the Mellin transform, gives a unitary operator
$$\mathfrak{U} : L_{\mathrm{rad}}^2(\Gw^\star,W\dx)\cong L^2\Big((0,\infty),r^{n-1}\frac{C_H}{r^2}\dr\Big) \rightarrow L^2(\R),$$ which is a spectral representation for $\tilde{P}$ restricted to radial functions. In this representation, $\tilde{P}$ is just the multiplication by $(1+4\xi^2)$. Indeed, this follows from the fact that due to \eqref {eq_r1} and \eqref{upm} (with $\xi=\sqrt{\gh-1}/2$), we have
\begin{equation}\label{eq_spect_rep}
\left(C_H^{-1}|x|^2(-\Delta)-(4\xi^2+1)\right)\left(r^{n-2}\right)^{\mathrm{i}\xi-\frac{1}{2}}=0.
\end{equation}
 }
\end{example}

The proof of the main  result (Theorem~\ref{thm_nsa}) in the {\em general} case is based on similar considerations and calculations. Loosely speaking,
to obtain the general result, we just replace $r^{(2-n)}$ in equations (\ref{upm}), (\ref{upm1}), (\ref{osc}), and \eqref{eq_spect_rep} by $G/u$  (cf. lemmas~\ref{lem_logu} and \ref{lem_construction1}).
\mysection{Preliminaries}\label{1}
In the present section we review the theory of positive solutions and formulate our main result for nonsymmetric operators defined on punctured domains.

Let $\Omega\subset \mathbb{R}^n$, $n\geq 2$ be a domain (or more generally, a smooth noncompact manifold $\Omega$ of dimension $n$). We assume that $\nu$ is a positive measure on $\Omega$, satisfying $\dnu=f\,\mathrm{vol}$ with $f$ a positive function; $\mathrm{vol}$ being the volume form of $\Omega$ (which is just the Lebesgue measure in the case of a domain of $\R^n$). Consider a second-order elliptic operator $P$ with real coefficients which (in any coordinate system
$(U;x_{1},\ldots,x_{n})$) is either of the form
\be \label{P}
Pu=-a^{ij}(x)\partial_{i}\partial_{j}u + b(x)\cdot\nabla u+c(x)u,
\end{equation}
or in the divergence form
\begin{equation} \label{div_P}
Pu=-\div \left[\big(A(x)\nabla u +  u\tilde{b}(x) \big) \right]  +
 b(x)\cdot\nabla u   +c(x)u,
\end{equation}
Here, the minus divergence is the formal adjoint of the gradient with respect to the measure $\nu$.  We assume that for every $x\in\Gw$ the matrix $A(x):=\big[a^{ij}(x)\big]$ is symmetric and that the real quadratic form
\be\label{ellip}
 \xi \cdot A(x) \xi := \sum_{i,j =1}^n \xi_i a^{ij}(x) \xi_j \qquad
 \xi \in \Real ^n
\end{equation}
is positive definite. Moreover, throughout the paper it is assumed that $P$ is locally uniformly elliptic, and the coefficients of $P$ are locally sufficiently regular in $\Omega$. All our results
hold for example when $P$ is of the form \eqref{div_P}, and $A,\, f$ are locally H\"{o}lder continuous, $b,\,\tilde{b} \in L^p_{\mathrm{loc}}(\Omega; \mathbb{R}^n,\dx)$, and $c \in L^{p/2}_{\mathrm{loc}}(\Omega; \mathbb{R},\dx)$ for some $p > n$. However it would be apparent from the proofs that any conditions that guarantee standard elliptic theory are sufficient.

The formal adjoint $P^*$ of the operator $P$ is defined on its natural space $L^2(\Omega, \dnu)$. When $P$ is in divergence form (\ref{div_P}) and $b = \tilde{b}$, the operator
\be
\nonumber
Pu = - \div \left[ \big(A \grad u + u b\big) \right] + b \cdot \grad u + c u,
\ee
is {\em symmetric} in the space $L^2(\Omega, \dnu)$. Throughout the paper, we call this setting the {\em symmetric case}.  We note that if $P$ is symmetric and $b$ is smooth enough, then $P$ is in fact a Schr\"odinger-type operator of the form
\be
\nonumber
Pu = - \div \big(A \grad u \big) + \big(c-\div b\big)u.
\ee

In the paragraphs below we recall basic notions and theorems from the
theory of positive solutions. We refer the reader to a review \cite{PReview} for
details and further references.

\begin{definition}{\em Denote by $\mathcal{C}_{P}(\Omega)$ the cone of all positive solutions of the
elliptic equation $Pu=0$ in $\Omega$. The operator $P$ is said to be {\em nonnegative in} $\Gw$, and write $P \geq  0$ in $\Gw$, if $\mathcal{C}_{P}(\Omega)\neq \emptyset$. We say that $P$ satisfies the {\em positive Liouville theorem} in $\Gw$ if $\dim \mathcal{C}_{P}(\Omega)=1$.

For a nonzero (real valued) function $W$, let
$$\lambda_0=\lambda_0(P,W,\Omega)
:= \sup\{\lambda \in \mathbb{R} \mid P-\lambda W\geq 0\; \mbox{ in } \Gw\}$$ be the {\em generalized principal
eigenvalue} of the operator $P$ with respect to the potential $W$ in $\Omega$.
We  also denote
$$\gl_{\infty}:= \gl_{\infty}(P,W,\Gw):=\sup\{\gl\in\Real\mid \exists K\subset
\subset\Gw\mbox{ s.t. } P-\gl W\geq 0 \; \mbox{ in }  \Gw\setminus K \}.$$
 }\end{definition}
Clearly, $ \gl_0\leq\gl_{\infty} $. Moreover, $P-\gl_0 W\geq 0$ in $\Gw$. If $P$ is a symmetric operator, then in light of the Agmon-Allegretto-Piepenbrink (AAP) theory (see for example \cite{Agmon} and \cite{Baptiste}), $\gl_0$ and $\gl_\infty$ have the following spectral interpretation:
\begin{proposition}
\label{allegretto}
Assume that the operator $P$ is a symmetric in $L^2(\Omega,\dnu)$,  and $W>0$. Suppose also that $\lambda_0(P,W,\Gw)  >-\infty$. Define $$\tilde{P}:=W^{-1}P.$$
Then $\tilde{P}$ is symmetric on $L^2(\Gw, \, W\mathrm{d}\nu)$, has the same quadratic form as $P$, and $\lambda_0$ (resp. $\lambda_\infty$) is the infimum of the spectrum (resp. essential spectrum) of the Friedrichs extension of $\tilde{P}$.

Denote by $q$ the quadratic form associated to $P$, and assume that $P\geq 0$ in $\Gw$. Then the following Hardy-type inequality holds true with the best constant $\lambda_0=\lambda_0(P,W,\Gw)\geq 0$:
\begin{equation}
\label{HardyIn}
 q(\vgf) \geq \lambda_0 \int_\Omega W \vgf^2\dnu \qquad\forall \vgf\in C_0^\infty(\Gw).
\end{equation}

\end{proposition}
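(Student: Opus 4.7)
My plan is to prove the proposition in four ingredients, the core of which is the Agmon-Allegretto-Piepenbrink (AAP) theorem combined with standard spectral theory.

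First, I would verify the symmetry of $\tilde{P}$ and the identity $\tilde{q}=q$ by direct computation: for $u,v \in C_0^\infty(\Omega)$, the symmetry of $P$ in $L^2(\Omega,\dnu)$ gives
\[
\langle \tilde{P}u, v \rangle_{L^2(W\dnu)} = \int_\Omega (W^{-1}Pu)\,v\,W\dnu = \int_\Omega u(Pv)\,\dnu = \langle u, \tilde{P}v \rangle_{L^2(W\dnu)},
\]
while $\tilde q(u) := \langle \tilde{P}u,u\rangle_{L^2(W\dnu)} = \int_\Omega u(Pu)\,\dnu = q(u)$. The Friedrichs extension $\tilde{P}_F$ is then the self-adjoint realization in $L^2(\Omega, W\dnu)$ obtained by closing $q$ over $C_0^\infty(\Omega)$, so $C_0^\infty(\Omega)$ is automatically a form core.

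Second, to identify $\lambda_0$ with $\inf\sigma(\tilde{P}_F)$, I would invoke the AAP theorem: for every $\lambda \in \mathbb{R}$, the inequality $q(\varphi) \geq \lambda \int_\Omega W\varphi^2\dnu$ for all $\varphi \in C_0^\infty(\Omega)$ is equivalent to $P - \lambda W \geq 0$ in $\Omega$, i.e., to the existence of a positive supersolution of $(P-\lambda W)u = 0$. The $(\Leftarrow)$ direction comes from Picone's identity applied to $\varphi = v\psi$, where $v$ is such a positive supersolution. The $(\Rightarrow)$ direction is obtained by exhausting $\Omega$ by smooth subdomains $\Omega_k \Subset \Omega$, taking normalized positive principal eigenfunctions of $P - \lambda W$ on $\Omega_k$ (whose eigenvalues are nonnegative under the Hardy inequality), and extracting a positive limit via Harnack's principle. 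Consequently
\[
\lambda_0 = \sup\Bigl\{\lambda : q(\varphi) \geq \lambda \int_\Omega W\varphi^2\dnu \text{ for all } \varphi \in C_0^\infty(\Omega)\Bigr\} = \inf_{\varphi \in C_0^\infty(\Omega) \setminus \{0\}} \frac{q(\varphi)}{\int_\Omega W\varphi^2\dnu},
\]
and the latter coincides with $\inf\sigma(\tilde{P}_F)$ by the Rayleigh quotient formula for the Friedrichs extension.

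Third, for the essential spectrum, Persson's formula yields
\[
\inf\sigma_{\mathrm{ess}}(\tilde{P}_F) = \sup_{K \Subset \Omega}\ \inf_{\varphi \in C_0^\infty(\Omega \setminus K) \setminus \{0\}} \frac{q(\varphi)}{\int_\Omega W\varphi^2\dnu}.
\]
Applying AAP on the open set $\Omega \setminus K$ identifies the inner infimum with $\lambda_0(P,W,\Omega \setminus K)$, and taking the supremum over $K \Subset \Omega$ gives $\lambda_\infty$ by definition. Finally, if $P \geq 0$ in $\Omega$, then AAP at $\lambda = 0$ gives $q(\varphi) \geq 0$ on $C_0^\infty(\Omega)$, so $\lambda_0 \geq 0$; the Hardy inequality \eqref{HardyIn} with sharp constant $\lambda_0$ is a restatement of the infimum identity from the second step. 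The only substantive input is the AAP theorem itself, which is classical and available in \cite{Agmon,PReview,Baptiste}; the remaining ingredients are routine spectral theory. A mild technical point is that $C_0^\infty(\Omega)$ serves as a form core for $\tilde{P}_F$, but this is automatic from the construction of the Friedrichs extension as the closure of $q$ on $C_0^\infty(\Omega)$.
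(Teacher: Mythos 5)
Your proposal is correct and follows exactly the route the paper itself takes: the paper states this proposition without proof, presenting it as a direct consequence of the AAP theory (citing \cite{Agmon} and \cite{Baptiste}), which is precisely the ingredient you use, supplemented by the standard variational characterization of $\inf\sigma$ and Persson's formula for $\inf\sigma_{\mathrm{ess}}$. Your reconstruction correctly fills in the argument the paper leaves to the references.
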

Next, we introduce the definition of (sub)criticality:
  \begin{definition}\label{def_sc}
{\em
Assume that $P \geq 0$ in $\Gw$. The operator $P$ is said to be {\em subcritical in $\Omega$} if
there exists a nonzero nonnegative continuous function $W$ such that $\lambda_0(P,W,\Gw)>0$, otherwise, $P$ is {\em critical in $\Gw$}. So, in the critical case, $\lambda_0(P,W,\Gw) = 0$ for any nonnegative nonzero continuous function $W$.

If $P\not \geq 0$ in $\Gw$, then $P$ is said to be {\em supercritical} in $\Gw$.
}
\end{definition}
The (sub)criticality of $P$ in $\Gw$ has an equivalent characterization in terms
of the structure of the cone of positive solutions $\mathcal{C}_P(\Omega)$.
This characterization is based on the notion of positive solution of minimal growth (see \cite{Agmon}), and it is a key to our theorems and proofs. We recall the definition.
  \begin{definition}\label{def:minimal_gr} {\em
1. Let $K\Subset \Omega$, and let $u$ be a positive solution of the equation $Pw=0$ in  $\Omega\sm K$. We say that $u$ is {\em a positive solution of  minimal growth in a neighborhood of infinity in $\Omega$}  if for any $K\Subset K'\Subset \Omega$ with smooth boundary  and any (regular) positive supersolution $v\in C((\Omega\sm K')\cup \,\pd K')$ of the equation $Pw=0$ in $\Omega \sm K'$ satisfying  $u \leq v$ on $\pd K'$, we have $u \leq v$ in $\Omega \sm K'$.

\vskip 3mm

2. Let $x_1\in \Gw$. A positive solution of the equation
$$
P u = 0 \qquad \mbox{in} \quad \Gw\setminus\{x_1\}
$$
of minimal growth in a neighborhood of infinity in $\Omega$ is called a {\em positive minimal Green function}, if the singularity at $x_1$ is not removable. The appropriately normalized Green function is denoted by $G_P^\Gw(x,x_1)$.}
  \end{definition}
The aforementioned characterization of a subcritical operator is given in the following proposition.
\begin{proposition}\label{structure}
Suppose that $P \geq 0$ in $\Gw$. The
operator $P$ is  subcritical in $\Omega$ if and only if it admits a positive minimal Green function $G_P^\Gw(x,x_1)$ in $\Gw$. Moreover, in the critical case,  the equation  $Pu=0$ admits a unique (up to multiplicative constant) positive global solution in $\Gw$, which is called {\em Agmon's ground state} (or in short a ground state).

The operator $P$  is  subcritical (resp. critical) in $\Omega$ if and only if its formal adjoint $P^\star$ is  subcritical (resp. critical) in $\Omega$.
\end{proposition}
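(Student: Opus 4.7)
The strategy is to reduce all three assertions to one construction, namely realizing $G_P^\Gw(\cdot,x_1)$ as a monotone limit of Dirichlet Green functions on an exhaustion. Fix a smooth exhaustion $\Gw_k\Subset\Gw_{k+1}\Subset\Gw$ with $\bigcup_k\Gw_k=\Gw$ and $x_1\in\Gw_1$. Standard elliptic theory furnishes, for each $k$, a unique positive Dirichlet Green function $G_k(\cdot,x_1)$ of $P$ on $\Gw_k$; the generalized maximum principle shows that $\{G_k\}$ is pointwise nondecreasing in $k$, and Harnack's inequality on compact subsets of $\Gw\setminus\{x_1\}$ reduces convergence to a single pointwise bound at some $y\neq x_1$. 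When the limit is finite, it is positive, solves $Pu=0$ in $\Gw\setminus\{x_1\}$, and inherits minimal growth from the Dirichlet minimality of each $G_k$, so it coincides with the positive minimal Green function of Definition \ref{def:minimal_gr}. The proof thus reduces to deciding when this limit is finite and relating finiteness to the existence of suitable weights and positive supersolutions.

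For the implication \emph{subcritical $\Rightarrow$ $G_P^\Gw(\cdot,x_1)$ exists}, subcriticality provides a positive $v$ on $\Gw$ with $Pv\geq Wv$ in $\Gw$ for some $W\geq 0$ compactly supported away from $x_1$: in the symmetric case via Proposition \ref{allegretto}, in the nonsymmetric case via the generalized maximum principle. Choosing $K\Subset\Gw$ containing $\{x_1\}\cup\supp W$, I use Harnack and boundary elliptic estimates to bound $G_k(\cdot,x_1)$ uniformly on $\pd K$, and then deploy $v$ as a barrier on $\Gw_k\setminus K$, yielding a uniform bound $G_k\leq Cv$ there; monotone convergence then delivers $G_P^\Gw(\cdot,x_1)$. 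Conversely, given $G=G_P^\Gw(\cdot,x_1)$, pick $0\lneqq\phi\in C_0^\infty(\Gw\setminus\{x_1\})$ and set $v(x):=\int_\Gw G(x,y)\phi(y)\dnu(y)$; standard Green-function estimates show that $v>0$ is locally bounded and satisfies $Pv=\phi$, so with the continuous weight $W:=\phi/v\gneqq 0$, Proposition \ref{allegretto} (and its nonsymmetric counterpart) gives $\lambda_0(P,W,\Gw)\geq 1>0$, hence subcriticality.

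For the uniqueness of the ground state in the critical case, suppose $u_0,u_1\in\mathcal{C}_P(\Gw)$. Set $c:=\inf_\Gw(u_1/u_0)\in(0,\infty)$, finite and positive by Harnack. Then $w:=u_1-cu_0\geq 0$ solves $Pw=0$ and the ratio $w/u_0$ attains the value $0$ in $\Gw$. Applying the strong minimum principle to the ``ground state substituted'' operator $P^{u_0}:=u_0^{-1}\circ P\circ u_0$, whose zero-order coefficient annihilates constants and to which $w/u_0$ is a nonnegative solution, forces $w/u_0\equiv 0$, i.e., $u_1=cu_0$. Equivalently, in the critical case every positive solution has minimal growth in a neighborhood of infinity and two such solutions in $\Gw$ must be proportional.

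Finally, the invariance of criticality under $P\leftrightarrow P^*$ is immediate from the duality $G^{\Gw_k}_P(x,y)=G^{\Gw_k}_{P^*}(y,x)$, itself a consequence of the symmetry identity $\int (Pu)w\dnu=\int u(P^*w)\dnu$ on $C_0^\infty(\Gw_k)$ together with the defining Dirichlet boundary conditions; this duality is preserved in the monotone limit, so finiteness of $\lim_k G_k(\cdot,x_1)$ and of $\lim_k G_k^*(x_1,\cdot)$ are equivalent, and the first part of the proposition then delivers the subcritical (resp. critical) equivalence. The main technical obstacle is the barrier bound $G_k\leq Cv$ in the subcritical-to-Green-function direction: in the fully nonsymmetric setting one has no quadratic-form tool, and must carefully combine the generalized maximum principle, interior and boundary Harnack estimates, and the strict positivity of $Pv-Wv$ on $\supp W$, all under the minimal regularity assumptions on $A,\,b,\,\tilde b,\,c$.
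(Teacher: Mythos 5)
The paper does not prove Proposition \ref{structure}; it is recalled from criticality theory and cited to the survey \cite{PReview}. So there is no ``paper's proof'' to compare against, and what you have written is a genuine attempt to supply one. Your overall architecture --- the exhaustion $\Gw_k$, monotone Dirichlet Green functions $G_k$, and the duality $G_P^{\Gw_k}(x,y)=G_{P^*}^{\Gw_k}(y,x)$ to handle the last assertion --- is the standard route, and the converse direction (Green function $\Rightarrow$ subcritical, via $v:=\int G(\cdot,y)\phi(y)\dnu(y)$ and $W:=\phi/v$) is correct. But there are two genuine gaps.

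In the direction subcritical $\Rightarrow$ existence of $G$, you say you ``use Harnack and boundary elliptic estimates to bound $G_k(\cdot,x_1)$ uniformly on $\pd K$'' and only afterwards use $v$ as a barrier on $\Gw_k\setminus K$. Harnack gives only comparability of the values of $G_k$ at different points of $\pd K$, with a constant depending on $K$ but not $k$; it does not give an absolute upper bound at any one point, which is precisely what is needed since $G_k$ is increasing. That bound must come from the strict supersolution $v$ itself, not be established prior to invoking $v$. For instance, writing $Pv=Wv\gneqq 0$ and applying Green's representation on $\Gw_k$ gives $\int_{\Gw_k}G_k(y_0,y)W(y)v(y)\dnu(y)\leq v(y_0)$, because the boundary term has a definite sign; this yields the missing uniform bound. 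Note, however, that in the nonsymmetric case what this bounds is the Green function of $P^*$ with pole at $y_0$, not that of $P$, so the argument must be run for $P$ and $P^*$ in parallel (or one argues by contradiction: if $G\equiv\infty$ then a Harnack limit of $G_k/G_k(y_0,x_1)$ produces a global positive solution of minimal growth, which is incompatible with the existence of a strict positive supersolution, i.e.\ with subcriticality). As written, the barrier step does not close.

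In the uniqueness of the ground state, you set $c:=\inf_\Gw(u_1/u_0)$ and claim $c\in(0,\infty)$ ``by Harnack,'' then argue that $w/u_0$ attains the value $0$ and invoke the strong minimum principle. Harnack gives local comparability only, with a constant that degenerates as the compact set exhausts $\Gw$; for a general pair of positive solutions $\inf_\Gw(u_1/u_0)$ can be $0$ (and typically is, in the subcritical case --- which is exactly why the conclusion fails there). That the infimum is positive, and more subtly that it is attained at an interior point rather than merely approached along a sequence going to infinity, is precisely where criticality must be used, via the minimal growth characterization of the ground state. You gesture at this (``in the critical case every positive solution has minimal growth $\ldots$ and two such solutions must be proportional''), but this is not an equivalent reformulation --- it is the missing core of the argument. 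The clean route is: show from the exhaustion that $u_0$ arises as a locally uniform limit of solutions vanishing on $\pd\Gw_k$, hence has minimal growth; deduce $u_0\leq C u_1$ globally, so $c>0$; and then rule out the infimum being approached only at infinity, again by minimal growth, before applying the strong minimum principle.
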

We note that a ground state of a critical operator $P$ in $\Gw$ is a positive global solution of the equation $Pu=0$ in $\Gw$ that has minimal growth in a neighborhood of infinity in $\Omega$.

Let $P$ be subcritical in $\Gw$ and $W\gneqq 0$. Clearly $\gl_0:=\gl_0(P,W,\Gw)\geq 0$, but $\gl_0$ might be either $0$ or positive. Moreover, the operator $P-\gl W$ is subcritical in $\Gw$ for $0\leq\gl<\gl_0$, but  $P-\gl_0 W$ might be either subcritical or critical in $\Gw$.  The case of a perturbation by a compactly supported potential (or more generally, by a semismall perturbation \cite{M97})  is well understood (see for example \cite[and references therein]{PReview}). In particular, we have:
\begin{proposition}\label{CompPot}
Let $P$ be a subcritical operator in $\Gw$ and $W \geq 0$ a nonzero bounded compactly
supported weight in $\Gw$ (or more generally, $W$ is a semismall perturbation potential of the operator $P$ in $\Gw$). Then $\lambda_0(P,W,\Omega)>0$. Moreover, the operator $P -
\lambda W$ is critical in $\Gw$ for $\gl=\gl_0$, and subcritical for $0\leq\gl< \gl_0$.
\end{proposition}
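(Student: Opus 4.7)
My plan is to prove the three assertions of Proposition~\ref{CompPot} --- $\lambda_0>0$, subcriticality on $[0,\lambda_0)$, and criticality at $\lambda=\lambda_0$ --- by combining a single Green-function based supersolution construction with a limiting/contradiction argument. All three conclusions flow from the same recipe, applied at different parameter values.

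First, to show $\lambda_0>0$, let $u_1\in\mathcal{C}_P(\Gw)$ be a positive solution of $Pu=0$ (available because $P\ge 0$) and let $G(x,y):=G_P^{\Gw}(x,y)$ be the minimal positive Green function, which exists by Proposition~\ref{structure} since $P$ is subcritical. I would form the candidate
\[
v(x) := u_1(x) + c\int_\Gw G(x,y)W(y)u_1(y)\dnu,
\]
with $c>0$ to be chosen, so that $Pv = cWu_1$ and thus $(P-\lambda W)v = W(cu_1-\lambda v)$. Because $W$ is bounded with compact support (or, more generally, a semismall perturbation of $P$), the second summand of $v$ is bounded above by $cCu_1$ on $\supp(W)$ for some constant $C>0$, so $v\le(1+cC)u_1$ there; choosing $\lambda\in(0,c/(1+cC)]$ makes $v$ a positive supersolution of $(P-\lambda W)u=0$ in $\Gw$, and the (AAP)-type principle underlying Proposition~\ref{structure} yields $\lambda_0>0$.

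Next, for any $\lambda<\lambda_0$, I would rerun the construction with $P$ replaced by $P-\lambda W$: Murata's theorem \cite{M97} ensures that $W$ remains a semismall perturbation of $P-\lambda W$, so the recipe produces a positive \emph{strict} supersolution of $(P-\lambda W)u=0$; since a critical operator admits no positive strict supersolution (Proposition~\ref{structure}), $P-\lambda W$ is subcritical on the whole of $[0,\lambda_0)$. For criticality at $\lambda=\lambda_0$, take $\lambda_n\nearrow\lambda_0$ and positive solutions $u_n$ of $(P-\lambda_n W)u_n=0$ normalized by $u_n(x_1)=1$. Harnack's inequality and standard elliptic regularity yield a locally uniformly convergent subsequence whose limit $u$ is a positive solution of $(P-\lambda_0 W)u=0$, so $P-\lambda_0 W\ge 0$ in $\Gw$. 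If $P-\lambda_0 W$ were subcritical, the first-step construction applied to it would produce $\varepsilon>0$ with $P-(\lambda_0+\varepsilon)W\ge 0$, contradicting the definition of $\lambda_0$; hence $P-\lambda_0 W$ is critical.

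The main technical obstacle is controlling $W$ as a semismall perturbation of each operator in the family $\{P-\lambda W\}_{\lambda\in[0,\lambda_0]}$, so that the supersolution recipe can be rerun uniformly. This is handled by the Green-function comparability provided by Murata's theorem, which keeps the relevant potential-integrals under control along the entire parameter interval and thereby makes both the openness of the subcritical set and the limiting/contradiction argument at $\lambda=\lambda_0$ go through.
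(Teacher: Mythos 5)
The paper does not supply its own proof of Proposition~\ref{CompPot}; it defers to the literature (\cite{M97}, \cite{PReview}). So I can only evaluate your attempt on its own terms.

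Steps~1 and~3 are essentially correct for a bounded, compactly supported $W$: the potential $v=u_1+c\int_\Gw G(\cdot,y)W(y)u_1(y)\dnu$ does satisfy $Pv=cWu_1$; the bound $v\le(1+cC)u_1$ on $\supp W$ holds by compactness, local integrability of $G(x,\cdot)$, and continuity; and the ``re-run the construction on $P-\lambda_0 W$ to push past $\lambda_0$'' contradiction is the right mechanism for criticality. However, step~2 as written is circular. To ``rerun the construction with $P$ replaced by $P-\lambda W$'' you need the Green function of $P-\lambda W$, and asserting that ``$W$ remains a semismall perturbation of $P-\lambda W$'' already presupposes that $P-\lambda W$ is subcritical --- the very thing you are trying to establish. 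The correct route is simpler and needs no Green function of $P-\lambda W$: since $P-\lambda_0 W\geq 0$ (this follows from the definition of $\lambda_0$ plus the Harnack-convergence argument you already give), pick a positive solution $u_{\lambda_0}$ of $(P-\lambda_0 W)u=0$; then for $0\le\lambda<\lambda_0$,
\[
(P-\lambda W)u_{\lambda_0}=(\lambda_0-\lambda)W u_{\lambda_0}\gneqq 0,
\]
so $u_{\lambda_0}$ is a \emph{strict} positive supersolution of $P-\lambda W$. Setting $W':=(P-\lambda W)u_{\lambda_0}/u_{\lambda_0}\gneqq 0$ shows $\lambda_0(P-\lambda W,W',\Gw)\geq 1>0$, i.e.\ $P-\lambda W$ is subcritical by Definition~\ref{def_sc}. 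This is the correct, non-circular version of the fact that a nonnegative operator with a strict positive supersolution is subcritical, which you invoked under Proposition~\ref{structure} even though it is not literally stated there.

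A smaller point: for the semismall-perturbation case, the estimate $\int_\Gw G(x,y)W(y)u_1(y)\dnu\le C\,u_1(x)$ on $\supp W$ is no longer automatic --- $\supp W$ need not be compact and the estimate you want is a Green-function (not arbitrary-positive-solution) estimate, which is precisely the content of Murata's theorem. You gesture at this, but as it stands the argument really only covers the compactly supported case; the semismall case is cited rather than proved.
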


\begin{remark}\label{rem_par}{\em
Assume that $P$ is the Laplace-Beltrami operator $-\Delta$ on a noncompact manifold $\Omega$, then $P\mathbf{1} = 0$ and the cone of positive solutions is nonempty. The manifold is called {\em parabolic} (resp. {\em hyperbolic}) if $-\Delta$ is critical (resp. subcritical) in $\Omega$. For a thorough discussion of the probabilistic interpretation of criticality theory,  see \cite{Pinsky}.
 }
 \end{remark}
 Next, we define {\em null-criticality}.
\begin{definition}\label{def_nc}{\em
We say that the operator $P-W$ is {\em null-critical} (resp. {\em positive critical}) {\em in $\Gw$ with respect to the measure $W \mathrm{d} \nu$}  if $P-W$ is critical in $\Gw$, and  $\varphi_0\varphi_0^\star\notin L^1(\Gw,\,W \mathrm{d}\nu)$ (resp. $\varphi_0\varphi_0^\star\in L^1(\Gw,\,W \mathrm{d} \nu)$), where $\varphi_0$, and $\varphi_0^\star$  are the corresponding ground states of $P-W$ and $P^\star-W$ in  $\Gw$.
 }
\end{definition}

Positive criticality is closely related to the large time behavior of the heat kernel (see, \cite{PReview}).
Moreover, if $P$ is symmetric, it is equivalent to the existence of a minimizer for the corresponding variational problem. Indeed, let $q$ be the quadratic form associated to a subcritical operator $P$ in $\Gw$. Consider the space  $\mathcal{D}_P^{1,2}(\Omega)$, the completion of $C_0^\infty(\Omega)$ with respect to the norm $u\mapsto \sqrt{q(u)}$. Since $P$ is subcritical, we know that $\mathcal{D}_P^{1,2}(\Omega)\hookrightarrow W^{1,2}_{\mathrm{loc}}(\Omega)$ (see \cite{Pinchover-Tintarev}) and $\lambda_0(P,W,\Omega)$ is characterized by the Rayleigh-Ritz variational problem:
\begin{equation}\label{variationnal_problem1}
\lambda_0=\inf_{u\in \mathcal{D}_P^{1,2}(\Omega)\setminus\{0\}}\frac{q(u)}{\int_\Omega u^2W\mathrm{d}\nu}\,.
\end{equation}
We have (see \cite[Lemma~1.1]{ky2}):
\begin{Lem}\label{null-critical_variationnal}
Assume that $P$ is symmetric and $W>0$ in $\Gw$. Then $P-W$ is positive-critical in $\Gw$ if and only if the infimum in the variational problem \eqref{variationnal_problem1}
is attained, and the infimum is equal $1$. Furthermore, if it is the case, then the corresponding ground state $\varphi_0$ satisfies $\varphi_0\in\mathcal{D}_P^{1,2}(\Omega)$, and realizes the infimum uniquely (up to a multiplicative constant).
\end{Lem}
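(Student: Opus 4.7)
The plan is to leverage criticality theory together with Agmon's ground-state machinery and standard Hilbert-space arguments in $\mathcal{D}_P^{1,2}(\Omega)$. First observe that $P-W$ being critical means $\lambda_0(P,W,\Omega)=1$: if $P-(1+\varepsilon)W\geq 0$ for some $\varepsilon>0$ then $(P-W)-\varepsilon W\geq 0$ would contradict criticality. Hence by Proposition~\ref{allegretto} the infimum in \eqref{variationnal_problem1} equals $1$, and the content of the lemma reduces to the equivalence between positive-criticality and attainment of this infimum, together with identifying the minimizer as the ground state $\varphi_0$.

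\textbf{Direction $(\Rightarrow)$.} Assume positive-criticality, so $\varphi_0\in L^2(\Omega,W\,\mathrm{d}\nu)$. Since $P-W$ is critical, I would invoke the standard existence of a \emph{null sequence} $\{\psi_k\}\subset C_0^\infty(\Omega)$, nonnegative, normalized so that $\psi_k\to\varphi_0$ locally uniformly, and with $q(\psi_k)-\int_\Omega \psi_k^2 W\,\mathrm{d}\nu\to 0$. Using cutoff-and-mollification on $\varphi_0$, one can moreover arrange a pointwise domination $\psi_k\leq c\,\varphi_0$. Then $\int \psi_k^2 W\,\mathrm{d}\nu\leq c^2\int \varphi_0^2 W\,\mathrm{d}\nu<\infty$, so $q(\psi_k)$ stays bounded and $\{\psi_k\}$ is bounded in $\mathcal{D}_P^{1,2}(\Omega)$. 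A weakly convergent subsequence $\psi_k\rightharpoonup\psi$ exists, and the embedding $\mathcal{D}_P^{1,2}(\Omega)\hookrightarrow W^{1,2}_{\mathrm{loc}}(\Omega)$ combined with the local uniform convergence forces $\psi=\varphi_0$; hence $\varphi_0\in\mathcal{D}_P^{1,2}(\Omega)$. Dominated convergence gives $\int \psi_k^2 W\,\mathrm{d}\nu\to\int \varphi_0^2 W\,\mathrm{d}\nu$, so $q(\psi_k)\to\int \varphi_0^2 W\,\mathrm{d}\nu$. Weak lower semicontinuity of $q$ yields $q(\varphi_0)\leq\int \varphi_0^2 W\,\mathrm{d}\nu$, while the reverse inequality is the Hardy inequality \eqref{HardyIn} at $\lambda_0=1$. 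Equality therefore holds, so $\varphi_0$ attains the infimum.

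\textbf{Direction $(\Leftarrow)$.} Assume the infimum is attained at some $u\in\mathcal{D}_P^{1,2}(\Omega)\setminus\{0\}$. Since $q(|u|)=q(u)$ in the symmetric divergence-form setting (the first-order terms reorganize into a Schr\"odinger-type form depending only on $|u|^2$), we may replace $u$ by $|u|$ and assume $u\geq 0$. The Euler-Lagrange identity $q(u,v)=\int u v W\,\mathrm{d}\nu$ for all $v\in C_0^\infty(\Omega)$ shows $u$ is a weak solution of $Pu=Wu$; elliptic regularity and the strong maximum principle then give $u>0$ in $\Omega$. Combined with $\lambda_0=1$, this forces criticality of $P-W$, since subcriticality would yield some $\varepsilon>0$ with $P-(1+\varepsilon)W\geq 0$ and hence $\lambda_0\geq 1+\varepsilon$. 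The uniqueness of positive solutions in the critical case (Proposition~\ref{structure}) gives $u=c\varphi_0$, and from $u\in\mathcal{D}_P^{1,2}(\Omega)$ we read off $\varphi_0\in L^2(\Omega,W\,\mathrm{d}\nu)$, establishing positive-criticality. The same identification simultaneously yields uniqueness of the minimizer up to a multiplicative constant.

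\textbf{Main obstacle.} The technical heart of the proof lies in the $(\Rightarrow)$ direction: constructing a null sequence $\{\psi_k\}$ that is \emph{pointwise dominated} by a multiple of $\varphi_0$. Without this domination, one cannot upgrade the weakly convergent subsequence to convergence of $\int \psi_k^2 W\,\mathrm{d}\nu$, and thus cannot promote weak lower semicontinuity to equality $q(\varphi_0)=\int \varphi_0^2 W\,\mathrm{d}\nu$. This construction relies on capacity-style cutoff estimates specific to critical operators (essentially Agmon's ground-state transform applied to $\chi_k\varphi_0$ followed by mollification) and is the point where positive-criticality is genuinely used.
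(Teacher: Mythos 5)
The paper does not prove this lemma itself; it cites \cite[Lemma~1.1]{ky2}, so your argument is an independent reconstruction. Your $(\Rightarrow)$ direction is sound in outline: the dominated null sequence $\psi_k=\chi_k\varphi_0$, built from cutoffs of small $\varphi_0^2\,\mathrm{d}\nu$-capacity (available because the ground-state transform of $P-W$ is parabolic, by criticality), is the right tool, and you correctly flag its construction as the technical heart. One clarification worth making: it is \emph{criticality} alone that supplies the null sequence; what \emph{positive}-criticality buys is the boundedness of $\int\psi_k^2 W\,\mathrm{d}\nu$ (and hence of $q(\psi_k)$), which is what lets you extract a weak limit. The two hypotheses do distinct work, which your "main obstacle" paragraph blurs.

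Your $(\Leftarrow)$ direction has a genuine error. You claim that subcriticality of $P-W$ "would yield some $\varepsilon>0$ with $P-(1+\varepsilon)W\geq 0$." This is false: subcriticality of $P-W$ only furnishes some nonzero $V\geq 0$ with $P-W-V\geq 0$, and $V$ need not be a multiple of $W$. The paper says so explicitly in Section~\ref{1}: for subcritical $P$ and $W\gneqq 0$, the operator $P-\gl_0 W$ "might be either subcritical or critical in $\Gw$." A concrete counterexample to your implication is $P=-\Gd+\mathbf{1}$, $W=\mathbf{1}$ on $\R^n$, $n\geq 5$, where $\gl_0(P,W,\R^n)=1$ yet $P-W=-\Gd$ is subcritical. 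The correct route uses the minimizer itself: given $u>0$ in $\mathcal{D}_P^{1,2}(\Omega)$ with $q(u)=\int u^2 W\,\mathrm{d}\nu$, suppose $q_{P-W}(\varphi)\geq\int V\varphi^2\,\mathrm{d}\nu$ for some $V\gneqq 0$ and all $\varphi\in C_0^\infty(\Gw)$, and test against $C_0^\infty$ approximants $\varphi_k\to u$. Since $\varphi_k\to u$ in $\mathcal{D}_P^{1,2}(\Omega)$ and (by $\gl_0=1$) also in $L^2(W\,\mathrm{d}\nu)$, one has $q_{P-W}(\varphi_k)\to q(u)-\int u^2W\,\mathrm{d}\nu=0$, while local convergence and $u>0$ force $\liminf\int V\varphi_k^2\,\mathrm{d}\nu>0$, a contradiction. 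Once criticality is established this way, your ground-state identification, Euler--Lagrange/regularity step, and uniqueness argument go through.
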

Finally, we define precisely what we mean by saying that $W$ is ``as large as possible" weight function (cf. Definition~\ref{temp_def}).
\begin{definition}\label{def_ess}{\em
Let $P$ be a subcritical operator in $\Gw$. A nonzero nonnegative function $W$ is said to be an {\em optimal Hardy-weight with respect to $P$ and the domain $\Gw$} if $P-W$ is null-critical in $\Gw$, and for any $\gl>1$, the operator $P-\gl W$ is supercritical in any neighborhood of infinity in $\Gw$.
 }
\end{definition}

\begin{remark}\label{ab_not_c}{\em
It is natural to ask whether all the above properties of an optimal Hardy-weight are independent. The following example shows that the {\em null}-criticality is indeed an additional requirement.

Let $V\in C_0^\infty(\mathbb{R}^n)$ be a potential such that the operator $-\Gd+V(x)$ is critical in $\mathbb{R}^n$ (see Proposition~\ref{CompPot}). Consider the operator $P:=-\Gd+\mathbf{1}+V(x)$, and the potential $W(x):=\mathbf{1}$. Then $$\gl_{0}(P,W,\mathbb{R}^n)=\gl_{\infty}(P,W,\mathbb{R}^n)=1.$$ On the other hand, the operator $P-W$ is null-critical in $\mathbb{R}^n$ for $n\leq 4$, and positive-critical if $n>4$ (see \cite[the paragraph below Theorem~8.6]{PReview}).
 }
\end{remark}

The following theorem provides the precise formulation of the main result of this paper; i.e. the {\em existence of  an optimal Hardy-weight} (cf. Theorem~\ref{thm*}).
\begin{theorem}[Main Theorem]\label{thm_nsa}
Let $P$ be a subcritical operator in $\Gw$, and let $G(x):=G_P^\Gw(x,0)$ be its minimal positive Green function with a pole at $0\in \Gw$. Let $u$ be a positive solution of the equation $Pu=0$ in $\Gw$ satisfying
\begin{equation}\label{u1u0a}
\lim_{x\to\infty} \frac{G(x)}{u(x)}=0,
\end{equation}
where $\infty$ is the ideal point in the one-point compactification of $\Gw$. Consider the positive supersolution
$$
v := \sqrt{G u}
$$
of the operator $P$ in $\Gw^\star$. Then for the associated Hardy-weight
\begin{equation}\label{eq_W}
W:=\frac{Pv}{v}=\frac{1}{4}\left|\nabla \log\left(\frac{G}{u}\right)\right|_A^2
\end{equation}
we have $\lambda_0(P,W,\Gw^\star)=1$, and $W$ is an optimal Hardy-weight with respect to $P$ and the punctured domain $\Gw^\star$.

Assume further that $P$ is a symmetric operator and $W$ is positive in $\Gw^\star$, then the spectrum and the essential spectrum of the Friedrichs extension of the operator $W^{-1}P$ on $L^2(\Gw^\star, W\dnu)$ is equal to  $[1,\infty)$, and the corresponding Agmon metric
$$\qquad \ds^2:= W(x)\sum_{i,j=1}^{n} a_{ij}(x)\dx^i\dx^j, \quad \mbox{where } \big[a_{ij}\big]:= \big[a^{ij}\big]^{-1}$$
is complete.
\end{theorem}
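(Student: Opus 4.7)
The plan is to prove the four clauses of Theorem~\ref{thm_nsa} separately, following the four-step scheme advertised after the statement. The unifying device, suggested by Example~\ref{H1}, is to regard $t := \log(G/u)$ as a ``radial'' coordinate on $\Gw^\star$ that plays the role that $\log(r^{2-n})$ plays for the punctured Euclidean space; the whole argument is then modelled on the ODE analysis of Euler's equation carried out in Section~\ref{short_pf}.

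\textbf{Step 1: the Hardy inequality.} The supersolution construction of Section~\ref{sec:construction} applied to $v_0 = G$, $v_1 = u$, $\ga = 1/2$ yields that $v := \sqrt{Gu}$ is a positive supersolution of $(P-W)w = 0$ in $\Gw^\star$ with $W = Pv/v = \tfrac14|\nabla \log(G/u)|_A^2$; by the AAP theorem (Proposition~\ref{allegretto}) this gives $\lambda_0(P,W,\Gw^\star) \geq 1$.

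\textbf{Step 2: criticality of $P-W$.} I would show that $v$ is a positive solution of $(P-W)w=0$ of minimal growth at \emph{both} ends of $\Gw^\star$ (the puncture at $0$, where $G$ blows up; and the ideal point at infinity of $\Gw$, where $G/u \to 0$ by hypothesis~\eqref{u1u0a}). By Proposition~\ref{structure} this forces $P-W$ to be critical with ground state $v$. The mechanism, mimicking the computation~\eqref{upm}, is to exhibit for every $\eta < 1$ the two linearly independent positive solutions
\[
v_\pm^\eta := v \cdot (G/u)^{\pm \sqrt{1-\eta}/2}
\]
of $(P - \eta W)w = 0$ in $\Gw^\star$, then to compare $v$ with arbitrary competing positive supersolutions near each end by an Agmon-type barrier argument, passing $\eta \uparrow 1$ so that the gap between $v_+^\eta$ and $v_-^\eta$ collapses onto $v$. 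The technical core is verifying rigorously that these expressions \emph{are} solutions for a general (nondivergence, nonsymmetric) $P$; this is the content of the auxiliary lemmas~\ref{lem_logu}--\ref{lem_construction1} and is where the main computational work sits.

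\textbf{Step 3: null-criticality.} One has to show that $\varphi_0 \varphi_0^\star \notin L^1(\Gw^\star, W\,\dnu)$, where $\varphi_0 = \sqrt{Gu}$ and $\varphi_0^\star = \sqrt{G^\star u^\star}$ is the analogous ground state of $P^\star - W$ (the formal adjoint of $P$ admits $G^\star = G_{P^\star}^\Gw(\,\cdot\,,0)$ and $u^\star$ in the same way, since criticality passes to the formal adjoint by Proposition~\ref{structure}). After factoring, the relevant integral becomes essentially $\int_{\Gw^\star} \sqrt{G u G^\star u^\star}\,|\nabla \log(G/u)|_A^2\,\dnu$; using the coarea formula along the level sets of $t = \log(G/u)$ and the fact that the boundary integrals on these level sets are controlled by the Green-function identities, this one-dimensional integral reduces to $c\int_{-\infty}^{\infty}\dt = \infty$. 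The subtle point, which I expect to be the main obstacle, is to justify the coarea change of variables and to control the cross term between $G/u$ and $G^\star/u^\star$ without any symmetry hypothesis.

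\textbf{Step 4: optimality near infinity and the spectral statements.} For $\gl > 1$, the real part of the ``complex solution''
\[
\mathfrak{Re}\bigl\{v \cdot (G/u)^{\mathrm{i}\sqrt{\gl-1}/2}\bigr\},
\]
again produced via lemmas~\ref{lem_logu}--\ref{lem_construction1}, oscillates at each end and hence must change sign in every neighborhood of infinity; the AAP principle then forces $P-\gl W$ to be supercritical near either end, so $\gl_\infty(P,W,\Gw^\star) \leq 1$. Combined with Step~1 this pins down $\gl_0 = \gl_\infty = 1$, concluding that $W$ is optimal. For the symmetric addendum, the bottom of the (essential) spectrum equals $\gl_0 = \gl_\infty = 1$ by Proposition~\ref{allegretto}, so it remains to produce the entire half-line $[1,\infty)$; I would build a unitary equivalence between the $\log(G/u)$-radial part of $W^{-1}P$ and multiplication by $1 + 4\xi^2$ on $L^2(\R,\dxi)$ via a ``generalized Mellin transform'' adapted to the family $\{v\cdot(G/u)^{\mathrm{i}\xi}\}_{\xi\in\R}$, exactly as in~\eqref{eq_spect_rep}, so that the radial spectrum is $[1,\infty)$ (the full spectrum is no larger by the monotonicity $\sigma \supseteq \sigma_{\mathrm{ess}} \supseteq [\gl_\infty,\infty) = [1,\infty)$ and no smaller since $\gl_0 = 1$). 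Finally, the Agmon length element satisfies $\sqrt{W}\,|\dx|_A = \tfrac12\,|\dt|$, so the Agmon distance from any interior point to either end dominates $\tfrac12|\log(G/u)|$, which diverges at $0$ and at the ideal point by construction, giving completeness. The chief difficulty across this last step is the rigorous construction of the spectral representation on a general (possibly anisotropic) domain with no genuine rotational symmetry, which the paper handles by isolating an abstract ``radial'' subspace built intrinsically from $G$ and $u$.
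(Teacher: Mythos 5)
Your overall scheme — supersolution construction for $\lambda_0 \geq 1$, minimal-growth comparison for criticality, integral non-finiteness for null-criticality, oscillatory solutions for $\lambda_\infty \leq 1$, and a generalized Mellin transform on a ``radial'' subspace for the full spectrum, plus the Agmon-length lower bound $\tfrac12|\log(G/u)|$ for completeness — is exactly the paper's four-step proof. (Your Step~2, which collapses the gap between $v^\eta_\pm$ as $\eta\uparrow 1$, corresponds to the paper's ``Alternative proof~1'' of Theorem~\ref{thm_Sch}; the paper's primary proof instead plays the log solution $\sqrt{Gu}\,\log(G/u)$ directly against $\sqrt{Gu}$ via Proposition~\ref{minimal}. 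Both are correct and both are in the paper.)

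There is, however, one genuine gap in Step~3 for nonsymmetric $P$: you claim the ground state of $P^\star - W$ is $\varphi_0^\star = \sqrt{G^\star u^\star}$, with $G^\star, u^\star$ built from $P^\star$ in the same way $G, u$ are built from $P$. This is false in general, because $W$ was manufactured from the pair $(G,u)$ of $P$, not from $(G^\star,u^\star)$. The function $\sqrt{G^\star u^\star}$ solves $(P^\star - W^\star)w = 0$ with $W^\star = \tfrac14|\nabla\log(G^\star/u^\star)|_A^2$, and for a nonsymmetric operator there is no reason to have $W^\star = W$. Consequently the integral you want to estimate is $\int \varphi_0\varphi_0^\star W\,\dnu$ with an \emph{unknown} $\varphi_0^\star$, and the integrand is not constant on the level sets of $G/u$, so the coarea reduction to $\int_{-\infty}^\infty \dt$ does not apply directly. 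The paper sidesteps this by (i)~building $\varphi_0^\star$ as the increasing limit of solutions of Dirichlet problems $(P^\star - W)u=0$ on the annular regions $\Omega_\xi = \{-\pi/(2\xi) < \log G < 0\}$ with data $\varphi_0^\star$ on $\{\log G = 0\}$ and $0$ on $\{\log G = -\pi/(2\xi)\}$, and then (ii)~using Green's formula to show that the $B.T.$ terms for the oscillatory solutions $\varphi_\xi$ and $\varphi_{3\xi}$ coincide, so that $\int_{\Omega_\xi}\varphi_\xi\varphi_\xi^\star W\,\dnu = 9\int_{\Omega_\xi}\varphi_{3\xi}\varphi_\xi^\star W\,\dnu$; assuming integrability and letting $\xi\searrow 0$ gives the contradiction $1=9$. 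Your coarea approach does go through verbatim in the symmetric case (it is essentially Corollary~\ref{Estimate W}), but for the general case you need the approximation scheme and Green-formula orthogonality, not the explicit-ground-state formula.
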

\begin{remark}\label{rem_ancona}
{\em
1. If $P$ is a symmetric operator, or more generally if $G_P^{\Gw}(x,y)\asymp G_P^{\Gw}(y,x)$, then a global positive solution $u$ satisfying \eqref{u1u0a} always exists \cite{Ancona02}.

\vskip 3mm

2. If  $u_0,\,u_1$ are two positive solutions of $P u = 0$ near infinity in $\Omega$ such that
\begin{equation}\label{eq_u0u1_rem}
\lim_{x \to \infty} \frac{u_0(x)}{u_1(x)} = 0,
\end{equation}
then $u_0$ is a positive solution of minimal growth in a neighborhood of infinity in $\Gw$ (see Proposition~\ref{minimal}). Therefore, in Theorem~\ref{thm_nsa} we must take $u_0=G$ (the Green function) as a solution satisfying \eqref{eq_u0u1_rem}.

\vskip 3mm

3. By  the uniqueness of the ground state, it follows that  $v=\sqrt{Gu}$ is the \textit{ground state} of $P - W$ in $\Gw^\star$.
 }
\end{remark}
As a consequence of the criticality of $P-W$, we get the following positive Liouville theorem:
\begin{Cor}\label{cor_L}
Under the assumptions of Theorem \ref{thm_nsa}, suppose that $\tilde{v}$ is a positive supersolution of the equation  $(P-W)w=0$ in $\Omega^\star$. Then $\tilde{v}$ is actually a solution of the above equation, and is equal (up to a multiplicative constant) to $\sqrt{Gu}$.
\end{Cor}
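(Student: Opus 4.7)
The strategy is short and relies on combining the criticality of $P - W$ in $\Omega^\star$ (which is part of the content of Theorem~\ref{thm_nsa}) with the standard strong Liouville property enjoyed by critical operators.

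First, by Theorem~\ref{thm_nsa} the weight $W$ is an optimal Hardy-weight with respect to $P$ in $\Omega^\star$. By Definition~\ref{def_ess}, this implies in particular that $P - W$ is null-critical, and therefore critical, in $\Omega^\star$. Proposition~\ref{structure} then tells us that the equation $(P - W)w = 0$ admits a unique (up to a multiplicative constant) positive global solution in $\Omega^\star$, its ground state. Remark~\ref{rem_ancona}(3) identifies this ground state as $v = \sqrt{G u}$.

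It remains to invoke the standard fact from criticality theory that, for a critical operator, every positive supersolution is in fact a scalar multiple of the ground state --- a strong form of the Liouville theorem which is one of the equivalent characterizations of criticality (see, e.g., \cite{PReview}). Applied to our positive supersolution $\tilde v$ of the critical operator $P - W$ in $\Omega^\star$, this yields $\tilde v = c\sqrt{G u}$ for some constant $c > 0$; in particular $\tilde v$ is a solution of $(P-W)w = 0$, which is what the corollary asserts.

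The essentially only conceptual point is the invocation of the strong Liouville property just quoted. If one preferred a self-contained argument, one could proceed via minimal growth: since the ground state $v = \sqrt{G u}$ is, by definition, a positive solution of minimal growth at infinity in $\Omega^\star$, comparison with $\tilde v$ on an exhaustion of $\Omega^\star$ gives $v \leq C \tilde v$ outside some fixed compact set; then the ratio $w := \tilde v / v$ is a positive supersolution of an associated divergence-form operator (the $v$-ground transform of $P - W$) which is itself critical on $\Omega^\star$, and the strong maximum principle forces $w$ to be constant. This self-contained route would be the only real work; given the results already established in the paper, the proof reduces to the two-line argument above.
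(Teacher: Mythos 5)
Your main two-paragraph argument is correct and is exactly the route the paper intends: the paper gives no separate proof, introducing the corollary with ``as a consequence of the criticality of $P-W$'' and relying, as you do, on the standard criticality-theory fact that every positive supersolution of a critical operator is necessarily a constant multiple of its ground state, together with the identification of $\sqrt{Gu}$ as that ground state (Remark~\ref{rem_ancona}(3), or Theorem~\ref{thm_Sch}). The ``self-contained'' alternative you append is only a sketch, and its final step is imprecise --- the strong maximum principle alone does not force $w$ constant; one usually argues instead that $(P-W)\tilde v=\mu\gneqq 0$ would make $\tilde v$ a positive solution of $(P-W-\mu/\tilde v)w=0$, contradicting criticality, and then uses $\dim\mathcal{C}_{P-W}(\Omega^\star)=1$ --- but since your primary argument is already complete this does not affect correctness.
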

We prove Theorem~\ref{thm_nsa} in four steps, see theorems~\ref{thm_Sch}, \ref{thm_Sch7}, \ref{thm_null-critical}, and \ref{essential_spectrum}.

\subsection{Ground state transform}

We recall a standard procedure to eliminate the zero-order term of the operator $P$.
Denote by $\mathcal{V}$ the space $C^{2,\ga}_{\mathrm{loc}}(\Gw)$ (resp. $W^{1,2}_{\mathrm{loc}}(\Gw))$ if $P$ is of the form \eqref{P} (resp. \eqref{div_P}).  Let $h \in \mathcal{V}$ be a positive continuous function and define a map
\begin{equation}
\label{eq:gmap}
T_h: \mathcal{V} \to \mathcal{V},\qquad v \to \frac{v}{h}\,.
\end{equation}
The operator $P_h:=T_h \circ P \circ T^{-1}_h$ given more explicitly by
\begin{equation}
\label{htransform}
P_h u = \frac{P(h u)}{h}
\end{equation}
is called the {\em $h$-transform} of $P$.

Fix $\varphi \in \mathcal{C}_{P}(\Omega)$. Then the corresponding $h$-transform is called a {\em ground state transform}. Clearly,
\begin{equation*}
P_\vgf \mathbf{1}=0 .
\end{equation*}
Moreover, we have
\begin{proposition}[Ground state transform]
\label{gtransform}
Let $\varphi \in \mathcal{C}_{P}(\Omega)$, and let $P_\varphi$ be the corresponding ground state transform. Then
$$\gl_0(P_\vgf,W,\Gw)=\gl_0(P,W,\Gw),
\quad \lambda_\infty(P_\varphi,W,\Omega) = \lambda_\infty(P,W,\Omega).$$
Moreover, $P_\vgf$ is subcritical in $\Gw$ if and only if
$P$ is subcritical in $\Gw$.

The map $T_{\varphi}|_{\mathcal{V}\cap L^2(\Omega,\, \dnu)}$ extends to an isometry between $L^2(\Omega,\, \dnu)$ and $L^2(\Omega,\,\varphi^2 \mathrm{d}\nu)$. In the symmetric case this implies that $P$ and $P_\varphi$ are unitary equivalent.
\end{proposition}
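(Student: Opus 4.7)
The plan is to exploit the pointwise intertwining identity
$$P_\varphi u = \frac{P(\varphi u)}{\varphi},$$
which is immediate from the definition $P_\varphi = T_\varphi \circ P \circ T_\varphi^{-1}$. First I would use this identity to set up a bijection between positive (super)solutions of $(P-\lambda W)w = 0$ on any open set $U \subseteq \Omega$ and positive (super)solutions of $(P_\varphi - \lambda W)u = 0$ on $U$, via $w = \varphi u$. Because $\varphi > 0$, positivity is preserved; because $(P-\lambda W)(\varphi u) = \varphi\cdot (P_\varphi-\lambda W)u$ as a pointwise identity, the equation and the supersolution inequality are equivalent on either side, both classically and weakly, in the regularity framework of the paper.

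With the bijection established, the identities $\lambda_0(P_\varphi,W,\Omega) = \lambda_0(P,W,\Omega)$ and $\lambda_\infty(P_\varphi,W,\Omega) = \lambda_\infty(P,W,\Omega)$ follow by taking suprema: each is by definition the largest $\lambda$ for which $P-\lambda W$ (respectively $P_\varphi - \lambda W$) admits a positive supersolution, globally in $\Omega$ for $\lambda_0$ and on the complement of some compact set for $\lambda_\infty$. Subcriticality is then preserved by Definition~\ref{def_sc}: $P$ is subcritical in $\Omega$ iff $\lambda_0(P,W,\Omega) > 0$ for some nonzero nonnegative $W$, a condition which is $\varphi$-invariant by what we have just shown.

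For the functional-analytic statements, a one-line change of variables gives
$$\|T_\varphi u\|_{L^2(\Omega,\varphi^2 \dnu)}^2 = \int_\Omega \left|\frac{u}{\varphi}\right|^2 \varphi^2 \dnu = \int_\Omega |u|^2 \dnu = \|u\|_{L^2(\Omega,\dnu)}^2,$$
so $T_\varphi$ is isometric on $\mathcal{V}\cap L^2(\Omega,\dnu)$. Since this subspace contains $C_0^\infty(\Omega)$, which is dense in $L^2(\Omega,\dnu)$, and its image similarly contains a dense subspace of $L^2(\Omega,\varphi^2\dnu)$ (namely the set $\{u/\varphi : u\in C_0^\infty(\Omega)\}$), the map extends by continuity to a surjective isometry, with inverse given by multiplication by $\varphi$. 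In the symmetric case the operator identity $P_\varphi = T_\varphi P T_\varphi^{-1}$ combined with the quadratic form identity $q_P(u) = q_{P_\varphi}(u/\varphi)$ for $u\in C_0^\infty(\Omega)$ (a direct integration by parts) then promotes this to unitary equivalence of the corresponding Friedrichs extensions.

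No serious obstacle is expected: the only point requiring attention is the bookkeeping that extends the formal intertwining from $C_0^\infty(\Omega)$ to the self-adjoint realizations, and this follows routinely once the form identity above has been verified.
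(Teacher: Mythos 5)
Your proposal is correct and follows essentially the same route as the paper's proof: the bijection $w \mapsto w/\varphi$ between positive (super)solutions of $(P-\lambda W)w=0$ and of $(P_\varphi-\lambda W)u=0$ on an arbitrary open set (the paper phrases this as $T_\varphi$ respecting the cone of positive solutions and preserving supports), followed by reading off $\lambda_0$, $\lambda_\infty$ from their definitions, and then the standard change-of-variables computation for the isometry and the intertwining for unitary equivalence. You have simply spelled out the details that the paper compresses into ``follows from their definitions'' and ``the last two claims \dots are standard.''
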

\begin{proof}
The map $T_\varphi$ respects the structure of positive solutions,
$$
T_\varphi \mathcal{C}_P(\Omega) = \mathcal{C}_{P_\varphi}(\Omega),
$$
and preserves support of functions, namely $\supp{v} = \supp{T_\varphi v}$. The claim about $\lambda_{0}$ and $\lambda_\infty$ then follows from their definitions and Proposition~\ref{structure}. The last two claims about the isometry are standard. When $P$ is symmetric it provides independent proof of the spectral claims of the proposition.
\end{proof}
We note that in the subcritical case, the corresponding Green function satisfies
$$G_{P_\vgf}^\Gw(x,y)=\frac{1}{\vgf(x)}G_{P}^\Gw(x,y)\vgf(y).$$
On the other hand, in the critical case $\mathbf{1}$ is the ground state of the equation $P_\vgf u=0$ in $\Gw$.
In addition, if the operator $P$ is symmetric,
then
\begin{equation}\label{Pvgf}
P_\vgf u=-\frac{1}{\varphi^{2}}\mathrm{div}(\varphi^2 A(x)\nabla u),
\end{equation}
 and $P_\varphi$ is manifestly symmetric in $L^2(\Gw,\,\varphi^{2}\mathrm{d} \nu)$.

Calculations are genuinely simplified after a ground state transform. Indeed, if
$P\mathbf{1} = 0 $, then
\begin{align}
   P(uv) &= u P(v) - 2 A \nabla u \cdot\nabla v + vP(u), \label{rule1}\\[3mm]
   P(f(v)) & =  f'(v) P(v) - f''(v) |\nabla v|_A^2 \label{rule2},
\end{align}
holds for all functions $u,\,v \in \mathcal{V}$ and  $f\in C^2(\R)$.

\section{Construction of Hardy-weights}
\label{sec:construction}
The construction of the optimal Hardy-weight using the supersolution method is based on the following simple
observation (\cite[Theorem~3.1]{crit2})
\begin{lemma}[Supersolution construction]
\label{construction}
Let $v_j$ be two positive solutions (resp. supersolutions) of the equation $P u=0$, $j=0,1$, in a domain $\Omega$, and let $v:=v_1/v_0$. Then for any $0\leq \ga\leq 1$ the function
\begin{equation}
\label{solvalpha}
v_\ga(x):= \big(v_1(x)\big)^{\ga}\big(v_0(x)\big)^{1-\ga} = v^\alpha(x) v_0(x)
\end{equation}
 is a positive
solution (resp. supersolution) of the equation
\begin{equation}\label{Pga}
\big[P - 4 \ga(1-\ga)W(x)\big]u=0\qquad \mbox{in } \Gw,
\end{equation}
where $W$ is the Hardy-weight given by
\begin{equation}\label{W}
W(x):=\frac{|\nabla v|_A^2}{4 v^2} \geq 0.
\end{equation}
In fact, $v_j$ are linearly independent if and only if $W \neq 0$.
\end{lemma}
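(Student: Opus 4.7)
The plan is to reduce the claim to a one-line chain-rule computation via a ground state transform. Set $Q := P_{v_0}$ and $v := v_1/v_0$. Since $T_{v_0}$ intertwines $P$ with $Q$ and satisfies $T_{v_0}(v_\alpha) = v^\alpha$, and since $P(v_0 u) = v_0\, Q(u)$ for every admissible $u$, it suffices to establish the required (in)equality for $Q(v^\alpha)$ and then multiply through by $v_0$. Observe that $Q(\mathbf{1}) = P(v_0)/v_0$ and $Q(v) = P(v_1)/v_0$: both vanish when $v_0, v_1$ are solutions, and both are nonnegative when they are supersolutions.

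The computational core is the following generalization of (\ref{rule2}), valid \emph{without} assuming $Q(\mathbf{1}) = 0$: for any $f \in C^2(\mathbb{R})$ and sufficiently regular positive $v$,
\begin{equation*}
Q\bigl(f(v)\bigr) \;=\; f'(v)\,Q(v) \;-\; f''(v)\,|\nabla v|_A^2 \;+\; \bigl[f(v) - v f'(v)\bigr]\,Q(\mathbf{1}).
\end{equation*}
I would derive this by direct expansion using $\partial_i\partial_j f(v) = f'(v)\partial_i\partial_j v + f''(v)(\partial_i v)(\partial_j v)$ (or the analogous computation in divergence form), checking that it reduces to (\ref{rule2}) when $Q(\mathbf{1})=0$.

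Specializing to $f(t) = t^\alpha$, so that $f(t) - t f'(t) = (1-\alpha) t^\alpha$, yields
\begin{equation*}
Q(v^\alpha) \;=\; \alpha\, v^{\alpha-1}\, Q(v) \;+\; \alpha(1-\alpha)\, v^{\alpha-2}\,|\nabla v|_A^2 \;+\; (1-\alpha)\, v^\alpha\, Q(\mathbf{1}).
\end{equation*}
By the definition of $W$, the middle term is exactly $4\alpha(1-\alpha)\, W\, v^\alpha$. For $0 \leq \alpha \leq 1$ and $v > 0$, the first and third terms are nonnegative in the supersolution case and vanish identically in the solution case; multiplying by $v_0$ therefore converts this into $[P - 4\alpha(1-\alpha)W]\, v_\alpha \geq 0$ (respectively, $= 0$), which is exactly the asserted (super)solution property. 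For the final linear-independence claim, positive-definiteness of $A(x)$ gives $W \equiv 0$ iff $\nabla v \equiv 0$ iff $v_1/v_0$ is a (positive) constant iff $v_0, v_1$ are linearly dependent.

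The only genuine obstacle is establishing the generalized composition identity above; once it is in hand, the remainder is sign-counting on three manifestly nonnegative terms. In the divergence-form case (\ref{div_P}) the same formula holds verbatim, the extra $\tilde{b}$-contributions being absorbed into $Q(\mathbf{1})$ exactly as in the paper's derivation of (\ref{rule1})--(\ref{rule2}).
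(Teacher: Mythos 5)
Your argument is correct, and it is a genuinely different organization of the computation from what the paper actually does. The paper's proof of Lemma~\ref{construction} defers to Proposition~\ref{pro_ends}, where the claim is established by a direct expansion of $P\bigl(\prod_j u_j^{\alpha_j}\bigr)$ with no $h$-transform at all; that calculation handles solutions and supersolutions in one stroke because the error term $\sum_i \alpha_i (Pu_i/u_i)u$ is manifestly nonnegative when the $u_i$ are supersolutions. The paper does gesture at the $h$-transform route in the paragraph following the proof (``\eqref{Pga} is then obtained by applying \eqref{rule1} and \eqref{rule2} with $P = P_{v_0}$, $f(t) = t^\alpha$''), but that remark only covers the \emph{solution} case, since \eqref{rule2} is stated under the hypothesis $P\mathbf{1} = 0$, i.e.\ $v_0$ an exact solution. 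Your contribution is to remove that hypothesis: the identity
\begin{equation*}
Q\bigl(f(v)\bigr) = f'(v)\,Q(v) - f''(v)\,|\nabla v|_A^2 + \bigl[f(v) - vf'(v)\bigr]\,Q(\mathbf{1})
\end{equation*}
is indeed valid for any second-order operator $Q$ of the form \eqref{P} or \eqref{div_P} (the zero-order coefficient $c$, or $c - \div\tilde b$, collects into $Q(\mathbf{1})$, as a short check in either form confirms), and with $f(t)=t^\alpha$ the leftover terms $\alpha v^{\alpha-1}Q(v)$ and $(1-\alpha)v^\alpha Q(\mathbf{1})$ are exactly $\alpha v^{\alpha-1}Pv_1/v_0$ and $(1-\alpha)v^\alpha Pv_0/v_0$, so the sign discussion closes correctly upon multiplying by $v_0$. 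The trade-off is that your version requires the $h$-transform bookkeeping and the generalized chain rule up front, but in exchange isolates the single algebraic identity driving the whole construction, and it makes transparent \emph{why} the constant $4\alpha(1-\alpha)$ appears (it is $-f''\cdot t^2/f$ at $f=t^\alpha$). The linear-independence equivalence at the end is handled correctly: positive definiteness of $A$ gives $W\equiv 0 \Leftrightarrow \nabla v \equiv 0$, and connectedness of $\Omega$ gives $\nabla v\equiv 0 \Leftrightarrow v_1/v_0$ constant.
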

\begin{proof} The proof is obtained by a straightforward calculation (see \cite[Theorem~3.1]{crit2}, or the proof of Proposition~\ref{pro_ends} below). The nonnegativity and non-triviality of $W$ follows from the ellipticity condition \eqref{ellip}.
\end{proof}
Optimizing \eqref{Pga} in $\alpha$, we find for $\alpha=1/2$:

\begin{Cor}\label{v1/2}
The function $\sqrt{v_0 v_1}$ is a positive (super)solution of the equation
$$\big[P - W(x)\big]u=0\qquad \mbox{in } \Gw.$$
In particular, $P-W\geq 0$ in $\Gw$.
\end{Cor}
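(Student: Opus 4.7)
The statement is the $\alpha=1/2$ specialization of Lemma~\ref{construction}, so my plan is simply to substitute and then invoke the (AAP)-theoretic characterization of nonnegativity for the final assertion.

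\textbf{Plan.} First, I would set $\alpha = 1/2$ in the family \eqref{solvalpha}, which yields
\[
v_{1/2} = v_1^{1/2} v_0^{1/2} = \sqrt{v_0 v_1}.
\]
The coefficient in front of the Hardy-weight in equation \eqref{Pga} is $4\alpha(1-\alpha)$, and this takes its maximal value $1$ precisely at $\alpha = 1/2$. Applying Lemma~\ref{construction} with this choice of $\alpha$ immediately gives that $\sqrt{v_0 v_1}$ is a positive (super)solution of $(P-W)u=0$ in $\Gw$, with $W$ given by \eqref{W}. No further calculation is needed — the corollary is a transparent consequence of the lemma, which is why the authors phrase it as a corollary rather than a separate theorem.

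\textbf{The second assertion.} For the statement $P - W \geq 0$ in $\Gw$, I would argue as follows. By definition (see the paragraph introducing $\mathcal{C}_P(\Omega)$ and the notion of nonnegativity), $P-W \geq 0$ in $\Gw$ means $\mathcal{C}_{P-W}(\Omega) \neq \emptyset$. The supersolution $\sqrt{v_0 v_1}$ constructed above is strictly positive in $\Gw$, and by the Agmon--Allegretto--Piepenbrink theory (which underlies Proposition~\ref{allegretto} and was recalled in the introduction), the existence of a positive supersolution of $(P-W)u=0$ in $\Gw$ implies the existence of an actual positive solution in $\Gw$, hence $\mathcal{C}_{P-W}(\Omega)\neq\emptyset$ and $P-W\geq 0$ in $\Gw$.

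\textbf{Potential difficulty.} There is essentially no obstacle: the verification is a one-line substitution into a lemma that has already been proved. The only conceptual point worth noting is that one needs a positive \emph{supersolution} (not necessarily a solution) to deduce $P-W\geq 0$; this is guaranteed by the (AAP) machinery recalled in Section~\ref{1}, and requires no additional work here.
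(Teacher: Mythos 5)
Your proposal is correct and matches the paper's (implicit) argument exactly: the paper introduces the corollary with "Optimizing \eqref{Pga} in $\alpha$, we find for $\alpha=1/2$," which is precisely your one-line substitution $4\cdot\tfrac12\cdot\tfrac12 = 1$. Your final step — that a positive supersolution of $(P-W)u=0$ in $\Gw$ already yields $P-W\ge 0$ in $\Gw$ via the standard supersolution-implies-solution equivalence from criticality/(AAP) theory — is exactly the point the paper leaves tacit, and you have stated it accurately.
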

  We call the above procedure {\em the supersolution construction}, and the corresponding potential $W$ is called a {\em Hardy-weight}. When $v_j$ are positive solutions it is often
useful to apply the ground state transform with respect to $v_0$. This $h$-transform
maps the pair of solutions $(v_0,\,v_1)$ of $P$ to a pair of solutions
$(\mathbf{1},\,v_1/v_0)$ of the equation $P_{v_0}u=0$. For example, \eqref{Pga} is then obtained by applying \eqref{rule1} and \eqref{rule2} with $P=P_{v_0}$, and  $f(t)=t^\ga$. Note that the Hardy-weight $W$ is unchanged  under this ground state transform.

\begin{remark}\label{rem_conv_comb}
{\em Lemma~\ref{construction} has a straightforward generalization to the case when
$v_j$ are positive (super)solutions of $(P -V_j)v_j = 0$, $j=0,\,1$ (cf. \cite[Theorem~3.1]{crit2}). In that case
$v_\alpha$ is a (super)solution of the equation
\begin{equation}\label{Pga3}
\left[P +(1-\alpha)V_0 + \alpha V_1 - 4 \alpha(1-\alpha)W\right]u =0.
\end{equation}
 }
\end{remark}
\begin{example}\label{exconv}
{\em
Suppose that
$P=-\Gd$,  and assume that $\Omega$ is a smooth bounded {\em convex} domain. Consider the function $v_0(x):=\gd(x):= \mathrm{dist} (x,\partial \Omega)$ which due to the convexity is  a positive superharmonic function in $\Gw$, and let $v_1:=\mathbf{1}$. Then the associated weight $W(x)=\gd(x)^{-2}/4$ is the corresponding Hardy-weight, and  we get the well known Hardy inequality \cite{MMP}
\begin{equation}\label{eq_Hardy12}
\int_{\Gw}|\nabla \phi|^2\dx \geq \frac{1}{4}\int_{\Gw}\frac{|\phi|^2}{\gd(x)^2}\dx \qquad \forall \phi\in C_0^\infty(\Gw).
\end{equation}
It is known \cite{MMP} that the operator $-\Gd-W$ is subcritical in $\Gw$, but
 \begin{equation}\label{eqgl_conv}
\lambda_0(-\Gd,\gd(x)^{-2},\Gw)=\lambda_\infty(-\Gd,\gd(x)^{-2},\Gw) =1/4.
\end{equation}
That is, $1/4$ is the best constant in the above inequality in a strong sense.
In fact, \eqref{eqgl_conv} can be deduced from Theorem~\ref{thm_Sch7} (see Example~\ref{ex21}).
 Note also, that if one takes instead the superharmonic function $v_0(x)=\gd(x)^\gb$ with $0<\gb<1$, then one obtains the Hardy inequality without the best constant.
 }
\end{example}
The supersolution construction can be generalized to the case of finitely many positive supersolutions.
\begin{Pro}\label{pro_ends}
Suppose that $P\geq 0$ in $\Gw$,  and let  $u_1,\ldots,u_N$ be positive (super)solutions of $Pv=0$ in $\Gw$.  Let  $\alpha_1,\ldots,\alpha_N$ be nonnegative numbers such that $\sum_{i=1}^N\alpha_i=1$.

Then
\begin{equation}\label{prod_4}
u:=\prod_{j=1}^N u_j^{\alpha_j}
\end{equation}
is a positive supersolution of the equation $Pv=0$ in $\Gw$. Moreover, $u$ is a positive (super)solution of $(P-W)v=0$ in $\Gw$, where
$$W:=\sum_{i<j}\alpha_i\alpha_j\left|\nabla\log\left(\frac{u_i}{u_j}\right)\right|_A^2.$$
\end{Pro}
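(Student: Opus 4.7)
The plan is to reduce to the case $P\mathbf{1}=0$ via a ground state transform, and then prove the claim by a direct computation using the product/composition rules \eqref{rule1}--\eqref{rule2}. Pick any index, say $N$, and set $Q:=P_{u_N}$, $w_j:=u_j/u_N$. By Proposition~\ref{gtransform} we have $Q\mathbf{1}=0$, each $w_j$ is a positive (super)solution of $Qv=0$ in $\Gw$, $w_N\equiv\mathbf{1}$, and the ratios $w_i/w_k=u_i/u_k$ leave the proposed Hardy-weight unchanged. It therefore suffices to prove that
\begin{equation*}
\prod_{j=1}^{N}w_j^{\alpha_j}=\prod_{j=1}^{N-1}w_j^{\alpha_j}
\end{equation*}
is a (super)solution of $(Q-W)v=0$: untransforming via $Q(v)=P(u_N v)/u_N$ and multiplying by $u_N$ then yields the conclusion for $u=u_N^{\alpha_N}\prod_{j<N}u_j^{\alpha_j}$, while the first assertion (that $u$ is a supersolution of $Pv=0$) follows from the second because $W\geq 0$ and $u>0$.

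Since $Q\mathbf{1}=0$, iterating \eqref{rule1} produces, for positive functions $f_1,\ldots,f_N$,
\begin{equation*}
Q\Bigl(\prod_j f_j\Bigr)=\sum_k\Bigl(\prod_{j\neq k}f_j\Bigr)Qf_k-2\sum_{i<k}\Bigl(\prod_{j\neq i,k}f_j\Bigr)A\nabla f_i\cdot\nabla f_k.
\end{equation*}
Substituting $f_j=w_j^{\alpha_j}$, together with $\nabla(w_j^{\alpha_j})=\alpha_j w_j^{\alpha_j-1}\nabla w_j$ and the power-law consequence of \eqref{rule2},
\begin{equation*}
Q(w^{\alpha})=\alpha w^{\alpha-1}Qw+\alpha(1-\alpha)w^{\alpha-2}|\nabla w|_A^2,
\end{equation*}
and then dividing through by $\prod_j w_j^{\alpha_j}$, yields
\begin{equation*}
\frac{Q\bigl(\prod_j w_j^{\alpha_j}\bigr)}{\prod_j w_j^{\alpha_j}}=\sum_k\alpha_k\frac{Qw_k}{w_k}+\sum_k\alpha_k(1-\alpha_k)\frac{|\nabla w_k|_A^2}{w_k^2}-2\sum_{i<k}\alpha_i\alpha_k\frac{A\nabla w_i\cdot\nabla w_k}{w_iw_k}.
\end{equation*}

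The crux is to recognize that the last two groups of terms assemble into a sum of complete squares. Using $\sum_j\alpha_j=1$ to rewrite $\alpha_k(1-\alpha_k)=\sum_{i\neq k}\alpha_i\alpha_k$ and then symmetrizing over unordered pairs $\{i,k\}$ one gets
\begin{equation*}
\sum_k\alpha_k(1-\alpha_k)\frac{|\nabla w_k|_A^2}{w_k^2}=\sum_{i<k}\alpha_i\alpha_k\Bigl(\frac{|\nabla w_i|_A^2}{w_i^2}+\frac{|\nabla w_k|_A^2}{w_k^2}\Bigr),
\end{equation*}
so that the combination with the cross-terms becomes
\begin{equation*}
\sum_{i<k}\alpha_i\alpha_k\Bigl|\frac{\nabla w_i}{w_i}-\frac{\nabla w_k}{w_k}\Bigr|_A^2=\sum_{i<k}\alpha_i\alpha_k\bigl|\nabla\log(w_i/w_k)\bigr|_A^2=W.
\end{equation*}

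It follows that $(Q-W)\prod_j w_j^{\alpha_j}=\bigl(\prod_j w_j^{\alpha_j}\bigr)\sum_k\alpha_k(Qw_k/w_k)$, which is nonnegative because $\alpha_k\geq 0$ and each $w_k$ is a supersolution of $Qv=0$, and which vanishes whenever each $u_j$ is a genuine solution of $Pv=0$. Undoing the ground state transform then gives $(P-W)u\geq 0$ (resp.\ $=0$), which is the second assertion, and the first follows at once. The main obstacle is the bookkeeping in the third paragraph, namely the identity $\alpha_k(1-\alpha_k)=\sum_{i\neq k}\alpha_i\alpha_k$ (which is exactly where $\sum_j\alpha_j=1$ enters) together with the symmetrization over unordered pairs that produces precisely the prefactor $\alpha_i\alpha_k$ in front of each square $|\nabla\log(w_i/w_k)|_A^2$.
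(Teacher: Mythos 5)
Your plan --- reduce to $P\mathbf{1}=0$ by an $h$-transform, expand by the product/composition rules, and regroup into complete squares using $\sum_j\alpha_j=1$ --- has the same algebraic core as the paper's proof; the paper simply carries out the computation of $Pu-\sum_i\alpha_i(Pu_i/u_i)u$ directly, without a preliminary normalization.

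There is, however, a real gap in the supersolution case. You set $Q:=P_{u_N}$ and cite Proposition~\ref{gtransform} to conclude $Q\mathbf{1}=0$, but that proposition assumes $\varphi\in\mathcal{C}_P(\Omega)$, i.e.\ an \emph{exact} solution. If $u_N$ is a strict supersolution then $Q\mathbf{1}=Pu_N/u_N\gneqq 0$, so $Q$ has a nontrivial nonnegative zero-order term $c_Q:=Q\mathbf{1}$, and the identities \eqref{rule1}--\eqref{rule2}, which the paper derives under the hypothesis that the zero-order coefficient vanishes, do not apply verbatim: instead $Q(fg)=fQg+gQf-2A\nabla f\cdot\nabla g-c_Qfg$ and $Q(w^\alpha)=\alpha w^{\alpha-1}Qw+\alpha(1-\alpha)w^{\alpha-2}|\nabla w|_A^2+(1-\alpha)c_Qw^\alpha$. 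Thus both displayed intermediate formulas in your second paragraph are incorrect for such $Q$. It happens that your \emph{final} identity $Q\bigl(\prod_j w_j^{\alpha_j}\bigr)/\prod_jw_j^{\alpha_j}=\sum_k\alpha_k(Qw_k/w_k)+W$ is still valid once $\sum_j\alpha_j=1$, because the $c_Q$-contributions cancel, but your computation does not establish this. The cleanest repair is to forego the $h$-transform and compute $Pu-\sum_i\alpha_i(Pu_i/u_i)u$ directly (the zero-order term of $P$ cancels, again thanks to $\sum_i\alpha_i=1$), which is precisely what the paper does; alternatively, transform with respect to a fixed positive \emph{solution} $\psi\in\mathcal{C}_P(\Omega)$ (available since $P\geq 0$) rather than with respect to one of the possibly strict supersolutions $u_j$, or split $Q=Q_0+c_Q$ with $Q_0\mathbf{1}=0$ and track the $c_Q$-terms through the cancellation. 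When all the $u_j$ are genuine solutions your argument is complete as written.
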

\begin{proof}
Consider the function  $u$ defined by \eqref{prod_4}.  We compute that
\begin{multline*}
Pu - \sum_{i=1}^N \alpha_i \frac{Pu_i}{u_i}  u\\[2mm]
=\left(\sum_{i=1}^N\alpha_i(1-\alpha_i)\left|\frac{\nabla u_i}{u_i}\right|_A^2-2\sum_{i<j}\alpha_i\alpha_j\left\langle A\frac{\nabla u_i}{u_i},\frac{\nabla u_j}{u_j}\right\rangle\right)u \\[2mm]
\!=\!\! \left(\!\! \sum_{i=1}^N \alpha_i(1 \!-\! \alpha_i)\left|\frac{\nabla u_i}{u_i}\right|_A^2
    \!+\!\sum_{i<j}\!\! \alpha_i\alpha_j \!\!\left[\left| \frac{\nabla u_i}{u_i} \!-\!\frac{\nabla u_j}{u_j}\right|_A^2 \!-\!  \left| \!\frac{\nabla u_i}{u_i}\right|_A^2 \!-\!  \left|\!\frac{\nabla u_j}{u_j}\right|_A^2\right]\!\right)\!u\\[2mm]
=\left(\sum_{i=1}^N\alpha_i\Big(1-\sum_{j=1}^N\alpha_j\Big)\left|\frac{\nabla u_i}{u_i}\right|_A^2+W\right)u
= W u,
\end{multline*}
since by hypothesis $\sum_{i=1}^N\alpha_i=1$.
\end{proof}
The supersolution construction given in Proposition~\ref{pro_ends} will be used in Section~\ref{sec_sever}, where we study the case of a subcritical operator which is defined on a manifold with $N$ ends, with  $N\geq 2$.

\vskip 3mm

Let us focus again on the case of two ends. Let $W$ be the Hardy-weight given in Lemma~\ref{construction} by \eqref{W}. The set of solutions of the equation $$(P-\lambda W)u=0 \qquad \mbox{in } \Gw$$ for $\gl\in\R$ plays a crucial role throughout the article. Indeed, under the assumptions of Lemma~\ref{construction}, for $\lambda < 1$ the equation $P - \lambda W$ admits two positive (super)solutions
\begin{equation}
\label{pmsolutions}
v_{\alpha_\pm}(x)= \big(v_1(x)\big)^{\ga_\pm}\big(v_0(x)\big)^{1-\ga_\pm},\qquad \mbox{where }\alpha_{\pm}:=\frac{1 \pm \sqrt{1 - \lambda}}{2}\,.
\end{equation}
At the maximum $\lambda=1$ the construction gives a  positive (super)solution $v_{1/2}$ of $(P - W)u=0$. We obtain a second solution for $\gl=1$ by differentiating \eqref{Pga} with respect to the parameter $\alpha$ and substituting $\alpha =  \frac{1}{2}$,
$$
\partial_\alpha\Big\{\big[P - 4 \ga(1-\ga)W(x)\big]v_{\alpha}\Big\}\Big|_{\alpha=\frac{1}{2}}=(P - W)\left[\sqrt{v_0 v_1} \,\log \left(\frac{v_0}{v_1}\right)\right] = 0.
$$
To avoid justification of the differentiating with respect to $\ga$, we give an independent proof of this formula.
\begin{lemma}\label{lem_logu}
Assume that $P$ is a subcritical operator in $\Gw$. Let $v_j$ be two linearly independent positive solutions of the equation $Pu=0$ in $\Omega$, where $j=0,1$.  Let $W$ be the associated Hardy-weight given by \eqref{W}. Then the equation
\begin{equation}\label{eqW1}
\left(P- W\right)u=0 \qquad \mbox{ in } \Gw,
\end{equation}
admits a solution $w:=\sqrt{v_0 v_1} \log \left(\frac{v_0}{v_1}\right)$.
\end{lemma}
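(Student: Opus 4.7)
The plan is to reduce everything to a one-variable computation via the ground state transform with respect to $v_0$. Since $v_0 \in \mathcal{C}_P(\Omega)$ is positive, Proposition~\ref{gtransform} lets us pass to the operator $\tilde P := P_{v_0}$, which satisfies $\tilde P \mathbf{1}=0$ and, since $v_1$ is a solution of $Pu=0$, also $\tilde P v = 0$, where $v := v_1/v_0$. Observe that the Hardy-weight
$$
W = \frac{|\nabla v|_A^2}{4 v^2}
$$
depends only on $v$ and $A$, hence is \emph{unchanged} by the ground state transform. Furthermore $w = \sqrt{v_0 v_1}\,\log(v_0/v_1) = v_0 \cdot f(v)$, where
$$
f(t) := -\sqrt{t}\,\log t.
$$
Because $(P-W)(v_0 \varphi) = v_0\,(\tilde P - W)\varphi$ for any admissible test function $\varphi$, it suffices to prove $(\tilde P - W) f(v) = 0$.

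For this, apply formula \eqref{rule2} (valid since $\tilde P \mathbf{1}=0$) with $P$ replaced by $\tilde P$ and with the chosen $f \in C^2(0,\infty)$:
$$
\tilde P\bigl(f(v)\bigr) = f'(v)\,\tilde P v - f''(v)\,|\nabla v|_A^2 = -f''(v)\,|\nabla v|_A^2,
$$
where the second equality uses $\tilde P v = 0$. A direct computation gives
$$
f''(t) = \frac{\log t}{4\,t^{3/2}},
$$
so that
$$
\tilde P\bigl(f(v)\bigr) = -\frac{\log v}{4\,v^{3/2}}\,|\nabla v|_A^2 = \frac{|\nabla v|_A^2}{4 v^2}\cdot\bigl(-\sqrt{v}\,\log v\bigr) = W\cdot f(v).
$$
Multiplying by $v_0$ and inverting the ground state transform yields $(P - W)w = 0$ in $\Omega$, which is the claim.

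\textbf{Expected difficulty.} There is no genuine obstacle here: the identity is essentially a one-line verification once one picks the correct $f$ and uses \eqref{rule2}. The only point that requires care is bookkeeping—making sure that the ground state transform matches $w$ with $v_0\, f(v)$ for $f(t) = -\sqrt{t}\log t$, and that $W$ is invariant under this transform (so that $(P-W)$ pulls back cleanly to $(\tilde P - W)$). The hypothesis that $P$ is subcritical is only used insofar as it guarantees the existence of the two linearly independent positive solutions $v_0, v_1$; the algebraic identity itself is local. One could alternatively motivate the choice of $f$ by formally differentiating the identity in Lemma~\ref{construction} at $\alpha = 1/2$, since $\partial_\alpha v_\alpha|_{\alpha=1/2} = \sqrt{v_0 v_1}\,\log(v_1/v_0)$, but the verification above bypasses any justification of differentiation in $\alpha$.
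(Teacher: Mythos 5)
Your proposal is correct and follows essentially the same route as the paper: reduce via the ground state transform to the case $v_0\equiv\mathbf 1$ and then invoke the algebraic identity \eqref{rule2}. The only minor difference is that the paper first factors $\sqrt v\,\log v$ via the product rule \eqref{rule1} and then applies \eqref{rule2} to $\log v$, whereas you apply \eqref{rule2} once directly to the composite $f(t)=-\sqrt t\,\log t$; the underlying computation is the same.
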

\begin{proof}
In light of the ground state transform with respect to the function $v_0$,  we may assume that $v_0=\mathbf{1}$, and let us denote $v:=v_1$. So, $P\mathbf{1}=Pv=0$ and,
by the construction of $W$, $(P- W)v^{1/2}=0$ in $\Gw$.
Then using \eqref{rule1} and \eqref{rule2} we obtain
\begin{align*}
   P(v^{1/2} \log v) &= P(v^{1/2}) \log v - 2 A \nabla v^{1/2} \cdot \nabla \log v +v^{1/2} P(\log v)\\
		     &= P(v^{1/2}) \log v + v^{1/2}\frac{1}{v} P(v) \\
		    &= W v^{1/2} \log v.
\qedhere
\end{align*}
\end{proof}
\begin{remark}\label{rem_log}{\em
Another way to understand the $\log$-type solution is as follows. Suppose that $P$ is of the form $Pu=-\mathrm{div}(A \nabla u)$, and let $v$ be a nonconstant positive solution of the equation $Pu=0$ in $\Gw$. Then by the supersolution construction with respect to the solutions $\mathbf{1}$ and $v$ we have $\Big(P- W\Big) v^{1/2}=0$, where $W$ is the Hardy-weight. Moreover, by \eqref{Pvgf}, the ground state transform with respect to $v^{1/2}$ gives
$$
(P - W)_{\sqrt{v}}(u) = -\frac{1}{v} \div (v A \nabla u),
$$
which readily implies  an equivalent formulation of Lemma~\ref{lem_logu},
\begin{equation}
\label{log}
(P - W)_{\sqrt{v}} \log v = 0.
\end{equation}
 }
 \end{remark}

\mysection{The criticality of $P-W$}\label{sec2}
In the present section we prove the first assertion of the main theorem (Theorem~\ref{thm_nsa}). Namely, we prove that under assumption \eqref{u1u0a}, the operator $P-W$ is {\em critical} in $\Gw^\star$. We start with a preliminary result.
\begin{proposition}\label{minimal}
Let $P$ be a second-order elliptic operator in $\Omega$ and let $u_0,\,u_1$ be two positive solutions of $P u = 0$ near infinity in $\Omega$ such that
$$
\lim_{x \to \infty} \frac{u_0(x)}{u_1(x)} = 0.
$$
Then $u_0$ is a  positive solution of minimal growth in a neighborhood of infinity in $\Gw$.
\end{proposition}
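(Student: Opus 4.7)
The plan is to argue by comparison, combining an exhaustion of $\Gw$ with the ground state transform of Proposition \ref{gtransform}, which allows us to apply the standard weak maximum principle even when $P$ has an arbitrary zero-order coefficient. Fix a smooth $K'$ with $K \Subset K' \Subset \Gw$ (enlarging $K$ if necessary so that both $u_0$ and $u_1$ are defined on $\Gw \setminus K$), and let $v$ be a positive supersolution of $Pw = 0$ in $\Gw \setminus K'$ with $u_0 \leq v$ on $\partial K'$. Choose an exhaustion $\{\Gw_n\}$ of $\Gw$ by relatively compact open sets with smooth boundary and $K' \Subset \Gw_n$, and for $\vge > 0$ set $v_\vge := v + \vge u_1$, which is still a positive supersolution of $Pw = 0$ in $\Gw \setminus K'$.

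The boundary comparison on $\partial(\Gw_n \setminus \overline{K'})$ is immediate: on $\partial K'$ we have $v_\vge \geq v \geq u_0$, while on $\partial \Gw_n$ the hypothesis $u_0/u_1 \to 0$ at the ideal point of $\Gw$ guarantees that $u_0 < \vge u_1 \leq v_\vge$ for all sufficiently large $n$. To pass from the boundary to the interior, I apply the weak maximum principle to the ground state transform $P_{u_1}$. Setting $\tilde u_0 := u_0/u_1$ and $\tilde v_\vge := v_\vge/u_1$, the defining identity $P_{u_1}(w) = P(u_1 w)/u_1$ yields $P_{u_1} \tilde u_0 = 0$ and $P_{u_1} \tilde v_\vge \geq 0$, so $\tilde u_0 - \tilde v_\vge$ is a $P_{u_1}$-subsolution. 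Since $P_{u_1} \mathbf{1} = 0$, the operator $P_{u_1}$ has vanishing zero-order coefficient, and the classical weak maximum principle applies on the bounded open set $\Gw_n \setminus \overline{K'}$: a subsolution whose boundary values are nonpositive must be nonpositive throughout. Hence $u_0 \leq v_\vge$ in $\Gw_n \setminus K'$.

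Letting $n \to \infty$ and then $\vge \to 0$ gives $u_0 \leq v$ in $\Gw \setminus K'$, which is precisely the condition of minimal growth in Definition \ref{def:minimal_gr}. The only technical obstacle is the applicability of the weak maximum principle when the zero-order coefficient of $P$ has no definite sign, and this is exactly what the ground state transform circumvents; the argument is independent of whether $P$ is symmetric, and requires no $L^2$ structure. The perturbation by $\vge u_1$ plays the dual role of forcing strict boundary inequality at infinity (so that the exhaustion can be taken large enough) and of supplying an admissible positive supersolution for the comparison.
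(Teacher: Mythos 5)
Your proof is correct, but it takes a somewhat more direct route than the paper's. The paper first constructs a canonical minimal-growth solution $w$ as the increasing limit of solutions $w_k$ of Dirichlet problems on $\Omega_k \setminus K$ (with data $u_0$ on $\partial K$ and $0$ on $\partial\Omega_k$), asserts that $w$ has minimal growth, and then shows $u_0 = w$ by the same perturbation-by-$\vge u_1$ argument you use. You bypass the auxiliary construction of $w$ entirely and verify Definition~\ref{def:minimal_gr} directly, comparing $u_0$ against an arbitrary admissible supersolution $v$; the exhaustion serves only to localize the maximum principle to bounded domains. The perturbation $v+\vge u_1$ and the limiting process $n\to\infty$, $\vge\to 0$ are identical in spirit to the paper's step $u_0 \leq w_k + \vge u_1$. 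What your version adds is an explicit reduction via the ground state transform $P_{u_1}$ to the classical weak maximum principle with vanishing zero-order term; the paper simply invokes the generalized maximum principle, which is the same device stated without unwinding it. What your version forgoes is the explicit identification of $u_0$ with the $w$ obtained from the Dirichlet exhaustion, which is a mildly stronger and occasionally useful conclusion but is not needed for the proposition. Both arguments are nonsymmetric and purely local, as they should be.
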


\begin{proof}
Let $K$ be a smooth compact set in $\Omega$ such that $u_0$ and $u_1$ are positive and continuous in $\big(\Gw\sm K\big)\cup \partial K$, and are solutions of $Pu=0$ in $\Gw\sm K$. Let $\{\Omega_k\}$ be an exhaustion of $\Omega$, such that $K\subset \Omega_0$, and let $w_k$ be the solution of the following Dirichlet problem:
\begin{equation} \label{eq:minimal}
\left\{
\begin{array}{lr} P w_k=0 & \qquad\mbox {in } \Omega_k\setminus K, \\[2mm]
		  w_k(x)=u_0 & \qquad \mbox{on }\partial K, \\
          w_k(x)=0 & \qquad \mbox{on }\partial \Omega_k.
\end{array} \right.
\end{equation}
Then by the generalized maximum principle, $\{w_k\}_{k\in\mathbb{N}}$ is an increasing sequence of nonnegative functions, satisfying $w_k\leq u_0$, and therefore, converging to a positive solution $w$ of $Pu=0$ in $\Omega\setminus K$, that clearly has minimal growth at infinity in $\Gw$. Thus, it is enough to show that $u_0=w$ in $\Omega\setminus K$. We obviously have  $w\leq u_0$. On the other hand, by hypothesis, if $\varepsilon>0$, there is $k_{\varepsilon}$ such that $u_0\leq \varepsilon u_1$ on $\partial \Omega_k$, for every $k\geq k_{\varepsilon}$. By the generalized maximum principle, this implies that $u_0 \leq w_k+\varepsilon u_1$ in $\Omega_k\setminus K$ and it follows $u\leq w+\varepsilon u_1$  in $\Omega\setminus K$. By letting $\varepsilon \to 0$, we conclude that $u_0\leq w$. Thus, $u_0=w$  in $\Omega\setminus K$.
\end{proof}
We are ready to prove the criticality statement of Theorem~\ref{thm_nsa}.
\begin{theorem}\label{thm_Sch}
Under the hypotheses of Theorem \ref{thm_nsa}, the operator $P - W$ is critical in $\Gw^\star:=\Omega\setminus \{0\}$, and has a ground state $\sqrt{Gu}$.
\end{theorem}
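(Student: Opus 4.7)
The plan is to show that $v:=\sqrt{Gu}$ is a positive solution of $(P-W)u=0$ in $\Gw^\star$ of minimal growth in a neighborhood of infinity in $\Gw^\star$; by Proposition \ref{structure} this identifies $v$ as the ground state and forces $P-W$ to be critical in $\Gw^\star$.

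First, since $G$ and $u$ are two linearly independent positive solutions of $Pu=0$ in $\Gw^\star$ ($G$ being a solution away from its pole at $0$), Lemma \ref{construction} at $\alpha=1/2$ (Corollary \ref{v1/2}) gives $(P-W)v=0$ in $\Gw^\star$. Moreover, by Lemma \ref{lem_logu} a second, linearly independent solution of the same equation in $\Gw^\star$ is
\[
w := v\,\log\!\left(\tfrac{G}{u}\right) = \sqrt{Gu}\,\log\!\left(\tfrac{G}{u}\right).
\]

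Next I analyze the sign of $w$ at the two ends of $\Gw^\star$. Since $G(x)\to+\infty$ as $x\to 0$ while $u$ is continuous and positive near $0$, we have $\log(G/u)\to+\infty$ at $0$; in particular $w>0$ in a punctured neighborhood of $0$ and $v/w=1/\log(G/u)\to 0$. By hypothesis \eqref{u1u0a}, $G/u\to 0$ at the ideal point $\infty$ of $\Gw$, hence $\log(G/u)\to-\infty$ there, so $-w>0$ in a neighborhood of $\infty$ and $v/(-w)\to 0$.

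To conclude, I apply Proposition \ref{minimal} to the operator $P-W$ on $\Gw^\star$, using a comparison solution defined piecewise. Choose a smooth compact $K\Subset\Gw^\star$ large enough that on every connected component of $\Gw^\star\setminus K$ the function $w$ has a definite sign; specifically, on the component $C_0$ containing a punctured neighborhood of $0$ we have $w>0$, and on every component meeting the end $\infty$ we have $-w>0$. Such a $K$ exists by the previous step. Define
\[
u_1 := \begin{cases} w & \text{on } C_0,\\ -w & \text{on each ``near-}\infty\text{'' component.} \end{cases}
\]
Then $u_1$ is a positive solution of $(P-W)u=0$ on $\Gw^\star\setminus K$ with $v/u_1\to 0$ at the ideal point at infinity of $\Gw^\star$. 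The exhaustion/generalized maximum-principle argument in the proof of Proposition \ref{minimal} then applies -- run componentwise whenever $\Gw^\star\setminus K$ is disconnected -- and shows that $v$ is of minimal growth at infinity in $\Gw^\star$. Proposition \ref{structure} then gives the criticality of $P-W$ in $\Gw^\star$ and identifies $v=\sqrt{Gu}$ as its ground state.

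The main obstacle is the passage from the two separate one-ended asymptotics supplied by Lemma \ref{lem_logu} to a single global minimal-growth statement on $\Gw^\star$. This is handled by the piecewise construction of $u_1$ from $|w|$ on the components of $\Gw^\star\setminus K$, together with the observation that the exhaustion argument of Proposition \ref{minimal} is local enough to survive disconnectedness of the exterior (one may additionally perform the ground-state transform with respect to $v$ to ensure that the transformed operator has vanishing zero-order term, so the maximum principle applies without further hypotheses).
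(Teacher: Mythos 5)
Your proof is correct and follows essentially the same route as the paper's: apply Corollary~\ref{v1/2} and Lemma~\ref{lem_logu} to produce the pair $v=\sqrt{Gu}$ and $w=v\log(G/u)$, use the sign of $w$ at each end together with Proposition~\ref{minimal} to get minimal growth of $v$ at $0$ and at $\infty$ separately, then combine. The only difference is in the combining step: the paper invokes the second part of Lemma~\ref{minimal_vanish} (minimal growth at $\omega_1$ and at $\omega_2$ implies minimal growth at $\omega_1\cup\omega_2$), whereas you re-derive that content in place by building a piecewise comparison supersolution $u_1$ from $\pm w$ on the components of $\Gw^\star\setminus K$ and running the exhaustion argument componentwise; this is a sound inlining of the same idea and not a genuinely different argument.
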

\begin{remark}\label{rem_a1}{\em
1. Theorem~\ref{thm_Sch} readily implies, that $\gl=1$ is the best constant for the validity of the inequality $P-\gl W\geq 0$ in $\Gw^\star$.  In particular, if $P$ is a symmetric operator, it follows that the best constant for the Hardy-type  inequality (\ref{HardyIn}) is $\lambda_0 = 1$.

2.  Theorem~\ref{thm_Sch} implies also that $\sqrt{Gu}$ is the unique (up to a multiplicative constant) positive solution of the equation $(P-W)w=0$ in $\Gw^\star$. Hence, for $\gl\leq 1$ the {\em positive Liouville theorem} holds true for the operator $P-\gl W$ in $\Gw^\star$ if and only if $\gl=1$ (cf. Corollary~\ref{cor_L}).
}
\end{remark}
We present three proofs of Theorem~\ref{thm_Sch}. The shortest one uses the log solution for $P-W$, as well as the notion of minimal growth and is as follows:
\begin{proof}[Proof of Theorem~\ref{thm_Sch}]
By Corollary \ref{v1/2} and Lemma \ref{lem_logu}, the equation $(P-W)u=0$ admits two solutions
$$
u_0 = \sqrt{G u} \quad \mbox{and} \quad u_1 = -\sqrt{G u} \log\left(\frac{G}{u}\right).
$$
By assumption (\ref{u1u0a}), these solutions are positive near infinity and
$$
\lim_{x \to \infty} \frac{u_0(x)}{u_1(x)} = 0.
$$
Proposition \ref{minimal} then implies that $u_0$ is a positive solution of the equation $(P-W)u=0$ of minimal growth in a neighborhood of infinity in $\Omega$. By the same argument and using the positive solution $-u_1$ in a neighborhood of zero, we conclude that $u_0$ has minimal growth in a neighborhood of zero.  The second part of Lemma~\ref{minimal_vanish} implies now that $u_0$ has minimal growth at infinity in $\Omega^\star$. Therefore,  $u_0$ is a ground state of $P - W$ in $\Omega^\star$, so, $P - W$ is critical in $\Omega^\star$.

\end{proof}
\paragraph{\em Alternative proof 1:} Let $\alpha \in (0,1/2) $ and consider $v_\alpha:=G^\alpha u^{1-\alpha}$ (cf.  \eqref{solvalpha}). Then $v_\alpha$ and $v_{(1 - \alpha)}$ are positive solutions of $P-4\alpha(1-\alpha)W$ that satisfies

$$\frac{v_\alpha}{v_{(1-\alpha)}}=\left(\frac{G}{u}\right)^{2\alpha - 1}.$$
Therefore, assumption (\ref{u1u0a}) and the singularity of Green's function at $0$ imply

$$\lim_{x\to \infty}\frac{v_{(1-\alpha)}(x)}{v_\alpha(x)}=0 \quad \mbox{and} \quad  \lim_{x\to 0}\frac{v_\alpha(x)}{v_{(1-\alpha)}(x)}=0.$$
Consequently, applying Proposition \ref{minimal}, we deduce that $v_{\alpha}$ has minimal growth at zero, and $v_{(1-\alpha)}$ has minimal growth at infinity (both for the operator $P-4\alpha(1-\alpha)W$). This implies that $\sqrt{Gu}=\lim_{\alpha\to1/2}v_\alpha=\lim_{\alpha\to1/2}v_{(1-\alpha)}$ has minimal growth at zero and at infinity for $P-W$, as we explain now.

Indeed, let $v$ be a positive supersolution for $P-W$ in a neighborhood of zero, that we assume for simplicity to be $B(0,1)\setminus\{0\}$. Then $v$ is a positive supersolution of $P-4\alpha(1-\alpha)W$ in $B(0,1)\setminus\{0\}$ for $0\leq \ga\leq 1$. Since on $\partial B(0,1)$, $v$ and $v_\alpha$ are bounded above and below by positive constant that does not depend on $\alpha$, we deduce that there is a constant $C$ independent of $\alpha$ such that
$$v_\alpha\leq Cv.$$
Letting $\alpha\to1/2$, we deduce that
$$\sqrt{Gu}\leq Cv,$$
hence $\sqrt{Gu}$ has minimal growth at zero. The proof at infinity repeats the same argument with the
solution $v_{(1-\alpha)}$. \qed

\vskip 3mm

\paragraph{\em Alternative proof 2:} Here we explain how to prove the criticality of $P-W$, using once more the $\log$ solution, but without the use of the notion of minimal growth. By performing a ground state transform with respect to $u$, we can assume that $u=\mathbf{1}$.

We need to prove that the operator $Q:=P - W$ is a critical operator in $\Omega^\star$. Notice that the supersolution construction gives that $Q(G^{1/2})=0$ on $\Omega^\star$, where $G$ is the Green function for $P$ with a pole $0$. Let us perform a ground state transform for $Q$ with respect to its positive solution $G^{1/2}$. We get a second-order elliptic operator $\tilde{Q}:=Q_{G^{1/2}}$. By Lemma~\ref{gtransform}, the operator  $\tilde{Q}$ is critical in $\Omega^\star$ if and only if $Q$ is critical in $\Omega^\star$.
By Lemma~\ref{lem_logu} (cf. Equation~(\ref{log})) we have,
$$\tilde{Q}(\log(G))=0\qquad \mbox{ in } \Omega^\star.$$
So, in $\Omega^\star$, we have two solutions of the equation $\tilde{Q}u=0$, namely $\mathbf{1}$ and $w:=\log(G)$. Note that
$$\lim_{x\to \infty}w(x)=-\infty, \qquad \lim_{x\to 0}w(x)=\infty,$$
where the first limit is due to our assumption (\ref{u1u0a}).

We claim that this implies that $\tilde{Q}$  is critical in $\Omega^\star$ (this is reminiscent of the Khas'minski\u{i} criterion for recurrency, cf. \cite{Pinsky}, see  also a related claim in \cite[Corollary~3.10]{Shuttle}).

Assume on the contrary that $\tilde{Q}$ is subcritical in $\Omega^\star$, and let $\tilde{G}(x)=G_{\tilde{Q}}^{\Omega^\star}(x,x_1)$ be the corresponding Green function with a pole at $x_1\in \Omega^\star$. Let $K$ be a compact annular domain around $0$ containing $x_1$ such that $G(x)=M$ on the inner boundary and $G(x)=M^{-1}$ in the outer boundary, where $M>1$ is a large positive number. So, $\Gw =K_0\cup K\cup K_\infty$ where $K_0$ is a neighborhood of $0$, and $K_\infty$ is a neighborhood of $\infty$.

By the minimality of $\tilde{G}$ and the fact that $\tilde{Q}\mathbf{1}=0$, we have $$\inf_{x\in \Omega^\star}\tilde{G}(x)=0.$$
Therefore, either
$\liminf_{{x\to 0}}\tilde{G}(x)=0$ or $\liminf_{{x\to \infty}}\tilde{G}(x)=0$.
Suppose first that  $\liminf_{{x\to 0}}\tilde{G}(x)=0$, and let $$D_k^0: =\{x\in K_0 \mid M<G(x)<k\}.$$ $D_k^0$ is a union of open, relatively compact, connected sets in $\Omega^\star$, whose boundaries are contained in $\{x : G(x)=M\}\cup\{x : G(x)=k\}$. Furthermore, the sequence $\{D_k^0\}_{k\in \mathbb{N}}$ is increasing and is an exhaustion of $K_0\setminus\{0\}$.
Let $v_k$ be the solution of the Dirichlet problem
\begin{equation} \label{eq:D0}
\left\{
\begin{array}{lr} \tilde{Q}u=0 & \qquad\mbox {in } D_k^0, \\[2mm]
		  u(x)=1 & \qquad \mbox{on } \partial D_k^0\cap\{x : G(x)=M\}, \\[2mm]
          u(x)=0 & \qquad \mbox{on } \partial D_k^0\cap\{x : G(x)=k\}.
\end{array} \right.
\end{equation}
Let $C>0$ such that $\tilde{G}\geq C^{-1}$ on $\{x : G(x)=M\}.$ Then by the maximum principle $0\leq  v_k\leq C\tilde{G}$. For $k$ big enough, the set $\partial D_k^0\cap\{x : G(x)=M\}$ is independent of $k$, and by the maximum principle  $v_k$ is a  bounded nondecreasing sequence, converging to a positive function $v_0$ which solves the equation $\tilde{Q}u=0$ in $K_0\setminus \{0\}$, and satisfies $ v_0\leq C\tilde{G}$ in $K_0\setminus \{0\}$. On the other hand, we have an explicit formula for $v_k$:
$$v_k(x)=\frac{\log k -w(x)}{\log k-\log M}.$$ Hence $v_0=\mathbf{1}$, and consequently $\tilde{G}\geq C^{-1}$ in $K_0\setminus \{0\}$ which contradicts our assumption.

A similar argument shows that $\liminf_{{x\to \infty}}\tilde{G}(x)=0$ cannot happen. Hence, we obtain a contradiction to our assumption that $\tilde{Q}$ is subcritical in $\Omega^\star$. \qed

\mysection{$\lambda_\infty(P,W,\Omega^\star)=1$}\label{sec22}
In the present section we prove that for any $\gl>1$ the equation $(P-\gl W)u=0$ does not admit any positive solution neither in any neighborhood of infinity in $\Gw$, nor in any punctured neighborhood of $0$.

We first state the following lemma which extends Lemma~\ref{construction} concerning the supersolution construction. The proof is obtained by a direct computation.
 \begin{lemma}
\label{lem_construction1}
Let $v_j$ be two positive solutions of the equation $P u=0$, $j=0,1$, in a domain $\Omega$, and let $v:=v_1/v_0$. Then for any $\lambda\in \mathbb{R}$ and $\ga\in \mathbb{C}$  satisfying $\gl=4 \ga(1-\ga)$, the function
\begin{equation}
\label{solvalpha4}
v_\ga(x):= \big(v_1(x)\big)^{\ga}\big(v_0(x)\big)^{1-\ga} = v^\alpha(x) v_0(x)
\end{equation}
 is a solution of the equation
\begin{equation}\label{Pga4}
\big[P - \gl W(x)\big]u=0\qquad \mbox{in } \Gw,
\end{equation}
where
\begin{equation}\label{W4}
W(x):=\frac{|\nabla v|_A^2}{4 v^2} \geq 0.
\end{equation}
\end{lemma}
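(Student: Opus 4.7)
The plan is to reduce to the normalized case via a ground state transform and then verify the identity by direct computation, leveraging the chain rule already recorded as \eqref{rule2}. Since the formula $\lambda = 4\alpha(1-\alpha)$ allows $\alpha \in \mathbb{C}$ when $\lambda > 1$, the function $v^\alpha = \exp(\alpha \log v)$ must be interpreted as a complex-valued function on the open set where $v > 0$; both the real and imaginary parts are then real-valued solutions of \eqref{Pga4} by virtue of $P$ having real coefficients, so nothing essential is lost in allowing complex exponents.

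First I would perform the $h$-transform with $h = v_0$. By the paragraph following Proposition~\ref{gtransform}, this transform preserves $v = v_1/v_0$, and in particular preserves the Hardy-weight $W$ defined by \eqref{W4} (which depends only on $v$). After the transform, the relevant operator, which I will still denote $P$ for brevity, satisfies $P\mathbf{1} = 0$ and $Pv = 0$, and the statement to prove reduces to
\[
P(v^\alpha) = \lambda W v^\alpha \qquad \text{in } \Omega,
\]
with $v_\alpha$ of \eqref{solvalpha4} becoming simply $v^\alpha$.

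Next, I would invoke \eqref{rule2} (which is valid precisely because $P\mathbf{1}=0$) with $f(t) = t^\alpha$, i.e.\ $f'(t) = \alpha t^{\alpha-1}$ and $f''(t) = \alpha(\alpha-1) t^{\alpha-2}$. This algebraic identity is a consequence of the product and chain rules and hence remains valid for complex-analytic $f$ acting on the positive function $v$. I obtain
\[
P(v^\alpha) = \alpha v^{\alpha-1} Pv - \alpha(\alpha-1)\, v^{\alpha-2} |\nabla v|_A^2
            = -\alpha(\alpha-1)\, v^{\alpha-2} |\nabla v|_A^2,
\]
where the last equality uses $Pv=0$. Rewriting this using the definition of $W$ in \eqref{W4} yields
\[
P(v^\alpha) = 4\alpha(1-\alpha) \cdot \frac{|\nabla v|_A^2}{4v^2} \cdot v^\alpha = \lambda W v^\alpha,
\]
which is exactly \eqref{Pga4}.

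The only point of caution, and what I would call the main (though minor) obstacle, is the complex-analytic interpretation of $v^\alpha$ and the validity of the chain rule \eqref{rule2} in that setting. Since $v > 0$ on $\Omega$, the principal branch $v^\alpha = e^{\alpha \log v}$ is a genuine $C^2$ complex-valued function to which the pointwise algebraic identity \eqref{rule2} applies verbatim; alternatively, one may simply verify the identity directly for $v^\alpha$ by differentiating $e^{\alpha \log v}$ twice, which reproduces exactly the calculation above without any appeal to \eqref{rule2}. Untransforming via $v_\alpha = v_0 \cdot v^\alpha = v_1^\alpha v_0^{1-\alpha}$ completes the proof.
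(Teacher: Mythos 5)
Your proof is correct and matches the paper's intent: the paper gives only the one-line remark ``the proof is obtained by a direct computation,'' and in the paragraph following Lemma~\ref{construction} it explicitly suggests exactly your route---perform the ground state transform with respect to $v_0$ and then apply \eqref{rule2} with $f(t)=t^\alpha$. Your extra care about interpreting $v^\alpha=e^{\alpha\log v}$ for complex $\alpha$ (legitimate since $v>0$, $P$ has real coefficients, and the identity is pointwise algebraic) is a useful point the paper leaves implicit.
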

Our main result of this section is given in the following theorem.
\begin{theorem}\label{thm_Sch7}
Under the assumptions of Theorem~\ref{thm_nsa} we have
$$\lambda_\infty(P,W,\Omega)=\lambda_\infty(P,W,\Omega^\star)=1.$$
More precisely, for any $\gl>1$ the equation $(P-\gl W)u=0$ does not admit any positive solution neither in any neighborhood of infinity in $\Gw$, nor in any punctured neighborhood of $0$.
\end{theorem}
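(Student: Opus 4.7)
The plan is to construct oscillating real-valued solutions of $(P - \gl W)u = 0$ for $\gl > 1$ via Lemma~\ref{lem_construction1} applied with a complex parameter, and then rule out positive (super)solutions on any neighborhood of either end of $\Gw^\star$ through a nodal-domain argument coupled with strict monotonicity of the generalized principal eigenvalue.

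First I would construct the oscillating solutions. For $\gl > 1$, set $\ga := \tfrac{1}{2}(1 + i\sqrt{\gl - 1})$ and $\gb := \tfrac{1}{2}\sqrt{\gl - 1}$, so that $4\ga(1 - \ga) = \gl$. Taking $v_0 := G$ and $v_1 := u$ as in Theorem~\ref{thm_nsa}, Lemma~\ref{lem_construction1} applied with this complex $\ga$ yields the complex-valued solution
\[ V := G^{1-\ga} u^\ga = \sqrt{Gu}\,\exp\!\bigl(i\gb \log(u/G)\bigr) \]
of $(P - \gl W)V = 0$ in $\Gw^\star$. Since $P$ has real coefficients, the real part $\phi := \sqrt{Gu}\,\cos\!\bigl(\gb \log(u/G)\bigr)$ is a real-valued solution in $\Gw^\star$. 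By assumption \eqref{u1u0a}, $\log(u/G) \to +\infty$ at infinity, while the pole of $G$ at $0$ combined with regularity of $u$ at $0$ give $\log(u/G) \to -\infty$ as $x \to 0$. Hence $\phi$ vanishes on the level sets $\{\log(u/G) = (2k+1)\pi/(2\gb)\}$, which accumulate at both ends of $\Gw^\star$.

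Next I would exploit this oscillation via nodal domains. Since $u/G$ is continuous and positive on $\Gw^\star$, with $u/G \to 0$ at $0$ and $u/G \to +\infty$ at infinity in $\Gw$, the slab $\{a \leq u/G \leq b\}$ is a compact subset of $\Gw^\star$ for every $0 < a < b < \infty$. Fix a neighborhood of infinity $U = \Gw \sm K$ in $\Gw$. For $k$ so large that $e^{(2k+1)\pi/(2\gb)} > \max_{K} u/G$, the slab
\[ S_k := \{(2k+1)\pi/(2\gb) < \log(u/G) < (2k+3)\pi/(2\gb)\} \]
has compact closure contained in $U \cap \Gw^\star$, $\phi$ has constant sign on $S_k$ (alternating with $k$), and $\phi = 0$ on $\pd S_k$. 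After a small perturbation of the bounding values (Sard's theorem) I may assume $\pd S_k$ is smooth. Take a connected component $D$ of $S_k$, for $k$ of appropriate parity, so that $\phi > 0$ on $D$; then $D$ is bounded, $\overline{D} \subset U$, $\phi > 0$ in $D$, and $\phi = 0$ on $\pd D$. Therefore $\phi$ is a positive Dirichlet eigenfunction of $P - \gl W$ on $D$ with eigenvalue $0$, so by the Berestycki--Nirenberg--Varadhan refined maximum principle, $\gl_0(P, W, D) = \gl$.

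To conclude, suppose for contradiction that $(P - \gl W)u = 0$ admits a positive solution on $U$; then $\gl_0(P, W, U) \geq \gl$. But strict monotonicity of $\gl_0(P, W, \cdot)$ under strict inclusion of connected domains yields $\gl_0(P, W, U) < \gl_0(P, W, D) = \gl$, a contradiction. Hence no positive solution exists on any neighborhood of infinity, so $\gl_\infty(P, W, \Gw) \leq 1$; combined with the reverse inequality coming from the criticality of $P - W$ in $\Gw^\star$ (Theorem~\ref{thm_Sch}), this gives $\gl_\infty(P, W, \Gw) = 1$. The same construction, applied to slabs $\{-(2k+3)\pi/(2\gb) < \log(u/G) < -(2k+1)\pi/(2\gb)\}$ for $k$ large---which lie in arbitrarily small punctured neighborhoods of $0$---excludes positive solutions in any punctured neighborhood of $0$ and gives $\gl_\infty(P, W, \Gw^\star) = 1$. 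The hard part will be the geometric verification that the slabs $S_k$ produce relatively compact nodal components inside the prescribed neighborhoods of each end, which reduces to the properness of $\log(u/G)$ at both ends of $\Gw^\star$ (a consequence of \eqref{u1u0a} and the pole of $G$ at $0$). A secondary delicate point is invoking strict monotonicity of $\gl_0$ in the non-symmetric setting; this is classical and rests on the strong maximum principle applied to an eigenfunction of the formal adjoint $P^\star$.
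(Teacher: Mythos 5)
Your proposal follows the paper's proof very closely: the core device in both is the complex-exponent supersolution of Lemma~\ref{lem_construction1} with $\ga=\tfrac12(1+i\sqrt{\gl-1})$, whose real part $\gf=\sqrt{Gu}\,\cos(\gb\log(u/G))$ oscillates and has nodal sets accumulating at both ends of $\Gw^\star$, together with a nodal-domain contradiction. The only real difference is the finishing step. The paper picks a single connected component $U$ of a slab $\{a<G/u<b\}$ on which $\gf>0$, observes $\gf$ has an interior positive maximum while vanishing on $\pd U$, and deduces directly that the generalized maximum principle for $P-\gl W$ fails there — hence $P-\gl W\not\geq 0$ on any neighborhood of infinity (resp.\ of $0$) containing $U$. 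You instead invoke the Berestycki--Nirenberg--Varadhan characterization to conclude $\gl_0(P,W,D)=\gl$ on the nodal component $D$, then appeal to strict monotonicity of $\gl_0$ under strict domain inclusion. That detour is in principle sound but hides exactly the delicate point you flag: strict monotonicity of $\gl_0$ in a nonsymmetric, unbounded setting is not a black box, and what actually justifies it here is again the maximum principle (a positive supersolution of $P-\gl W$ on a superdomain of $\overline D$ is bounded below on $\pd D$, forcing the Dirichlet principal eigenvalue of $P-\gl W$ on $D$ to be strictly positive, contradicting the Dirichlet eigenfunction $\gf$ with eigenvalue $0$). So your route reduces to the paper's argument, just with extra machinery (Sard, BNV, monotonicity) layered on top; the paper's version is the cleaner one.
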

\begin{proof}
To simplify the notations we assume that $u=\mathbf{1}$ in the assumptions of Theorem~\ref{thm_nsa} (in particular, $P\mathbf{1}=0$ in $\Gw$). The general case then follows by ground state transform (see Proposition~\ref{gtransform}).

Fix $\lambda>1$ and $K$ a compact subset of $\Omega$ containing $0$. We need to show that  the operator $P-\lambda W$ cannot be nonnegative on $K^c:=\Omega\setminus K$.

  By Lemma~\ref{lem_construction1}, we have
$$\left(P- \lambda W\right)G^\alpha=0 \qquad \mbox{ in } K^c,$$
where $\ga$ is a \textit{complex} number satisfying $4 \alpha (1-\alpha) = \lambda$.
Inverting the relation, we get that
$$\left(P-\lambda W\right)G^{\frac{1}{2}+\mathrm{i}\xi}=0,$$
where
$$\xi:=\frac{\sqrt{\lambda-1}}{2}.$$
By taking the real part
$$\varphi:=\mathfrak{Re}(G^{\frac{1}{2}+\mathrm{i}\xi})=G^{1/2}\cos\left(\xi\log(G)\right),$$ we obtain an oscillatory solution of the equation
$$\left(P-\lambda W\right)u=0 \qquad \mbox{ in } K^c.$$
We claim that the existence of such an oscillatory solution $\varphi$ implies that $P-\lambda W$ is supercritical in $K^c$ (i.e.  $P-\lambda \not \geq 0$ in $K^c$).

Indeed, since $\lim_{x\to\infty} G(x)=0$, we can find a connected component $U$ of the open, relatively compact set $\{x : 0<a<G(x)<b\}$ contained in $K^c$, where $a$ and $b$ are chosen so that
$$\cos\left(\xi\log a \right)=\cos\left(\xi\log b\right)=0,$$
and such that $\varphi$ has a constant sign on $U$, for example $\varphi>0$ on $U$. Then since $\varphi$ vanishes on the boundary of $U$ and is positive on $U$, it has a local maximum point in $U$. If the generalized maximum principle for $P-\lambda W$ would hold, we would deduce that $\varphi$ is zero on $U$, which is a contradiction. Therefore, the generalized maximum principle for $P-\lambda W$ does not hold in $K^c$, and hence $P-\lambda W \not\geq 0$ in $K^c$. Since $K$ is an arbitrary compact set containing $0$, it follows that  $P-\lambda W$ cannot admit a positive (super)solution in any neighborhood of infinity in $\Gw$.

Similarly, one shows that for any $\gl>1$, the generalized maximum principle for $P-\lambda W$ does not hold in any punctured neighborhood of the origin.
\end{proof}
The next result demonstrates that the asymptotic behavior of the constructed optimal Hardy-weight near $0$ is exactly like the classical Hardy potential near the origin. Without loss of generality we may assume that the matrix $A=\big[a^{ij}\big]$ at $0$ is equal to the identity matrix.
\begin{theorem}\label{cor_zero}
Assume that $n\geq 3$, the coefficients of $P$ are smooth enough near $0$, and $a^{ij}(0)=\gd_{ij}$. Suppose further that the assumptions of Theorem~\ref{thm_nsa} holds true. Then
 $$\lim_{x\to 0} |x|^2W(x)=C_H=\left(\frac{n-2}{2}\right)^2.$$
\end{theorem}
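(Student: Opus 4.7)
The strategy is to expand $\log(G/u)$ near the singular point $0$ and use the fact that $u$ is a classical positive solution while $G$ has a precise Laplace-type singularity at the origin. Since $u$ solves $Pu=0$ near $0$ and is positive there, elliptic regularity (under the assumed smoothness of coefficients) gives $u\in C^{1,\ga}$ near $0$ with $u(0)>0$; in particular, $\grad\log u$ is bounded in a neighborhood of $0$. Thus all the singular behavior of $\grad\log(G/u)=\grad\log G-\grad\log u$ near $0$ comes from $\grad\log G$, and it suffices to analyze $|\grad\log G|_A^2$ as $x\to 0$ and show it tends to $(n-2)^2/|x|^2$ in the required sense.

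The main step is the asymptotic expansion of the Green function at its pole. Under the smoothness assumption and $a^{ij}(0)=\gd_{ij}$, a standard parametrix construction (freezing coefficients at $0$ and comparing with the Newtonian kernel of the constant-coefficient operator $-\gd^{ij}\pd_i\pd_j$) yields
\[
G(x)=c_n|x|^{2-n}\bigl(1+O(|x|^{\gamma})\bigr),\qquad
\grad G(x)=(2-n)c_n\frac{x}{|x|^n}+O(|x|^{1-n+\gamma}),
\]
as $x\to 0$, for some $\gamma>0$ and the usual constant $c_n$. Dividing, we obtain
\[
\frac{\grad G(x)}{G(x)}=(2-n)\frac{x}{|x|^2}+O(|x|^{-1+\gamma}),
\]
so $\grad\log(G/u)=(2-n)\,x/|x|^2+O(|x|^{-1+\gamma})+O(1)$ near $0$.

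Taking the $A$-norm squared and using continuity of $A$ together with $A(0)=I$ gives
\[
\bigl|\grad\log(G/u)\bigr|_A^2
=(n-2)^2\frac{x\cdot A(x)x}{|x|^4}+O(|x|^{-2+\gamma})
=\frac{(n-2)^2}{|x|^2}\bigl(1+o(1)\bigr)
\]
as $x\to 0$. Multiplying by $|x|^2/4$ and invoking \eqref{eq_W}, we conclude
\[
\lim_{x\to 0}|x|^2 W(x)=\frac{(n-2)^2}{4}=C_H.
\]
The principal technical obstacle is the asymptotic expansion of $G$ and $\grad G$ at the pole with a quantitative remainder; this is where the local smoothness hypothesis on the coefficients is used, but it is a classical parametrix computation (see, e.g., the references on Green function asymptotics for elliptic operators with regular coefficients). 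Everything else reduces to the elementary manipulations above.
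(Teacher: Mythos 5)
Your argument is correct and follows essentially the same route as the paper's (terse) proof: isolate the singular behavior in $\nabla\log G$, use the pole asymptotics of $G$ and $\nabla G$ together with $A(0)=I$, and compute. The only difference is cosmetic — where you invoke a parametrix construction for the expansion $\nabla G(x)=(2-n)c_n x|x|^{-n}+O(|x|^{1-n+\gamma})$, the paper simply cites Marcus~\cite{M65} for the same gradient asymptotics at the pole.
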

\begin{proof}
It is well known that near the origin we have $G(x)\sim |x|^{n-2}$. Moreover, using \cite{M65} we know also the asymptotic near $0$ of $|\nabla G(x)|$. Hence, an elementary calculation shows that
$$\lim_{x\to 0}\frac{|x|^2\left|\nabla \left(\frac{G}{u}\right)\right|^2}{4\left|\left(\frac{G}{u}\right)\right|^2}=C_H.$$

\end{proof}
The next result demonstrates that if $P$ is symmetric, Theorem~\ref{thm_Sch7} implies that the decay of the weight $W$ near infinity is ``optimal" in the following sense.
\begin{corollary}\label{cor1}
Suppose that $P$ is a symmetric operator that satisfies the assumptions of Theorem~\ref{thm_nsa}, and assume further that
$$\lim_{x \to \infty} W(x) = 0.$$
 Then for every $\lambda > 1$ and every locally regular potential $\tilde{W}$ such that $\tilde{W}=W$ outside a compact neighborhood of $0$,  the (Friedrichs extension of the) operator $P-\lambda \tilde{W}$  has an infinite number of negative eigenvalues accumulating at zero.
\end{corollary}
\begin{proof}
@@Let $Q$ be a symmetric linear elliptic operator of second-order with real coefficients, and let $q$ be its associated quadratic form.@@ Assume that the bottom of the essential spectrum of $Q$ is zero, i.e. $\lambda_\infty(Q,1,\Omega) = 0$. The number of negative eigenvalues of $Q$ (counting multiplicities) is given by the Morse index
$$
\sup\{\dim(F) : F\subset \mathcal{D}_Q^{1,2}(\Omega),\,q|_F<0\}.
$$
@@ Then the finiteness of this index is characterized by the following property of positive solutions of $Q$ \cite{Fischer-Colbrie, Baptiste}:@@ The Morse index is finite if and only if there exists a positive solution of $Qu = 0$ outside of a compact neighborhood of zero.

Due to our assumption
$$\lim_{x \to \infty} W(x) = 0,$$
we have that $\lambda_\infty (P - \lambda \tilde{W},1,\Omega) = 0$ for any $\lambda \geq 1$ and by Theorem~\ref{thm_Sch7} there are no positive solutions of @@$(P - \lambda \tilde{W}) u=0$@@ in a neighborhood of infinity. The corollary then follows by the above characterization of the Morse index.
\end{proof}
\begin{remark}\label{rem_Dev}{\em
Recently B.~Devyver \cite[Theorem~5.6]{Dev} proved the following complementary result:

{\em Let $P$ be a (general) subcritical operator in $\Gw$, and let $V$ and $W$ be nonzero nonnegative functions defined in $\Gw$ such that
$$\lim_{x\to\infty}\frac{V(x)}{W(x)}=0.$$
If $\gl_\infty(P,W,\Gw)> 0$, then $\gl_\infty(P,V,\Gw)= \infty$.

Moreover, if $V>0$ and $P$ is symmetric, then the spectrum of $V^{-1}P$ consists of
an increasing sequence of eigenvalues tending to $\infty$, and if $\gl\in \mathbb{R}$ does not
belong to the spectrum of $V^{-1}P$, then the resolvent $(V^{-1}P-\gl)^{-1}$ is compact.}

Indeed, if for some $\gm>0$, a function $v$ is a positive supersolution of the equation $(P-\gm W)u=0$ in a neighborhood of infinity in $\Gw$,
then for any $m >0$ the function $u$ is a positive supersolution of the equation $(P-m V)u=0$ in a neighborhood of infinity in $\Gw$, and hence $\gl_\infty(P,V,\Gw)= \infty$. See \cite[Theorem~5.6]{Dev} for the proof for symmetric operators, and for further results.
 }
 \end{remark}
\mysection{Null-criticality}\label{sec_nc}

Under the hypotheses of Theorem~\ref{thm_nsa}, we know (by Theorem~\ref{thm_Sch}) that the operator $P-W$ is critical in $\Gw^\star$. Let $\varphi_0$ be the ground state of $P-W$, and $\varphi_0^\star$ be the ground state of $P^\star-W$ (which is also a critical operator  in $\Gw^\star$). In this section we study integrability properties of these ground states. In particular, if $P$ is symmetric, we study whether the corresponding ground state belongs to $L^2(\Gw^\star,\,W\mathrm{d} \nu)$. Note that since $\varphi_0$ is continuous its integrability is determined by its behavior at infinity and zero.

\begin{definition}\label{def_nc1}{\em
Assume that $P-W$ is critical in $\Gw^\star$, and let $\varphi_0$ and $\varphi_0^\star$ be the ground states of $P-W$,  and $P^\star-W$, respectively. We say that $P-W$ is \textit{null-critical at infinity} if
$$\int_{\Omega\setminus K}\varphi_0(x)\varphi_0^\star(x) W(x)\mathrm{d} \nu=\infty,$$
for (any) compact set $K$ containing zero. Similarly, we define \textit{null-criticality at zero}.
 }
 \end{definition}
We have:
\begin{Thm}\label{thm_null-critical}
Under the assumptions of Theorem~\ref{thm_nsa}, the operator $P-W$ is null-critical at infinity and at zero.
\end{Thm}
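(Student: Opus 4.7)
The strategy exploits the explicit second solution $\varphi_1:=\sqrt{Gu}\log(G/u)$ of $(P-W)v=0$ produced by Lemma~\ref{lem_logu}, together with a Wronskian computation involving the adjoint ground state $\varphi_0^\star$ of $P^\star-W$ (which exists by the criticality of $P-W$ proved in Theorem~\ref{thm_Sch} together with Proposition~\ref{structure}). Set $\varphi_0:=\sqrt{Gu}$ and $s:=\log(G/u)$; the hypothesis \eqref{u1u0a} and the singularity of $G$ at $0$ give $s(x)\to-\infty$ as $x\to\infty$ in $\Omega$ and $s(x)\to+\infty$ as $x\to 0$, so by the intermediate value theorem $s$ attains every value in $\mathbb{R}$ on the connected set $\Omega^\star$. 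The plan is to show that the integral $\int_{\{a<s<b\}}\varphi_0\varphi_0^\star W\,d\nu$ is a constant multiple of $b-a$, and conclude by letting $a\to-\infty$ (giving null-criticality at $\infty$) or $b\to+\infty$ (giving null-criticality at $0$).

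The central step is a pair of Wronskian identities. Applying the standard Green's identity
\begin{equation*}
fPg - gP^\star f = \operatorname{div} X(f,g)
\end{equation*}
to the pairs $(f,g)=(\varphi_0^\star,\varphi_0)$ and $(\varphi_0^\star,\varphi_1)$, and using $(P-W)\varphi_0=(P-W)\varphi_1=(P^\star-W)\varphi_0^\star=0$, produces two divergence-free currents $J_0:=X(\varphi_0^\star,\varphi_0)$ and $J_1:=X(\varphi_0^\star,\varphi_1)$. A direct computation with $\nabla\varphi_1=s\nabla\varphi_0+\varphi_0\nabla s$ then yields the clean relation
\begin{equation*}
sJ_0-J_1 \;=\; \varphi_0\varphi_0^\star\, A\nabla s .
\end{equation*}
By Sard's theorem the level set $\{s=c\}$ is smooth for a.e.\ $c$, and the divergence theorem shows that the fluxes $F_0$ and $F_1$ of $J_0$ and $J_1$ across $\{s=c\}$ (oriented by $\hat n=\nabla s/|\nabla s|$) are constants independent of $c$. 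Taking fluxes of both sides of the identity above gives
\begin{equation*}
\int_{\{s=c\}}\varphi_0\varphi_0^\star\,\frac{|\nabla s|_A^2}{|\nabla s|}\,d\sigma \;=\; cF_0-F_1 \qquad\text{for a.e.\ } c\in\mathbb R .
\end{equation*}
The left-hand side is nonnegative and the right-hand side is affine in $c$ over all of $\mathbb{R}$; hence $F_0=0$, and $C:=-F_1\ge 0$ is strictly positive because $W\not\equiv 0$.

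The conclusion is then a coarea computation:
\begin{equation*}
\int_{\{a<s<b\}}\varphi_0\varphi_0^\star\,W\,d\nu \;=\; \tfrac14\int_{\{a<s<b\}}\varphi_0\varphi_0^\star\,|\nabla s|_A^2\,d\nu \;=\; \tfrac{(b-a)C}{4}.
\end{equation*}
Every neighborhood of infinity in $\Omega^\star$ contains $\{s<c_0\}$ for a sufficiently negative $c_0$ (since the set $\{s\le c_0\}$ is disjoint from any fixed compact $K\subset\Omega$ containing $0$ by the boundary behavior of $s$), so letting $a\to-\infty$ gives an infinite integral there; symmetrically, every punctured neighborhood of $0$ contains $\{s>c_1\}$, and letting $b\to+\infty$ gives null-criticality at zero. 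The chief technical obstacle is making the Green's identity and the divergence theorem rigorous on the unbounded and possibly irregular annular regions $\{a<s<b\}$: one handles this by restricting to regular values of $s$, exhausting $\Omega^\star$ by relatively compact smooth subsets via Khas'minski\u{i}-type cutoffs, and invoking standard elliptic regularity for $\varphi_0$ and $\varphi_0^\star$ on $\Omega^\star$. The explicit shape of $X(f,g)$ depends on whether $P$ is of nondivergence form \eqref{P} or divergence form \eqref{div_P} (and on the measure $d\nu$), but this only affects the explicit drift correction inside $J_0,J_1$ and not the scheme of the proof.
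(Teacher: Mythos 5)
Your proposal is correct, and it takes a genuinely different route from the paper's. The paper proves null-criticality by Green's formula applied to the oscillating solutions $\varphi_\xi$ and $\varphi_{3\xi}$ on the ``shell'' $\Omega_\xi$, deriving an orthogonality relation (in the symmetric case) or an identity of the form $\int 4\xi^2\varphi_\xi\varphi_\xi^\star W\,d\nu = \int 4(3\xi)^2\varphi_{3\xi}\varphi_\xi^\star W\,d\nu$ (in the general case, via an approximating sequence $\varphi_\xi^\star$ solving Dirichlet problems on $\Omega_\xi$), and then passes to the limit $\xi\to 0$ to contradict integrability. You instead work directly with the $\log$ solution $\varphi_1=s\varphi_0$, observe that the Wronskian currents $J_0=X(\varphi_0^\star,\varphi_0)$ and $J_1=X(\varphi_0^\star,\varphi_1)$ are divergence-free, and exploit the Leibniz-type identity $X(\varphi_0^\star,s\varphi_0)-sX(\varphi_0^\star,\varphi_0)=\varphi_0\varphi_0^\star A\nabla s$ (valid because $X(u,v)$ has the form $uA\nabla v - vA\nabla u + uv\mathbf{b}$ up to measure factors) to show that the flux of $\varphi_0\varphi_0^\star A\nabla s$ across $\{s=c\}$ is affine in $c$ and nonnegative, hence constant. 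This forces $F_0=0$ (which holds trivially in the symmetric case, where $J_0\equiv 0$) and gives $\int_{\{a<s<b\}}\varphi_0\varphi_0^\star W\,d\nu=\tfrac{C}{4}(b-a)$ with $C>0$; taking $a\to-\infty$ or $b\to+\infty$ then yields null-criticality at each end. Your route is more direct (no $\xi\to 0$ limit, no Dirichlet approximation of $\varphi_0^\star$), treats the symmetric and nonsymmetric cases uniformly, and gives a quantitative output — it is in fact the natural nonsymmetric extension of Corollary~\ref{Estimate W}, where the paper establishes exactly this coarea identity but only for the symmetric case. The price is the need to track the precise form of the boundary bilinear $X$ for the operator class at hand and to justify Sard/coarea under the paper's regularity assumptions, both of which you flag and which are at the same level of delicacy as the boundary integrations the paper itself performs.
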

\begin{Rem}\label{rem_int} {\em
 If $P$ is symmetric, then the null-criticality at zero follows at once from Theorem~\ref{cor_zero}. In fact, if $P$ is  symmetric, the null-criticality both at zero and at infinity follows readily from Corollary \ref{Estimate W}.
 }
\end{Rem}

\begin{proof}[Proof of Theorem~\ref{thm_null-critical}] Recall that the explicit form of $\varphi_0$ is known. On the other hand, in contrast to the  symmetric case, the explicit form of $\varphi_0^\star$ is unknown in the nonsymmetric case. Consequently, the proof is much subtler. Therefore, to illustrate the idea of the proof in the general case, we first present the proof in the symmetric case.

So, let us first assume that $P$ is a symmetric operator. We assume as before that $P\mathbf{1}=0$, the general case then follows by the ground state transform.  Recall that for $\xi \geq 0$,  the function
$$\varphi_\xi:=G^{1/2}\cos(\xi \log(G))$$
solves the equation
$$(P- (4 \xi^2 + 1) W)u=0.$$
In particular $\varphi_0=G^{1/2}$ is the ground state.

Define a set
\begin{equation}
\label{OmegaXi}
\Omega_{\xi} :=\left\{x\,:\,  -\frac{\pi}{2\xi} <\log G(x) < 0 \right\}
\end{equation}
and consider the solutions $\varphi_{\xi},\,\varphi_{3 \xi}$. These solutions as formal eigenfunctions of a mixed value boundary problem on $\Omega_{\xi}$ lead to the following orthogonality relation
\begin{equation}\label{Orthogonality}
\int_{\Omega_{\xi}} \varphi_{\xi} \varphi_{3 \xi} \, W \mathrm{d} \nu = 0.
\end{equation}

Let us prove (\ref{Orthogonality}) in detail.
Assume first that $\Omega_\xi$ is regular enough, then we have the following Green formula for $P$:
\begin{equation} \label{Green}
\int_{\Omega_\xi}\!\!\! \big(P[\varphi_\xi] \varphi_{3 \xi}- \varphi_\xi P[\varphi_{3 \xi}]\big)\dnu=\int_{\partial \Omega_\xi}\!\!\!\! \Big\langle A \nabla [\varphi_\xi] \varphi_{3 \xi}- A\nabla [\varphi_{3 \xi} ] \varphi_{\xi},\vec{\gs} \Big\rangle\,\mathrm{d}\gs,
\end{equation}
where $\mathrm{d}\gs$ is  the induced measure on $\partial \Omega_\xi$ and $\vec{\gs}$ is the outward unit normal vector field on $\partial \Omega_\xi$. By construction, the functions $\varphi_\xi,\,\varphi_{3 \xi}$
vanish on the set $\log G = -\pi/(2\xi)$. On the other hand, on the part of the boundary contained in $\{\log G = 0\}$ we have
\begin{equation}\label{tangent}
\varphi_\zeta= 1 \quad \mbox{and} \quad \nabla \varphi_\zeta=\nabla\varphi_0
\end{equation}
for all $\zeta$. It follows that the right hand side of the Green formula \eqref{Green} vanishes. This establishes (\ref{Orthogonality}) since the left hand sides of (\ref{Orthogonality}) and (\ref{Green}) are nonzero multiple of each other.

For a nonregular $\Omega_\xi$ the claim follows by approximation of $\Omega_\xi$ by regular domains.

Now, assume that $\varphi_0\in L^2(\Gw\sm K, W\mathrm{d}\nu)$ and note that
$$
|\varphi_\xi| \leq \varphi_0
$$
for all $\xi \geq 0$.
 Letting $\xi \to 0$ in (\ref{Orthogonality}), we conclude by the dominated convergence theorem that
$$\int_{\{G<1\}}\varphi_0^2W\mathrm{d}\nu=0,$$
which is a contradiction since $\varphi_0>0$ and $W\gneqq 0$ on $\{G<1\}$. The proof of the null-criticality near zero is analogous.

\vskip 3mm

\textit{The general case:}
The proof follows the same idea as above, but since an explicit formula for the ground state $\varphi_0^\star$ of the adjoint operator $P^\star-W$ is not available, we construct instead an approximating sequence for $\varphi^\star_0$ .

Consider the domain $\Omega_\xi$ defined by (\ref{OmegaXi}), and let $\varphi^\star_\xi$ be the solution
 of the Dirichlet problem
\begin{equation} \label{oscillation}
\left\{
\begin{array}{ll} (P^\star- W)u=0 & \qquad\mbox {in } \Omega_\xi, \\[2mm]
		  u(x)=\varphi_0^\star & \qquad \mbox{on } \{\log G=0\}, \\[3mm]
          u(x)=0 & \qquad \mbox{on } \left\{ \log G=-\pi/(2\xi) \right\}.
\end{array} \right.
\end{equation}
Since $P^\star-W$ is subcritical in $\Omega_\xi$, the generalized maximum principle implies that $\varphi^\star_\xi$ is  positive,  $\varphi^\star_\xi \leq
 \varphi_{0}^\star$ on $\Omega_\xi$, and the sequence $\{\varphi^\star_{\xi}\}$ is increasing with respect to $\xi$.

Therefore, as $\xi \searrow 0$, we have $\varphi^\star_\xi \to \varphi^\star\leq \varphi_0^\star$ locally uniformly in $\Omega^\star \setminus K$, where $K=\{G>1\}$ is a neighborhood of zero,  and $\varphi^\star$ is a nonnegative solution of the equation $(P^\star-W)u=0$ in $\Omega\setminus K$. Since $\varphi^\star_0$ is a ground state of $P^\star-W$ in $\Gw^\star$, it has minimal growth at infinity of $\Gw$, and hence $\varphi_0^\star\leq  \varphi^\star$. Thus,   $\varphi^\star= \varphi_0^\star$, and we obtain
$$\lim_{\xi\searrow 0}\varphi^\star_\xi=\varphi_0^\star.$$

We use Green's formula for the operator $Q:=P -W$:
\begin{equation}
\label{GGreen}
\int_{\Omega_{\xi}} Q[u] \varphi^\star_\xi \dnu = \int_{\Omega_{\xi}} \left( Q[u] \varphi^\star_\xi - u Q^\star[\varphi^\star_\xi]\right) \dnu = B.T.\,,
\end{equation}
where $u$ is either $\varphi_\xi$ or $\varphi_{3 \xi}$, and $B.T.$ is the corresponding boundary term. We claim that $B.T.$ is independent of the choice of either $\varphi_\xi$ or $\varphi_{3 \xi}$. Indeed, the claim readily follows from (\ref{tangent}), \eqref{oscillation}, and the explicit form
\begin{equation}\label{boundary term}
B.T. = \int_{\{G=1\}}\!\!\!\left \langle A \nabla[\varphi_0] \varphi_0^\star -  A \nabla[\varphi_\xi^\star] \varphi_0 + \mathbf{b} \varphi_0 \varphi_0^\star- \tilde{\mathbf{b}} \varphi_0 \varphi_0^\star,\vec{\gs} \right\rangle \,\mathrm{d}\gs.
\end{equation}

We have
\begin{multline*}
\int_{\Omega_\xi} 4 \xi^2 \varphi_\xi \varphi^\star_\xi \,W \mathrm{d} \nu = \int_{\Omega_{\xi}} Q[\varphi_\xi] \varphi^\star_\xi \dnu = B.T. \\
= \int_{\Omega_{\xi}} Q[\varphi_{3 \xi}] \varphi^\star_\xi \dnu= \int_{\Omega_\xi} 4 (3\xi)^2 \varphi_{3 \xi} \varphi^\star_\xi \,W \mathrm{d} \nu.
\end{multline*}
Hence,
$$ \int_{\Omega_\xi}  \varphi_\xi \varphi^\star_\xi \,W \mathrm{d} \nu =
9\int_{\Omega_\xi}  \varphi_{3 \xi} \varphi^\star_\xi \,W \mathrm{d} \nu.$$
Assuming that $\varphi_0 \varphi^\star_0W$ is $\nu$-integrable in $\Omega \setminus K$, we can pass to the limit $\xi \to 0$ and obtain the contradiction $1 = 9$. The case of a nonregular domain $\Omega_\xi$ can again be treated by approximations. The proof of null-criticality near zero is analogous.
\end{proof}
\begin{corollary}\label{cor_nc1}
Assume further that $P$ is subcritical in $\Gw$,  symmetric in $L^2(\Omega,\dnu)$, and $P\mathbf{1}=0$.
Then
\begin{equation}\label{eqG}
    \frac{|\nabla G|^2_A}{G}
\end{equation}
is not $\nu$-integrable  neither near $0$ nor near infinity in $\Gw$.
 \end{corollary}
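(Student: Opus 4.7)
The plan is to deduce the corollary as a direct specialization of Theorem~\ref{thm_null-critical}. Since $P\mathbf{1}=0$, the constant function $u=\mathbf{1}$ is a positive global solution of $Pw=0$ in $\Gw$. To invoke Theorem~\ref{thm_nsa} with this choice of $u$, we must check that $\lim_{x\to\infty}G(x)=0$. This is a standard consequence of subcriticality in the symmetric setting: $G$ has minimal growth at infinity in $\Gw$, while $\mathbf{1}$ is a positive global solution, so by the comparison inherent in the definition of minimal growth (Definition~\ref{def:minimal_gr}) one has $G\le\varepsilon \mathbf{1}$ outside a suitable compact set for every $\varepsilon>0$, which gives $G(x)\to 0$. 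Hence condition~\eqref{u1u0a} holds with $u=\mathbf{1}$.

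With this choice, the optimal Hardy-weight produced by Theorem~\ref{thm_nsa} is
\begin{equation*}
W \;=\; \frac{1}{4}\Bigl|\nabla\log G\Bigr|_A^{2} \;=\; \frac{|\nabla G|_A^{2}}{4\,G^{2}},
\end{equation*}
and its ground state in $\Gw^\star$ is $\varphi_0=\sqrt{G\cdot\mathbf{1}}=\sqrt{G}$. Because $P$ is symmetric, $P^\star=P$, so the ground state of $P^\star-W$ is also $\varphi_0^\star=\sqrt{G}$. Consequently
\begin{equation*}
\varphi_0\,\varphi_0^\star\,W \;=\; G\cdot\frac{|\nabla G|_A^{2}}{4\,G^{2}} \;=\; \frac{|\nabla G|_A^{2}}{4\,G}.
\end{equation*}

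By Theorem~\ref{thm_null-critical}, $P-W$ is null-critical both at $0$ and at infinity, meaning that $\varphi_0\varphi_0^\star W$ fails to be $\nu$-integrable on any deleted neighborhood of $0$ and on any neighborhood of infinity in $\Gw$. The displayed identity then gives exactly the non-integrability of $|\nabla G|_A^{2}/G$ at both ends, which is the claim.

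I do not expect a serious obstacle here: the only delicate point is the reduction to $u=\mathbf{1}$, i.e.\ the verification that $G(x)\to 0$ at infinity; this is where subcriticality (together with $P\mathbf{1}=0$) is actually used. Once this is in hand, the rest is a direct algebraic rewriting of the conclusion of Theorem~\ref{thm_null-critical}, with the symmetry of $P$ only needed to identify $\varphi_0^\star$ with $\varphi_0$.
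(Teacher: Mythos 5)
Your algebraic reduction is exactly what the corollary intends: take $u=\mathbf{1}$ in Theorem~\ref{thm_nsa}, so $W = \tfrac14|\nabla\log G|_A^2 = \tfrac{|\nabla G|_A^2}{4G^2}$; by symmetry $P^\star=P$, so $\varphi_0=\varphi_0^\star=\sqrt{G}$, and therefore $\varphi_0\varphi_0^\star W = \tfrac{|\nabla G|_A^2}{4G}$; Theorem~\ref{thm_null-critical} then gives exactly the non-integrability near $0$ and near infinity. That part is correct and is the route the paper has in mind.

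The gap is in the claimed verification of condition~\eqref{u1u0a}, i.e.\ that $G(x)\to 0$ at infinity. The argument you give is circular: Definition~\ref{def:minimal_gr} only lets you compare $G$ against a positive supersolution $v$ on $\Omega\setminus K'$ \emph{after} you know $G\le v$ on $\partial K'$. Choosing $v=\varepsilon\mathbf{1}$ yields nothing unless you already know $G\le\varepsilon$ on $\partial K'$ for a suitable $K'$, which is precisely what you are trying to establish. And in fact $G\to 0$ is a genuine additional assumption, not a consequence of subcriticality and $P\mathbf{1}=0$. For instance, take a weighted Laplacian (so $P\mathbf{1}=0$ automatically) on a two-ended manifold where one end is parabolic and the other is non-parabolic; such $P$ is subcritical, but $G$ does not tend to $0$ along the parabolic end, and the conclusion of the corollary also fails there since $|\nabla G|^2_A/G$ is integrable along that end. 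The corollary is stated immediately after Theorem~\ref{thm_null-critical} and inherits the standing assumptions of Theorem~\ref{thm_nsa}; the correct reading is that \eqref{u1u0a} with $u=\mathbf{1}$ (namely $G\to 0$ at infinity) is one of the hypotheses, not something to be derived. Once you treat it as such, your proof is complete.
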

\mysection{The essential spectrum}\label{sec_essential}
In the present section (unless otherwise stated), we assume that $P$ is a subcritical symmetric operator defined on $\Gw$.  We continue our study of the supersolution construction with the  pair $(u,G)$, where $G(x)=G_P^{\Gw}(x,0)$ and $u$ satisfy (\ref{u1u0a}). Moreover, throughout this section we assume that the corresponding (optimal) Hardy-weight $W$ is {\em strictly positive} in $\Gw^\star$.

\begin{remark}\label{rem_unique_cont}{\em A natural question is to find sufficient conditions for the strict positivity of $W$ near infinity in $\Gw$. Recall that the unique continuation property holds true for a second-order elliptic equation $Pu=0$ in $\Gw$ if the coefficients of $P$ are smooth enough (see for example \cite{Kenig}). Since $G/u$ is a positive solution of a second-order elliptic equation, and the zero set of the above optimal Hardy-weight $W$ is equal to the zero set of $|\nabla (G/u)|$, it follows that under appropriate smoothness assumptions, the zero set of $W$ has an empty interior. Moreover, if a level set $\Gg$ of $G/u$ is smooth enough, then by Hopf lemma, $|\nabla (G/u)|\neq 0$ on $\Gg$. For results concerning the set of critical points of Green functions on complete manifolds see \cite{EP} and the references therein.
   }
\end{remark}

Recall that  for any $\gl>1$,  the function
\begin{equation}\label{eq_phixi}
\varphi_\xi:=\varphi(\xi,x)= u\,\left(\frac{G}{u}\right)^{1/2}\exp(\mathrm{i}\xi \log(G/u))
\end{equation}
with $\xi=\pm \sqrt{\lambda-1}/2$  solves the equation
$$(P-\lambda W)u=\big(P-(1+4\xi^2)W\big)u=0 \qquad \mbox{in } \Gw^\star.$$
So, for any $\gl> 1$  the equation $(P-\lambda W)u=0$ admits (at least two) ``non-growing" generalized eigenfunctions.
Therefore, \v{S}nol's principle (or Bloch-type property) suggests that the spectrum $\gs$ and the essential spectrum $\gs_{\mathrm{ess}}$ of $W^{-1}P$ in $L^2(\Gw^\star,W\mathrm{d} \nu)$ is equal to $[1,\infty)$.
In fact, for  such an operator $P$, we find an invariant subspace ``spanned" by the functions $\varphi_\xi$ on which $P$ has a canonical form with purely absolutely continuous spectrum that is equal to $[1,\infty)$.

Define $\mathcal{U}_{\mathrm{rad}}(\Omega^\star)$ to be the space of measurable functions that are proportional to $u$ on the level sets of $G/u$, and denote by $L^2_{\mathrm{rad}}(\Omega^\star,W \mathrm{d}\nu)$ the space  $L^2(\Omega^\star,W \mathrm{d}\nu)\cap \mathcal{U}_{\mathrm{rad}}(\Omega^\star)$.
Explicitly, $v \in \mathcal{U}_{\mathrm{rad}}(\Omega^\star)$ if and only if  $v = u f(G/u)$ for some measurable function $f : (0,\,\infty) \to \mathbb{C}$.

\begin{lemma}
\label{radspace}
Under the normalization $u(0)=1$, the map
\begin{align}
\label{eq_isoA}
 L^2_{\mathrm{rad}}\left(\Omega^\star, W \mathrm{d}\nu\right)  \quad &\rightarrow \quad  L^2\left((0,\,\infty),\frac{1}{4 t^2} \dt\right), \nonumber \\
 v = uf (G/u) \quad & \mapsto \quad f(t),
 \end{align}
 is an isometry.
\end{lemma}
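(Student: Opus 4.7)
My plan is to reduce the isometry to the coarea formula applied in the ``radial'' variable $\phi := G/u$, with the key input being that the vector field $u^2 A\nabla\phi$ is $\nu$-divergence free on $\Omega^\star$ with a unit source at $0$ --- which is precisely the Green-function property of $\phi$ for the ground-state-transformed operator $P_u$.

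For $v = uf(\phi)\in L^2_{\mathrm{rad}}(\Omega^\star,W\dnu)$, a direct computation gives $|v|^2 W = u^2|f(\phi)|^2|\nabla\phi|_A^2/(4\phi^2)$, so the desired identity becomes
\begin{equation}\label{eq:goalproposal}
\int_{\Omega^\star} u^2|f(\phi)|^2\,\frac{|\nabla\phi|_A^2}{4\phi^2}\dnu = \int_0^\infty \frac{|f(t)|^2}{4t^2}\dt.
\end{equation}
I would first identify $\phi$ as the minimal Green function of $P_u$ at $0$: the formula $G_{P_u}^{\Omega}(x,y) = G_P^{\Omega}(x,y)/(u(x)u(y))$ combined with $u(0)=1$ gives $G_{P_u}^{\Omega}(x,0) = \phi(x)$. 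Using the symmetric formula $P_u f = -\frac{1}{u^2}\mathrm{div}(u^2 A\nabla f)$ from \eqref{Pvgf}, the identity $P_u\phi = \delta_0$ rewrites as $\mathrm{div}(u^2 A\nabla\phi) = -\delta_0$ in the $\nu$-distributional sense on $\Omega$. Since $W>0$ forces $\nabla\phi\neq 0$ on $\Omega^\star$, every level set $\{\phi = t\}$ is a smooth hypersurface, and Stokes' theorem applied to smooth cutoff approximations of $\mathbf{1}_{\{\phi>t\}}$ yields the constant-flux identity
\begin{equation}\label{eq:fluxproposal}
\int_{\{\phi=t\}} u^2\,\frac{|\nabla\phi|_A^2}{|\nabla\phi|}\,\mathrm{d}S_t = 1\qquad\text{for every } t>0,
\end{equation}
where $\mathrm{d}S_t$ denotes the surface measure on the level set induced by $\dnu$.

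To conclude, I would apply the coarea formula with respect to $\dnu$. The range of $\phi$ is all of $(0,\infty)$, thanks to \eqref{u1u0a} (which gives $\phi\to 0$ at infinity in $\Omega$) and the singularity of $G$ at $0$ (which gives $\phi\to\infty$ there). Decomposing the integrand of \eqref{eq:goalproposal} as $\frac{|f(\phi)|^2}{4\phi^2}\cdot u^2\frac{|\nabla\phi|_A^2}{|\nabla\phi|}\cdot|\nabla\phi|$, the coarea formula together with \eqref{eq:fluxproposal} reduces the left-hand side of \eqref{eq:goalproposal} to $\int_0^\infty |f(t)|^2/(4t^2)\,\dt$. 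Surjectivity is immediate, since $v := uf(\phi)$ is a valid preimage of any $f\in L^2\!\big((0,\infty),\dt/(4t^2)\big)$.

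I expect the main obstacle to be the rigorous justification of \eqref{eq:fluxproposal} near the puncture $0$ and near infinity in $\Omega$: this requires exhausting $\Omega^\star$ by annuli $\{t_1 < \phi < t_2\}$, then using the Green-function asymptotics near $0$ (cf.\ Theorem~\ref{cor_zero}) together with the minimal-growth characterization of $G$ at infinity in $\Omega$ to verify that the boundary fluxes on $\{\phi = t_1\}$ and $\{\phi = t_2\}$ match up correctly as $t_1 \to \infty$ and $t_2 \to 0$.
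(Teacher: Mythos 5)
Your proof is correct and takes essentially the same route as the paper's: a coarea-formula computation in the variable $\phi := G/u$ reduced to a constant-flux identity across the level sets of $\phi$. The only cosmetic difference is that you perform the ground-state transform first (viewing $\phi$ as the Green function of $P_u$, so that $u^2A\nabla\phi$ is $\nu$-divergence-free with a unit source at $0$), whereas the paper writes the flux identity directly via Green's formula for the pair $(G,u)$ and only invokes the ground-state transform implicitly at the end; since $u^2 A\nabla\phi = uA\nabla G - GA\nabla u$, these are the same computation. One small remark: the paper deals with possibly-singular level sets by first assuming $P$ smooth and invoking Sard's lemma (hence only a.e.\ regular values), then approximating; you instead lean on the Section~7 standing hypothesis $W>0$, which rules out critical points of $\phi$ altogether — both are legitimate, but the Sard route is needed if one wants the isometry without that positivity assumption.
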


\begin{proof}
 Assume first that $P$ has smooth coefficients. Then by Sard's lemma, almost every point $t\in \R_+$ is a regular value of the function $G/u$, and hence for such points $t$,  the set $\{G/u=t\}$ is a smooth $(n-1)$-dimensional submanifold. Note also that the function $W|\grad (G/u)|^{-1}$ is smooth in $\Gw^\star$ (see the computation below).

On the other hand, by Green's formula, for any smooth neighborhood $\tilde\Omega$ of $0$, we have
\begin{equation}
\label{ConstGamma}
 \int_{\pd{\tilde\Omega}} \left\langle u A\grad G - G A\grad u,\vec{\gs} \right\rangle \mathrm{d} \sigma = \gamma,
\end{equation}
where $\gamma=u(0)=1$.

Consequently, the coarea formula and \eqref{ConstGamma} imply that for any two functions in $L^2_{\mathrm{rad}}(\Omega^\star,W \mathrm{d}\nu)$ we have
\begin{multline}\label{eq_coarea}
\int_{\Omega^\star} u f\left(\frac{G}{u}\right) u g^*\left(\frac{G}{u}\right) W \dnu \\
= \int_{\Omega^\star} u f\left(\frac{G}{u}\right) u g^*\left(\frac{G}{u}\right) \frac{W}{|\grad (G/u)|_A} |\grad (G/u)|_A\dnu \\
 =\int_0^\infty \mathrm{d}t \int_{\{G/u = t\}} f(t) g^*(t) u^2 \frac{1}{4 t^2}
 \left\langle A\grad \left(\frac{G}{u} \right),\vec{\gs}\right\rangle \mathrm{d} \sigma \\
= \int_0^\infty \mathrm{d}t f(t) g^*(t) \frac{1}{4 t^2} \int_{\{G/u = t\}} \Big\langle \left(u A\grad G - GA \grad u \right) ,\vec{\gs}\Big\rangle \,\mathrm{d} \sigma \\
= \int_0^\infty f(t) g^*(t) \frac{1}{4 t^2} \,\mathrm{d} t,
\end{multline}
where in passing from the second line to the third line of \eqref{eq_coarea} we used the coarea formula, and that $\grad (G/u)$ is parallel (in the metric $|\cdot|_A)$ to the normal vector $\vec{\sigma}$ of the level set $\{G/u = t\}$, and therefore,
$$ \frac{W}{|\grad (G/u)|_A}=\frac{1}{4 \big(G/u\big)^2}|\grad (G/u)|_A=\frac{\langle A \grad(G/u),\, \vec{\sigma}\rangle}{4 t^2}\, .$$
Hence, in the smooth case we have the isometry
\begin{equation}\label{eq_iso}
\int_{\Omega^\star} u f\left(\frac{G}{u}\right) u g^*\left(\frac{G}{u}\right) W \dnu = \int_0^\infty f(t) g^*(t) \frac{1}{4 t^2} \,\mathrm{d} t.
 \end{equation}
The regular case is obtained by a standard approximation argument (note that one may assume that $u=\mathbf{1}$).
\end{proof}
In the sequel of the present section, we assume that the positive solution $u$ is \textit{normalized} so that $u(0)=1$.

\vskip 3mm

Before proceeding with the study of the essential spectrum we note that the proof of Lemma~\ref{radspace} implies the  following corollary, which allows us to estimate in average the potential $W$, and provides (in the symmetric case) an alternative proof of the null-criticality of the operator $P-W$ near $0$ and $\infty$.

\begin{Cor}\label{Estimate W}
Suppose that the hypotheses of Theorem \ref{thm*} are satisfied,  and that $u(0)=1$. Then for any $0<a<b$ and $\xi\in\R$ we have
\begin{equation}\label{eq_coar}
\int_{\{a \leq \frac{G}{u}  \leq b\}} uGW\, \mathrm{d} \nu=\int_{\{a \leq \frac{G}{u}  \leq b\}} |\varphi_\xi|^2 W \mathrm{d} \nu= \frac{1}{4}(\log b- \log a).
\end{equation}
\end{Cor}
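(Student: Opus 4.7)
The plan is to reduce the corollary to two short observations, both of which are essentially already present inside the proof of Lemma~\ref{radspace}; no new machinery is needed.

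First, I would dispose of the trivial equality. Directly from the explicit formula \eqref{eq_phixi},
$$\varphi_\xi = u\left(\frac{G}{u}\right)^{1/2}\exp\!\left(\mathrm{i}\xi \log(G/u)\right),$$
so the complex exponential has modulus one and
$$|\varphi_\xi|^2 = u^2 \cdot (G/u) = uG$$
independently of $\xi\in\mathbb{R}$. Consequently $\int_{\{a\leq G/u\leq b\}} |\varphi_\xi|^2 W\,\mathrm{d}\nu = \int_{\{a\leq G/u\leq b\}} uGW\,\mathrm{d}\nu$, settling the first equality.

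Second, I would read off the remaining equality from the isometry \eqref{eq_iso} proved inside Lemma~\ref{radspace}, by choosing the radial test functions
$$f(t) = g(t) = \sqrt{t}\,\chi_{[a,b]}(t).$$
Then $uf(G/u)\cdot u\,\overline{g(G/u)} = u^2(G/u)\chi_{[a,b]}(G/u) = uG\cdot \chi_{\{a\leq G/u\leq b\}}$, so substitution into \eqref{eq_iso} yields
$$\int_{\{a\leq G/u\leq b\}} uGW\,\mathrm{d}\nu = \int_0^\infty t\,\chi_{[a,b]}(t)\cdot \frac{1}{4t^2}\,\mathrm{d}t = \frac{1}{4}\int_a^b \frac{\mathrm{d}t}{t} = \frac{1}{4}(\log b - \log a),$$
which is exactly the stated identity. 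The normalization $u(0)=1$ enters only through the constant $\gamma$ in \eqref{ConstGamma}, in the same way as in Lemma~\ref{radspace}; in the merely regular case one approximates by smooth coefficients verbatim as there.

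No genuine obstacle arises: the only thing to notice is that the integrand fits the form $uf(G/u)\cdot u\,\overline{g(G/u)}$ required by \eqref{eq_iso}, after which the whole computation collapses to the elementary integral $\int_a^b \frac{\mathrm{d}t}{4t}$. It is worth flagging, however, that the corollary immediately recovers the null-criticality statements of Theorem~\ref{thm_null-critical} in the symmetric case with $P\mathbf{1}=0$: sending $b\to\infty$ (i.e.\ approaching $0$, since $G\to\infty$ there) or $a\to 0$ (i.e.\ approaching $\infty$, by \eqref{u1u0a}) makes the right-hand side diverge, and $\varphi_0\varphi_0^\star = \sqrt{Gu}\cdot\sqrt{Gu} = uG$ identifies the left-hand side with the integral in Definition~\ref{def_nc1}.
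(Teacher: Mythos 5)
Your proof is correct and follows essentially the same route as the paper's: both reduce the identity to the coarea computation established inside the proof of Lemma~\ref{radspace}. The only cosmetic difference is your choice of test pair $f=g=\sqrt{t}\,\chi_{[a,b]}$ (genuine elements of $L^2\bigl((0,\infty),\frac{1}{4t^2}\,\mathrm{d}t\bigr)$, so \eqref{eq_iso} applies verbatim), whereas the paper takes $f(t)=t$, $g=\mathbf{1}$ and restricts the coarea integral to $\{a\leq G/u\leq b\}$; the two are trivially equivalent. The trivial first equality $|\varphi_\xi|^2=uG$ and the closing observation relating the corollary to null-criticality also match Remark~\ref{rem_int}.
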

\begin{proof}
As in \eqref{eq_coarea}, we use the coarea formula on the domain $\{a \leq \frac{G}{u}  \leq b\}$ (instead of the domain $\Gw^\star$) with the functions $f(x)=x$ and $g(x)=\mathbf{1}$, to obtain
\begin{equation*}
\int_{\{a \leq \frac{G}{u}  \leq b\}} uGW \mathrm{d} \nu
              							= \frac{1}{4}\int_a^b t^{-1} \mathrm{d} t = \frac{1}{4}\left( \log b - \log a \right).
\end{equation*}
\end{proof}

\begin{theorem}\label{essential_spectrum}
Suppose that the hypotheses of Theorem \ref{thm*} are satisfied, and $W>0$  in $\Gw^\star$. Then the spectrum $\gs$ and the essential spectrum $\gs_{\mathrm{ess}}$ of (the Friedrichs extension of) $\tilde{P}:=W^{-1}P$ acting on $L^2(\Omega^\star,W\mathrm{d}\nu)$ satisfy  $$\gs(\tilde{P},\Gw^\star)=\gs_{\mathrm{ess}}(\tilde{P},\Gw^\star)=[1,\infty).$$

In fact, the spectrum of $\tilde{P}$ restricted to $L^2_{\mathrm{rad}}(\Omega^\star,W \mathrm{d} \nu)$ is purely absolutely continuous with respect to the Lebesgue measure.

Moreover, for any neighborhood $U\subset \Gw^\star$ of $0$ or infinity of $\Gw$, the (essential) spectrum of the Friedrichs extension of the operator $\tilde{P}$ on $L^2(U,W\mathrm{d}\nu)$ satisfies $$\gs(\tilde{P},U)=\gs_{\mathrm{ess}}(\tilde{P},U)=[1,\infty).$$
\end{theorem}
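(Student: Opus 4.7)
The plan has three main parts: (a) diagonalize $\tilde{P}$ on the radial subspace, (b) construct Weyl sequences localized near any chosen end, and (c) combine with the already established $\lambda_0=\lambda_\infty=1$ to pin down both the full spectrum and the spectrum on any end.

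For (a), after a ground state transform with respect to $u$ we may assume $u\equiv\mathbf{1}$, so $P\mathbf{1}=0$. Applying \eqref{rule1}--\eqref{rule2} to $P(f(G))$ with $PG=0$ in $\Omega^\star$, combined with $W=|\nabla G|_A^2/(4G^2)$, yields
$$
\tilde{P}(f(G)) \;=\; -4G^2 f''(G),
$$
so $\tilde{P}$ carries smooth radial functions to radial functions. Via the isometry of Lemma~\ref{radspace} followed by the unitary substitution $s=\log t$, $h(s):=e^{-s/2}f(e^s)$, this differential operator is carried to the constant-coefficient operator
$$
S \;:=\; I-4\,\frac{d^2}{ds^2} \qquad \text{on } L^2(\mathbb{R}, ds/4),
$$
as one checks on the vectors $f(t)=t^{1/2+i\xi}\mapsto e^{i\xi s}$. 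Moreover, the coarea computation of Lemma~\ref{radspace} together with the integration-by-parts identity $\int|f'(t)|^2\,dt=\int(|h'(s)|^2+|h(s)|^2/4)\,ds$ shows that the Friedrichs form of $P$ restricted to radial smooth test functions coincides with the Friedrichs form of $S$. Hence the radial restriction of $\tilde{P}$ is unitarily equivalent to $S$, and by Fourier analysis has purely absolutely continuous spectrum $[1,\infty)$. Combined with $\sigma(\tilde{P})\subset[\lambda_0,\infty)=[1,\infty)$ from Proposition~\ref{allegretto} and Theorem~\ref{thm_Sch}, this already gives $\sigma(\tilde{P},\Omega^\star)=[1,\infty)$.

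For (b), I would localize the formal eigenfunctions $\varphi_\xi=u(G/u)^{1/2}\exp(i\xi\log(G/u))$ from \eqref{eq_phixi}, which satisfy $(P-(1+4\xi^2)W)\varphi_\xi=0$. Set
$$
v_n \;:=\; \chi_n\!\bigl(\log(G/u)\bigr)\,\varphi_\xi, \qquad \chi_n(s):=\chi\!\left(\tfrac{s-s_n}{n}\right),
$$
where $\chi$ is a fixed nonzero smooth bump and $s_n\to+\infty$ to localize near $0$, or $s_n\to-\infty$ to localize near infinity in $\Omega$. Under the isometry above, $v_n$ corresponds to $\chi_n(s)e^{i\xi s}$, and the commutator identity
$$
(S-(1+4\xi^2))(\chi_n e^{i\xi s})=(-4\chi_n''-8i\xi\chi_n')\,e^{i\xi s},
$$
together with the scalings $\|\chi_n\|=O(\sqrt{n})$, $\|\chi_n'\|=O(1/\sqrt{n})$, $\|\chi_n''\|=O(n^{-3/2})$, yields $\|(\tilde{P}-(1+4\xi^2))v_n\|/\|v_n\|=O(1/n)$. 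Since the supports of $v_n$ eventually lie in any prescribed neighborhood $U$ of the chosen end and $v_n/\|v_n\|\rightharpoonup 0$, Weyl's criterion gives $[1,\infty)\subset\sigma_{\mathrm{ess}}(\tilde{P},U)$. The matching upper bound $\sigma(\tilde{P},U)\subset[1,\infty)$ follows from the monotonicity $\lambda_0(P,W,U)\geq\lambda_0(P,W,\Omega^\star)=1$ and Proposition~\ref{allegretto}, completing (c).

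The main technical obstacle is to rigorously carry out, in the non-smooth setting, the identification of the radial form with that of $S$ and to verify that the resulting self-adjoint operator on $L^2_{\mathrm{rad}}(\Omega^\star,W\mathrm{d}\nu)$ is actually a restriction of the Friedrichs extension of $\tilde{P}$, rather than some distinct self-adjoint extension. As in the proof of Lemma~\ref{radspace}, this would be handled by an approximation argument using Sard's lemma and standard elliptic regularity for $G$ and $u$, together with a coarea-based verification that $q$ is block-diagonal with respect to the orthogonal decomposition $L^2(\Omega^\star,W\mathrm{d}\nu)=L^2_{\mathrm{rad}}\oplus L^2_{\mathrm{rad}}^{\perp}$.
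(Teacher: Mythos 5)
Your proof is correct, and while the diagonalization of $\tilde{P}$ on the radial subspace is essentially the same as the paper's (the logarithmic change of variable $s=\log t$ carrying $Df=-4t^2f''$ on $L^2((0,\infty),\tfrac{dt}{4t^2})$ to $I-4\tfrac{d^2}{ds^2}$ on $L^2(\mathbb{R},\tfrac{ds}{4})$ is precisely the Mellin spectral representation the paper imports from Section~\ref{short_pf}, just phrased without the Mellin name), the localization step at the end is a genuinely different route. The paper shows $\tilde{P}$ restricted to $L^2_{\mathrm{rad}}$ is unitarily equivalent to $\tilde{D}f=-4(t^2f')'$ on $L^2((0,\infty),dt)$, invokes invariance of the essential spectrum under compactly supported perturbations, and uses the reflection symmetry $T\tilde{D}=\tilde{D}T$ with $Tf(t)=\tfrac{1}{t}f(1/t)$ to force $\sigma_{\mathrm{ess}}$ at each end to equal $[1,\infty)$; you instead construct explicit Weyl sequences $v_n=\chi_n(\log(G/u))\,\varphi_\xi$ supported near the chosen end and verify the quasimode estimate $\|(\tilde{P}-(1+4\xi^2))v_n\|/\|v_n\|=O(1/n)$ directly. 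Both are valid; your route is more elementary and hands-on, dispenses with the reflection-symmetry observation specific to $\tilde{D}$, and produces Weyl sequences that live in $C_0^\infty(U)$ for the full operator rather than only its radial part, so it gives $[1,\infty)\subset\sigma_{\mathrm{ess}}(\tilde{P},U)$ without any detour through the invariant subspace; the paper's argument is a touch slicker. You also correctly flag, and defer, the point that $L^2_{\mathrm{rad}}$ must be shown to be a reducing subspace of the Friedrichs extension (not merely invariant for the formal differential expression on test functions), which is what the ``purely absolutely continuous'' clause actually requires; the paper passes over this with the bare assertion that $L^2_{\mathrm{rad}}$ is invariant, and your proposed repair via block-diagonality of the form with respect to $L^2_{\mathrm{rad}}\oplus L^2_{\mathrm{rad}}^{\perp}$ (proved by coarea and approximation) is the right resolution, though it is sketched rather than executed. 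One small point worth adding for completeness: in the construction of $\chi_n(s)=\chi((s-s_n)/n)$ the centers $s_n$ must escape to $\pm\infty$ faster than linearly in $n$ (e.g.\ $s_n=\pm n^2$) so that the widening supports $[s_n-Cn,s_n+Cn]$ eventually leave any fixed compact set and lie inside the image of $U$.
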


\begin{proof}
Using formulas (\ref{rule1}) and (\ref{rule2}) we find that
\begin{equation}\label{eq_doubleprime}
\frac{1}{W}P \left(u f(G/u) \right) = -4 u f''(G/u) \left(\frac{G}{u}\right)^2.
\end{equation}
This proves that $L^2_{\mathrm{rad}}\left(\Omega^\star,W \mathrm{d}\nu\right)$ is an invariant subspace of $\tilde{P}$, and the operator restricted to this subspace is unitarily equivalent to the symmetric operator
$$
D :  L^2\left((0,\,\infty),\frac{1}{4 t^2} \dt\right) \to L^2\left((0,\,\infty),\frac{1}{4 t^2} \dt\right)
$$
defined by
\begin{equation}\label{eq_D}
 \quad (Df)(t) := -4t^2f''(t).
\end{equation}
The spectral representation of $D$, in terms of the Mellin transform (with $n=1$), has been derived in Section~\ref{short_pf}  (see in particular, \eqref{eq_spect_rep}). More explicitly, it is the composition of the Mellin transform with the isometry from $L^2\left((0,\infty),\frac{1}{4 t^2}\mathrm{d}t\right)$ to $L^2\left((0,\infty),\mathrm{d}t\right)$, which is given by
\begin{equation}\label{L2L2}
f(t) \mapsto \frac{1}{2} f\left(\frac{1}{t}\right).
\end{equation}
It follows
\begin{equation}\label{eq_spD}
\sigma(D,(0,\infty)) = \sigma_{\mathrm{ac}}(D,(0,\infty)) = [1,\,\infty).
\end{equation}
Recall that by theorems~\ref{thm_Sch} and \ref{thm_Sch7} we have $$\gs(\tilde{P},\Gw^\star)=\gs_{\mathrm{ess}}(\tilde{P},\Gw^\star)\subset [1,\infty).$$ Therefore, \eqref{eq_spD} implies that
$$\gs(\tilde{P},\Gw^\star)=\gs_{\mathrm{ess}}(\tilde{P},\Gw^\star)= [1,\infty).$$

\vskip 3mm

It remains to explain why we can localize the spectral result at a neighborhood $U\subset \Gw^\star$ of either $0$ or infinity of $\Gw$.

It is not difficult to check using the above results that $\tilde{P}$ on $L^2_{\mathrm{rad}}\left(\Omega^\star,W \mathrm{d}\nu\right)$ is unitarily equivalent to the operator
$$\tilde{D}f=-4(t^2f')' \qquad \mbox{defined on } L^2\left((0,\infty),\mathrm{d}t\right).$$
 Moreover, a neighborhood of $0$ (resp. of $\infty$) in $\Omega^\star$ corresponds to a neighborhood of $0$ (resp. of $\infty$) in $(0,\infty)$.

 Therefore, it is enough to prove that the essential spectrum of $\tilde{D}$ restricted to a neighborhood of $0$ or $\infty$ in $(0,\infty)$ is $[1,\infty)$. First, we know that the essential spectrum is preserved under compactly supported perturbation, and this implies that $\gs_{\mathrm{ess}}(\tilde{D},(0,\infty))$ is equal to the union of $\gs_{\mathrm{ess}}(\tilde{D},U_0)$ and $\gs_{\mathrm{ess}}(\tilde{D},U_\infty)$, where $U_0$ (resp. $U_\infty$) is any neighborhood of $0$ (resp. $\infty$) in $(0,\infty)$. Let $U_0$ be a neighborhood of $0$, and define $U_\infty$ to be the neighborhood of $\infty$ obtained from $U_0$ by the transformation $t\mapsto\frac{1}{t}$. Consider the following isometry $T$ between $L^2(U_0,\mathrm{d}t)$ and $L^2(U_\infty,\mathrm{d}t)$ given by

$$Tf(t)=\frac{1}{t}f\left(\frac{1}{t}\right).$$
A computation shows that

$$T\tilde{D}=\tilde{D}T,$$
and this implies that the essential spectrum of $\tilde{P}$ restricted to $U_0$ is equal to the essential spectrum of $\tilde{P}$ restricted to $U_\infty$. Since the union of these two essential spectra is $[1,\infty)$, we get that each one is equal to $[1,\infty)$.
\end{proof}
\begin{remark}\label{rem_another_pr}{\em
The latter assertion of Theorem~\ref{essential_spectrum} provides us with an alternative proof (in the symmetric case) that $\lambda_\infty(P,W,\Omega^\star)=1$.
 }
 \end{remark}

Collecting the transformations \eqref{eq_isoA},\eqref{eq_D}, and \eqref{L2L2}, we obtain a spectral representation of $\tilde{P}=W^{-1} P$ restricted to $L^2_{\mathrm{rad}}\left(\Omega^\star,W \mathrm{d}\nu\right)$.

\begin{Cor}
The operator $\mathcal{F}$ given by
\begin{equation}\label{trans}
\mathcal{F}f(\xi):=\sqrt{\frac{2}{\pi}}\int_{\Omega^\star}f(x)\varphi(\xi,x)W(x)\mathrm{d}\nu(x)\qquad \xi\in \R,
\end{equation}
(where $\varphi(\xi,x)$ is defined by \eqref{eq_phixi}) is a well defined unitary operator from $L_\mathrm{rad}^2(\Gw^\star,W\mathrm{d}\nu)$ onto $L^2(\R,\dxi)$, whose inverse is given by
$$\mathcal{F}^{-1}g(x)= \sqrt{\frac{2}{\pi}}\int_{\R}g(\xi)\varphi(-\xi,x)\,\mathrm{d}\xi.$$
Furthermore,
$$ \mathcal{F} \frac{1}{W} P \mathcal{F}^{-1} f(\xi) = (1 + 4 \xi^2) f(\xi).$$
\end{Cor}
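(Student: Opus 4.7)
The plan is to realize $\mathcal{F}$ as a composition of three already-established unitary maps, up to the harmless reflection $\xi\mapsto-\xi$, and then invoke the fact that $\varphi(\xi,\cdot)$ is a generalized eigenfunction of $\tilde{P}=W^{-1}P$ with eigenvalue $1+4\xi^2$ to obtain the diagonalization.

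First, for $v=uf(G/u)\in L^2_{\mathrm{rad}}(\Omega^\star,W\dnu)$ with $f\in C_0^\infty((0,\infty))$ (a dense subspace on which all integrals converge absolutely), I would apply the coarea computation carried out in the proof of Lemma~\ref{radspace} to the product $v\cdot\varphi(\xi,\cdot)=u^2 f(G/u)\,(G/u)^{1/2}\mathrm{e}^{\mathrm{i}\xi\log(G/u)}$. This gives
$$
\mathcal{F}v(\xi)=\frac{1}{4}\sqrt{\frac{2}{\pi}}\int_0^\infty f(t)\,t^{\mathrm{i}\xi-3/2}\,\dt.
$$

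Next, I would match this with the Mellin transform. Denote by $U_1:L^2_{\mathrm{rad}}(\Omega^\star,W\dnu)\to L^2((0,\infty),\tfrac{1}{4t^2}\dt)$ the isometry of Lemma~\ref{radspace}, by $U_2:L^2((0,\infty),\tfrac{1}{4t^2}\dt)\to L^2((0,\infty),\dt)$ the unitary $f(t)\mapsto\tfrac{1}{2}f(1/t)$ from Section~\ref{short_pf}, and by $\mathcal{M}$ the unitary Mellin transform. A substitution $s=1/t$ shows
$$
\mathcal{M}(U_2 U_1 v)(\xi)=\frac{1}{4}\sqrt{\frac{2}{\pi}}\int_0^\infty f(s)\,s^{-\mathrm{i}\xi-3/2}\,\ds=\mathcal{F}v(-\xi),
$$
so if $R$ denotes the unitary reflection $g(\xi)\mapsto g(-\xi)$ on $L^2(\mathbb{R})$, then $\mathcal{F}=R\circ\mathcal{M}\circ U_2\circ U_1$ on the dense subspace. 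Each factor being unitary, $\mathcal{F}$ extends by continuity to a unitary operator from $L^2_{\mathrm{rad}}(\Omega^\star,W\dnu)$ onto $L^2(\mathbb{R},\dxi)$.

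The inverse formula then follows by inverting each factor: $\mathcal{F}^{-1}=U_1^{-1}\circ U_2^{-1}\circ\mathcal{M}^{-1}\circ R$. Applying the inverse Mellin transform $\mathcal{M}^{-1}g(r)=\tfrac{1}{\sqrt{2\pi}}\int_\mathbb{R}g(\xi)r^{-\mathrm{i}\xi-1/2}\,\dxi$, reversing $U_2$ and $U_1$, and recognizing that $\varphi(-\xi,x)=u(G/u)^{1/2}\mathrm{e}^{-\mathrm{i}\xi\log(G/u)}$, one retrieves the stated formula for $\mathcal{F}^{-1}$ by the same coarea identity read backwards. The diagonalization $\mathcal{F}\tilde{P}\mathcal{F}^{-1}f(\xi)=(1+4\xi^2)f(\xi)$ is then immediate: by \eqref{eq_doubleprime}, $U_1$ conjugates $\tilde{P}$ into the operator $D$ of \eqref{eq_D}, which in turn is diagonalized by $U_2$ followed by the Mellin transform into multiplication by $1+4\xi^2$ (as in \eqref{eq_spect_rep}), and the symmetric multiplier $1+4\xi^2$ commutes with the reflection $R$.

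The main obstacle is the rigorous justification of the coarea identity for the oscillating, merely $L^2$-integrable integrand $v\,\varphi(\xi,\cdot)\,W$; this is why I would first argue on the dense subspace $f\in C_0^\infty((0,\infty))$ (where $v$ is supported in a compact subset of $\Omega^\star$ bounded away from the singular locus of $G/u$) and then extend by continuity using the already-verified unitarity of $U_1$, $U_2$ and $\mathcal{M}$. Nonsmoothness of $P$ is handled by the same approximation argument as in Lemma~\ref{radspace}.
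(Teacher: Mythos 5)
Your proposal is correct and follows essentially the same route as the paper, which simply asserts the corollary as a consequence of ``collecting the transformations'' \eqref{eq_isoA}, \eqref{L2L2} and the Mellin transform; your contribution is to carry out this composition explicitly, and your observation that a reflection $R:\xi\mapsto-\xi$ is needed (due to the sign convention in the Mellin transform and the inversion $t\mapsto 1/t$, with $R$ commuting with the even multiplier $1+4\xi^2$) is a careful detail the paper glosses over. The dense-subspace argument with $f\in C_0^\infty((0,\infty))$, followed by extension via unitarity, is the appropriate way to justify the coarea identity for the non-$L^1$ integrand.
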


\begin{remark}\label{rem_rad}{\em
Formula \eqref{eq_doubleprime} is valid also in the nonsymmetric case. So, the operator $\tilde{P}=W^{-1}P$ restricted to ``radial" functions (i.e., functions in  $\mathcal{U}_{\mathrm{rad}}(\Omega^\star)\cap\mathcal{V}$) is in fact a Fuchsian-type ordinary differential operator. In particular, under the assumptions of Theorem~\ref{thm_nsa}, for any $\gl<1$ the set  of all ``radial" positive solutions of the equation $(P-\gl W)u=0$ in $\Gw^\star$  (i.e., the set  $\mathcal{C}_{\tilde{P}-\gl}(\Omega^\star)\cap \mathcal{U}_{\mathrm{rad}}(\Omega^\star)$) is a two dimensional cone, while by Theorem~\ref{thm_Sch}, the entire cone $\mathcal{C}_{\tilde{P}-1}(\Omega^\star)=\mathcal{C}_{P-W}(\Omega^\star)$ is a singleton.
}
\end{remark}

\vskip 3mm

We provide below a more detailed and explicit construction of the above transform $\mathcal{F}$ using methods related to classical Fourier transform.  This also gives independent proof of Theorem~\ref{essential_spectrum}.

\paragraph {\em Alternative proof of Theorem~\ref{essential_spectrum}:}
The idea is to find a spectral representation of $\tilde{P}$ restricted to $L^2_\mathrm{rad}(\Omega^\star,W\mathrm{d}\nu)$, that is a unitary operator

$$U : L^2_\mathrm{rad}(\Omega^\star,W\mathrm{d}\nu) \mapsto L^2(\R)$$
such that $U\tilde{P}U^{-1}$ is the multiplication by a real function with values in $[1,+\infty)$. Since the ground state transform is unitary, we may assume that $u=\mathbf{1}$.
For the sake of brevity, we will denote $\mathcal{F}f(\xi)$ by $\hat{f}(\xi)$. We thus have to prove that for every $f\in C_0^\infty(\Omega^\star)$ which is constant on the level sets of $G$, the following  two identities hold
\begin{equation}\label{parseval}
\int_{\Omega^\star}|f|^2W\mathrm{d}\nu=\int_{\R}|\hat{f}|^2\,\mathrm{d}\xi \quad \mbox{(Plancherel-type formula)}
\end{equation}
 and
\begin{equation}\label{inversion}
f(x)=\sqrt{\frac{2}{\pi}}\int_{\R}\hat{f}(\xi)\varphi(-\xi,x)\,\mathrm{d}\xi\qquad \forall x\in \Gw^\star \quad \mbox{(the inversion formula)}.
\end{equation}
For a fixed  $r>0$, we define $\Omega(r)$ to be the open, relatively compact set
$$\Omega(r):=\{-r\pi<\log(G)<r\pi\},$$
and for any $k\in \mathbb{Z}$, we denote

$$\varphi^r_k(x):=\varphi\left(\frac{k}{r},x\right)=\sqrt{G}\exp\left(\mathrm{i}\frac{k}{r} \log(G)\right) \qquad x\in \Gw(r).$$
Consider the ``torus" $\mathbf{T}_r$ to be the closure of $\Omega(r)$ divided by the equivalence relation

$$x\equiv y\Leftrightarrow \log G (x)=\log G (y) \mod \left(2\pi r \right).$$

The set of complex valued continues functions $C(\mathbf{T}_r;\mathbb{C})$ can be identified to the set of complex valued continuous functions on the closure of $\Omega(r)$,  each of which is constant on the level sets of $G$, and its value on the set $\{\log G=-\pi r\}$ is equal to its value on the set $\{\log G=\pi r\}$. In particular, for every $k\in \mathbb{Z}$, we have $\exp(\mathrm{i}\frac{k}{r}\log G)\in C(\mathbf{T}_r;\mathbb{C})$. We also define the space $L_\mathrm{rad}^2(\mathbf{T}_r;\mathbb{C})$, with the induced measure from $\Omega_r$. We want to decompose the elements of $L_\mathrm{rad}^2(\Omega(r);\mathbb{C})$ in ``Fourier series" with respect to the family $\{\varphi^r_k\}_{k\in \mathbb{Z}}$. First, we check the orthonormality.

\begin{Lem}\label{lem_orth}
For any $r>0$ it holds

$$\frac{2}{ \pi r}\int_{\Omega(r)}\varphi^r_k\overline{\varphi^r_l}W\mathrm{d}\nu=\delta_{k,l} \qquad \forall\,  k, l\in\mathbb{Z}.$$

\end{Lem}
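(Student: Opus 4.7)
The plan is to reduce this identity to the classical orthogonality of the Fourier exponentials on the interval $(-r\pi,r\pi)$. After a ground state transform with respect to $u$, I may assume $u=\mathbf{1}$, so that $\varphi^r_k(x)=\sqrt{G(x)}\,\exp\!\bigl(\mathrm{i}\tfrac{k}{r}\log G(x)\bigr)$ and hence
$$\varphi^r_k(x)\,\overline{\varphi^r_l(x)}=G(x)\,\exp\!\Bigl(\mathrm{i}\tfrac{k-l}{r}\log G(x)\Bigr).$$
Since the right-hand side depends on $x$ only through $G(x)$, the integral over $\Omega(r)=\{e^{-r\pi}<G<e^{r\pi}\}$ is naturally computed by the coarea formula used in the proof of Lemma~\ref{radspace} and Corollary~\ref{Estimate W}.

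The key step is to invoke the coarea identity
$$\int_{\Omega(r)} h(G)\,G\,W\,\mathrm{d}\nu = \frac{1}{4}\int_{e^{-r\pi}}^{e^{r\pi}} h(t)\,\frac{\mathrm{d}t}{t},$$
valid for any bounded Borel function $h$ on $(e^{-r\pi},e^{r\pi})$. This is exactly the computation carried out in \eqref{eq_coarea}, where the normalizing constant $\gamma=u(0)=1$ from \eqref{ConstGamma} and the relation $W|\nabla G|_A^{-1}=\tfrac{1}{4G^2}|\nabla G|_A$ combine to produce the factor $\tfrac{1}{4t^2}$ on the level set $\{G=t\}$, and the measure element then pairs with the extra factor $G=t$ to give $\tfrac{1}{4t}\,\mathrm{d}t$. (For nonsmooth $P$ one approximates by smooth operators as in Lemma~\ref{radspace}.) Applying this identity with $h(t)=\exp\!\bigl(\mathrm{i}\tfrac{k-l}{r}\log t\bigr)$ yields
$$\int_{\Omega(r)}\varphi^r_k\,\overline{\varphi^r_l}\,W\,\mathrm{d}\nu
   =\frac{1}{4}\int_{e^{-r\pi}}^{e^{r\pi}} \exp\!\Bigl(\mathrm{i}\tfrac{k-l}{r}\log t\Bigr)\,\frac{\mathrm{d}t}{t}.$$

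Finally, the substitution $s=\log t$ converts this to
$$\frac{1}{4}\int_{-r\pi}^{r\pi}\exp\!\Bigl(\mathrm{i}\tfrac{k-l}{r}s\Bigr)\,\mathrm{d}s,$$
which equals $\tfrac{r\pi}{2}$ if $k=l$ and vanishes otherwise (the standard Fourier orthogonality on $(-r\pi,r\pi)$, since $(k-l)/r$ multiplied by the half-period $r\pi$ gives an integer multiple of $\pi$). Multiplying by $\tfrac{2}{\pi r}$ gives $\delta_{k,l}$ and finishes the proof.

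The main conceptual obstacle is simply ensuring that the coarea computation of Lemma~\ref{radspace} applies verbatim on the truncated annular set $\Omega(r)$ rather than on $\Omega^\star$; this is purely a matter of restricting the range of integration in the $t$-variable from $(0,\infty)$ to $(e^{-r\pi},e^{r\pi})$, and of handling the regularity of the level sets of $G$ by Sard's lemma (or by smooth approximation of $P$), exactly as was already done. Once that identity is in hand, the rest is a one-line Fourier computation.
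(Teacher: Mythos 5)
Your proof is correct, but it follows a genuinely different route from the paper's. You observe that $\varphi^r_k\overline{\varphi^r_l}=G\,\exp(\mathrm{i}\frac{k-l}{r}\log G)$ depends on $x$ only through $G$, restrict the coarea/isometry computation of Lemma~\ref{radspace} (already invoked in Corollary~\ref{Estimate W}) to the annular set $\Omega(r)$, and reduce the whole statement at once to the classical Fourier orthogonality $\int_{-r\pi}^{r\pi}e^{\mathrm{i}(k-l)s/r}\ds=2\pi r\,\delta_{k,l}$. The paper instead proceeds case by case: for $k\neq\pm l$ it treats $\varphi^r_k$ and $\varphi^r_l$ as generalized eigenfunctions of $\tilde P$ with distinct eigenvalues and derives orthogonality from Green's formula after checking that the boundary terms cancel (using that $\exp(\mathrm{i}\frac{k}{r}\log G)=(-1)^k$ on $\partial\Omega(r)$ and that $\int_{\partial\Omega(r)}\langle A\nabla G,\vec{\gs}\rangle\,\mathrm{d}\gs=0$); the case $l=-k$ is reduced to $(\varphi^r_{2k},\varphi^r_0)$, and the diagonal case $k=l$ is handled by Corollary~\ref{Estimate W}. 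Your approach is more uniform and arguably more transparent, since it makes the ``torus'' $\mathbf{T}_r$ structure and the Fourier-series interpretation explicit in one stroke; the paper's Green-formula argument buys a proof that does not channel every calculation through the coarea identity and exhibits the orthogonality as a direct consequence of formal self-adjointness of the eigenvalue problem, which may be the point the authors want to stress in an ``alternative, independent'' proof. The one thing to keep in mind is exactly what you flag: the coarea identity must be known to hold on the truncated domain $\Omega(r)$ and the level sets of $G$ handled via Sard/approximation, which is indeed already addressed in the proof of Lemma~\ref{radspace}, so no gap remains.
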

\begin{proof}[Proof of Lemma~\ref{lem_orth}]
Notice that $\overline{\varphi^r_l}=\varphi_{-l}^r$. If $k\neq l$ and $k\neq-l$, then $\varphi_k^r$ and $\varphi_{l}^r$ are generalized eigenfunctions of $P$ with different associated eigenvalues, and to prove their orthogonality we need to establish the identity

$$\int_{\Omega(r)}\left(P[\varphi_k^r]\varphi_l^r-\varphi_k^rP[\varphi_k^r]\right)\dnu=0.$$
To this end, we have to check that the boundary term in the corresponding Green formula is zero. This boundary term is
given by
$$B.T.:=\int_{\partial\Omega(r)}\left\langle A\nabla[\varphi_k^r] \varphi_l^r- A\nabla[\varphi_l^r]\varphi_k^r ,\vec{\gs}\right\rangle\,\mathrm{d}\sigma.$$
We compute
$$\nabla\varphi(\xi,\cdot)=\exp(\mathrm{i}\xi\log(G))\left(\nabla G^{1/2}\right)+\mathrm{i}\xi G^{-1/2}\exp(\mathrm{i}\xi\log(G))\nabla G.$$
Since $\exp(\mathrm{i}\frac{k}{r}\log(G))$ and $\exp(\mathrm{i}\frac{l}{r}\log(G))$ are constant (equal to $(-1)^k$ and $(-1)^l$ respectively) on $\partial \Omega(r)$, we have

$$B.T.=\mathrm{i}(-1)^{k+l}\frac{(k-l)}{r}\int_{\partial\Omega(r)}\left\langle A\nabla[ G],\vec{\gs}\right\rangle \,\mathrm{d}\sigma.$$
On the other hand, applying the Green formula on the pair $(1,G)$, we obtain
$$\int_{\Omega(r)}P[G]\mathbf{1}\dnu-\int_{\Omega(r)}GP[\mathbf{1}]\dnu=\int_{\partial\Omega(r)}\left\langle A\nabla[G]\mathbf{1}- A\nabla[\mathbf{1}] G,\vec{\gs}\right\rangle\,\mathrm{d}\sigma,$$
and recalling that we assumed that $P\mathbf{1}=0,$ and that also $PG=0$ on $\Omega(r)$, we get

$$\int_{\partial\Omega(r)} \left\langle A\nabla[ G],\vec{\gs}\right\rangle  \,\mathrm{d}\sigma=0,$$
and thus $B.T.=0$.

If $k\in \mathbb{Z}$ and $l=-k\neq 0$, then $\varphi_k^r\overline{\varphi_{-k}^r}=\varphi_{2k}^r\varphi_0$ and the orthogonality of $\varphi_{2k}$ and $\varphi_0$ have been already established.
On the other hand, for $k\in \mathbb{Z}$, and $l=k$, we have

$$\int_{\Omega(r)}|\varphi^r_k|^2W\mathrm{d}\nu =\int_{\Omega(r)}GW\mathrm{d}\nu,$$
and the integral is equal to $\pi r/2$ according to Corollary~\ref{Estimate W}.
\end{proof}

\paragraph {\em Continuation of the alternative proof of Theorem~\ref{essential_spectrum}:} Since $\mathbf{T}_r$ is compact, the Stone-Weierstrass theorem implies that the vector space generated by the sequence $\big\{\exp(\mathrm{i}k/r\log(G))\big\}_{k\in \mathbb{Z}}$ is dense in $C(\mathbf{T}_r;\mathbb{C})$ (in the topology of uniform convergence). Therefore, the orthonormal  series $\left\{(\pi r / 2)^{-1/2}\varphi_k^r\right\}_{k\in \mathbb{Z}}$ is complete  in $L_\mathrm{rad}^2(\Omega(r);\mathbb{C})$. Consequently, by Parseval's equalities, the following discrete analogues of \eqref{parseval} and \eqref{inversion} are available for every $f\in L_\mathrm{rad}^2(\Omega(r);\mathbb{C})$:
\begin{equation}\label{parseval_discret}
\int_{\mathbf{T}_r}|f|^2W\mathrm{d}\nu=
\frac{1}{r}\sum_{k\in\mathbb{Z}}\left|\hat{f}\left(\frac{k}{r}\right)\right|^2
\end{equation}
and

\begin{equation}\label{inversion1}
f(x)=\frac{1}{r} \sqrt{\frac{2}{\pi}}\sum_{k\in\mathbb{Z}}\hat{f}\left(\frac{k}{r}\right)\varphi\left(-\frac{k}{r},x\right)
\end{equation}

Fix now $f\in C_0^\infty(\Omega^\star)\cap L_\mathrm{rad}^2(\Omega^\star,W\mathrm{d}\,\nu)$, and choose $r>0$ such that the support of $f$ is included in $\Omega(r)$ (this is possible since the fact that $G$ tends to $0$ at infinity implies that $\{\Omega(r)\}_{r>0}$ is an exhaustion of $\Omega^\star$). Let us apply \eqref{parseval_discret} and \eqref{inversion1} to the function $g:=\exp(\mathrm{i}\alpha \log(G))f$, for $\alpha\in (0,1/r)$: we get

$$\int_{\mathbf{T}_r}|f|^2W\mathrm{d}\nu=\frac{1}{r}\sum_{k\in\mathbb{Z}}\left|\hat{f}\left(\frac{k}{r}+\alpha\right)\right|^2\,,$$
and

$$f=\frac{1}{r} \sqrt{\frac{2}{\pi}}\sum_{k\in\mathbb{Z}}\hat{f}\left(\frac{k}{r}+\alpha\right)\varphi\left(-\frac{k}{r}-\alpha,\cdot\right).$$
We integrate these two equalities with respect to $\alpha\in(0,1/r)$: recalling that $f$ has support in $\Omega(r)$, we obtain

$$\int_{\Omega^\star}|f|^2W\mathrm{d}\nu= \int_{\R}|\hat{f}|^2\dxi,$$
and
$$f(x)= \sqrt{\frac{2}{\pi}} \int_{\R}\hat{f}(\xi)\varphi(-\xi,x)\,\mathrm{d}\xi .$$
This is exactly \eqref{parseval} and \eqref{inversion}.  \qed

We conclude this section with the following conjecture that arises naturally from our study.
\begin{conjecture}\label{conj_gen_ef}
Assume that $\gf_0$ is a ground state of a symmetric critical operator $P$ in $\Gw$. For $\gl>0$ let $\gf_\gl$ be a solution of the equation $(P-\gl)u=0$ in $\Gw$ satisfying
\begin{equation}\label{eq_gen_ef}
|\gf_\gl(x)|\leq C\gf_0(x)  \qquad \forall x\in \Gw,
\end{equation}
where $C>0$ is a constant. Then $\gl$ belongs to the spectrum of the Friedrichs extension of the operator $P$ on $L^2(\Gw, \dx)$.
\end{conjecture}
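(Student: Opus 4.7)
The plan is to prove this by a Shnol-type argument adapted to the critical setting, using the ground state transform as the main reduction. First I would apply the ground state transform $T_{\varphi_0}$ and work with $\tilde{P} := P_{\varphi_0}$ acting on $L^2(\Omega,\varphi_0^2\,\mathrm{d}\nu)$. By Proposition~\ref{gtransform} this operator is unitarily equivalent to $P$, so spectra coincide, and it suffices to prove $\lambda \in \sigma(\tilde{P})$. The key simplification is that $\tilde{P}\mathbf{1}=0$, $\tilde{P}$ has no zero-order term, and the hypothesis translates into the bounded solution $\tilde{\psi}:=\varphi_\lambda/\varphi_0$ of $\tilde{P}\tilde{\psi}=\lambda\tilde{\psi}$ with $\|\tilde{\psi}\|_\infty\leq C$.

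Next I would exploit the criticality of $\tilde{P}$: by the null-sequence characterization of criticality (in the spirit of Pinchover--Tintarev), there exist $\eta_n\in C_0^\infty(\Omega)$ with $0\leq\eta_n\leq 1$, $\eta_n\to\mathbf{1}$ locally uniformly, and
\[
\int_\Omega |\nabla\eta_n|_A^2\,\varphi_0^2\,\mathrm{d}\nu\longrightarrow 0.
\]
Testing with $u_n:=\eta_n\tilde{\psi}$, an integration by parts using $\tilde{P}\tilde{\psi}=\lambda\tilde{\psi}$ and $\tilde{P}\mathbf{1}=0$ gives the clean identity
\[
q_{\tilde{P}-\lambda}(u_n) = \int_\Omega |\nabla\eta_n|_A^2\,|\tilde{\psi}|^2\,\varphi_0^2\,\mathrm{d}\nu \leq C^2 \int_\Omega |\nabla\eta_n|_A^2\,\varphi_0^2\,\mathrm{d}\nu \longrightarrow 0.
\]
This already shows that $u_n$ is a \emph{quadratic-form Weyl sequence} at energy $\lambda$, which in the positive-critical case (where $\mathbf{1}\in L^2(\varphi_0^2\mathrm{d}\nu)$ forces $\tilde{\psi}\in L^2(\varphi_0^2\mathrm{d}\nu)$) immediately yields $\lambda\in\sigma(\tilde{P})$.

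The remaining and \emph{main obstacle} is to upgrade this form-level estimate to an operator-level Weyl sequence $\|(\tilde{P}-\lambda)v_n\|_{L^2}/\|v_n\|_{L^2}\to 0$, in particular in the null-critical regime where $\|u_n\|\to\infty$. Writing
\[
(\tilde{P}-\lambda)u_n = (\tilde{P}\eta_n)\,\tilde{\psi} - 2\,A\nabla\eta_n\cdot\nabla\tilde{\psi},
\]
one would need to refine the choice of cutoffs so that, in addition to $\|\nabla\eta_n\|_{L^2(\varphi_0^2\mathrm{d}\nu)}\to 0$, also $\|\tilde{P}\eta_n\|_{L^2(\varphi_0^2\mathrm{d}\nu)}$ is controlled, and to apply a Caccioppoli estimate to $\tilde{\psi}$ on the annular supports of $\nabla\eta_n$ to bound the second term. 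In the classical Shnol theorem this is achieved by a pigeonhole choice of radii where $\varphi_0^2\mathrm{d}\nu$-volume grows slowly; in the present generality one has no \emph{a priori} control on this volume growth, and it is precisely here that I expect the difficulty to lie. A promising route is to combine the null sequence $\eta_n$ with Agmon-type distance-function cutoffs and, on a subsequence, select scales along which both the annular volume of $\varphi_0^2\mathrm{d}\nu$ and the Caccioppoli tail of $\tilde{\psi}$ are small relative to the bulk. Carrying out this selection rigorously in the full generality of the conjecture is the heart of the problem, and would amount to establishing a genuine Shnol principle for symmetric critical operators with respect to the ground state measure.
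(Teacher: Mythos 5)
Be aware that the statement you have been asked to prove is \emph{Conjecture}~\ref{conj_gen_ef}: the paper itself does not prove it, but poses it as an open problem suggested by the analysis of Section~\ref{sec_essential}. There is therefore no proof in the paper against which to compare; what I can do is assess whether your partial argument is on solid ground.

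Your reduction and the core identity are correct. After the ground state transform, $\tilde{P}=P_{\varphi_0}$ has no zero-order term, $\tilde{\psi}:=\varphi_\lambda/\varphi_0$ is a bounded solution of $\tilde{P}\tilde{\psi}=\lambda\tilde{\psi}$, and a standard integration by parts (using $\tilde{P}\tilde{\psi}=\lambda\tilde{\psi}$) gives exactly
\begin{equation*}
q_{\tilde{P}}(\eta_n\tilde{\psi})-\lambda\|\eta_n\tilde{\psi}\|^2_{L^2(\varphi_0^2\,\mathrm{d}\nu)}=\int_\Omega|\nabla\eta_n|_A^2\,|\tilde{\psi}|^2\,\varphi_0^2\,\mathrm{d}\nu,
\end{equation*}
and the null-sequence characterization of criticality (which is indeed available, e.g.\ from \cite{Pinchover-Tintarev}) makes the right-hand side tend to zero.

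However, your phrase ``quadratic-form Weyl sequence\ldots immediately yields $\lambda\in\sigma(\tilde{P})$'' is misleading even in the positive-critical case. For $\lambda>0$ the form $q_{\tilde{P}-\lambda}$ is \emph{not} sign-definite, so $q_{\tilde{P}-\lambda}(u_n)\to 0$ with $\|u_n\|$ bounded away from zero does not by itself place $\lambda$ in the spectrum: a vector can have spectral mass above and below $\lambda$ that cancels in the form. What actually works in the positive-critical case is different and slightly longer: there $\mathbf{1}\in L^2(\varphi_0^2\,\mathrm{d}\nu)$, so the bounded $\tilde{\psi}$ is itself in $L^2$, $q_{\tilde{P}}(u_n)$ is bounded, $\{u_n\}$ has a weak limit $\tilde{\psi}$ in the form domain, and since $\tilde{P}\tilde{\psi}=\lambda\tilde{\psi}\in L^2$ one concludes $\tilde{\psi}$ lies in the domain of the Friedrichs extension and is a genuine eigenvector. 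You should state this argument rather than invoke a form-level Weyl criterion.

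In the null-critical case, the gap you identify is real and is precisely why the statement remains a conjecture. Writing $(\tilde{P}-\lambda)u_n=(\tilde{P}\eta_n)\tilde{\psi}-2A\nabla\eta_n\cdot\nabla\tilde{\psi}$, one must bound $\|\tilde{P}\eta_n\|_{L^2(\varphi_0^2\,\mathrm{d}\nu)}$ and the Caccioppoli term relative to $\|u_n\|\to\infty$, and criticality alone gives no control over the $\varphi_0^2\,\mathrm{d}\nu$-volume growth on the annular supports of $\nabla\eta_n$, so the classical Shnol pigeonhole cannot be run. You correctly diagnose that resolving this would amount to a Shnol theorem for critical operators with respect to the ground state measure; no such general result is known, and the paper leaves it open.
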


\begin{Rem}
{\em
(Added after the paper was accepted) The results concerning the (essential) spectrum of $\frac{1}{W}P$ have recently been extended to some non-optimal potentials $W$ (see \cite{Dev}).
}

\end{Rem}

\mysection{Completeness of the induced Agmon metric and Rellich-type inequalities}\label{sec_agmon_m}

\subsection{Completeness of the induced metric}\label{sec-compl}
In this subsection we prove that the Agmon metric corresponding to optimal Hardy-weight $W$ in $\Omega^\star$ is {\em complete}. The completeness of $\Gw^\star$ in this metric implies sharp decay estimates for solutions of the equations the $Pu=f$ in $\Gw^\star$ (see Subsection~\ref{sec_decay}).
\begin{lemma}\label{lem_complete}
Suppose that the assumptions of Theorem~\ref{thm_nsa} are satisfied and let $W$ be the corresponding optimal Hardy-weight. Assume further that $W$ is strictly positive. Then $\Gw^\star$ is complete in the Agmon (Riemannian) metric
\begin{equation}\label{eq_Ag_metric}
\qquad \ds^2:= W(x)\sum_{i,j=1}^{n} a_{ij}(x)\dx^i\dx^j, \quad \mbox{where } \big[a_{ij}\big]:= \big[a^{ij}\big]^{-1}.
\end{equation}
\end{lemma}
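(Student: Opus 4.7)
The plan is to use $\rho := \log(G/u)$ as a global ``radial coordinate'' on $\Omega^\star$: it blows up at both ends of $\Omega^\star$ and is globally $2$-Lipschitz in the Agmon distance, which together force completeness. First observe that $u$ is continuous and positive on the whole of $\Omega$ while $G = G_P^\Omega(\cdot,0)$ has a pole at $0$, hence $\rho(x)\to +\infty$ as $x\to 0$; by the standing hypothesis \eqref{u1u0a}, $\rho(x)\to -\infty$ as $x\to\infty$ in $\Omega$. Consequently, for every $M>0$ the set $\{x\in\Omega^\star : |\rho(x)|\leq M\}$ is contained in a compact subset of $\Omega^\star$ that is bounded away from both the puncture and the ideal point of $\Omega$.

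Next I would establish the estimate $|\rho(x)-\rho(y)|\leq 2\,d(x,y)$, where $d$ denotes the Agmon distance associated to \eqref{eq_Ag_metric}. For any piecewise $C^1$ curve $\gamma:[a,b]\to\Omega^\star$, Cauchy--Schwarz in the duality pairing between the co-metric $|\cdot|_A$ on gradients and the metric $|\cdot|_{A^{-1}}$ on tangent vectors yields
\[
|\rho(\gamma(b))-\rho(\gamma(a))|\leq \int_a^b |\nabla\rho(\gamma(t))|_A\,|\dot\gamma(t)|_{A^{-1}}\,\dt = 2\int_a^b\sqrt{W(\gamma(t))}\,|\dot\gamma(t)|_{A^{-1}}\,\dt,
\]
where I used the algebraic identity $|\nabla\rho|_A^2 = 4W$ that is built into the very definition of $W$ in \eqref{eq_W}. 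The right-hand side is exactly twice the length of $\gamma$ in the Agmon metric, so the claimed Lipschitz estimate follows upon taking the infimum over $\gamma$.

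Given these two facts, completeness is immediate. Let $\{x_n\}\subset\Omega^\star$ be Cauchy in $d$. The Lipschitz estimate forces $\{\rho(x_n)\}$ to be Cauchy in $\mathbb{R}$, hence bounded by some $M$, and the first paragraph then confines $\{x_n\}$ to a fixed compact set $K\subset\Omega^\star$. On a neighborhood of $K$, the (assumed) strict positivity and continuity of $W$ together with the local uniform ellipticity of $A$ imply that the Agmon Riemannian metric is uniformly comparable to the Euclidean one, so $\{x_n\}$ is Cauchy in the Euclidean metric as well, converges to some $x^\star\in K\subset\Omega^\star$, and the convergence also holds in $d$.

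The only genuine content lies in the blow-up behavior of $\rho$ at the two ``ends'' of $\Omega^\star$, which is exactly what \eqref{u1u0a} and the pole structure of $G$ encode; the Lipschitz inequality then just expresses $|\nabla\rho|_A=2\sqrt W$ through Cauchy--Schwarz. The strict positivity hypothesis on $W$ enters only at the final step, to guarantee that $d$ is a genuine metric and is quasi-isometric to the Euclidean distance on compact subsets of $\Omega^\star$; without it the Agmon ``distance'' would degenerate to a pseudo-distance and the convergence argument at the very end would collapse.
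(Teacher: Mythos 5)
Your proof is correct and follows essentially the same route as the paper's: both proofs exploit the identity $4W=|\nabla\log(G/u)|_A^2$, apply Cauchy--Schwarz in the $A$--$A^{-1}$ dual pairing, and use that $\log(G/u)$ diverges to $\pm\infty$ at the two ends of $\Omega^\star$. The only difference is the completeness criterion invoked: the paper (following Li--Wang) shows directly that any curve tending to infinity in $\Omega^\star$ has infinite Agmon length, while you verify metric completeness via Cauchy sequences, which requires the extra (correct) observation that on compact subsets of $\Omega^\star$ the Agmon metric is comparable to the Euclidean one thanks to $W>0$ and local ellipticity. Both are standard equivalent formulations and your version is, if anything, slightly more explicit about where the strict positivity of $W$ is used.
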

\begin{proof}
We follow the proof of Lemma 1.5 in \cite{LW}. Let $\gamma$ be a curve in $\Omega^\star$ such that $\gamma(t)\to\infty$ in $\Omega^\star$ when $t\to T$. Here, $T$ is finite or infinite. We have to show that the length $L(\gamma)$ of $\gamma$ for the metric $\mathrm{d}s^2$ is infinite. Denoting $v:=\frac{G}{u}$, we compute

\begin{multline}
L(\gamma)=\int_0^T\!\!\!\sqrt{W(\gamma(s))}|\gamma'|_{A^{-1}}\ds
=\dfrac{1}{2}\int_0^T\!\!\!|\nabla \log v|_A(\gamma(s))|\gamma'|_{A^{-1}}\ds.
\end{multline}
Define $\nabla_A$ to be the gradient with respect to the metric $|\cdot|_A$. For a function $f$ and a vector $v\in T_x\Omega$, by definition of the gradient, we have the following identity
$$\mathrm{d}f_x(v)=\langle \nabla f,v\rangle=\langle A\nabla_Af,v\rangle,$$
which shows that $\nabla_A=A^{-1}\nabla$. From this, we see that
\begin{align*}
|\nabla_{A^{-1}}f|_{A^{-1}}^2=\langle A^{-1}\nabla_{A^{-1}}f,\nabla_{A^{-1}}f\rangle=
\langle A\nabla f,\nabla f\rangle=|\nabla f|_A^2.
\end{align*}
Using this last identity, we get
\begin{align*}
L(\gamma)&=\frac{1}{2}\int_0^T|\nabla_{A^{-1}}\log v|_{A^{-1}}(\gamma(s))|\gamma'(s)|_{A^{-1}}\ds
\geq \frac{1}{2}\int_0^T\left|\frac{\mathrm{d}}{\ds}\log v(\gamma(s)) \right|\ds\\[3mm]
&\geq\frac{1}{2}\left|\int_0^T\frac{\mathrm{d}}{\ds}\Big(\log v\big(\gamma(s)\big)\Big)\ds\right|
=\frac{1}{2}\lim_{t\to T}|\log v(\gamma(t))-\log v(\gamma(0))|.
\end{align*}
Since $\gamma(t)\to\infty$ in $\Gw^\star$ as $t\to T$, and  $\lim_{x\to \infty} |\log v(x)|=\infty$, we deduce that $L(\gamma)=\infty$.
\end{proof}

\subsection{Decay of solutions of $Pu=f$ and Rellich-type inequality}\label{sec_decay}
Let $P$ be a Schr\"odinger operator of the form
\be \label{Sch}
Pu=-\sum_{i,j=1}^{n}
\partial_{i}\big(a^{ij}(x)\partial_{j}u\big)+c(x)u
\end{equation}
defined on a domain $\Gw\subset \mathbb{R}^n$. A theorem of Agmon
\cite[Theorem~1.5]{Ag82} states that under certain conditions on $P$, solutions $u$ of the equation
$Pu=f$ in $\Gw$ that do not grow too fast, in fact, decay rapidly. The main condition which is required for the validity of the theorem is given by
\begin{equation}\label{Ag_cond}
(P\gf,\gf)\geq \int_{\Gw} \gl(x)|\gf|^2 \dx
\qquad \forall \gf\in C_0^\infty(\Gw),
\end{equation}
where $\gl$ is a nonnegative weight function. The decay is then given in terms of a function $h$ satisfying
\begin{equation}\label{h_cond}
    |\nabla h(x)|^2_A< \gl(x) \qquad \mbox{ a.e. } \Gw.
\end{equation}
Any Hardy-weight $W$ given by \eqref{W} provides us with a natural candidate for $\gl$ and $h$. Assume that our Hardy-weight $W$ obtained by the supersolution construction with a pair $(v_0,v_1)$ is strictly positive a.e, in $\Gw$, and set
$$\gl:=W, \qquad h :=\frac{\gm}{2}\log\left(\frac{v_0}{v_1}\right),$$ where $0<\gm<1$.
Then $\gl$ and $h$  clearly satisfy \eqref{h_cond}.
Suppose also that a solution $u$ of $Pu=f$ in $\Gw$ satisfies the growth condition (1.13) in \cite{Ag82}. By Lemma~\ref{lem_complete} the induced Riemannian metric
\begin{equation}\label{eq_metric}
\qquad \ds^2:= W(x)\sum_{i,j=1}^{n} a_{ij}(x)\dx^i\dx^j, \quad \mbox{where } \big[a_{ij}\big]:= \big[a^{ij}\big]^{-1}
\end{equation}
is complete. Therefore, by \cite[Theorem~1.5]{Ag82}, the following \textit{Rellich-type inequality} holds true
\begin{equation}\label{exp}
 \left(1-\gm^2\right)^2\int_{\Gw} |u|^2W(x)\left(\frac{v_0}{v_1}\right)^\gm \dx\leq
  \int_{\Gw} \frac{|Pu|^2}{W(x)}\left(\frac{v_0}{v_1}\right)^\gm \dx\,.
\end{equation}
Assume that for some $0<\gm<1$ we have
$$\int_{\Gw} \frac{|Pu|^2}{W(x)}\left(\frac{v_0}{v_1}\right)^\gm \dx<\infty.$$
Then letting $\gm\to0$ (using the monotone and dominated convergence theorems) we obtain  the following \textit{Rellich-type inequality}:
\begin{equation}\label{exp1}
 \int_{\Gw} |u|^2W(x) \dx\leq
  \int_{\Gw} \frac{|Pu|^2}{W(x)} \dx\,.
\end{equation}
That a Rellich-type inequality follows \textit{via} Agmon's theory from a Hardy inequality was already observed in \cite{Grillo}.
\begin{remark}\label{remRell}{\em
One can obtain the above inequalities (\eqref{exp} and \eqref{exp1}) for functions $u\in C_0^\infty(\Gw)$ (for a {\em general} subcritical symmetric operator $P$) using only the supersolution construction
and the associated Hardy inequality.

Indeed, without loss of generality assume that $P\mathbf{1} = 0$. Then using \eqref{rule1} and  \eqref{rule2} it follows that for any two smooth enough functions $u$ and $v$
with $u\in C_0^\infty(\Gw)$ we have
\begin{equation}\label{Pvu}
(Pu,uv^2) = (P(vu),vu) + \frac{1}{2}(u^2,Pv^2) - ( Pv,u^2v).
\end{equation}

Now let $w$ be a positive solution of the equation $Pw=0$ in $\Gw$, and let $0\leq \gm \leq 1$. We use \eqref{Pvu}
with the pair $u\in C_0^\infty(\Gw)$ and $v=w^{\mu/2}$, recalling that $$Pw^\mu = 4 \mu(1-\mu) W w^\mu, \qquad \mbox{ where } W:=\frac{Pw^{1/2}}{w^{1/2}}.$$ It follows that
\begin{align}
(Pu, u w^{\mu}) &= (P(w^{\mu/2} u), w^{\mu/2} u) +[2 \mu (1-\mu) - \mu(2-\mu)](u^2,Ww^\mu) \nonumber\\
			&\geq \left(1-\gm^2\right)\int_\Gw u^2Ww^\mu\dx, \label{Pvu1}
\end{align}
where we used the Hardy inequality $P-W\geq 0$ to derive the second line.
Assume now that $W>0$ in $\Gw$, then the Cauchy-Schwarz inequality implies the following Rellich-type inequality
$$
 \left(1-\gm^2\right)^2\int_\Gw u^2Ww^\mu \dx \leq \int_\Gw (Pu)^2 \frac{1}{W} w^\mu\dx \qquad \forall u\in C_0^\infty(\Gw).
$$
Therefore, for a general symmetric, subcritical operator $P$, and a positive Hardy-weight $W$ obtained
by the supersolution construction with a pair $(v_0,v_1)$ of two positive solutions,  we obtain for $0\leq \gm \leq 1$ that
\begin{equation}\label{exp9}
 \left(1-\gm^2\right)^2\!\!\int_{\Gw} |u|^2W(x)\!\left(\frac{v_0}{v_1}\right)^\gm\!\!\! \dx\leq
  \!\int_{\Gw} \!\frac{|Pu|^2}{W(x)}\!\left(\frac{v_0}{v_1}\right)^\gm \!\!\!\dx \;\; \forall u\in C_0^\infty(\Gw).
\end{equation}
Moreover, using an approximation argument, it follows that if  $P- W\geq 0$ is critical in $\Gw$, and $0\leq \gm < 1$, then $(1-\gm^2)^2$ is the {\em best constant} for the inequality \eqref{exp9}.
 }
\end{remark}
We summarize these results in the following corollary.
\begin{corollary}\label{cor1_hardy_rell}
Assume that $P$ is a symmetric subcritical operator in $\Gw$, and let $W>0$ be a Hardy-weight obtained
by the supersolution construction with a pair $(v_0,v_1)$ of two positive solutions $v_0$ and $v_1$ of the equation $Pu=0$ in $\Gw$.
Fix $0\leq \gl\leq 1$. Then
\begin{itemize}
   \item[(a)] For fixed  $0\leq \gm<  1$ and all $u\!\in\! C_0^\infty(\Gw)$ the following Rellich-type inequality holds true
    \begin{equation}\label{exp8}
\gl\left(1- \gm^2\right)^2  \int_{\Gw}  |u|^2W(x)\left(\frac{v_0}{v_1}\right)^\gm \! \\dx \leq
  \int_{\Gw} \frac{|Pu|^2}{W(x)}\left(\frac{v_0}{v_1}\right)^\gm \!\dx .
\end{equation}

\item[(b)]  For any $0\leq \ga\leq 1$ and all $ u\!\in \!C_0^\infty(\Gw)$ the following Hardy-Rellich-type inequality holds true
    \begin{equation}\label{exp13}
\gl \int_{\Gw} |u|^2W(x) \dx \leq  \ga\int_{\Gw} uP[u] \dx +(1- \ga)
  \int_{\Gw}  \frac{|Pu|^2}{W(x)} \dx .
\end{equation}

\item[(c)] If $P-W$ is critical in $\Gw$,  then  $\gl=1$ is the best constant in inequalities \eqref{exp8} and \eqref{exp13}.
\end{itemize}
\end{corollary}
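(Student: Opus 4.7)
All three assertions essentially consolidate computations already carried out in Remark~\ref{remRell}, with the sharpness claim (c) requiring an additional ingredient.

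For (a), first reduce via the ground state transform (Proposition~\ref{gtransform}) to the normalization $v_1 \equiv \mathbf{1}$ (equivalently, $P\mathbf{1}=0$); both the weight $W$ and the ratio $v_0/v_1$ are preserved under this transform. Applied with $v := w^{\mu/2}$ and $w := v_0/v_1$ (a positive solution of $Pu=0$ after the transform), the identity \eqref{Pvu} combines with $Pw^\mu = 4\mu(1-\mu)Ww^\mu$ (a direct consequence of the supersolution construction, Lemma~\ref{construction}) and with the Hardy inequality $P-W\geq 0$ of Corollary~\ref{v1/2} to yield the lower bound \eqref{Pvu1}. Applying Cauchy--Schwarz to $(Pu,uw^\mu)$ and squaring then produces \eqref{exp9}, which is precisely (a) with $\lambda=1$; the extension to $0\leq \lambda\leq 1$ is immediate.

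For (b), combine the Hardy inequality $(Pu,u)\geq \int_\Gw W u^2 \dx$ (a restatement of $P-W\geq 0$) with the Rellich inequality $\int_\Gw W u^2 \dx \leq \int_\Gw |Pu|^2/W \dx$, which is (a) specialized to $\mu=0$. Taking the convex combination with weight $\alpha$ gives, for every $u\in C_0^\infty(\Gw)$ and every $\lambda \in [0,1]$,
$$
\alpha(Pu,u)+(1-\alpha)\int_\Gw \frac{|Pu|^2}{W}\dx \;\geq\; \int_\Gw W u^2 \dx \;\geq\; \lambda \int_\Gw W u^2 \dx .
$$

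For (c), assume $P-W$ is critical. By Proposition~\ref{allegretto} the best constant in the Hardy inequality is $\lambda_0(P,W,\Gw)=1$, which yields the optimality of $\lambda=1$ in (b) at $\alpha=1$. To upgrade this to every $\alpha\in [0,1]$ (and to the sharpness of the prefactor $(1-\mu^2)^2$ in (a)), I would test against $C_0^\infty$-truncations of the oscillating generalized eigenfunctions $\varphi_\xi := \mathfrak{Re}(G^{1/2+\mathrm{i}\xi})$ used in the proof of Theorem~\ref{thm_Sch7}, which satisfy $P\varphi_\xi=(1+4\xi^2)W\varphi_\xi$. Letting $\xi\to 0$ along a suitable truncated null sequence $\{u_k\}\subset C_0^\infty(\Gw)$ should force both the Hardy ratio $(Pu_k,u_k)/\int_\Gw W u_k^2 \dx$ and the Rellich ratio $\int_\Gw |Pu_k|^2/W \dx$ divided by $\int_\Gw W u_k^2 \dx$ to tend to $1$, and weighting by $(v_0/v_1)^{\mu/2}$ handles the case $\mu>0$. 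The main obstacle lies precisely in this construction: one must show that the Hardy and the Cauchy--Schwarz inequalities used in the derivation of \eqref{exp9} can be simultaneously saturated within $C_0^\infty(\Gw)$, which requires a careful interplay between the null-criticality established in Theorem~\ref{thm_null-critical} and the oscillating eigenfunctions of Theorem~\ref{thm_Sch7}.
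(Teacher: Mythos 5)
Your derivation of (a) and (b) follows the paper's argument in Remark~\ref{remRell} exactly: ground-state transform to $P\mathbf{1}=0$, identity \eqref{Pvu} with $v=w^{\mu/2}$ plus the Hardy inequality to reach \eqref{Pvu1}, then Cauchy--Schwarz; and (b) is the evident convex combination of Hardy and the $\mu=0$ Rellich bound. For (c) the paper itself only invokes ``an approximation argument,'' so your sketch via oscillating generalized eigenfunctions and null-criticality is a reasonable unpacking of that claim; one small imprecision worth fixing is that in the general setting of this corollary the oscillating solutions should come from Lemma~\ref{lem_construction1} applied to the pair $(v_0,v_1)$, i.e.\ $\mathfrak{Re}\bigl(v_1(v_0/v_1)^{1/2+\mathrm{i}\xi}\bigr)$, rather than from the Green function $G$, which belongs to the punctured-domain specialization of Theorem~\ref{thm_nsa}.
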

\begin{example}\label{ex_rell}{\em
Consider the Poisson equation in the punctured space $\Gw^\star=\mathbb{R}^n\sm \{0\}$, $n\geq 3$ with the {\em optimal} Hardy-weight $$W(x):= \left(\frac{n-2}{2}\right)^2|x|^{-2}.$$
The corresponding induced Riemannian metric is given by
$$\ds^2:= W(x)\sum_{i=1}^{n} (\dx^i)^2.$$
By Lemma~\ref{lem_complete}, $\Gw^\star$ is complete in the above Agmon metric.
By  \eqref{exp}, \eqref{exp1}, and \eqref{exp9}, for any $0\leq \gm <1$   the following Rellich-type inequality (with the best constant) holds true
\begin{equation}\label{exp2}
 \left(\frac{n\!-\!2}{2}\right)^4\!\!\!\left(1\!-\!\gm^2\right)^2\!\!\!\!\int_{\Gw^\star} \!\!\frac{|u(x)|^2}{|x|^{2 +(n-2)\gm}}\dx\!\leq\!
  \int_{\Gw^\star}\!\! |\Gd u|^2|x|^{2-(n-2)\gm} \dx\;\;\: \forall u\in C_0^\infty(\Gw^\star) .
\end{equation}
In fact, it is known that $\left(\frac{n-2}{2}\right)^4\left(1-\gm^2\right)^2$ is indeed the {\em best constant} for the above inequality, see \cite[Theorem~3.14 and the references therein]{GM1}. Note also that the choice $\mu = 2/(n-2)$ recovers the classical Rellich inequality:
$$\frac{n^2(n-4)^2}{16}\int_{\Gw^\star} \!\!\frac{|u(x)|^2}{|x|^{4}}\dx\!\leq\!
  \int_{\Gw^\star}\!\! |\Gd u|^2 \dx\;\;\:, \forall u\in C_0^\infty(\Gw^\star) .$$
    }
\end{example}
\mysection{Boundary singularities}\label{sec_bound_sing}
In the present section we explain how our results can be extended to the case of boundary singularities, where the singularities of the Hardy-weight are located at $\pd\Omega\cup \{\infty\}$ and not at an isolated interior point of $\Omega$ as above. So, we apply the supersolution construction with two {\em global} positive solutions $u_0, \; u_1$ of the equation $Pu=0$ in $\Gw$  that have singularities ``at the boundary", instead of at an interior point, and get an optimal Hardy-weight $W$ in the {\em entire} domain $\Gw$. To understand the setting, we begin by presenting an example.
\begin{example}\label{ex1}
{\em
Let $P=-\Gd$, and consider the cone
$$\Gw:=\{x\in \mathbb{R}^n\mid  r>0,\gw\in \Gs\}\,,$$ where $\Gs$ is a Lipschitz domain in the unit sphere $S^{n-1}\subset \mathbb{R}^n$, $n\geq 2$, and $(r,\gw)$ denotes the spherical coordinates of $x$.
Let $\gth$ be the principal eigenfunction of the (Dirichlet) Laplace-Beltrami operator on $\Gs$ with eigenvalue $\lambda_0=\gl_0(\Gs)$, and set $$\ga_j:= \frac{2-n+(-1)^j\sqrt{(2-n)^2+4\gl_0}}{2}\,.$$ Then for $j=0$ (resp. $j=1$) the positive harmonic function  $u_j(r,w):=r^{\ga_j}\gth(\gw)$ is the (unique) Martin
kernel at $\infty$ (resp. $0$) \cite{P94}.

Applying the supersolution construction with the pair $(u_0,u_1)$, we obtain the Hardy-weight
$$W(x):=\frac{(n-2)^2+4\gl_0}{4|x|^2}\,. $$ Consequently, the corresponding Hardy-type inequality reads as
\begin{equation}\label{eq_4n2}
\int_{\Gw}|\nabla \phi|^2\dx\geq \frac{(n-2)^2+4\gl_0}{4}\int_{\Gw}\frac{|\phi|^2}{|x|^2}\dx \qquad \forall\phi\in C_0^\infty(\Gw).
\end{equation}
It follows from  Theorem \ref{thm_bs} that $W$ is an optimal Hardy-weight, and that the spectrum and the essential spectrum of $W^{-1}(-\Gd)$ is $[1,\infty)$. Note that \eqref{eq_4n2} and the {\em global} optimality of the constant is known (cf. \cite{FM,LLM}).
 }
\end{example}
Throughout this section (unless otherwise stated), we assume that the Martin boundary $\delta\Omega$ of $\Omega$ and $P$ is equal to the minimal Martin boundary and consists of $\partial\Omega\cup \{\xi_0,\xi_1\}$, where $\partial\Omega\sm \{\xi_0,\xi_1\}$ is assumed to be a regular manifold of dimension $n-1$ without boundary (in fact, it is enough to assume that $\partial\Omega\sm \{\xi_0,\xi_1\}$ is Lipschitz and satisfies the interior sphere condition). Note that it might be that one or two of the Martin points $\xi_0,\xi_1$ belong to $\pd\Gw$ (cf. Example~\ref{ex1}).

We denote by $\hat{\Omega}$ the Martin compactification of $\Omega$. Hence,
$$\hat{\Omega}:=\overline{\Omega}\cup\{\xi_0,\xi_1\}.$$
We assume that there exists a bounded domain $D\subset \Gw$ such that $\xi_0$ and $\xi_1$ belongs to two different connected components of $\hat{\Omega}\setminus \bar{D}$ that are neighborhoods of $\xi_0$ and $\xi_1$.

We need the following definition of minimal growth at a portion of the boundary  $\gd \Omega$:
\begin{definition}\label{def_9}{\em
Let $\omega\subset\delta\Omega$ be a closed set, and let $u$ be a positive solution of $Pu=0$ in a neighborhood $\Gw_1\subset\Gw$ of $\omega$. We say that $u$ has minimal growth at $\omega$ if for every positive supersolution $v$ of the equation $Pu=0$ in a relative neighborhood of $\omega$, we have
$$u\leq Cv$$
in a neighborhood $\Gw_2\subset\Gw_1$ of $\omega$.
 }
\end{definition}
We need two lemmas. The first one concerns minimal growth:

\begin{Lem}\label{minimal_vanish}
Assume that the coefficients of $P$ are locally regular up to  a Lipschitz portion $\Gg$ of $\partial\Omega$. Let $W$ be a nonnegative potential which is $L^\infty_{\mathrm{loc}}$ up to $\Gg$, such that $P-W\geq 0$ in $\Gw$.

\begin{enumerate}
\item Let $\omega\subset\Gg$ be the closure of a nonempty open set, and let $u$ be a positive solution of $P-W$ in a relative neighborhood of $\omega$. The following are equivalent:

\begin{enumerate}
 \item $u$ has minimal growth for $P-W$ at $\omega$.
 \item $u$ vanishes continuously on $\omega$.
\end{enumerate}
 \item Let $\omega=\omega_1\cup\omega_2$, where $\omega_1$ and $\omega_2$ are closed sets in $\delta\Omega$, and let $u$ be a positive solution of $P-W$ in a neighborhood of $\omega$. If $u$ has minimal growth for $P-W$ at $\omega_1$ and at $\omega_2$, then $u$ has minimal growth for $P-W$ at $\omega$.
\end{enumerate}
\end{Lem}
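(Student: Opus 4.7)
The plan is to establish Part (1) by reducing both directions to the generalized maximum principle, which applies because $P-W\geq 0$ in $\Omega$, and to observe that Part (2) follows at once from the local nature of minimal growth.

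For the implication (1)(b)$\Rightarrow$(1)(a), suppose $u$ vanishes continuously on $\omega$, and let $v$ be a positive supersolution of $(P-W)z=0$ in a relative neighborhood $\Omega_1\subset\Omega$ of $\omega$. I would pick a smaller relative neighborhood $\Omega_2$ of $\omega$ whose closure in $\hat\Omega$ meets $\delta\Omega$ only in $\omega$, so that the interior boundary $K:=\partial\Omega_2\cap\Omega$ is a compact subset of $\Omega_1$. On $K$, both $u$ and $v$ are continuous and strictly positive, hence $v\geq c>0$ and $u\leq M$ there; setting $C:=M/c$, the function $Cv-u$ is a supersolution of $(P-W)z=0$ in $\Omega_2$ with $\liminf_{x\to\partial\Omega_2}(Cv-u)(x)\geq 0$ (it is nonnegative on $K$, and on the part of the boundary of $\Omega_2$ contained in $\omega$ one has $u(x)\to 0$ while $v\geq 0$). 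The generalized maximum principle, valid since $P-W\geq 0$, then yields $u\leq Cv$ in $\Omega_2$.

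For (1)(a)$\Rightarrow$(1)(b), fix $x_0\in\omega$. Using the Lipschitz/interior-sphere regularity of $\Gamma$ at $x_0$ together with the local regularity up to $\Gamma$ of the coefficients of $P$ and of $W$, I would construct a local positive supersolution $w$ of $(P-W)z=0$ in some relative neighborhood of $x_0$, continuous up to $\Gamma$ and satisfying $w(x_0)=0$. Two standard ways to produce such a $w$: solve a local Dirichlet problem for $P-W$ in a small half-ball with zero data on $\Gamma$ and positive data on the interior face, invoking Hopf's lemma to obtain strict positivity in the interior; or write down an explicit barrier modeled on a ball tangent to $\Gamma$ from the interior at $x_0$ and rescale it into a local supersolution. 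The minimal-growth hypothesis then gives $u\leq Cw$ in a relative neighborhood of $x_0$, whence $\limsup_{x\to x_0}u(x)\leq Cw(x_0)=0$.

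Part (2) is immediate: given a positive supersolution $v$ of $(P-W)z=0$ in a neighborhood of $\omega=\omega_1\cup\omega_2$, it is in particular a positive supersolution in neighborhoods of $\omega_1$ and of $\omega_2$, so by minimal growth at each $\omega_i$ there exist constants $C_i>0$ and neighborhoods $\Omega_2^{(i)}$ of $\omega_i$ with $u\leq C_iv$ in $\Omega_2^{(i)}$. Then $u\leq\max(C_1,C_2)\,v$ in $\Omega_2^{(1)}\cup\Omega_2^{(2)}$, which is a neighborhood of $\omega$.

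The main obstacle is the construction in (1)(a)$\Rightarrow$(1)(b) of a positive local supersolution vanishing at a prescribed boundary point: this is where the Lipschitz/interior-sphere assumption on $\Gamma$ and the local regularity of $P$ and $W$ up to $\Gamma$ (stated in the hypotheses) are really used, via classical boundary barriers and Hopf's lemma for $P-W$. Once this barrier is in hand, all remaining steps are straightforward comparisons.
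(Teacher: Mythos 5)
Your proof of Part (2) and the overall structure are fine, but both directions of Part (1) contain gaps, and the more serious one is in (1b) $\Rightarrow$ (1a), where your approach differs from the paper's in a way that actually costs you the result.

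\textbf{(1b) $\Rightarrow$ (1a).} The crux of your argument is the claim that one can pick $\Omega_2$ so that $K:=\partial\Omega_2\cap\Omega$ is a \emph{compact} subset of $\Omega_1$. This fails whenever $\omega$ has a nonempty relative boundary inside $\Gamma$ (which is the generic case: $\omega$ is the closure of an open set in $\Gamma$, so $\partial_\Gamma\omega\neq\emptyset$ unless $\omega$ is a whole component of $\Gamma$). In that case $K$ necessarily accumulates at points of $\partial_\Gamma\omega\subset\delta\Omega$, so it is \emph{not} compact in $\Omega$, and there is no uniform lower bound $v\geq c>0$ on $K$: near those accumulation points both $u$ and $v$ may tend to zero, and nothing in your argument controls the ratio $u/v$ there. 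This is not a technicality — it is exactly the content of the boundary Harnack principle, which is what the paper invokes. Their proof does not compare $u$ directly with $v$; instead it constructs, as a limit of Dirichlet problems in interior domains $\mathcal{O}_k\Subset\Omega$, a reference solution $w$ which has minimal growth at $\omega$ \emph{by the local boundary Harnack principle}, and then shows $u\leq w$ by a maximum-principle argument on domains whose closure stays in $\Omega$. So the detour through $w$ is precisely where the work you are skipping gets done; your direct comparison would need the boundary Harnack principle to be inserted by hand to handle the corner of $\omega$.

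\textbf{(1a) $\Rightarrow$ (1b).} Your barrier idea is fine in spirit, but as written $w$ is a supersolution only in a relative neighborhood of $x_0$, whereas Definition \ref{def_9} requires the comparison supersolution to live on a relative neighborhood of the whole $\omega$. This is fixable (e.g., take any global positive supersolution $h$ of $P-W$, guaranteed since $P-W\geq 0$, choose $c$ large so that $cw>h$ on the interior face of the half-ball, and glue by $\tilde{w}:=\min(cw,h)$ near $x_0$, $\tilde w:=h$ elsewhere), but you do not address it. Note also that the paper first extends $P$ and $W$ past $\Gamma$ and performs a ground-state transform so that the operator has H\"older-continuous coefficients up to $\Gamma$ with vanishing constant term; that preprocessing is what licenses both the citation of Lemma 3.2 of \cite{P94} and, in the other direction, the trick that $u+\varepsilon$ is again a solution. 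If you avoid that transform, your barrier/Hopf argument needs the regularity hypotheses spelled out with more care.

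\textbf{Part (2)} is correct and is the same one-line observation as in the paper.
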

\begin{proof}
1) First, we extend $P$ (resp. $W$) in a neighborhood $U$ of $\omega$ in $\R^n$ such that the corresponding extension $\hat{P}$ (resp. $\hat{W}$) has H\"{o}lder continuous coefficients (resp. the extension is $L^\infty$). If $U$ is small enough, then the extended operator $\hat{P}-\hat{W}$ is nonnegative in $U$, and we can find a positive solution $\theta$ of the equation $(\hat{P}-\hat{W})u=0$  in $U$. By elliptic regularity, $\theta\in C_{\mathrm{loc}}^{1,\alpha}(U)$, and therefore $\tilde{P}:=\theta^{-1}(\hat{P}-\hat{W})\theta$ has H\"{o}lder continuous coefficients in $\bar{\Gw}\cap U$. By performing a ground state transform with respect to $\theta$, we see that it is enough to prove the lemma for $\tilde{P}$ instead of $P-W$ ; so we will assume that $u$ is a solution of $\tilde{P}$ instead. The fact that $(1a)$ implies $(1b)$ now follows from Lemma 3.2 in \cite{P94}.

For the proof that $(1b)$ implies $(1a)$ we may assume that $\gw$ is bounded. Let $\mathcal{O}\subset \Gw$ be a neighborhood of $\omega$ on which $u$ is a positive solution of the equation $\tilde{P}u=0$ that vanishes continuously on $\omega$. Let $\{\Omega_k\}_{k\in\mathbb{N}}$ be an exhaustion of $\Omega$ such that $\mathcal{O}_k:=\mathcal{O}\cap\Omega_k$ is regular. Let $w:=\lim_{k\to\infty}w_k$, where $w_k$ solves the Dirichlet problem
\begin{equation}\label{Dirichlet_minimal}
\left\{
\begin{array}{lr} \tilde{P}w_k=0 & \qquad\mbox {in } \mathcal{O}_k, \\[2mm]
		  w_k(x)=u & \qquad \mbox{on }\partial\mathcal{O}\cap\partial\mathcal{O}_k, \\[2mm]
          w_k(x)=0 & \qquad \mbox{on }\partial \Omega_k\cap\partial\mathcal{O}_k.
\end{array} \right.
\end{equation}
Then $w$ has minimal growth at $\omega$ (this follows from the \textit{local boundary Harnack principle}, see \cite{P94}). For every $\varepsilon>0$, we can find $k_0$ big enough such that $u<\varepsilon$ on $\partial \Omega_k\cap\partial\mathcal{O}_k$ for every $k\geq k_0$. Then, since $\tilde{P}\mathbf{1}=0$, $u+\varepsilon$ is a solution of $\tilde{P}$, and by the maximum principle $u<w_k+\varepsilon$. Letting $k\to\infty$ and then $\varepsilon\to 0$, we obtain $u\leq w$, which concludes the first part of the lemma.

Part 2) follows directly from the definition of minimal growth.
\end{proof}
We now turn to the second lemma concerning the regularity of the supersolution construction and the corresponding Hardy-weight on a portion of the boundary where the solutions $u_0$ and $u_1$ vanish.
\begin{Lem}\label{boundary_continuous}
Let  $\Sigma$ be an open subset of $\partial\Omega$. Assume that $\Omega$ is equipped with a Riemannian metric $\mathfrak{g}$, regular up to $\Sigma$. Let $u_0$ and $u_1$ be two positive functions defined in a neighborhood $\Gw'\subset \Gw$ of $\Sigma$ that are $C^2$ up to $\Gs$ and vanish continuously on $\Sigma$. Suppose that the gradients of $u_0$ and $u_1$ restricted to $\Sigma$ vanish nowhere. Then
$$W:=\frac{1}{4}\left|\nabla \log\left(\frac{u_0}{u_1}\right)\right|^2$$
has a continuous extension up to $\Sigma$ (here the gradient and its norm are computed with respect to $\mathfrak{g}$ and not to the Euclidean metric). If, in addition, $u_0/u_1$ has a continuous extension to $\Sigma$, then $u_0/u_1$ is in fact $C^1$ up to $\Sigma$.
\end{Lem}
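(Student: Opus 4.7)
\textbf{Proof plan for Lemma \ref{boundary_continuous}.}

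Since both assertions are local, I would fix a point $p \in \Sigma$ and work in a local $\mathfrak{g}$-normal chart (or any $C^2$ chart) $(x',x_n)$ around $p$ in which $\Sigma$ is straightened to $\{x_n = 0\}$ and $\Omega'$ to $\{x_n > 0\}$. Because $\mathfrak{g}$ is regular up to $\Sigma$, its matrix $\bigl[\mathfrak{g}_{ij}\bigr]$ and inverse $\bigl[\mathfrak{g}^{ij}\bigr]$ are continuous (indeed as smooth as the metric allows) up to $\{x_n = 0\}$, so it suffices to show that the Euclidean coordinate derivatives $\partial_i \log(u_0/u_1)$ extend continuously to $\Sigma$; then the $\mathfrak{g}$-norm $|\nabla \log(u_0/u_1)|_{\mathfrak{g}}^2 = \mathfrak{g}^{ij}\partial_i \log(u_0/u_1)\,\partial_j \log(u_0/u_1)$ is a continuous function of continuous functions.

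The key analytic step is a Taylor expansion in the normal variable. Since $u_j$ is $C^2$ up to $\Sigma$ and vanishes on $\Sigma$, the fundamental theorem of calculus in $x_n$ gives
\begin{equation*}
u_j(x',x_n) = x_n\, g_j(x',x_n), \qquad g_j(x',x_n) := \int_0^1 \partial_n u_j(x', t x_n)\,\mathrm{d}t,
\end{equation*}
with $g_j$ of class $C^1$ up to $\Sigma$ and $g_j(x',0) = \partial_n u_j(x',0)$. The vanishing of $u_j$ on $\Sigma$ forces all tangential derivatives $\partial_{x'} u_j$ to vanish on $\Sigma$, so the gradient on $\Sigma$ is purely normal; the hypothesis that $\nabla u_j|_\Sigma$ vanishes nowhere therefore gives $\partial_n u_j(x',0)\neq 0$, i.e. $g_j$ is nonzero in a neighborhood of $p$ (and keeps a fixed sign, since $u_j > 0$ inside $\Omega$).

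With $g_0,g_1$ of class $C^1$ up to $\Sigma$ and bounded away from zero there, the decisive identity
\begin{equation*}
\log\!\left(\frac{u_0}{u_1}\right) \;=\; \log\!\left(\frac{x_n g_0}{x_n g_1}\right) \;=\; \log g_0 - \log g_1
\end{equation*}
holds on $\Omega' \cap \{x_n > 0\}$, and its right-hand side extends to a $C^1$ function on a neighborhood of $p$ in $\overline{\Omega'}$. Consequently $\nabla \log(u_0/u_1) = \nabla \log g_0 - \nabla \log g_1$ extends continuously to $\Sigma$, and plugging into the $\mathfrak{g}$-norm yields the continuous extension of $W$. The same representation shows $u_0/u_1 = g_0/g_1$ is $C^1$ up to $\Sigma$ as a ratio of $C^1$ functions with nonvanishing denominator, giving the second assertion. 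The only subtle point is the verification that $g_j$ is $C^1$ up to the boundary and the justification that differentiation under the integral is permitted there, which follows from $u_j \in C^2$ up to $\Sigma$; every other step is algebraic.
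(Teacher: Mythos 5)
Your proof is correct and takes a cleaner route than the paper's. The paper works intrinsically with the distance function $\delta$ to $\Sigma$: it restricts $u_i$ and $|\nabla u_i|$ to the family of unit-speed geodesics normal to $\Sigma$, performs a Taylor expansion in the arclength parameter to obtain $\frac{\nabla u_i}{u_i} = \frac{1}{\delta}\,\gamma' + X_i$ with $X_i$ continuous up to $\Sigma$, and then observes that the $\frac{1}{\delta}$ singularities cancel in $\frac{\nabla u_0}{u_0} - \frac{\nabla u_1}{u_1} = X_0 - X_1$, so that $\nabla(u_0/u_1) = \frac{u_0}{u_1}(X_0 - X_1)$ is continuous; for this last step the paper needs the continuity of $u_0/u_1$ as an extra hypothesis. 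You instead straighten $\Sigma$ to $\{x_n=0\}$ and apply Hadamard's lemma via the fundamental theorem of calculus, writing $u_j = x_n\, g_j$ with $g_j \in C^1$ up to the boundary and $g_j > 0$ near $p$ (positivity on $\Sigma$ follows from continuity plus $u_j>0$ inside). The $x_n$ cancels at the level of the ratio, giving $\log(u_0/u_1) = \log g_0 - \log g_1 \in C^1$ up to $\Sigma$ outright. This buys you two things: the coefficient computation is purely algebraic rather than a two-sided Taylor comparison, and it shows that the paper's ``if, in addition, $u_0/u_1$ has a continuous extension'' hypothesis is actually automatic under the stated assumptions, since $u_0/u_1 = g_0/g_1$ already extends $C^1$-ly. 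The only price is that the cancellation of the singularity is tied to a choice of chart, so you correctly note at the outset that the $\mathfrak{g}$-norm of a coordinate-continuous gradient is continuous because $[\mathfrak{g}^{ij}]$ is continuous up to $\{x_n=0\}$; once that remark is made, your argument is complete and correct.
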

\begin{proof}
Let us denote by $\vec{\gs}$ the unit exterior normal to $\Sigma$. Since $u_0$ and $u_1$ vanishes on $\Sigma$, the gradient of $u_0$ and $u_1$ are collinear to $\vec{\gs}$ on $\Sigma$. Next, we claim that near $\Gs$ we have for $i=0,1$,
\begin{equation}\label{eq_uiai}
\frac{|\nabla u_i|_A}{u_i}=\frac{1}{\delta}+g_i,
\end{equation}
where $\delta$ is the distance to $\partial\Omega$ with respect to the metric given by $\mathfrak{g}$, and $g_i$ is continuous up to $\Sigma$. Indeed, for $x_1$ be a point of $\Sigma$, let $\gamma_{x_1}$ be the unit speed geodesic starting at $x_1$, with $\gamma'(0)=-\vec{\gs}$ the interior normal. Let $r\geq 0$ be the coordinate on $\gamma$ (so that $r=\delta$ in restriction to $\gamma_{x_1}$, for $r$ small enough), then the restricting $u_i$ (resp. $|\nabla u_i|$) to $\gamma$ provides us with a function $f_i(r)$ (resp. $g_i(r)$). Notice that $f_i$ is $C^2$, $g_i$ is $C^1$ and $f_i'(0)=g_i(0)=|\nabla u_i|\neq 0$ (this comes from the fact that $\nabla u_i$ is collinear to $\vec{\gs}$, since $u_i$ vanishes on $\Sigma$). A Taylor expansion in $r$ gives (dropping the subscript $i$)

$$\frac{g(r)}{f(r)}=\frac{g(0)+g'(0)r+o(r)}{rf'(0)+\frac{r^2}{2}f''(0)+o(r^2)}=\frac{1}{r}+\left(\frac{g'(0)}{f'(0)}-\frac{f''(0)}{2f'(0)}\right)+o(r),$$
hence \eqref{eq_uiai} follows. From the same kind of consideration, we get in a neighborhood of $\Sigma$,

$$\frac{\nabla u_i(x)}{u_i(x)}=\frac{1}{\delta}\gamma'_{\exp^{-1}(x)}(x)+X_i,$$
where $X_i$ is a continuous vector field defined in a neighborhood of $\Sigma$ and $\exp^{-1}$ is the mapping sending a point $x$ to the unique point on $x_1\in \Sigma$ such that $x\in \gamma_{x_1}$.
The lemma follows at once, by noticing that

$$\nabla \left(\frac{u_0}{u_1}\right)=\frac{u_0}{u_1}\left(\frac{\nabla u_0}{u_0}-\frac{\nabla u_1}{u_1}\right)=\frac{u_0}{u_1}(X_0-X_1),$$
and that

$$W=\frac{\left|\nabla \left(\frac{u_0}{u_1}\right)\right|^2}{4\left|\frac{u_0}{u_1}\right|^2}\;.\qquad \qquad\qquad\qedhere$$
\end{proof}
We also need the following analogue of Proposition \ref{minimal} for a domain with boundary:

\begin{Pro}\label{minimal_boundary}
Let $P$ be a second-order nonnegative elliptic operator on $\Omega$ either of the form \eqref{P} or \eqref{div_P} with coefficients that are locally regular up to $\partial\Omega\setminus\{\xi\}$, where  $\xi\in\delta\Omega$. If $u$ and $v$ are two positive solutions of the equation $Pw=0$ in a relative neighborhood of $\xi$, which satisfy
$$\lim_{\substack{x\to\xi\\x\in \Gw}}\frac{u(x)}{v(x)}=0,$$
and both vanish on a punctured neighborhood of $\xi$ in $\delta\Omega$, then $u$ has minimal growth at $\xi$.
\end{Pro}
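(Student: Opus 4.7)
The plan is to mirror the proof of Proposition \ref{minimal}, replacing ``infinity'' by the boundary point $\xi$ and using a separating hypersurface to set up a maximum principle argument. So let $\tilde{v}$ be an arbitrary positive supersolution of $Pw=0$ in a relative neighborhood of $\xi$; we must produce a constant $C$ and a smaller relative neighborhood of $\xi$ on which $u \leq C\tilde{v}$.

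First, I would select a smooth compact hypersurface $\Gamma \subset \Omega$ separating $\xi$ from the rest of $\Omega$, chosen so that $\Gamma$ is bounded away from $\partial\Omega\setminus\{\xi\}$ and so that $u$, $v$, and $\tilde{v}$ are all positive and continuous on $\Gamma$. Let $V$ denote the connected component of $\Omega\setminus\Gamma$ whose closure in $\hat{\Omega}$ contains $\xi$. Since $\Gamma$ is compact and $\tilde{v}>0$ on $\Gamma$, the constant
\[
C := \max_{\Gamma} \frac{u}{\tilde{v}}
\]
is finite. I would then introduce, for each $\varepsilon>0$, the auxiliary function $\psi_\varepsilon := u - C\tilde{v} - \varepsilon v$, which satisfies $P\psi_\varepsilon \leq 0$ in $V$ because $Pu = Pv = 0$ and $P\tilde{v}\geq 0$.

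Next, I would exhaust $V$ by relatively compact subdomains $V_k$ with $\Gamma \subset \partial V_k$, whose remaining boundary consists of a portion $\Sigma_k\subset V\cap\partial\Omega$ and a ``far'' portion $\Lambda_k$ that shrinks toward $\xi$ in $\hat{\Omega}$ as $k\to\infty$. I would then verify $\psi_\varepsilon \leq 0$ on each piece of $\partial V_k$: on $\Gamma$ by the choice of $C$ (since additionally $-\varepsilon v <0$); on $\Sigma_k$ because $u$ and $v$ vanish continuously there (using the hypothesis that the singularity is only at $\xi$) while $\tilde{v}\geq 0$; and on $\Lambda_k$ for all sufficiently large $k$ thanks to the assumption $\lim_{x\to\xi} u(x)/v(x)=0$, which forces $u\leq \varepsilon v$ on $\Lambda_k$ once $k$ is large. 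The generalized maximum principle then yields $\psi_\varepsilon\leq 0$ in $V_k$; sending $k\to\infty$ and then $\varepsilon\to 0$ delivers $u\leq C\tilde{v}$ in $V$, which is the desired minimal growth at $\xi$.

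The main obstacle will be applying the generalized maximum principle on $V_k$ along the boundary piece $\Sigma_k\subset\partial\Omega$, since $\tilde{v}$ is only a supersolution in the interior and is not assumed to attain boundary values continuously on $\partial\Omega$. To handle this I would rely on the local regularity assumptions on $\partial\Omega\setminus\{\xi\}$ (Lipschitz plus an interior sphere condition) exactly as in the proof of Lemma \ref{minimal_vanish}: under these hypotheses, $u$ and $v$ extend continuously by zero across $\Sigma_k$, and a standard boundary version of the maximum principle applies to the subsolution $\psi_\varepsilon$ (one may, for instance, perform a ground state transform by a positive solution vanishing on $\Sigma_k$ to reduce to an interior problem, or approximate $V_k$ by slightly retracted smooth subdomains on which $u\leq \delta$ on the retracted part of $\Sigma_k$ and let $\delta\to 0$). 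With this technicality resolved, the remainder of the argument is a direct boundary analogue of Proposition \ref{minimal}.
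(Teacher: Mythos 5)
Your proposal is correct, but it takes a genuinely different route from the paper's. The paper's proof literally repeats the scheme of Proposition~\ref{minimal}: it solves a sequence of Dirichlet problems $Pw_k=0$ on annular regions $\Omega_k=B_1\setminus B_k$ shrinking toward $\xi$ (with $w_k=u$ on the inner boundary away from $\partial\Omega$ and $w_k=0$ on $\partial B_k\cup(\partial\Omega_k\cap\partial\Omega)$), passes to $w=\lim w_k$, invokes the \emph{boundary Harnack principle} and \cite{P94} to establish that $w$ has minimal growth at $\xi$, and only then runs the $\varepsilon v$-comparison of Proposition~\ref{minimal} to identify $u=w$. You bypass the auxiliary Dirichlet-limit solution $w$ entirely and compare $u$ directly with an arbitrary positive supersolution $\tilde v$ via the subsolution $\psi_\varepsilon=u-C\tilde v-\varepsilon v$; this verifies Definition~\ref{def_9} in one stroke without the boundary Harnack input. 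The trade-off is that you must run the maximum principle on domains $V_k$ whose boundary meets $\partial\Omega$, which you correctly flag as the delicate point. Your resolution is adequate: since $\tilde v\ge 0$, one has $\psi_\varepsilon\le u-\varepsilon v$, which is a genuine solution vanishing continuously on $\partial\Omega$ near $\xi$, so $\limsup_{x\to\Sigma_k}\psi_\varepsilon\le 0$; combined with a retraction to domains away from $\partial\Omega$ (or a ground state transform as in Lemma~\ref{minimal_vanish}), the generalized maximum principle applies. Both proofs are sound; yours is the more elementary and self-contained, while the paper's construction identifies $u$ with the canonical Dirichlet-exhaustion solution, the standard object of criticality theory.
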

\begin{proof}The proof is almost exactly the same as the proof of Proposition \ref{minimal}. This time, we take a sequence of bounded sets   $\{\Omega_k:=B_1\setminus B_k\}$, where $\{B_k\}$ is a decreasing sequence of relative neighborhoods in $\hat{\Gw}$ of $\xi$ converging to $\xi$  such that $\pd\Omega_k$ is piecewise smooth.  With this definition, $\{\pd\Omega_k\}$ exhausts a punctured neighborhood  $\xi\in \gd\Omega$ (cf. the proof of Proposition \ref{minimal}). Let $w:=\lim_{k\to\infty}w_k$, where $w_k$ is the solution of the Dirichlet problem

\begin{equation}
\left\{
\begin{array}{lr} Pw_k=0 & \qquad\mbox {in } \Omega_k, \\[2mm]
		  w_k(x)=u & \qquad \mbox{on }\partial B_1\setminus\partial\Omega, \\
          w_k(x)=0 & \qquad \mbox{on }\big(\partial \Omega_k\cap\partial\Omega\big)\cup \pd B_k.
\end{array} \right.
\end{equation}
It follows (using the boundary Harnack principle and arguments similar to those in \cite{P94}) that $w$ has minimal growth at $\xi$. The end of the proof follows exactly the lines of the proof of Proposition \ref{minimal}.
\end{proof}

We now establish the main result of the present section.
\begin{theorem}\label{thm_bs}
Assume that $P$ is subcritical in $\Gw$.  Suppose that the corresponding Martin boundary $\delta\Omega$ is equal to the minimal Martin boundary and is equal to $\partial\Omega\cup \{\xi_0,\xi_1\}$, where $\partial\Omega\sm \{\xi_0,\xi_1\}$ is assumed to be a regular manifold of dimension $n-1$ without boundary, and the coefficients of $P$ are locally regular up to $\partial\Omega\sm \{\xi_0,\xi_1\}$.

Denote by $\hat{\Omega}$ the Martin compactification of $\Omega$, and assume that there exists a bounded domain $D\subset \Gw$ such that $\xi_0$ and $\xi_1$ belongs to two different connected components $D_0$ and $D_1$ of $\hat{\Omega}\setminus \bar{D}$ such that each $D_j$ is a neighborhood in $\hat{\Omega}$ of $\xi_j$, where $j=0,1$.

Let $u_0$ and $u_1$ be the minimal Martin functions at $\xi_0$ and $\xi_1$ respectively. Consider the supersolution
$v := \sqrt{u_0 u_1}$, and assume that

\begin{equation}\label{u1u0bs}
\lim_{\substack{x\to \gz_0\\x\in \Gw}} \frac{u_1(x)}{u_0(x)}=\lim_{\substack{x\to \gz_1\\ x\in \Gw}} \frac{u_0(x)}{u_1(x)}= 0.
\end{equation}

\vskip 3mm

Then the associated Hardy-weight $W:=Pv/v$ is {\em optimal} in $\Gw$. Moreover, if $P$ is symmetric and $W$ does not vanish on $\hat{\Omega}\setminus\{\xi_0,\xi_1\}$, then the (essential) spectrum of the operator $W^{-1}P$ acting on $L^2(\Omega,W\mathrm{d}\nu)$ is $[1,\infty)$.
\end{theorem}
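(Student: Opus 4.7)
The strategy is to imitate the four-step proof of Theorem~\ref{thm_nsa}, replacing the interior singularity at $0$ (and the solutions $G,u$) by the two boundary singularities $\xi_0,\xi_1$ of the Martin boundary (and the Martin functions $u_0,u_1$). The geometric picture to keep in mind is that $\hat\Omega\setminus\{\xi_0,\xi_1\}$ plays the role of $\Omega^\star$, with the two ``ends'' being minimal Martin points rather than an interior puncture.

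The first step is to establish criticality of $P-W$ in $\Omega$. By Corollary~\ref{v1/2} and Lemma~\ref{lem_logu} the two functions
$$w_0:=\sqrt{u_0 u_1},\qquad w_1:=\sqrt{u_0 u_1}\,\log(u_0/u_1)$$
solve $(P-W)u=0$ in $\Omega$, and assumption~\eqref{u1u0bs} gives $w_0/|w_1|\to 0$ at both $\xi_0$ and $\xi_1$. Hence Proposition~\ref{minimal_boundary} implies that $w_0$ has minimal growth at each of $\xi_0,\xi_1$ for $P-W$. Since $u_0$ and $u_1$ are minimal Martin functions at points distinct from the regular part of $\partial\Omega$, standard Martin theory ensures they vanish continuously on every compact set of $\partial\Omega\setminus\{\xi_0,\xi_1\}$, so $w_0$ does as well; by Lemma~\ref{minimal_vanish}(1) this gives minimal growth of $w_0$ on every compact subset of $\partial\Omega\setminus\{\xi_0,\xi_1\}$. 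Finally Lemma~\ref{minimal_vanish}(2) lets us splice these three local minimal-growth properties into a single global one at $\delta\Omega$, which identifies $w_0$ as a ground state of $P-W$ and proves criticality.

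For $\lambda_\infty(P,W,\Omega)=1$ I repeat the argument of Theorem~\ref{thm_Sch7}: Lemma~\ref{lem_construction1} with complex exponent $\alpha=\tfrac12+i\xi$, $\xi=\sqrt{\lambda-1}/2$, produces the solution $(u_0u_1)^{1/2}\exp(i\xi\log(u_0/u_1))$ of $(P-\lambda W)u=0$, whose real part changes sign on any neighborhood of $\xi_0$ (and of $\xi_1$) because $\log(u_0/u_1)$ tends to $\mp\infty$ there by~\eqref{u1u0bs}. Carving out a connected component between two consecutive zeros of the cosine factor produces a relatively compact domain $U$ on which the oscillatory solution has constant sign and vanishes on $\partial U$, contradicting the generalized maximum principle for $P-\lambda W$. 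For null-criticality, I follow Theorem~\ref{thm_null-critical} verbatim, using the one-parameter family of domains $\{\,-\pi/(2\xi)<\log(u_0/u_1)<0\,\}$ (and its analogue near $\xi_1$), with the Green's formula argument based on the fact that $w_0$ (and the adjoint ground state $\varphi_0^\star$) vanish continuously on the regular part of $\partial\Omega$, so the only nontrivial boundary contributions come from the level sets $\{u_0/u_1=\text{const}\}$ and are independent of the choice of oscillation frequency by~\eqref{tangent}. The spectral assertion in the symmetric case follows by defining $\mathcal U_{\mathrm{rad}}$ to be functions proportional to $u_0$ (say) on level sets of $u_0/u_1$, and running the coarea/Mellin computation of Theorem~\ref{essential_spectrum}; the normalization constant in \eqref{ConstGamma} is now replaced by a flux computed on a separating hypersurface in $D$, using that $u_0,u_1$ are the two minimal Martin functions.

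The main obstacle will be the last part of the criticality step and the justification of the Green's formula/coarea identities on the boundary portion: unlike in the interior case, the level sets $\{u_0/u_1=t\}$ for small or large $t$ run up against the boundary, and one must approximate them by smooth hypersurfaces and check that the boundary integrals on $\partial\Omega\setminus\{\xi_0,\xi_1\}$ vanish in the limit. This is where Lemma~\ref{boundary_continuous} is essential: it guarantees that $W$ and $u_0/u_1$ extend regularly to the regular part of $\partial\Omega$, so that the coarea formula applies and the flux identity~\eqref{ConstGamma} (with a suitable separating surface inside $D$) still controls the isometry onto $L^2((0,\infty),\tfrac{1}{4t^2}\,\mathrm dt)$. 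Once this technical point is in place, the proof of the essential-spectrum statement is a direct transcription of Theorem~\ref{essential_spectrum}.
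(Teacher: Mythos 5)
Your proposal follows the paper's approach essentially step by step---log and oscillating solutions, minimal growth via Proposition~\ref{minimal_boundary} and Lemma~\ref{minimal_vanish}, Green's formula for null-criticality, and coarea/Mellin for the spectrum. One reorganization worth making: the paper invokes Lemma~\ref{boundary_continuous} (together with the continuous positive extension of $u_0/u_1$ up to $\partial\Omega\setminus\{\xi_0,\xi_1\}$ from \cite{P94} and boundary elliptic regularity giving $C^{1,\alpha}$ solutions) at the very start, because the boundary regularity of $W$ is already needed to verify the coefficient hypotheses of Proposition~\ref{minimal_boundary} and Lemma~\ref{minimal_vanish} in the criticality step, not only for the later coarea and flux identities where you place it.
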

\begin{proof}
We know that $u_i$ vanishes continuously on $\partial\Omega\setminus\{\xi_0,\xi_1\}$. Also, by Hopf's boundary point lemma, we know that the gradient of $u_i$ does not vanish on $\partial\Omega$. Define a metric $\mathfrak{g}$ on $\Omega$, regular up to $\partial\Omega$, by

$$\mathfrak{g}(\cdot,\cdot):=\langle A^{-1}\cdot,\cdot\rangle.$$
We have $\nabla_\mathfrak{g}=A\nabla$, and therefore,

$$W:=\frac{\left|\nabla_\mathfrak{g} \left(\frac{u_0}{u_1}\right)\right|_\mathfrak{g}^2}{4\left|\frac{u_0}{u_1}\right|^2}=\frac{\left|\nabla \left(\frac{u_0}{u_1}\right)\right|_A^2}{4\left|\frac{u_0}{u_1}\right|^2}\,.$$

\vskip 3mm

Now, we can apply Lemma \ref{boundary_continuous} with $\Sigma=\partial\Omega\setminus\{\xi_0,\xi_1\}$, to get that $W$ is continuous up to the boundary $\partial\Omega\setminus\{\xi_0,\xi_1\}$. Also, we know that $u_0/u_1$ has a continuous positive extension up to $\partial\Omega\setminus\{\xi_0,\xi_1\}$ (see part (i) of Theorem 7.1 in \cite{P94}). Hence, the $\log$ solution

$$\sqrt{u_0u_1}\log\left(\frac{u_0}{u_1}\right),$$
as well as the oscillating solutions

$$\sqrt{u_0u_1}\cos\left(\xi\log\left(\frac{u_0}{u_1}\right)\right),$$
vanish continuously on $\partial\Omega\setminus\{\xi_0,\xi_1\}$. By elliptic regularity up to the boundary, since $W$ is continuous up to $\partial\Omega\setminus\{\xi_0,\xi_1\}$, all these solutions are in fact $C^{1,\alpha}$ up to $\partial\Omega$, for some $\alpha\in (0,1)$.

Consequently, \eqref{u1u0bs} and Proposition \ref{minimal_boundary} imply that $\sqrt{u_0u_1}$ has minimal growth at $\xi_0$ and $\xi_1$. It also vanishes continuously on $\partial\Omega\setminus\{\xi_0,\xi_1\}$, and therefore has minimal growth on $\partial\Omega\setminus\{\xi_0,\xi_1\}$ by Lemma \ref{minimal_vanish}. Therefore, again by Lemma \ref{minimal_vanish}, it has minimal growth on $\delta\Omega$, i.e. at infinity in $\Omega$,
 and the criticality of $P-W$ follows.

 The optimality of the constant $1$ near $\xi_0$ and $\xi_1$ follows from the existence of the oscillating solutions.  Such a solution contradicts the generalized maximum principle near $\xi_0$ and $\xi_1$  for the operator $P-\lambda W$ with the corresponding  $\lambda>1$ (as in Theorem \ref{thm_Sch7}).

Concerning the null-criticality, the proof follows the same lines as in the proof of Theorem \ref{thm_null-critical}; here again we use the vanishing of the oscillating solutions on $\partial \Omega\setminus\{\xi_0;\xi_1\}$. This implies that the boundary of $\Omega$ will not cause trouble in the various integrations by part. The same remark also applies to the proof concerning the entire spectrum in the symmetric case.
\end{proof}
\begin{remark}\label{rm_ends}{\em
In the one-dimensional case (i.e. $n=1$, $\Gw=(a,b)$, where $-\infty\leq a<b\leq\infty$), with a {\em general} subcritical operator $P$, there are always two positive solutions of the equation $Pu=0$ in an interval $\Gw \subset \mathbb{R}$ that satisfy \eqref{u1u0bs}. Indeed, in this case one should take the two {\em minimal} positive solutions (Martin's kernels) of the equation $Pu=0$ in $\Gw$ corresponding to the two end points (cf. \cite{Murata86}).
 }
\end{remark}

The following example deals with an important class of operators with boundary singularities which satisfy the assumptions of Theorem \ref{thm_bs}, and in particular \eqref{u1u0bs}.

\begin{example} {\em \textbf{Fuchsian type operators}

\noindent Consider a \textit{Fuchsian} linear subcritical elliptic operator of the form \eqref{P} defined on the cone $\Gw:=\{x\in \mathbb{R}^n\mid  r>0,\gw\in \Gs\}$, where $\Gs$ is a Lipschitz domain in the unit sphere $S^{n-1}$ in $\mathbb{R}^n$, $n\geq 2$, and $(r,\gw)$ denotes the spherical coordinates of $x$. We assume that the coefficients of $P$ are up to the boundary locally H\"older continuous except at the origin. The operator $P$ has {\em Fuchsian singularities} both at $0$ and $\infty$ means that there exists a positive constant $M$ such that  near $0$ and $\infty$ we have
$$M^{-1} \sum_{i=1}^{n} \xi_{i}^2 \leq  \sum_{i,j=1}^{n} a^{ij}(x)\xi_{i}\xi_{j}\leq M \sum_{i=1}^{n} \xi_{i}^2 \qquad  \xi\in \R^n,$$ and
$$ |x|\sum_{i=1}^{n}|b_{i}(x)|+|x|^2|c(x)|\leq M .$$

It is known from \cite{P94} that the Martin boundary of $\Omega$ for $P$ is equal to the minimal Martin boundary, and is the union of the Euclidean boundary and $\infty$. For $j=0$ (resp. $j=1$), denote by $u_j$ the minimal Martin function with pole $0$ (resp. $\infty$). By \cite{P94} $u_j$ vanish on $\pd\Gw\sm \{0\}$, and
\begin{equation}\label{lim0}
\lim_{\substack{x\to 0\\x\in \Gw}} \frac{u_1(x)}{u_0(x)}=\lim_{\substack{x\to \infty\\ x\in \Gw}} \frac{u_0(x)}{u_1(x)}= 0.
\end{equation}
Applying Theorem \ref{thm_bs}, we conclude that if $W$ is the weight obtained by the supersolution construction applied to $u_0$ and $u_1$, then $W$ is an optimal Hardy-weight. Moreover, in the symmetric case the spectrum of $W^{-1}P$ is equal to $[1,\infty)$. In particular, the Hardy-weight of Example~\ref{ex1} is optimal. The same conclusions hold true for a bit more general domains (for example, truncated cones), see \cite{P94}.
}
\end{example}
The following example deals with the case where one of the conditions of \eqref{u1u0bs} is not satisfied.
\begin{example}\label{ex1a}
{\em
Let $P=-\Gd$ and $\Omega=\Real^n_+$, $n>1$. Let $v_0(x):=C_nx_n/|x|^n$ be the Poisson kernel at the origin, and $v_1:=\mathbf{1}$. We note that in contrast to the pair $(v_0, x_n)$, the pair $(v_0,\mathbf{1})$ does not satisfy one of the assumptions in \eqref{u1u0bs}. An elementary computation shows that
$$W(x):=\frac{1}{4}\left(\frac{1}{|x_n|^2}+\frac{n(n-2)}{|x|^2}\right)\,, $$ which
is obviously greater than the corresponding well known Hardy potential $1/(2|x_n|)^{2}$,  and  we get the following Hardy inequality
$$\int_{\Real^n_+}|\nabla \phi|^2\dx\geq \int_{\Real^n_+}W(x)|\phi|^2\dx \qquad \forall\phi\in C_0^\infty(\Real^n_+).$$
Georgios Psaradakis kindly informed us recently that indeed the above inequality can be improved, and in fact, the following improved Hardy inequality holds true
\begin{equation}\label{psar}
\int_{\Real^n_+}|\nabla \phi|^2\dx\geq \frac{1}{4}\int_{\Real^n_+}\left(\frac{1}{|x_n|^2}+\frac{(n-1)^2}{|x|^2}\right)|\phi|^2\dx \quad \forall\phi\in C_0^\infty(\Real^n_+).
\end{equation}
This inequality was proved by Filippas, Tertikas and Tidblom in \cite[Theorem A]{FTT}. We show below, that this inequality is in fact optimal.

We note that by \cite[Theorem A]{FTT}, for every $0\leq \mu\leq \frac{1}{4}$ one can consider the Hardy inequality
\begin{equation}\label{half_space}
\int_{\Real^n_+}|\nabla \phi|^2\dx\geq \int_{\Real^n_+}\left(\frac{\mu}{|x_n|^2}+\frac{\beta(\mu)}{|x|^2}\right)|\phi|^2\dx \quad \forall\phi\in C_0^\infty(\Real^n_+),
\end{equation}
where for a fixed $\mu$, $\beta(\mu):=1-n-\sqrt{1-4\mu}$ is the best constant. Moreover, by \cite[Theorem B]{FTT}, inequality \eqref{half_space} cannot be improved by a Sobolev term.

{\bf Claim:}  {\em The Hardy inequality \eqref{half_space} is optimal. In particular, the operator $-\Gd -\frac{\mu}{|x_n|^2}-\frac{\gb(\gm)^2}{4|x|^2}$ is critical in $\Real^n_+$ with the ground state $\psi(x):=x_n^{\ga_+}|x|^{\beta(\gm)/2}$. Furthermore, no Sobolev improvement of \eqref{half_space} is possible.}

Indeed, for  $\mu\leq 1/4$, consider the subcritical operator $$P_\mu:=-\Gd -\frac{\mu}{|x_n|^2}$$ in $\Gw=\Real^n_+$.  Let $\ga_+$ be the largest root of the equation
$\ga(1-\ga)=\gm$, and let $$\beta(\gm):=1-n-\sqrt{1-4\gm}$$ be the nonzero root of the equation
\begin{equation*}
\beta\big(\beta+n-1+\sqrt{1-4\mu}\big)=0.
\end{equation*}
Then
$$w_0(x):=x_n^{\ga_+}, \qquad w_1(x):=x_n^{\ga_+}|x|^{\beta(\gm)}$$ are two positive solutions of the equation
$P_\gm u=0$ in $\Gw$.  Moreover, $w_1$ has minimal growth on $\partial \Gw$, and $w_0$ has minimal growth on $\partial \Gw\cup \{\infty\}\setminus \{0\}$. In particular,
$$\lim_{x\to 0} \frac{w_0(x)}{w_1(x)}=\lim_{x\to \infty} \frac{w_1(x)}{w_0(x)}=0.$$

Although the potential $|x_n|^{-2}$ is not smooth on $\partial \Gw \setminus \{0\}$, it can be easily checked that the proof of Theorem~\ref{thm_bs} applies also to the case of the operator $P_\mu$ in $\Gw$ with the pair of the positive solutions $w_0$ and $w_1$. This yields that inequality \eqref{half_space} is optimal. The criticality of $-\Gd -\frac{\mu}{|x_n|^2}-\frac{\gb(\gm)^2}{4|x|^2}$ implies that no Sobolev improvement is possible.
 }
\end{example}
\begin{remark}\label{rem_cone}{\em
A generalization of the optimal Hardy inequality \eqref{half_space} to the case of a cone will appear in a forthcoming  paper.
 }
\end{remark}

\mysection{Several ends}\label{sec_sever}
The {\em criticality result} for Hardy-weights obtained by a particular supersolution construction (Theorem~\ref{thm_Sch}) can be extended to the case where we have {\em a finite number of ends} in $\Omega$, instead of just two ends (e.g., one isolated singularity and $\infty$). For related results see also propositions~\ref{Pro_appB} and \ref{lem_severB} in Appendix~\ref{appen2}, and \cite{A1,BDE, CZ}.
\begin{definition}\label{def_ends}{\em
Let $M$ be a noncompact manifold. We say that $M$ has $N$-{\em ends} $E_1,\ldots,E_N$, if each $E_i$ is a smooth non-compact connected manifold with boundary such that
$$M=\bigcup_{i=0}^NE_i,\quad \mbox{and } \qquad\bigcap_{i=1}^NE_i=\emptyset ,$$
where $E_0$ is a relatively compact, open set of $M$.  We denote the ideal ``infinity" point of each $E_i$ by   $x_i$ (that is, $x_i$ is the ideal limit point when  $x\to \infty$ in $M\cap E_1\setminus \big(\overline{\pd E_1\cap M}\big)$).
 }
\end{definition}
We need the following lemma, which is a slight extension of the results of \cite[Corollary~3.6]{P07}:

\begin{Lem}\label{min_growth_ends}

Let $P$ be a symmetric  operator on a manifold $M$ with ends $E_1,\ldots,E_N$ . For $i=1,2$, let $P_i=P+W_i$ be a nonnegative operator in $M$, where $W_i$ is a potential, and let $\phi_i$ be a positive solution of $P_iu=0$ in $E_1$. Assume further that
$$\varphi_2\leq C\varphi_1 \qquad \mbox{in } E_1,$$
and that  $\varphi_1$ has a minimal growth at  $x_1$  with respect to $P_1$.
Then $\varphi_2$ has a minimal growth at  $x_1$  with respect to $P_2$.
\end{Lem}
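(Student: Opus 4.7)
I would prove the lemma by the standard strategy of constructing the minimal-growth solution for $P_2$ as a monotone limit of Dirichlet problems, and then identifying this limit with $\varphi_2$ by using the pointwise bound $\varphi_2\leq C\varphi_1$ together with the minimal growth of $\varphi_1$ for $P_1$.

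Let $K'\Subset E_1$ be a smooth compact set containing $\overline{\partial E_1\cap M}$, and let $v$ be any positive supersolution of $P_2 u=0$ in $E_1\setminus K'$ with $v\geq\varphi_2$ on $\partial K'$; the goal is to show $v\geq\varphi_2$ in $E_1\setminus K'$. Fix a smooth exhaustion $\{\Omega_k\}$ of $E_1$ with $K'\Subset\Omega_k$, and for $i=1,2$ let $u_k^{(i)}$ solve
\begin{equation*}
P_i u_k^{(i)}=0 \text{ in }\Omega_k\setminus K',\quad u_k^{(i)}=\varphi_i \text{ on }\partial K',\quad u_k^{(i)}=0 \text{ on }\partial\Omega_k\cap E_1.
\end{equation*}
The generalized maximum principle (valid since $P_i\geq 0$) gives that $\{u_k^{(i)}\}$ is nondecreasing and bounded above by $\varphi_i$, hence converges locally uniformly to a positive $P_i$-solution $u^{(i)}\leq \varphi_i$. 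The minimal growth of $\varphi_1$ for $P_1$ is equivalent to $u^{(1)}=\varphi_1$, while the same maximum principle applied to $u_k^{(2)}$ and $v$ yields $u_k^{(2)}\leq v$, and hence $u^{(2)}\leq v$. Thus it suffices to prove $u^{(2)}=\varphi_2$, that is, that the defects $\delta_k^{(2)}:=\varphi_2-u_k^{(2)}$ tend to zero locally uniformly.

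To set up a comparison with the $P_1$-defects $\delta_k^{(1)}:=\varphi_1-u_k^{(1)}$, I would apply the ground state transform $T_{\varphi_1}\colon f\mapsto f/\varphi_1$. Under $T_{\varphi_1}$ one has $\tilde P_1\mathbf{1}=0$ and $\tilde P_2=\tilde P_1+(W_2-W_1)$, and the transformed defects $\tilde\delta_k^{(i)}:=\delta_k^{(i)}/\varphi_1$ solve $\tilde P_i\tilde\delta_k^{(i)}=0$ in $\Omega_k\setminus K'$, vanish on $\partial K'$, and satisfy $\tilde\delta_k^{(1)}=1$ and $\tilde\delta_k^{(2)}=\varphi_2/\varphi_1\leq C$ on $\partial\Omega_k$. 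Minimal growth of $\varphi_1$ translates into $\tilde\delta_k^{(1)}\to 0$ locally uniformly, while the pointwise hypothesis gives the boundary comparison $\tilde\delta_k^{(2)}\leq C\tilde\delta_k^{(1)}$ on the full boundary of $\Omega_k\setminus K'$.

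The main obstacle is to propagate this boundary comparison to the interior, despite the fact that $\tilde\delta_k^{(1)}$ and $\tilde\delta_k^{(2)}$ satisfy different equations and the potential difference $W_2-W_1$ has no a priori sign. I would handle this as follows: extract a locally uniformly convergent subsequence of $\{\tilde\delta_k^{(2)}\}$, obtaining a nonnegative bounded $\tilde P_2$-solution $\tilde\delta^\ast$ in $E_1\setminus K'$ that vanishes on $\partial K'$ and is dominated by $C$. Then use the minimal growth of $\mathbf{1}$ for $\tilde P_1$ in the following Perron-type form: any bounded nonnegative $\tilde P_2$-solution vanishing on $\partial K'$ must vanish identically, by a comparison/Khas'minski\u{i}-type argument that uses the $\tilde P_1$-solution $\mathbf{1}$ as a barrier and absorbs the perturbation $(W_2-W_1)\tilde\delta^\ast$, which is bounded, into an iterative Dirichlet comparison. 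This yields $\tilde\delta^\ast\equiv 0$, hence $u^{(2)}=\varphi_2$, and therefore $v\geq u^{(2)}=\varphi_2$. The genuine technical difficulty is concentrated in this last step, since the indefinite sign of $W_2-W_1$ prevents a one-shot application of the maximum principle and requires a careful iterative or limiting comparison that exploits the boundedness of $\tilde\varphi_2/\mathbf{1}$ together with the strong minimal-growth property at $x_1$.
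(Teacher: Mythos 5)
Your reduction is correct up to a point: you correctly characterize minimal growth via exhausting Dirichlet problems, pass to the $\varphi_1$-ground state transform, and isolate the crux as showing that the limiting bounded nonnegative $\tilde P_2$-solution $\tilde\delta^\ast$ on $E_1\setminus K'$ (vanishing on $\partial K'$, dominated by $C$) must vanish identically. But the final step is not a proof, and it contains an error and an unjustified leap.

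The error is the assertion that $(W_2-W_1)\tilde\delta^\ast$ is bounded. You only know $\tilde\delta^\ast$ is bounded; nothing in the hypotheses bounds $W_2-W_1$, which is just a difference of locally regular potentials. So the ``perturbation'' you propose to absorb is not even in $L^\infty$, and the iterative Dirichlet comparison you sketch has no a priori control. More fundamentally, even if $W_2-W_1$ were bounded, its sign is indefinite, so there is no one-sided comparison between $\tilde P_1$ and $\tilde P_2$; an ``iterative comparison'' would need to be shown to converge, which is precisely the hard part and is not addressed. The claim ``any bounded nonnegative $\tilde P_2$-solution vanishing on $\partial K'$ must vanish'' is itself a Liouville comparison statement, and you cannot get it for free from the maximum principle for two \emph{different} operators.

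What the paper actually does is different in two ways. First, it modifies $\varphi_1,\varphi_2$ near $\partial E_1\cap M$ (replacing them there by a fixed solution of minimal growth at that boundary portion) to obtain $\tilde\varphi_i$ that are solutions of $\tilde P_i u=0$ in $E_1$ with the property that $\tilde\varphi_1$ has minimal growth on all of $E_1$ viewed as a subdomain of $M$, so $\tilde P_1$ is \emph{critical} in $E_1$ with ground state $\tilde\varphi_1$. Second, and crucially, it then invokes the Liouville comparison theorem of \cite{P07} (Theorem~1.7 / Corollary~3.6): if $\tilde P_1$ is critical with ground state $\tilde\varphi_1$, $\tilde P_2\geq 0$ admits a positive solution $\tilde\varphi_2$, the two operators share principal part, and $\tilde\varphi_2\leq C\tilde\varphi_1$, then $\tilde P_2$ is critical with ground state $\tilde\varphi_2$. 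This is exactly the deep input that your sketch tries to reproduce by hand, and it is where the symmetry hypothesis on $P$ is used in an essential way (the proof in \cite{P07} is variational, via the quadratic forms). Without that input, or an equally strong substitute, your argument cannot close: the gap is not technical bookkeeping but the absence of a comparison principle across two non-ordered Schr\"odinger operators.
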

\begin{proof}
We first modify $\varphi_i$ so that it has minimal growth for $P_i$ on $E_1$ (seen as a manifold with boundary). To this purpose, let us consider $U$ a compact, smooth, open set which is a neighborhood of $\partial E_1$ in $E_1$. Let $\psi$ be a positive solution of $P_1u=0$ in $U$, with minimal growth at $\partial E_1\cap M$. Now consider a positive function $\tilde{\varphi}_1(x)$ (resp. $\tilde{\varphi}_2(x)$) which is equal to $\psi(x)$ on a neighborhood of $\partial E_1$, and to $\varphi_1(x)$ (resp. $\varphi_2(x)$) near $x_1$. Let $\tilde{W}_i$ be a potential such that $(P+\tilde{W}_i)\tilde{\varphi}_i=0$ in $E_1$. By the (AAP) theorem, $\tilde{P}_i:=P+\tilde{W}_i$ is nonnegative. Also, by construction, $\tilde{\varphi}_1$ has minimal growth (globally) in $E_1$, considered as a subdomain of $M$, and therefore $\tilde{P_1}$ is critical in $E_1$. Furthermore, we still have (with a different constant $C$)
$$\tilde{\varphi}_2\leq C\tilde{\varphi}_1\qquad \mbox{ in } E_1.$$
 Now, \cite[Theorem 1.7 or Corollary 3.6]{P07} implies that $\tilde{P}_2$ is critical in $E_1$, and $\tilde{\varphi}_2$ is its ground state. Therefore, $\tilde{\varphi}_2$ has minimal growth (globally) on $E_1$. Since $\tilde{\varphi}_2(x)=\varphi_2(x)$ near $x_1$, the lemma is proved.
\end{proof}
We now formulate the main result of the present section that claims that in the case of finitely many ends the supersolution construction produces an $N-1$-parameter family of critical Hardy-weights.
 \begin{theorem}\label{lem_sever}
 Suppose that $P$ is a symmetric subcritical operator in a manifold $M$ with ends $E_1,\ldots,E_N$, $N\geq 2$. Assume that  for each $1\leq i\leq N$ there exists a function  $u_i$ which is a positive solution of  the equation $Pu=0$ in $M$ of minimal growth near each end $x_j$, $j\neq i$, and satisfying
$$\lim_{x\to x_i} \frac{u_j(x)}{u_i(x)}=0  \qquad \forall j\neq i.$$

Consider the supersolution construction

$$v:=\prod_{j=1}^N u_j^{\alpha_j}\,,$$
 where $0<\alpha_j\leq 1/2$ for all $1\leq j\leq N$, and $\sum_{j=1}^N\alpha_j=1$.

Then the corresponding Hardy-weight $W:=Pv/v$ is critical with respect to $P$ and $M$.
\end{theorem}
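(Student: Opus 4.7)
The plan is first to apply Proposition~\ref{pro_ends} to the family $\{u_j\}_{j=1}^N$ with weights $\{\alpha_j\}$: because the $u_j$ are positive solutions of $Pu=0$ and $\sum_j \alpha_j = 1$, the conclusion of that proposition yields that $v = \prod_j u_j^{\alpha_j}$ is a positive \emph{solution} of $(P-W)u=0$ in $M$, where $W = \sum_{j<k}\alpha_j\alpha_k|\nabla\log(u_j/u_k)|_A^2 \geq 0$. In particular $v$ certifies $P - W \geq 0$, and by Proposition~\ref{structure} the criticality of $P-W$ in $M$ is equivalent to $v$ being a ground state, i.e.\ to $v$ having minimal growth at infinity in $M$. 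Since $M$ has only the finitely many ends $x_1,\ldots,x_N$, part~2 of Lemma~\ref{minimal_vanish} (or its direct analogue for abstract ends) reduces this to showing that $v$ has minimal growth at each $x_i$ for the operator $P-W$.

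The heart of the proof is then the verification of minimal growth at a fixed end $x_i$. I would reduce to a two-function setting by writing
\[
v = u_i^{\alpha_i}\, U_i^{1-\alpha_i}, \qquad U_i := \prod_{j\neq i} u_j^{\alpha_j/(1-\alpha_i)},
\]
whose rescaled exponents sum to $1$; Proposition~\ref{pro_ends} applied to $\{u_j\}_{j\neq i}$ then gives that $U_i$ is a positive solution of $(P - W_{U_i})u = 0$, with $W_{U_i} = \sum_{j<k;\,j,k\neq i}\alpha_j\alpha_k(1-\alpha_i)^{-2}|\nabla\log(u_j/u_k)|_A^2$. The hypothesis $u_j/u_i \to 0$ at $x_i$ for every $j\neq i$ propagates to $U_i/u_i \to 0$ at $x_i$, placing us locally near $x_i$ in the situation of Theorem~\ref{thm_Sch}: $u_i$ plays the role of the ``non-minimal'' solution and $U_i$ that of the Green-function-like minimal one. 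I would then adapt Alternative Proof~1 of Theorem~\ref{thm_Sch}: introducing the one-parameter family $v_\beta := u_i^{1-\beta}U_i^\beta$, Remark~\ref{rem_conv_comb} (equation~\eqref{Pga3}) shows $(P - W_\beta)v_\beta = 0$ with $W_\beta = 4\beta(1-\beta)W_{iU_i} - \beta W_{U_i}$ and $W_{iU_i} := \tfrac{1}{4}|\nabla\log(u_i/U_i)|_A^2$, so that $W_{1-\alpha_i} = W$ and $v_{1-\alpha_i} = v$. Proposition~\ref{minimal}, applied locally near $x_i$ to members of this family whose mutual ratio tends to $0$ at $x_i$, together with a limiting argument as $\beta \to 1 - \alpha_i$ analogous to the last step of Alternative Proof~1 of Theorem~\ref{thm_Sch}, transfers minimal growth to $v$ for $P - W$. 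Once this is obtained at every end $x_i$, Lemma~\ref{minimal_vanish}(2) completes the proof.

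The main obstacle lies in this Step~3: unlike the 2-end case, where the symmetry $\beta \leftrightarrow 1-\beta$ of $4\beta(1-\beta)$ produces two solutions $v_\beta$ and $v_{1-\beta}$ of \emph{the same} operator, here $u_i$ and $U_i$ satisfy different base equations ($Pu_i = 0$ but $(P - W_{U_i})U_i = 0$), so the map $\beta \mapsto W_\beta$ is not symmetric about $\beta = 1/2$. The two natural candidates $v = v_{1-\alpha_i}$ and its ``dual'' $v_{\alpha_i}$ consequently solve \emph{different} equations $P - W$ and $P - W_{\alpha_i}$, with $W_{\alpha_i} - W = (1-2\alpha_i)W_{U_i} \geq 0$ precisely because $\alpha_i \leq 1/2$. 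Exploiting this inequality—together with the comparison $v \leq C v_{\alpha_i}$ near $x_i$ that follows from $v_{\alpha_i}/v = (u_i/U_i)^{1-2\alpha_i} \to \infty$—should permit the application of Lemma~\ref{min_growth_ends} to transfer minimal growth from $v_{\alpha_i}$ (for $P - W_{\alpha_i}$) to $v$ (for $P - W$). Establishing minimal growth of $v_{\alpha_i}$ itself is the genuinely new technical point, and it is precisely here that the hypothesis $\alpha_i \leq 1/2$ must be used sharply, either by a careful one-parameter limit or by an inductive reduction exploiting the $(N-1)$-end structure of $U_i$.
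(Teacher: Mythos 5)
Your outline is right in spirit — reduce to minimal growth at each end $x_i$ and then invoke Lemma~\ref{min_growth_ends} — and you correctly diagnose where the two-end symmetry breaks down: $u_i$ solves $Pu=0$ while $U_i$ solves a shifted equation, so $v_{\alpha_i}$ and $v_{1-\alpha_i}$ do \emph{not} solve the same equation and Proposition~\ref{minimal} cannot be applied to the pair. But the proposal stalls exactly where you say it does: you need the comparison function to already be known to have minimal growth at $x_i$ for its own operator, and you offer no way to establish that for $v_{\alpha_i}$. That is a genuine gap, not a technicality; the remaining ``one-parameter limit or inductive reduction'' you gesture at is precisely the unresolved problem.

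The paper sidesteps $U_i$ entirely. Instead of comparing $v$ to $v_{\alpha_i}=u_i^{1-\alpha_i}U_i^{\alpha_i}$, fix any single $k\neq i$ and compare to $\sqrt{u_iu_k}$. Two observations make this work. First, by minimal growth of each $u_j$ $(j\neq i)$ at $x_i$, all $u_j$ with $j\neq i$ are pairwise comparable near $x_i$, so $\prod_{j\neq i}u_j^{\alpha_j}\asymp u_k^{1-\alpha_i}$ there; combined with $u_k/u_i\to 0$ at $x_i$ and $\alpha_i\leq 1/2$ this gives
\[
v \;\asymp\; u_i^{\alpha_i}u_k^{1-\alpha_i} \;=\;(u_iu_k)^{1/2}\Big(\tfrac{u_k}{u_i}\Big)^{1/2-\alpha_i}\;\leq\;C\,(u_iu_k)^{1/2}
\]
near $x_i$ — this is where $\alpha_i\leq 1/2$ is used. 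Second, $(u_iu_k)^{1/2}$ and $(u_iu_k)^{1/2}\log(u_i/u_k)$ are both solutions of the \emph{same} equation $(P-W_{i,k})u=0$ (here $W_{i,k}=\tfrac14|\nabla\log(u_i/u_k)|_A^2$, since $u_i$ and $u_k$ both solve $Pu=0$), and the second dominates the first near $x_i$; Proposition~\ref{minimal} (equivalently, the proof of Theorem~\ref{thm_Sch}) then gives that $(u_iu_k)^{1/2}$ has minimal growth at $x_i$ for $P-W_{i,k}$ with no extra work. Now Lemma~\ref{min_growth_ends}, with $\varphi_1=(u_iu_k)^{1/2}$ for $P-W_{i,k}$ and $\varphi_2=v$ for $P-W$, transfers minimal growth to $v$ at $x_i$. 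This is the piece you were missing: by choosing a single fellow solution $u_k$ rather than the aggregate $U_i$, the reference function is the ground state of a genuine two-end critical operator, so its minimal growth is already available. As a small side remark, your sign for $W_{\alpha_i}-W$ follows a typo in Remark~\ref{rem_conv_comb} — with the correct sign one gets $W_{\alpha_i}\leq W$ — but this does not affect the structure of the argument; the real issue is the unestablished minimal growth of $v_{\alpha_i}$.
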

\begin{proof}
Note that by the definition of minimal growth, for each $i$, and every $k,j \neq i$ we have
$$u_j(x)\asymp  u_k(x)\qquad \mbox{ as } x\to x_i.$$

Denote $\hat{u}_i:=\prod_{j\neq i}u_j^{\alpha_j}$. Fix $1\leq i \leq N$ and $k\neq i$. Then near $x_i$ the following inequality holds

$$v = u_i^{\alpha_i}  \hat{u_i}^{1-\alpha_i} \leq  C u_i^{\alpha_i}  u_k^{1-\alpha_i} =
 C(u_i u_k)^{1/2}\left( \frac{u_k}{u_i}\right)^{1/2-\alpha_i} \leq (u_i u_k)^{1/2}.$$
Recall that it follows from the {\em proof} of  Theorem~\ref{thm_Sch} (or Theorem~\ref{thm_bs}) that  $(u_i u_k)^{1/2}$ has minimal growth at $x_i$ with respect the symmetric operator $P-W_{i,k}$, where $W_{i,k}$ is the Hardy-weight corresponding to the pair $(u_i,u_k)$.

Hence, by Lemma \ref{min_growth_ends}, $v$ has minimal growth at $x_i$. Since this is true for all
$1\leq i\leq N$, it follows that the operator $P-W$ is critical in $M$.
\end{proof}

\begin{remark}\label{rem_sev}{\em
1. Theorem~\ref{lem_sever} should hold also in the nonsymmetric case (cf. Proposition~\ref{lem_severB}).

\vskip 3mm

2. We plan to study the other optimality properties of the Hardy-weights of Theorem~\ref{lem_sever} (besides the criticality) in a subsequent paper. Note that if the number of ends $N$ is greater than $2$, then these critical Hardy-weights might be not optimal, see Remark~\ref{rem_Bosi_Zu}.

\vskip 3mm

3. For a slightly different approach in some particular cases, see propositions~\ref{Pro_appB} and \ref{lem_severB} in Appendix~\ref{appen2}.
 }
 \end{remark}

\mysection{Examples, applications and problems}\label{examples}
In this section we present some further examples, and discuss some additional applications and extensions. First, we present a straightforward example of an optimal Hardy-weight.
\subsection{Further examples}\label{further_ex}
\begin{example}\label{ex2}
{\em
Consider the Laplace operator $P=-\Gd$ on the unit disk $\Omega=B(0,1)\subset \mathbb{R}^2$. Take $v_0(x):=-\frac{1}{2\pi}\log|x|$, the Green function of the unit ball with a pole at the origin,  and let $v_1:=\mathbf{1}$. Then the corresponding optimal Hardy-weight is given by $W(x)=\big(4|x|\log|x|\big)^{-2}$ defined on $B(0,1)^\star:=B(0,1)\setminus \{0\}$. We obtain the classical {\em Leray inequality} \cite{Leray} with the best constant
$$\int_{B(0,1)^\star}|\nabla \phi|^2\dx \geq \frac{1}{4}\int_{B(0,1)^\star}\frac{|\phi|^2}{\big(|x|\log|x|\big)^2}\dx \qquad \forall \phi\in C_0^\infty(B(0,1)^\star),$$
 cf. \cite[(1.3)]{AS}. In particular, the operator $-\Delta- W$ is null-critical in $B(0,1)^\star$, and  $\lambda_0(-\Delta,W,B(0,1)^\star)=\gl_\infty(-\Gd,W,B(0,1)^\star)=1$.

Analogously, in higher dimension $n\geq 3$, let $v_0(x):= C_n(|x|^{2-n}-1)$ be the Green function of the unit ball $B(0,1)$ with a pole at the origin, and let $v_1:=\mathbf{1}$.  Then
$$W(x)= \frac{(n-2)^2}{4\big(|x|(1-|x|^{n-2})\big)^2}\,,$$
and the following optimal inequality holds true
$$\int_{B(0,1)^\star}\!\!\!\!|\nabla \phi|^2\dx \!\geq\!\! \Big(\frac{n\!-\!2}{2}\Big)^2\!\!\!\int_{B(0,1)^\star}\!\!\frac{|\phi|^2}{\big(|x|(1-|x|^{n-2})\big)^2}\dx \;\;\; \forall \phi\in C_0^\infty(B(0,1)^\star).$$ In particular, the operator $\Delta-W$ is null-critical in $B(0,1)^\star$, furthermore, cf. \cite[Section~1.1]{AS}, we have
$$\lambda_0(-\Delta,W,B(0,1)^\star)=\lambda_\infty(-\Delta,W,B(0,1)^\star) =1.$$
 }
\end{example}

\begin{example}\label{ex21}
{\em
The aim of the present  example is to give an alternative proof that $1/4$ is the best constant in the classical Hardy inequality \eqref{eq_Hardy12} for a \textit{smooth} convex bounded domain $\Gw$ (see the discussion in Example~\ref{exconv}).
If we use the supersolution construction with $P=-\Gd$, $u_0=G$ (the Green function), and $u_1=\mathbf{1}$, we get an optimal Hardy-weight $W:=\frac{1}{4}\left|\frac{\nabla G}{G}\right|^2$. Recall that $G$ vanishes on $\partial\Omega$ (in fact, $G(x)\asymp \gd(x)$ near the boundary). By Hopf's lemma, $\partial G/\partial \vec{\gs}$ does not vanish on $\partial\Omega$, where $\vec{\gs}$ is the outer normal vector to $\partial\Omega$. Hence, by the proof of Lemma \ref{boundary_continuous}, we have
\begin{equation}\label{asymptotic_W}
W(x)\sim \frac{1}{4\gd(x)^2} \qquad \mbox{ as } x\to \pd \Gw.
\end{equation}
Since we know that $\lambda_\infty(P,W,\Omega)=1$, we deduce that $1/4$ is indeed the best constant in the classical Hardy inequality \eqref{eq_Hardy12}. It is also easy to deduce from the fact that $P-W$ is null-critical that the classical Hardy inequality \eqref{eq_Hardy12} has no minimizer (this also follows from the subcriticality of $-\Gd-\gd(x)^{-2}/4$). We do not know if the asymptotic of $W$ given by \eqref{asymptotic_W} remains true if $\Omega$ has a rougher boundary. On the other hand, B.~Devyver recently proved \cite{Dev} that the spectrum and the essential spectrum of $-4\gd(x)^{2}\Gd$ on $L^2(\Gw,\big(4\gd(x)\big)^{-2}\mathrm{d} \nu)$ is equal to $[1,\infty)$.
 }
\end{example}
In the next two examples we apply the supersolution construction to positive solutions with boundary singularities.
\begin{example}\label{ex22}
{\em Consider the operator $Pu:=-u''$ on $\mathbb{R}_+$, and apply the supersolution construction with the positive solutions $u_0(x)=x$, $u_1(x)=\mathbf{1}$. By Theorem \ref{thm_bs}, we readily get
the classical Hardy inequality on $\mathbb{R}_+$ with the optimal Hardy-weight $W(x):=1/(4x^2)$. We note that the corresponding transform \eqref{trans} is just the classical {\em Mellin transform}.
 }
\end{example}
\begin{example}\label{ex23}
{\em
Consider the operator $Pu:=-u''+u$ defined on $\mathbb{R}$, with $u_0(x)=\mathrm{e}^x$, $u_1(x)=\mathrm{e}^{-x}$. Applying Theorem \ref{thm_bs} we obtain the optimal Hardy-weight $W:=\mathbf{1}$, and we get the trivial inequality $P-W=-\mathrm{d}^2/\dx^2\geq 0$ in $\R$. The corresponding transform \eqref{trans} is just the classical {\em Fourier transform} on $\mathbb{R}$.
 }
\end{example}

\subsection{Decay of solutions and estimates of $W$ near infinity. }
The supersolution construction provides bounds for solutions near infinity in terms of the Green function $G$ and a global solution $u$. In particular, we have
\begin{lemma}\label{thm_decay}
Let $P$ be a subcritical operator in $\Gw$, and let $W$ be a Hardy-weight in $\Gw$ associated to a pair $(v_1,v_2)$, where $v_1,\,v_2$ are positive solutions of the equation $Pu=0$ in $\Gw$.  Let $v$ be a positive supersolution of the equation
$$
(P -V) v=0
$$
of minimal growth at  infinity with respect to $P-V$ in $\Gw$.

Suppose further that  $$V(x) \leq  4 \alpha (1-\alpha) W(x)\qquad \mbox{in} \quad \Omega'$$ holds true for some $1/2\leq\alpha\leq 1$, and some neighborhood $\Gw'$ of infinity in $\Gw$. Then for any $1/2 \leq \beta \leq \alpha$ there exists a constant $C$ such that the inequality
$$
 v(x) \leq C v_1^{1 -\beta}(x) v_2^\beta(x)
$$
holds true in a neighborhood of infinity of $\Omega$.
\end{lemma}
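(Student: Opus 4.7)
The plan is to dominate $v$ outside a compact set by the explicit function produced by the supersolution construction applied to $(v_1,v_2)$. By Lemma~\ref{construction}, for every $\beta\in[0,1]$ the function
$$
w_\beta(x):=v_1^{1-\beta}(x)\,v_2^{\beta}(x)
$$
is a positive solution of $[P-4\beta(1-\beta)W]u=0$ in $\Omega$.

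Next, I would exploit the monotonicity of the map $s\mapsto 4s(1-s)$, which is decreasing on $[1/2,1]$: for any $\beta$ with $1/2\leq \beta\leq \alpha$ this gives $4\beta(1-\beta)\geq 4\alpha(1-\alpha)$, and therefore on the neighborhood $\Omega'$ of infinity,
$$
4\beta(1-\beta)W(x)\;\geq\; 4\alpha(1-\alpha)W(x)\;\geq\; V(x).
$$
Consequently
$$
(P-V)w_\beta=\bigl[4\beta(1-\beta)W-V\bigr]w_\beta\geq 0 \quad\text{in } \Omega',
$$
so $w_\beta$ is a positive supersolution of $(P-V)u=0$ in $\Omega'$.

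To conclude, I would fix a compact set $K'\Subset \Omega$ with smooth boundary such that $\Omega\setminus K'\subset \Omega'$. Since both $v$ and $w_\beta$ are positive and continuous on the compact surface $\partial K'$, there exists a constant $C=C(\beta)>0$ with $v\leq C w_\beta$ on $\partial K'$. As $C w_\beta$ is itself a positive supersolution of $(P-V)u=0$ on $\Omega\setminus K'$, the minimal growth property of $v$ with respect to $P-V$ (Definition~\ref{def:minimal_gr}) yields
$$
v\;\leq\; Cw_\beta\;=\;C\,v_1^{1-\beta}v_2^{\beta}\qquad \text{on } \Omega\setminus K',
$$
which is the asserted inequality in a neighborhood of infinity.

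The only (essentially notational) obstacle is that Definition~\ref{def:minimal_gr} is formulated for positive \emph{solutions} being compared with positive supersolutions, while here $v$ itself is only a supersolution. The required extension---namely, that a positive supersolution of minimal growth is dominated outside a compact set by any positive supersolution majorizing it on the boundary---is standard in criticality theory: it follows from the generalized maximum principle applied in $\Omega\setminus K'$, whose validity is guaranteed by the existence of the positive supersolution $w_\beta$ of $(P-V)u=0$ there (and hence by the nonnegativity of $P-V$ on a neighborhood of infinity via the Agmon--Allegretto--Piepenbrink theory).
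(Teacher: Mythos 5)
Your proposal is correct and follows the same route as the paper's own proof, which simply observes that $v_1^{1-\beta}v_2^{\beta}$ is a positive supersolution of $(P-V)u=0$ near infinity and appeals to the definition of minimal growth. You have filled in the details (the monotonicity of $s\mapsto 4s(1-s)$ on $[1/2,1]$, the comparison-on-$\partial K'$ argument, and the observation that the definition extends from solutions to supersolutions of minimal growth via the generalized maximum principle) that the paper leaves implicit.
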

\begin{proof}
The function $v_1^{1-\beta} v_2^\beta$ is a positive supersolution of the operator $P - V$ in a neighborhood of infinity of $\Gw$. The claim then follows by the definition of positive solutions of minimal growth.
\end{proof}

\begin{example}\label{ex_est_conv}{\em
Let $\Omega$ be a smooth bounded convex domain and $u$ a positive solution of the equation
$$ (P - V)u = 0,$$
  of minimal growth in a neighborhood of infinity in $\Omega$. Suppose further that for some $1/2 \leq \alpha \leq 1$, the inequality $V(x) \leq \alpha (1- \alpha) \delta^{-2}(x)$ holds true in a neighborhood $\Omega'$ of infinity of $\Omega$. Then
 $$
  u(x) \leq C \delta(x)^\alpha \qquad \mbox{in }\, \Omega'.
 $$
  }
\end{example}
\vskip 5mm
In order to apply Lemma~\ref{thm_decay} for the pair $(u,\,G)$, where $G$ is the Green function and $u$ is a global  positive solution satisfying \eqref{u1u0a}, one needs to know the behavior of the optimal Hardy-weight $W$ near infinity, and to compare pointwise $V$ and $W$, if $V$ is a (non-optimal) Hardy potential. In full generality, it seems hopeless to get an asymptotic of $W$ at infinity, since $\nabla G$ might vanish on a nonempty set with an accumulation point at infinity in $\Gw$ (of course, it is expected that this set should be small).

However, in the symmetric case  we have an asymptotic of $W$ \textit{in average}
 at infinity, as follows from Corollary \ref{Estimate W}, which, if $u$ is normalized so that $u(0)=1$, gives that
$$\int_{\{a\leq\frac{G}{u}\leq b\}}uGW\,d\nu= \frac{1}{4}(\log b-\log a).$$
 Moreover, in average, we can compare $W$ and any Hardy-weight $V$ near infinity.
\begin{Pro}
Suppose that $ P$ is symmetric and the hypotheses of Theorem \ref{thm*} are satisfied (with $P$, $u$, $G$ and $W$ as in the theorem).  Let $V$ be a nonnegative potential such that $P-V\geq 0$ in $\Gw^\star$. Then for every $1<a<b<\infty$ (or $-\infty<a<b<-1$), we have
$$\int_{\{a\leq \log \frac{G}{u}\leq b\}} \hspace{-10mm}uGV\dnu\leq 5\int_{\{a-1\leq \log \frac{G}{u}\leq b+1\}} \hspace{-10mm} uGW\dnu= \frac{5}{4} [b- a+2].$$

\end{Pro}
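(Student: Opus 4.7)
The plan is to test the Hardy-type inequality equivalent to $P - V \geq 0$ against a carefully chosen truncation of the ground state $\psi := \sqrt{uG}$ of the critical operator $P-W$ (Theorem~\ref{thm_Sch}). The crucial observation is that since $W = \tfrac{1}{4}|\nabla \log(G/u)|_A^2$, any cutoff built from the ``radial'' variable $t := \log(G/u)$ automatically produces a gradient term proportional to $W$ itself, which is what will yield the explicit dimension-free constant $5$.

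First, by the (AAP) theorem, $P - V \geq 0$ in $\Omega^\star$ is equivalent to
\begin{equation*}
\int_{\Omega^\star} V \varphi^2 \dnu \leq q(\varphi) \qquad \mbox{for every } \varphi \in C_0^\infty(\Omega^\star).
\end{equation*}
On the other hand, since $\psi$ is a positive solution of $P\psi = W\psi$ and $\psi^2 = uG$, the standard ground-state substitution identity reads, for an admissible cutoff $\chi$,
\begin{equation*}
q(\chi \psi) = \int_{\Omega^\star} W \chi^2\, uG\, \dnu + \int_{\Omega^\star} |\nabla \chi|_A^2\, uG\, \dnu.
\end{equation*}
Testing the Hardy inequality against $\chi\psi$ and combining these two relations gives
\begin{equation*}
\int_{\Omega^\star} V \chi^2\, uG\, \dnu \;\leq\; \int_{\Omega^\star} W \chi^2\, uG\, \dnu + \int_{\Omega^\star} |\nabla \chi|_A^2\, uG\, \dnu.
\end{equation*}

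Next I would specialize $\chi$ to depend only on $t := \log(G/u)$. Let $\tilde\chi: \R \to [0,1]$ be a Lipschitz function equal to $1$ on $[a,b]$, supported in $[a-1, b+1]$, and satisfying $|\tilde\chi'| \leq 1$ (smoothed by a convolution in $t$, which preserves the Lipschitz constant), and set $\chi := \tilde\chi \circ \log(G/u)$. By the chain rule,
\begin{equation*}
|\nabla \chi|_A^2 = \bigl|\tilde\chi'(\log(G/u))\bigr|^2 \bigl|\nabla \log(G/u)\bigr|_A^2 = 4\,\bigl|\tilde\chi'(\log(G/u))\bigr|^2 W.
\end{equation*}
Since $\chi^2$ and $|\tilde\chi'|^2$ are both $\leq 1$ and supported in $\{a-1 \leq \log(G/u) \leq b+1\}$, while $\chi^2 \geq \mathbf{1}_{\{a \leq \log(G/u) \leq b\}}$, the previous display becomes
\begin{equation*}
\int_{\{a\leq \log(G/u) \leq b\}} V uG \dnu \;\leq\; (1+4)\int_{\{a-1\leq \log(G/u) \leq b+1\}} W uG \dnu,
\end{equation*}
which is the desired inequality. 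The final equality then follows by Corollary~\ref{Estimate W} applied to the perturbed level set.

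The main obstacle is technical: ensuring that $\chi\psi$ is an admissible test function. The hypothesis $1 < a$ (or $b < -1$) separates the level set from the hypersurface $\{\log(G/u) = 0\}$ and, together with the singularity of $G$ at $0$ and the decay assumption~\eqref{u1u0a}, guarantees that $\{a-1 \leq \log(G/u) \leq b+1\}$ stays bounded away from both $0$ and infinity in $\Omega^\star$. Compact support inside $\Omega^\star$ (away from $\partial\Omega$ when present) is achieved by the usual truncation-and-density argument, without spoiling the bound $|\tilde\chi'|\leq 1$.
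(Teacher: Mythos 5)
Your proposal is correct and follows essentially the same strategy as the paper. The paper likewise tests the Hardy inequality against $G^{1/2}\psi$ (after performing the ground-state transform to set $u=\mathbf{1}$), derives the same energy identity via the product rule \eqref{rule1} and one integration by parts, and uses a trapezoidal cutoff with unit slope in $\log G$ whose gradient satisfies $|\nabla\psi|_A^2\leq 4W$, arriving at the same $1+4=5$; the only cosmetic differences are that you keep $u$ explicit and phrase the cutoff as a general Lipschitz $\tilde\chi$ with $|\tilde\chi'|\leq 1$ rather than writing down the piecewise-linear representative and then smoothing.
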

\begin{proof}
By performing a ground state transform, we may assume that $u=\mathbf{1}$. We start with the following inequality

$$\int_{\Gw^\star} Vv^2\dnu\leq \int_{\Gw^\star} v\,P[v]\dnu \qquad \forall v\in C_0^\infty(\Omega^\star),$$
which holds true by our assumption.

Fix $1<a<b<\infty$, and  let $\psi$ be a smooth nonnegative cut-off function supported in $\{a-1\leq \log G\leq b+1\}$,  such that $\psi=1$ on $\{a\leq \log G\leq b\}$. Set $v:=G^{1/2}\psi$, and recall that $(P-W)(G)^{1/2}=0$.  Therefore, by \eqref{rule1} we have
$$\int_{\{a-1\leq \log G\leq b+1\}} \hspace{-10mm}v\,P[v]\dnu=\int_{\{a-1\leq \log G\leq b+1\}}\hspace{-10mm}\big[GW\psi^2-\frac{1}{2}\langle \nabla G,\nabla \psi^2\rangle_A +G\psi P[\psi]\big]\dnu.$$
Now, integrate by part the last term to get

\begin{equation}\label{estimate V}
\int_{\{a\leq \log G\leq b\}} \hspace{-10mm}GV\dnu\leq\int_{\{a-1\leq \log G\leq b+1\}} \hspace{-10mm}GW\psi^2 \dnu +\int_{\{a-1\leq \log G\leq b+1\}} \hspace{-10mm} G|\nabla\psi|_A^2\dnu.
\end{equation}
Consider the function $\psi$ defined by
$$\psi(x):=\left\{
             \begin{array}{ll}
               1 & \quad x\in \{a\leq \log G\leq b\}, \\[2mm]
               b+1-\log G(x) &\quad x\in \{b\leq \log G\leq b+1\}, \\[2mm]
               \log G(x)-a+1 & \quad x\in \{a-1\leq \log G\leq a\}, \\[2mm]
               0 & \quad \hbox{ elsewhere.}
             \end{array}
           \right.
$$
Now, take a sequence $\{\psi_k\}\subset C_0^\infty(\Gw^\star)$ of  smooth function $0\leq \psi_k\leq \psi$ which converges in $W^{1,2}$ to $\psi$. Since $\psi$ is in $W_0^{1,2}(\Gw^\star)$, we can find such a sequence $\{\psi_k\}$. Applying \eqref{estimate V} to $\psi_k$ and passing to the limit as $k\to\infty$ gives
$$\int_{\{a\leq \log G\leq b\}} \hspace{-10mm} GV\psi\dnu\leq\int_{\{a-1\leq \log G\leq b+1\}}\hspace{-10mm} GW\psi^2 \dnu+ \int_{\{a-1\leq \log G\leq b+1\}}\hspace{-10mm} G|\nabla\psi|_A^2 \dnu.$$
We use finally the fact that $\psi$ is supported in $\{a-1\leq \log G\leq b+1\}$, that $0\leq\psi\leq 1$ and that $|\nabla\psi|^2_A\leq 4W$ to get the result.
\end{proof}

It is natural to formulate the following conjecture about the pointwise asymptotic of the optimal Hardy-weight $W$.
\begin{conjecture}\label{13_8}
Let $P u = -\laplace u + V(x) u + u$ be subcritical in $\R^n$  and assume that $\lim_{x\to \infty} V(x) = 0$.
  If $u$ is a positive solution of $Pu=0$ in $\R^n$ satisfying \eqref{u1u0a}, then the optimal Hardy-weight $W$ associated to the pair $(u,\,G)$ satisfies
$$
\lim_{x \to \infty} W(x) =1.
$$
\end{conjecture}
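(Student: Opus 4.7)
The plan is to read off $W=\tfrac14|\nabla\log(G/u)|^2$ from sharp \emph{pointwise} gradient asymptotics for $\log G$ and $\log u$ at infinity. Any positive solution $w$ of $Pw=0$ satisfies the Riccati-type identity
\begin{equation*}
\Delta(\log w) + |\nabla \log w|^2 = V(x) + 1,
\end{equation*}
so under $V\to 0$ one expects $|\nabla \log w|^2\to 1$ and, after determining the direction, $\nabla \log w\to \pm \hat{x}$ (with $\hat{x}:=x/|x|$) according to which end of the Martin boundary $w$ is attached to.

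First, I would establish the Green-function asymptotic $\nabla \log G(x)\to -\hat{x}$ as $|x|\to\infty$. Agmon's exponential decay theorem, whose weight here is $\sqrt{V+1}\to 1$, gives the upper bound $G(x)\le C_\varepsilon\exp(-(1-\varepsilon)|x|)$ for every $\varepsilon>0$; a matching lower bound comes from comparison with the explicit Yukawa kernel of $-\Delta+1$ outside a large ball. Cheng--Yau type gradient estimates applied to $\log G$, combined with a bootstrap in the Riccati identity above, then upgrade the logarithmic asymptotic $\log G(x)=-|x|+O(\log|x|)$ to the pointwise convergence $\nabla \log G(x)=-\hat{x}+o(1)$.

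Second, I would seek the complementary estimate $\nabla \log u(x)\to +\hat{x}$. Given a ``balanced'' $u$---concretely, the rotation average of the Martin kernel at infinity, which in the model case $V\equiv 0$ is a multiple of $\int_{S^{n-1}}e^{\omega\cdot x}\,\mathrm{d}\sigma(\omega)\sim c|x|^{-(n-1)/2}e^{|x|}$---a stationary-phase analysis yields $\nabla \log u\to\hat{x}$, which can then be transported to the general variable-coefficient setting by the same Agmon/Riccati machinery applied to $u$ in place of $G$. Combining the two estimates closes the argument:
\begin{equation*}
W(x)=\tfrac14\bigl|\nabla\log G(x)-\nabla\log u(x)\bigr|^2 \longrightarrow \tfrac14\bigl|{-}\hat{x}-\hat{x}\bigr|^2=1.
\end{equation*}

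The main obstacle is that a \emph{general} positive solution $u$ satisfying (\ref{u1u0a}) need not have this pointwise asymptotic: in the flat case, a plane wave $u(x)=e^{\omega\cdot x}$ with $\omega\in S^{n-1}$ satisfies $G/u\to 0$ but has $\nabla\log u\equiv\omega$ and leads to $W\to \tfrac12(1+\omega\cdot\hat{x})$, which is not identically $1$. The conjecture therefore hinges either on showing that the stated hypotheses implicitly force the ``balanced'' $u$ above, or on weakening the claim to generic directions; and on upgrading the classical logarithmic WKB asymptotics of $\log w$ to pointwise gradient asymptotics, which I would attack by writing $\log w=\pm|x|+r(x)$ with $r$ of lower order, deriving an elliptic equation for $r$ from the Riccati identity, and exploiting Schauder and Harnack estimates on exterior annuli.
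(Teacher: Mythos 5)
The statement you are addressing is labeled a \emph{conjecture} in the paper; the authors supply no proof, so there is no argument in the paper to compare yours against. What you have written is a heuristic program plus a decisive counterobservation, and the counterobservation is the important part.

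In fact, the ``obstacle'' you flag at the end is stronger than an obstacle to your method: it refutes the conjecture in the literal form stated. Take $P=-\Delta+1$ on $\R^n$ (so $V\equiv 0$), $G(x)\sim C_n|x|^{-(n-1)/2}e^{-|x|}$, and $u(x)=e^{\omega\cdot x}$ for a fixed $\omega\in S^{n-1}$. Then $Pu=0$, and $G(x)/u(x)\le C_n|x|^{-(n-1)/2}\to 0$, so hypothesis \eqref{u1u0a} is satisfied (the polynomial prefactor in the Yukawa asymptotics saves the limit even along the ray $x=-t\omega$). Yet $\nabla\log(G/u)\to -\hat{x}-\omega$, so $W(x)\to \tfrac12\bigl(1+\hat{x}\cdot\omega\bigr)$, which ranges from $0$ (along $\hat{x}=-\omega$) to $1$ (along $\hat{x}=\omega$) and hence does not converge. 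So the conjecture as written is false; it must implicitly (or should explicitly) single out a canonical $u$ such as the radially symmetric solution, for which $\nabla\log u\to\hat{x}$. For that restricted version your Agmon/Riccati program is the right shape, but it is still only a sketch: Agmon's theorem gives upper bounds on $G$, Cheng--Yau-type estimates give $|\nabla\log w|=O(1)$, and passing from the WKB-type statement $\log w=\pm|x|+o(|x|)$ to the pointwise limit $\nabla\log w\to\pm\hat{x}$ requires the elliptic bootstrap you gesture at but do not carry out. None of this detracts from the value of your observation — you have effectively shown the conjecture needs an extra hypothesis.
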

\begin{remark*}[Added on 15/10/2016; after the publication of the paper in JFA]{\em
Mr.~Idan Versano kindly pointed out to us that Conjecture~\ref{13_8} is not correct. He gave the following elementary counterexample.

Consider the operator  $P:=-\Gd+1$ on $\R^n$, and let $G(x):=G_P^{\R^n}(x,0)$ be the corresponding Green function. Consider  the positive solution $u(x):=\exp(x_1)$, where $x=(x_1,\ldots,x_n)$, and let $W$ the corresponding optimal Hardy weight.
Then one can easily verified that $\liminf_{x\to \infty} W(x)=0$, but $\limsup_{x\to \infty} W(x)=1$.
 }
\end{remark*}
\begin{remark}\label{rem_asymp}{\em
 In many cases the asymptotic of the Green function at infinity is known. Therefore, knowing the asymptotic of the optimal Hardy-weight $W$ associated to a pair $(\mathbf{1},G)$ will lead to the asymptotic of $|\nabla G|_A$ at infinity,  such information is rarely available.
 }
\end{remark}
\subsection{Regularization}\label{reg} The main result of our paper (Theorem~\ref{thm_nsa}) provides us with an optimal Hardy-weight $W$ defined in the {\em punctured} domain $\Gw^\star$ rather in $\Gw$. This drawback can be easily relaxed using the following regularization procedure. Let $\tilde{W}\leq W$ be a (locally) regular nonnegative potential in $\Gw$ such that $\tilde{W}  =W$ outside
a punctured neighborhood of the origin. Clearly, $P-\tilde{W}$ is subcritical in $\Gw$. Let $V\in C_0^\infty(\Gw)$ be a smooth nonzero nonnegative function such that $P-\tilde{W}-V$ is critical in $\Gw$ (see, Lemma~\ref{CompPot}). Then the potential
$\hat{W}:=\tilde{W}-V$ is critical in $\Gw$, null-critical at infinity of $\Gw$, and $\lambda_\infty(P,\hat{W},\Omega)=1$. Moreover, in the symmetric case, by Theorem~\ref{essential_spectrum}, the corresponding spectrum and essential spectrum of $\hat{W}^{-1}P$ is equal $[1,\infty)$. So, $\hat{W}$ is an optimal Hardy-weight  for  $P$ in $\Gw$.

\subsection{The quasilinear case}
In this section, we briefly discuss some extensions of the previous results to the case of $p\,$-Laplacian type equations (for some related results see \cite{AS,DA,KO}). Throughout the present subsection we assume that $p\neq 2$. The celebrated $p\,$-Laplacian is the quasilinear elliptic operator
$$\Delta_p(u):= \mathrm{div}\left(|\nabla u|^{p-2}\nabla u\right).$$
Let $V\in L^\infty_{\mathrm{loc}}(\Gw)$ be a given function (potential), we  consider the functional
\be\label{Q_V}
Q_V(\phi):=\int_\Omega(|\nabla \varphi|^p+V|\varphi|^p)\dx\qquad \varphi\in C_0^\infty(\Omega)
\ee and the associated differential operator
\begin{equation}\label{eq:plaplace}
    Q'_V(u):=-\Delta_p(u)+V|u|^{p-2}u.
\end{equation}
The notions of criticality and subcriticality of $Q_V$ have been studied in this context, and we refer to \cite{ky3} for an account on this. In particular, the Agmon-Allegretto-Piepenbrink theorem extends to this case \cite[Theorem~2.3]{ky3}. Due to the nonlinearity of the operator,  if the potential $V$ is nonzero it is likely that our supersolution construction will not yield in general an optimal weight, as we can see from the following result in the radially symmetric case.
\begin{theorem}\label{thm_super_p_constr}
Assume that the functional
\be\label{Q_V1}
Q_V(\phi):=\int_\Omega(|\nabla \phi|^p+V(|x|)|\phi|^p)\dx\qquad \phi\in C_0^\infty(\Omega),
\ee is subcritical in a radially symmetric domain $\Omega\subset\mathbb{R}^n$, where  the potential $V$ is radially symmetric. Suppose further that either $1<p \leq 2$ and $V\geq 0$, or $p \geq 2$ and $V\leq 0$. Let $v_0,v_1$ be two linearly independent positive radially symmetric supersolutions of the equation $Q'_V(u)=0$ in $\Omega^\star: =\Omega\setminus \{0\}$.

Define the function $$v_\ga(|x|):= (v_1(|x|))^{\ga}(v_0(|x|))^{1-\ga}\qquad x\in\Gw^\star,$$
where $0\leq \ga\leq 1$, and let
 $$W_\ga(t):= \ga(1-\ga)(p-1)
 \left|\left[\log\left(\frac{v_0(t)}{v_1(t)}\right)\right]'\right|^2\left|\left[\log(v_\ga(t))\right]'\right|^{p-2}.$$
Then $v_\ga$ is a positive supersolution of the equation
\begin{equation}\label{eqQv-Wa}
  Q_{V-W_\ga}'(u)= 0 \qquad \mbox{ in } \Omega^\star,
\end{equation}
and the following improved inequality holds
$$Q_V(\phi) \geq \int W_\ga |\phi|^{p}\dx \qquad \forall \phi\in C_0^\infty(\Omega^\star). $$
Moreover, if $p\neq 2$ and $V$ is not identically zero, then for every $\alpha\in(0,1)$ the functional $Q_{V-W_\ga}$ is {\em subcritical} in $\Omega^\star$.
\end{theorem}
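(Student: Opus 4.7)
The strategy is to reduce the claim to a one-dimensional Riccati-type inequality and then exploit the convexity (respectively concavity) of $t\mapsto t^{2-p}$ together with the sign hypothesis on $V$. By radial symmetry one may work with $u=u(r)$, for which $\Delta_p u = r^{1-n}(r^{n-1}|u'|^{p-2}u')'$. Writing $a:=u'/u$ for a positive $u$ and $\Phi(s):=|s|^{p-2}s$, a direct calculation gives
\[
Q'_V(u)=u^{p-1}\bigl(V-L(a)\bigr),\qquad L(a):=\tfrac{n-1}{r}\Phi(a)+(p-1)|a|^p+(p-1)|a|^{p-2}a'.
\]
Hence $v_0,v_1$ being positive radial supersolutions is equivalent to the pointwise inequalities $V\ge L(a)$ and $V\ge L(b)$, where $a:=v_0'/v_0$ and $b:=v_1'/v_1$. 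Since $c:=v_\alpha'/v_\alpha=(1-\alpha)a+\alpha b$, the target $Q'_{V-W_\alpha}(v_\alpha)\ge 0$ amounts to the pointwise inequality $V\ge L(c)+W_\alpha$.

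The core of the proof is an algebraic reduction. Multiplying $L$ by $|\cdot|^{2-p}$ one obtains the \emph{quadratic} expression $M(a):=\tfrac{n-1}{r}a+(p-1)a^2+(p-1)a'$, which satisfies the exact identity
\[
(1-\alpha)M(a)+\alpha M(b)=M(c)+(p-1)\alpha(1-\alpha)(a-b)^2,
\]
verified by direct expansion (the only nonlinearity is the term $(p-1)a^2$). Multiplying the two supersolution inequalities by $|a|^{2-p}$ and $|b|^{2-p}$ and taking the convex combination with weights $(1-\alpha),\alpha$ yields
\[
(1-\alpha)|a|^{2-p}V+\alpha|b|^{2-p}V\ \ge\ M(c)+(p-1)\alpha(1-\alpha)(a-b)^2.
\]
The function $t\mapsto t^{2-p}$ is strictly convex on $(0,\infty)$ for $p>2$ and strictly concave for $1<p<2$; the hypotheses of the theorem pair this with the correct sign of $V$ so that in both admissible regimes
\[
|c|^{2-p}V\ \ge\ (1-\alpha)|a|^{2-p}V+\alpha|b|^{2-p}V.
\]
Chaining these inequalities and multiplying through by $|c|^{p-2}$, while using $|c|^{p-2}M(c)=L(c)$ and $(p-1)\alpha(1-\alpha)(a-b)^2|c|^{p-2}=W_\alpha$, yields $V\ge L(c)+W_\alpha$, which is the sought supersolution condition $Q'_{V-W_\alpha}(v_\alpha)\ge 0$. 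The improved Hardy inequality $Q_V(\phi)\ge\int W_\alpha|\phi|^p\dx$ then follows from the Agmon--Allegretto--Piepenbrink theorem for the $p$-Laplacian (Theorem~2.3 in \cite{ky3}), which promotes a positive supersolution to the associated functional inequality.

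For subcriticality of $Q_{V-W_\alpha}$ when $p\ne 2$, $\alpha\in(0,1)$ and $V\not\equiv 0$, I would use the characterization recalled in the cited references: the functional is subcritical provided the equation $Q'_{V-W_\alpha}u=0$ admits a positive \emph{strict} supersolution (i.e.\ one which is not a solution). For $p\ne 2$ the function $t\mapsto t^{2-p}$ is strictly convex or strictly concave, so the Jensen step above is strict wherever $a\ne b$; linear independence of $v_0,v_1$ gives $a-b\not\equiv 0$, and together with $V\not\equiv 0$ this forces the composite inequality to be strict on a set of positive measure, so $v_\alpha$ is a strict supersolution. The main obstacle is the behaviour of the argument at points where one of $a,b,c$ vanishes: there the multiplication and division by $|\cdot|^{2-p}$ are singular, and the convexity of $t\mapsto t^{2-p}$ across the origin fails when $a$ and $b$ have opposite signs. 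In the generic radial setting (for example, when $v_0,v_1$ are monotone with derivatives of the same sign) this difficulty does not arise; in the general case the pointwise argument should be localized to the open set where $a,b,c$ share a common sign and nonvanishing, and then extended by an approximation argument in the weak formulation of the equation.
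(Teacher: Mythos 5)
The paper does not actually contain a proof of this theorem: the text following the statement says the proof ``will appear somewhere else,'' with a note added after acceptance that it was published in \cite{DP14}. So there is no internal argument to compare yours against; I can only assess your proposal on its own terms. Your Riccati reduction is correct: writing $a=v_0'/v_0$, $b=v_1'/v_1$, $c=(1-\ga)a+\ga b=v_\ga'/v_\ga$, one indeed has $Q'_V(u)=u^{p-1}(V-L(a))$ with $L(a)=\tfrac{n-1}{r}\Phi(a)+(p-1)|a|^p+(p-1)|a|^{p-2}a'$, and the quadratized quantity $M(a)=|a|^{2-p}L(a)=\tfrac{n-1}{r}a+(p-1)a^2+(p-1)a'$ satisfies the exact algebraic identity $(1-\ga)M(a)+\ga M(b)=M(c)+(p-1)\ga(1-\ga)(a-b)^2$. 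Translating back via $|c|^{p-2}M(c)=L(c)$ and recognizing $(p-1)\ga(1-\ga)(a-b)^2|c|^{p-2}=W_\ga$ is also correct.

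The gap you yourself flag, however, is genuine and not patched by the ``localize and approximate'' remark at the end. The whole chain hinges on the pointwise inequality $|c|^{2-p}\leq (1-\ga)|a|^{2-p}+\ga|b|^{2-p}$ (for $p>2$; reversed for $p<2$), and the convexity of $t\mapsto t^{2-p}$ only yields $\big((1-\ga)|a|+\ga|b|\big)^{2-p}\leq(1-\ga)|a|^{2-p}+\ga|b|^{2-p}$, which identifies with your target only when $a$ and $b$ share a sign so that $|c|=(1-\ga)|a|+\ga|b|$. When $a$ and $b$ have opposite signs, $|c|$ is strictly smaller than $(1-\ga)|a|+\ga|b|$, and since $t^{2-p}$ is \emph{decreasing} for $p>2$, the needed inequality can fail badly (indeed $|c|^{2-p}\to\infty$ as $c\to0$ while the right side stays bounded). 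There is no obvious ``approximation in the weak formulation'' that fills this: the desired supersolution inequality $V\geq L(c)+W_\ga$ is a pointwise statement, and on a set where it fails it fails. A correct proof must either exploit additional structure of radial (super)solutions of a subcritical $Q_V$ to rule out the opposite-sign configuration, or use a different algebraic mechanism than the Jensen step (for instance, one could try working with the Riccati variable $w=\Phi(u'/u)$, for which $L$ is convex in $(w,w')$; but then the convex combination of $w$-variables does \emph{not} correspond to $v_\ga$, so this would prove a different statement). A secondary, smaller issue: your strictness argument for subcriticality invokes the Jensen step being strict ``wherever $a\ne b$,'' but strictness of the convexity inequality requires $|a|\neq|b|$, not merely $a\neq b$; this needs to be reconciled with the hypothesis of linear independence.
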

The proof of this theorem will appear somewhere else ({\em added after the paper was accepted for publication:} the proof has appeared in \cite[Theorem A1]{DP14}). Notice that in the case where $p\neq 2$, the supersolution construction yields a weight $W_\alpha$ which is not easy to optimize with respect to $\alpha$. On the other hand, for the case of the $p\,$-Laplacian itself in a general domain $\Gw\subset \R^n$, we can take $u_0=\mathbf{1}$, and thus optimize $W_\alpha$ (cf. \cite[Lemma 3.57]{Martio}):
\begin{Pro}\label{supersolution p-Laplacian}
Assume that $v$ is a  positive  supersolution (resp. solution) of the equation $-\Delta_p(u)=0$ in $\Gw$. Then for $\alpha\in (0,1)$, $v^\alpha$ is a positive supersolution (resp. solution) of the equation $Q'_{W_\alpha}(u)=0$ in $\Gw$, where
$$W_\alpha:=\alpha^{p-2}\alpha(1-\alpha)(p-1)\left|\frac{\nabla v}{v}\right|^p.$$
In particular, for the optimal value $\alpha=\frac{p-1}{p}$, the following logarithmic Caccioppoli inequality holds:
\begin{equation}\label{cacc}
\left(\frac{p-1}{p}\right)^p\int_\Omega \left|\frac{\nabla v}{v}\right|^p |\varphi|^p\dx\leq\int_\Omega |\nabla\varphi|^p\dx\qquad \varphi\in C_0^\infty(\Omega),
\end{equation}
where $v$ is any positive $p$-superharmonic function in $\Gw$.
\end{Pro}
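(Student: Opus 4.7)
The approach is a direct pointwise computation of $-\Delta_p(v^\alpha)$, followed by an application of the quasilinear Agmon--Allegretto--Piepenbrink theorem to pass from the supersolution to the integral inequality.

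First I would compute the chain/product rule for $v^\alpha$. Writing $\nabla(v^\alpha)=\alpha v^{\alpha-1}\nabla v$, so that
$$|\nabla(v^\alpha)|^{p-2}\nabla(v^\alpha)=\alpha^{p-1}v^{(\alpha-1)(p-1)}|\nabla v|^{p-2}\nabla v,$$
(using $\alpha>0$ so $\alpha^{p-1}=|\alpha|^{p-2}\alpha$), and then taking divergence with the product rule gives
$$\Delta_p(v^\alpha)=\alpha^{p-1}(\alpha-1)(p-1)\,v^{(\alpha-1)(p-1)-1}|\nabla v|^{p}+\alpha^{p-1}v^{(\alpha-1)(p-1)}\Delta_p(v).$$
A quick exponent check shows $(\alpha-1)(p-1)-1=\alpha(p-1)-p$, so after rearranging one obtains
$$-\Delta_p(v^\alpha)=\alpha^{p-1}(1-\alpha)(p-1)\left|\frac{\nabla v}{v}\right|^{p}v^{\alpha(p-1)}+\alpha^{p-1}v^{(\alpha-1)(p-1)}\bigl(-\Delta_p(v)\bigr).$$
Since $|v^\alpha|^{p-2}v^\alpha=v^{\alpha(p-1)}$ and $\alpha^{p-1}(1-\alpha)(p-1)=\alpha^{p-2}\alpha(1-\alpha)(p-1)$, the first term is precisely $W_\alpha\,|v^\alpha|^{p-2}v^\alpha$. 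The hypothesis $-\Delta_p(v)\geq 0$ (resp.\ $=0$) then yields
$$-\Delta_p(v^\alpha)-W_\alpha|v^\alpha|^{p-2}v^\alpha\;\geq 0\quad (\text{resp. }=0),$$
which is the asserted (super)solution property.

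For the logarithmic Caccioppoli inequality I would invoke the quasilinear (AAP) theorem from \cite[Theorem~2.3]{ky3}: the existence of a positive supersolution of the equation associated to the functional $\phi\mapsto\int(|\nabla\phi|^p-W_\alpha|\phi|^p)\dx$ is equivalent to the nonnegativity of that functional on $C_0^\infty(\Omega)$. Applied to $v^\alpha$ this gives
$$\int_{\Omega}W_\alpha|\varphi|^p\dx\;\leq\;\int_{\Omega}|\nabla\varphi|^p\dx\qquad\forall\varphi\in C_0^\infty(\Omega).$$
It remains to optimize the prefactor $f(\alpha):=\alpha^{p-1}(1-\alpha)(p-1)$ over $\alpha\in(0,1)$. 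Differentiating, $f'(\alpha)=(p-1)\alpha^{p-2}\bigl[(p-1)(1-\alpha)-\alpha\bigr]$, so the unique critical point is $\alpha=(p-1)/p$, at which
$$f\!\left(\tfrac{p-1}{p}\right)=\left(\tfrac{p-1}{p}\right)^{p-1}\cdot\tfrac{1}{p}\cdot(p-1)=\left(\tfrac{p-1}{p}\right)^{p},$$
yielding the claimed constant.

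The only real technicalities are bookkeeping the exponents correctly and, at the Caccioppoli step, ensuring that the potential $W_\alpha$ (which is locally bounded away from the zero set of $\nabla v$, and in general only in $L^1_{\mathrm{loc}}$) falls under the regularity hypotheses of the quasilinear (AAP) theorem; for a positive $p$-superharmonic $v$ this follows from Harnack-type estimates for $v$ and the fact that test functions have compact support, so no essential obstacle arises. No subcritical hypothesis is required: unlike Theorem~\ref{thm_super_p_constr}, here the potential $V\equiv 0$, so the product structure of $v^\alpha$ interacts cleanly with $\Delta_p$, which is exactly why the optimization in $\alpha$ can be carried out explicitly.
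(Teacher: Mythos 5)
Your chain-rule computation is correct: with $\nabla(v^\alpha)=\alpha v^{\alpha-1}\nabla v$ one gets $|\nabla v^\alpha|^{p-2}\nabla v^\alpha=\alpha^{p-1}v^{(\alpha-1)(p-1)}|\nabla v|^{p-2}\nabla v$, and the divergence produces exactly the two terms you display, with the exponent bookkeeping $(\alpha-1)(p-1)-1=\alpha(p-1)-p$ giving $W_\alpha|v^\alpha|^{p-2}v^\alpha$. The optimization over $\alpha$ is also right. Note, though, that what you actually establish (and what is meant) is that $v^\alpha$ is a supersolution of $-\Delta_p u - W_\alpha|u|^{p-2}u=0$, i.e.\ of $Q'_{-W_\alpha}(u)=0$ in the paper's sign convention for $Q'_V$; the proposition's ``$Q'_{W_\alpha}$'' should be read with this sign, consistently with the ``$Q'_{V-W_\alpha}$'' appearing in Theorem~\ref{thm_super_p_constr}.

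The paper omits the proof entirely (deferring to \cite{Martio} and \cite{DAD}), so yours is necessarily an independent argument. The supersolution step is the natural computation the authors surely have in mind. For the passage to the integral inequality, however, you route through the quasilinear (AAP) theorem \cite[Theorem~2.3]{ky3}, which is stated for $V\in L^\infty_{\mathrm{loc}}$; for a \emph{solution} $v$ this is harmless (by Harnack and $C^{1,\beta}_{\mathrm{loc}}$ regularity of $p$-harmonic functions, $W_\alpha\in L^\infty_{\mathrm{loc}}$), but the proposition asserts \eqref{cacc} for an arbitrary positive \emph{$p$-superharmonic} $v$, for which $\nabla v$ need not even lie in $L^p_{\mathrm{loc}}$ when $p\geq n$, so $W_\alpha$ can fail $L^\infty_{\mathrm{loc}}$ and even $L^1_{\mathrm{loc}}$. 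You flag this but dismiss it; the concern is real and is precisely what the direct logarithmic Caccioppoli argument (cf.\ \cite[Lemma~3.57]{Martio}) avoids: one tests the weak inequality $-\Delta_p v\geq 0$ with $\varphi^p v^{1-p}$, which gives \eqref{cacc} in one line with no appeal to (AAP), handles the $p$-superharmonic case cleanly, and yields the inequality trivially if the left side is $+\infty$. So: your first half is the same computation the paper intends and is correct; your second half is a valid route in the $p$-harmonic case but is less robust than the classical Caccioppoli proof the paper cites, and for general $p$-superharmonic $v$ you would need either to truncate/approximate or to switch to the direct test-function argument.
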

We omit the proof of Proposition~\ref{supersolution p-Laplacian}. We note that the proposition and its generalization have been independently derived by L.~D'Ambrosio and S.~Dipierro, \cite{DAD}. However, the following problem remains open:
\begin{problem}\label{open_pr}
Let $\Omega^\star:=\Omega\setminus\{0\}$, and assume that the functional $\int_\Omega|\nabla u|^p$ is subcritical in $\Gw$. Let $G$ be the Green function for $-\Delta_p$ in $\Gw$ with a pole at zero, and assume that

$$\lim_{x\to\infty}G(x)=0.$$
(for example, this holds if $\Omega=\R^n$, and $p<n$). Is the weight $$W=\left(\frac{p-1}{p}\right)^p\left|\frac{\nabla G}{G}\right|^p$$
an optimal Hardy-weight for the $p\,$-Laplacian in $\Gw^\star$?
\end{problem}
\begin{remark}[Added after the paper was accepted]\label{remDP14}{\em
Problem~\ref{open_pr} was recently affirmatively solved in \cite{DP14}.
 }
\end{remark}
\appendix
\section{Radially symmetric Schr\"odinger operators}\label{appendix1}
In this appendix we discuss the important family of radially symmetric Schr\"odinger operators defined on radially symmetric domains. The results of our paper obliviously apply to this case. On the other hand, since the technique we used  throughout the paper is based on a one-variable approach, it is natural to consider this particular family of operators, and give an alternative direct proof of some parts of Theorem~\ref{thm*} for this case.

We consider the supersolution construction in the case of a nonnegative Schr\"odinger operator $P=-\Delta+V$ in $\R^n$, (or a radially symmetric subdomain) where $n\geq 2$,  and $V$ is a radially symmetric potential.
\begin{theorem}\label{thm_radial}
Consider a subcritical Schr\"odinger operator $P=-\Delta+V$ in $\R^n$, where $n\geq 2$,  and $V$ is a radially symmetric potential. Let $\psi$ be the unique global positive radial solution of the equation $Pu=0$ in $\R^n$, and $g_0$ the corresponding positive minimal Green function with a pole at $0$. In the supersolution construction, take  $G_0:=\sqrt{\psi g_0}$, and let $$W(r):=\frac{1}{4}\Big|\big[ \log(g_0(r)/\psi(r)) \big]'\Big|^2\qquad r>0$$ be the corresponding  Hardy-weight.

Then the operator $P-W$ is null-critical in $\Gw^\star:= \mathbb{R}^n\setminus \{0\}$.
In particular, $\gl=1$ is the best constant for the inequality
\be \label{improved_ineq1}
\int_{\Gw^\star}\left(\left|\nabla u\right|^2+V(x)u^2\right)\dx  \geq \gl \int_{\Gw^\star} W(|x|)\, u^2 \dx \qquad u\in C_0^\infty(\Gw^\star).\ee
Moreover, $\lambda_\infty(P,W,\Gw^\star)=1$.
\end{theorem}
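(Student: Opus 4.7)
The plan is to exploit the radial symmetry to reduce the whole statement to a one-dimensional ODE via the substitution $t = g_0(r)/\psi(r)$, and then verify the three required properties (criticality, null-criticality, and $\lambda_\infty=1$) by inspection of the resulting Euler equation.

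First I would perform the ground state transform with respect to $\psi$. By Proposition~\ref{gtransform} the Hardy-weight $W$ is unchanged, and the pair $(\psi,g_0)$ is mapped to $(\mathbf{1},\tilde g_0)$ with $\tilde g_0 := g_0/\psi$. On radial functions, $P_\psi$ takes the symmetric one-dimensional form $P_\psi u = -(r^{n-1}\psi^{2})^{-1}\bigl(r^{n-1}\psi^{2}u'\bigr)'$, and the hypothesis \eqref{u1u0a} forces $\tilde g_0(r)\to 0$ as $r\to\infty$ while $\tilde g_0(r)\to\infty$ as $r\to 0$; in particular $t:=\tilde g_0(r)$ is a strictly monotone bijection $(0,\infty)\to(0,\infty)$. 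Writing any radial function as $u(r)=f(t(r))$ and invoking \eqref{eq_doubleprime} (which is purely algebraic and valid in any dimension) gives
\begin{equation*}
\frac{1}{W}\,P_\psi\bigl(f(t)\bigr)=-4\,t^{2}f''(t),
\end{equation*}
so that the equation $(P-\lambda W)u=0$ on radial functions reduces to the Euler equation $-4t^{2}f''(t)=\lambda f(t)$ on $(0,\infty)$.

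For $\lambda=1$ two independent solutions of this ODE are $f_{1}(t)=\sqrt{t}$ and $f_{2}(t)=\sqrt{t}\log t$, which translate back to $G_0=\sqrt{\psi g_0}$ and $G_0\log(g_0/\psi)$. Since $|f_2/f_1|=|\log t|\to\infty$ both as $t\to 0^+$ and as $t\to\infty$, Proposition~\ref{minimal} applied separately near the two ends of $\Omega^{\star}$ shows that $G_0$ has minimal growth at infinity of $\Omega$ and at $0$; part (2) of Lemma~\ref{minimal_vanish} then gives minimal growth at infinity of $\Omega^{\star}$, so $G_0$ is a ground state and $P-W$ is critical in $\Omega^{\star}$, which is equivalent to $\lambda_0(P,W,\Omega^{\star})=1$. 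For the null-criticality, I would use the (radial) Wronskian identity $r^{n-1}(\psi g_0'-\psi'g_0)=C\neq 0$ to obtain $[\log(g_0/\psi)]'(r)=C/(\psi(r)g_0(r)r^{n-1})$; plugging this into $W=\tfrac14[\log(g_0/\psi)]'^{2}$ and changing variable $t=g_0/\psi$ one finds
\begin{equation*}
\int_{\Omega^{\star}}G_0^{2}\,W\,dx=\frac{\omega_{n-1}|C|}{4}\int_{0}^{\infty}\frac{dt}{t}=\infty,
\end{equation*}
which, by symmetry and Lemma~\ref{null-critical_variationnal}, gives null-criticality.

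Finally, for any $\lambda>1$ the Euler equation admits the real oscillating solution $f_{\xi}(t)=\sqrt{t}\cos(\xi\log t)$ with $\xi=\tfrac{1}{2}\sqrt{\lambda-1}$, hence $u_{\xi}(r):=\psi(r)f_{\xi}(t(r))$ is a radial solution of $(P-\lambda W)u=0$ on $\Omega^{\star}$ that changes sign in any deleted neighborhood of $0$ and of $\infty$ (corresponding to $t\to\infty$ and $t\to 0$). Exactly as in the proof of Theorem~\ref{thm_Sch7}, such sign-changes contradict the generalized maximum principle in any such neighborhood, and therefore $\lambda_\infty(P,W,\Omega^{\star})=1$. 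The main technical point, and the one I expect to be most delicate, is justifying the change of variable $t=g_0/\psi$ globally (i.e.\ monotonicity of $\tilde g_0$ and that $t(r)$ really ranges over all of $(0,\infty)$); once this is in place, all three assertions follow from the explicit solutions of the Euler equation and the Wronskian computation above.
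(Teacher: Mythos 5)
Your proposal is correct, but it is not the route the paper takes in Appendix~A; there the authors deliberately give a \emph{self-contained ODE proof} via Murata's subcriticality criterion and the Gesztesy--\"Unal oscillation criterion, precisely so as not to re-invoke the minimal-growth/ground-state machinery used in the body of the paper. The paper writes $g_0(r)=\psi(r)\int_r^\infty s^{1-n}\psi(s)^{-2}\,ds$, computes $W(r)=r^{2-2n}/(4\psi^2 g_0^2)$, and then checks directly that the two Murata integrals (at $0$ and at $\infty$) for the factor $G_0^2=\psi g_0$ both diverge, giving criticality; null-criticality is ``similar'', and $\lambda_\infty=1$ comes from the Gesztesy--\"Unal perturbative oscillation criterion applied to the radial Sturm--Liouville problem. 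Your proof instead re-specializes the general-theorem machinery: the ground-state transform to $P_\psi$, the change of variable $t=g_0/\psi$ reducing the radial operator to the Euler operator $-4t^2\partial_t^2$, the $\log$-solution plus Proposition~\ref{minimal} and Lemma~\ref{minimal_vanish} for criticality, a Wronskian computation $\int_{\Omega^\star}G_0^2W\,dx=\tfrac{\omega_{n-1}|C|}{4}\int_0^\infty t^{-1}dt=\infty$ for null-criticality, and the sign-changing solution $\sqrt{t}\cos(\xi\log t)$ with the maximum-principle argument of Theorem~\ref{thm_Sch7} for $\lambda_\infty=1$. Both routes are valid, and in fact your Wronskian computation is a pleasantly compact replacement for the paper's two divergent integrals; the trade-off is that the paper's version is entirely within classical ODE theory (which is the stated purpose of the appendix), while yours leans on the Green-function/minimal-growth lemmas whose main use was to prove Theorem~\ref{thm_nsa} in the first place. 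The one point you flagged as delicate --- monotonicity and surjectivity of $r\mapsto g_0(r)/\psi(r)$ --- is in fact immediate from the nonvanishing radial Wronskian $r^{n-1}(\psi g_0'-\psi'g_0)\equiv C\neq 0$ together with the singularity of $g_0$ at $0$ and the decay $g_0/\psi\to 0$ at $\infty$, so there is no genuine gap there.
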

\begin{remark}\label{rem_GU}
{\em
1. In the radially symmetric case, condition \eqref{u1u0} always holds true. In particular, for $\Gw=\R^n$ we have, $\lim_{r\to\infty} \frac{g_0(r)}{\psi(r)}=0$.

2. The results of our paper gives an alternative proof of a @@recent@@ result of Gesztesy and \"{U}nal  \cite[Theorem~2.1]{GU}.
 }
 \end{remark}

\begin{proof}
 By Murata's criterion for the subcriticality of radially symmetric Schr\"odinger operators  \cite[Theorem~3.1]{Murata86}, we know that the operator $P$ is subcritical in $\mathbb{R}^n$ if and only if
\begin{equation}\label{murata}
\int_1^\infty r^{1-n} (\psi(r))^{-2}\dr < \infty ,
\end{equation}
 and in this case, $$g_0(|x|):=\psi(|x|) \int_{|x|}^\infty r^{1-n} (\psi(r))^{-2}\dr$$  is the corresponding positive minimal Green function with a pole at $0$ (up to a multiplicative constant).

Assume that \eqref{murata} is satisfied, and take for the supersolution construction the function $G_0:=\sqrt{\psi g_0}$. So,
\begin{equation}\label{eq_G_0}
G_0(t) := \sqrt{\psi(t) g_0(t)} = \sqrt{(\psi(t))^2 \int_t^\infty r^{1-n} (\psi(r))^{-2}\dr}.
\end{equation}
Hence, $ G_0$ is  a positive global solution of the equation
\begin{equation}\label{eqW}
\left(-\Delta+V-W\right)u=0 \qquad \mbox{in } \Gw^\star,
\end{equation}
where
\begin{equation}\label{eqW2}
W(r):=\frac{1}{4}\Big|\big[ \log(g_0(r)/\psi(r)) \big]'\Big|^2=\frac{r^{2-2n}}{4\left[\psi(r)g_0(r)\right]^{2}}\,.
\end{equation}

It follows from the criterion \eqref{murata} that the operator $-\Delta+V-W$ is critical in $\Gw^\star$, with a ground state $G_0(t)$ if and only if
\begin{eqnarray}\label{murata1}
\begin{array}{lll}
\displaystyle{\int_1^\infty t^{1-n}\Big[(\psi(t))^2 \int_t^\infty r^{1-n} (\psi(r))^{-2}\dr\Big]^{-1}\dt} &=&\infty, \\[7mm] \hspace{4cm} \mbox{ and}\\[5mm]
\displaystyle{\int_0^1 t^{1-n} \Big[(\psi(t))^2 \int_t^\infty r^{1-n} (\psi(r))^{-2}\dr\Big]^{-1}\dt} &=& \infty.
\end{array}
\end{eqnarray}
But by \eqref{murata} we have
\begin{multline*}
\int_1^\infty t^{1-n} \Big[(\psi(t))^2 \int_t^\infty r^{1-n} (\psi(r))^{-2}\dr\Big]^{-1}\dt \\[3mm]
=-\int_1^\infty  \Big[\log(g_0(t)/\psi(t)) \Big]' \dt = \log(g_0(1)/\psi(1)) -\lim_{t\to \infty} \log(g_0(t)/\psi(t))\\[3mm]
=  \log \left[\int_1^\infty r^{1-n} (\psi(r))^{-2}\dr\right] - \lim_{t\to \infty} \log \left[\int_t^\infty r^{1-n} (\psi(r))^{-2}\dr\right] =\infty.
\end{multline*}
Moreover, since $n\geq 2$ we have,
\begin{multline*}
\int_0^1 t^{1-n} \Big[(\psi(t))^2 \int_t^\infty r^{1-n} (\psi(r))^{-2}\dr\Big]^{-1}\dt \\
=-\int_0^1 \Big[ \log(g_0(t)/\psi(t)) \Big]' \dt = -\log(g_0(1)/\psi(1))+\lim_{t\to 0} \log(g_0(t)/\psi(t))\\
=   -\log \left[\int_1^\infty r^{1-n} (\psi(r))^{-2}\dr\right]+ \lim_{t\to 0} \log \left[\int_t^\infty r^{1-n} (\psi(r))^{-2}\dr\right] =\infty.
\end{multline*}
So, \eqref{murata1} is satisfied and the operator $-\Delta+V-W$ is critical in $\Gw^\star$.
Similarly, one shows that $-\Delta+V-W$ is null-critical in $\Gw^\star$.

Next, we investigate the bottom of the essential spectrum of the corresponding operator. Let $\tilde{W}$ be a positive continuous function in a neighborhood of the origin such that $\tilde{W}=W$ outside a ball $B$ centered at the origin. We need to prove that $\lambda_\infty:=\lambda_\infty(P,\tilde{W},\R^n)=1$.

Clearly, $\lambda_\infty \geq 1$. On the other hand, since $\psi$  is a positive solution of minimal growth (i.e. a principal solution) near  $0$ and not near $\infty$ of the equation
\begin{equation}\label{eq_SL}
    -(t^{n-1}v')'+t^{n-1}V(t)=0 \qquad t\in (0,\infty),
\end{equation}
the oscillatory criterion \cite[Theorem~2.1]{GU} implies that the equation
\begin{equation}\label{eq_SLq}
    -(t^{n-1}v')'+t^{n-1}(V(t)+q(t))v=0 \qquad t\in (0,\infty),
\end{equation}
is oscillatory near infinity if
\begin{multline}\label{GUcriterion}
    \limsup_{t\to\infty} \Big[q(t)t^{2n-2}(\psi(t))^4 \left(\int_t^\infty r^{1-n} (\psi(r))^{-2}\dr\right)^2\Big]\\
    =\limsup_{t\to\infty} \frac{q(t)}{4W(t)}< -\frac{1}{4}\,,
\end{multline}
where we have used \eqref{eqW2}.

In particular, \eqref{GUcriterion} implies that for any $\vge>0$ the equation
$$\Big(-\Delta+V-(1+\vge)\tilde{W}\Big)u=0$$ does not admit a positive {\em radial} solution near infinity. Consequently, the equation $\Big(-\Delta+V-(1+\vge)\tilde{W}\Big)u=0$ does not admit {\em any} positive solution near infinity. Hence  $\lambda_\infty(-\Delta+V,\tilde{W},\mathbb{R}^n) = 1$.
\end{proof}

\section{Some more results concerning several ends}\label{appen2}
 Here we study a particular case of Theorem \ref{lem_sever} without using the Liouville comparison argument, an argument that applies only in the symmetric case.  In particular, we recover, and in fact improve, the results of \cite{CZ}.

Consider a subcritical operator $P$ in $\Gw$, and let $x_1,\ldots,x_N$ be given distinct points in $\Omega$. Set $\Omega^\star:=\Omega\setminus\{x_1,\ldots,x_N\}$. Let $u_i=G(\cdot,x_i)$ be the Green function with a pole at $x_i$, and $u_0$ be a positive solution of $Pu=0$ in $\Gw$ such that
$$\lim_{x\to \infty} \frac{G(x,0)}{u_0(x)}=0.$$

We consider the supersolution construction $v:=\prod_{j=0}^N u_j^{\alpha_j}$   with the functions $u_0,\ldots,u_N$, and  with weights $\alpha_i\in(0,\frac{1}{2}]$, where $i=0,\ldots,N$, and $\sum_{i=0}^N\alpha_i=1$. We claim that in a certain number of cases, this construction gives a critical weight $W$.

First, consider the Laplacian on $\R^n$, $n\geq 3$.  Let us compute explicitly the Hardy-weight $W$ for the case $\alpha_i=\frac{1}{N+1}$, where $i=0,\ldots,N$. If $i\neq 0$, $j\neq 0$, then $\frac{u_i}{u_j}=\left|\frac{x-x_i}{x-x_j}\right|^{2-n}$, and therefore,
$$W(x)=\left(\frac{n-2}{N+1}\right)^2\sum_{i=1}^N \frac{1}{|x-x_i|^2}+\frac{1}{(N+1)^2}\sum_{1\leq i<j\leq N}\left|\nabla\log\left(\frac{u_i(x)}{u_j(x)}\right)\right|^2.$$
Now,
\begin{multline*}
\left|\nabla\log\left(\frac{u_i(x)}{u_j(x)}\right)\right|^2=
\left(\frac{n-2}{2}\right)^2\left|2\frac{x-x_i}{|x-x_j|^2}-2\frac{x-x_j}{|x-x_j|^2}\right|^2\\[2mm]
=\left(n-2\right)^2\frac{1}{|x-x_i|^4|x-x_j|^4}\left| |x-x_j|^2(x-x_i)-|x-x_i|^2(x-x_j)\right|^2\\[2mm]
=\left(n-2\right)^2\frac{1}{|x-x_i|^4|x-x_j|^4}\big(|x-x_j|^4|x-x_i|^2\\[2mm]
+|x-x_i|^4|x-x_j|^2-2|x-x_i|^2|x-x_j|^2\langle x-x_i,x-x_j\rangle\big)\\[2mm]
=\left(n-2\right)^2\frac{|x_i-x_j|^2}{|x-x_i|^2|x-x_j|^2}\,.
\end{multline*}
Hence,
\begin{equation}\label{eq_weight_9}
W(x)=\left(\frac{n-2}{N+1}\right)^2\left(\sum_{i=1}^N \frac{1}{|x-x_i|^2}+\sum_{1\leq i<j\leq N}\frac{|x_i-x_j|^2}{|x-x_i|^2|x-x_j|^2}\right).
\end{equation}

We claim that the Hardy-weight $W$ given by \eqref{eq_weight_9} is a \textit{critical} weight, but if $N>1$, the constant is neither optimal at any of the $x_i$, nor at infinity.
\begin{Pro}\label{Pro_appB}
Let $P=-\Gd$ in $\R^n$, $n\geq 3$, and $\{x_i\}_{i=1}^N$ be distinct points in $\R^n$.  Consider the preceding supersolution construction $v:=\prod_{j=0}^N u_j^{\alpha_j}$  with $u_0=\mathbf{1}$, $u_i=G_{-\Gd}^{\R^n}(\cdot,x_i)$, $i=1,\ldots, N$, the weights $\alpha_i\in (0,\frac{1}{2}]$, $i=0,\ldots,N$, such that $\sum_{i=0}^N\alpha_i=1$, and the Hardy-weight \eqref{eq_weight_9}.

\vskip 3mm

Then $-\Gd-W$ is a critical in $\Omega^\star:=\Omega\setminus\{x_1,\ldots,x_N\}$.
\end{Pro}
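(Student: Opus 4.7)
By Proposition \ref{pro_ends}, the function $v := \prod_{j=0}^N u_j^{\alpha_j}$ is a positive solution of $(-\Delta - W)u = 0$ in $\Omega^\star = \mathbb{R}^n\setminus\{x_1,\dots,x_N\}$, so by Proposition \ref{structure} the desired criticality is equivalent to $v$ having minimal growth at infinity in $\Omega^\star$. The ideal infinity of $\Omega^\star$ decomposes into the $N+1$ ``ends'' $x_1,\dots,x_N$ (interior punctures) and the point at infinity of $\mathbb{R}^n$, so by part (2) of Lemma \ref{minimal_vanish} it suffices to check minimal growth of $v$ at each end separately. At every end $e$ the plan is to exhibit a positive supersolution $\tilde{v}_e$ of $(-\Delta - W)u = 0$ in a deleted neighbourhood of $e$ with $\lim_{x\to e} v(x)/\tilde{v}_e(x) = 0$; Proposition \ref{minimal} then yields minimal growth of $v$ at $e$. (Although that proposition is stated for a pair of positive solutions, its proof is an application of the generalized maximum principle and goes through verbatim when the competitor $\tilde{v}_e$ is merely a positive supersolution.)

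Writing $L_j := \nabla\log u_j$, so that $L_0\equiv 0$ and $|L_i|^2 = (n-2)^2/|x-x_i|^2$ for $i\ge 1$, a direct expansion of $W=\sum_{k<l}\alpha_k\alpha_l|L_k-L_l|^2$ combined with the fact that the $L_j$ for $j\ne i$ are bounded near $x_i$ (whence $L_i\cdot L_j = O(|x-x_i|^{-1})$) gives
\[
v(x)\sim c_i|x-x_i|^{-\alpha_i(n-2)},\qquad W(x)=\alpha_i(1-\alpha_i)\,\frac{(n-2)^2}{|x-x_i|^2}+O\bigl(|x-x_i|^{-1}\bigr),
\]
so locally $-\Delta-W$ mimics the classical Hardy operator with coupling $4\alpha_i(1-\alpha_i)C_H\le 1$. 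In the subcritical range $\alpha_i<1/2$, take $\tilde{v}_i:=v\log u_i$, which is positive in a punctured neighbourhood of $x_i$ (where $u_i>1$). A computation using formulas (\ref{rule1})--(\ref{rule2}) together with $(-\Delta-W)v=0$ yields
\[
(-\Delta - W)(v\log u_i)\;=\;v\Bigl[(1-2\alpha_i)|L_i|^2-2\sum_{k\ne i}\alpha_k\,L_k\cdot L_i\Bigr],
\]
whose principal part $\sim(1-2\alpha_i)(n-2)^2|x-x_i|^{-2}$ is strictly positive and dominates the cross-term contribution $O(|x-x_i|^{-1})$ in a sufficiently small punctured ball; hence $\tilde{v}_i$ is a positive supersolution and $v/\tilde{v}_i=1/\log u_i\to 0$. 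At infinity one instead takes $\tilde{v}_\infty:=-v\log u_j$ for any $j\ge 1$ (positive for large $|x|$ since $u_j\to 0$); since each $L_j$ with $j\ge 1$ behaves like $-(n-2)x/|x|^2$ near infinity, the cross terms in the analogous identity add constructively, and $(-\Delta-W)\tilde{v}_\infty\sim v(1-2\alpha_0)(n-2)^2|x|^{-2}>0$ whenever $\alpha_0<1/2$, so $\tilde{v}_\infty$ is a positive supersolution with $v/\tilde{v}_\infty\to 0$.

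The main obstacle is the borderline case $\alpha_i=1/2$ at some end (possibly $\alpha_0=1/2$ at infinity), in which the leading term of the above identity vanishes and $v\log u_i$ is no longer sign-definite---this reflects the fact that the two indicial exponents of the local Hardy operator coalesce and the second local solution acquires a logarithmic factor. I would fall back on a direct Dirichlet construction: for small $r>0$ and large $k$, solve
\[
(-\Delta-W)w_k=0\text{ in }B(x_i,r)\setminus\overline{B(x_i,1/k)},\quad w_k|_{\partial B(x_i,r)}=0,\quad w_k|_{\partial B(x_i,1/k)}=k^{(n-2)/2}\log k,
\]
which is solvable because $-\Delta-W\ge 0$ in $\Omega^\star$ has strictly positive first Dirichlet eigenvalue on every relatively compact subdomain of $\Omega^\star$. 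After normalising by a fixed interior reference value and applying the Harnack inequality together with standard elliptic compactness, a subsequence converges locally uniformly to a positive solution $\tilde{v}_i$ of $(-\Delta-W)u=0$ on $B(x_i,r)\setminus\{x_i\}$ whose blow-up at $x_i$ is strictly stronger than $|x-x_i|^{-(n-2)/2}$ by the logarithmic factor inherited from the boundary data, so $v/\tilde{v}_i\to 0$. The identical procedure supplies $\tilde{v}_\infty$ when $\alpha_0=1/2$. With positive supersolutions $\tilde{v}_e$ at every end, Proposition \ref{minimal} delivers minimal growth of $v$ at each end, and Lemma \ref{minimal_vanish}(2) assembles these into minimal growth of $v$ at infinity in $\Omega^\star$; therefore $v$ is a ground state and $-\Delta-W$ is critical in $\Omega^\star$.
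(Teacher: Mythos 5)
Your approach for the range $\alpha_i < 1/2$ is correct and is a genuinely different route from the paper's. The paper starts from the explicit function $w := u_i^{1-\alpha_i}\hat{u}_i^{\alpha_i}$ (where $\hat u_i = \bigl(\prod_{j\ne i}u_j^{\alpha_j}\bigr)^{1/(1-\alpha_i)}$), observes that $w$ is only a \emph{subsolution} of $L:=-\Delta-W$ near $x_i$, and then patches it to a genuine positive solution $\tilde w = w + h$ by adding the Green-function correction $h(x)=(1-2\alpha_i)\int_U G_L^U(x,y)V(y)w(y)\,dy$. The key technical point there is to show this integral is finite, which the paper does by invoking the symmetry of $G_L^U$. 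You avoid all of that: you use the elementary identity
\[
(-\Delta-W)(v\log u_i)=v\Bigl[(1-2\alpha_i)|L_i|^2-2\sum_{k\ne i}\alpha_k\,L_k\cdot L_i\Bigr],
\]
note that near $x_i$ the leading term $|L_i|^2\asymp|x-x_i|^{-2}$ strictly dominates the cross-terms $O(|x-x_i|^{-1})$, and conclude that $v\log u_i$ is already a positive supersolution with $v/(v\log u_i)\to 0$; the end at $\infty$ is handled symmetrically. Your remark that Proposition~\ref{minimal} extends verbatim to a positive supersolution comparator is correct and is in fact invoked by the paper itself at the end of the proof of Proposition~\ref{lem_severB}. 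So on the open range your argument is shorter, more self-contained, and free of the integrability lemma; the price is that you need the $\alpha_i$ strictly below $1/2$ to make the leading term strictly positive.

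The borderline case $\alpha_i=1/2$ is where the argument genuinely does not close. You correctly observe that $(1-2\alpha_i)|L_i|^2$ drops out, leaving the indefinite cross-term, so $v\log u_i$ is not a supersolution. But the Dirichlet fallback as you wrote it does not establish the needed comparison: once you normalize $w_k$ at a fixed interior point and extract a locally uniform subsequential limit $\tilde v_i$, the boundary data $k^{(n-2)/2}\log k$ has been scaled away, and nothing in the construction forces the limit to inherit the extra logarithmic factor. The maximum principle gives only an upper bound of the type $w_k\le C\log k\cdot v$ in the annulus, not a lower bound up to the outer boundary, so the claim that $\tilde v_i$ blows up strictly faster than $|x-x_i|^{-(n-2)/2}$ is unsupported. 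It is worth noting, however, that the paper's own proof has the same limitation: there $v/w=(\hat u_i/u_i)^{1-2\alpha_i}$ tends to $1$ (not $0$) when $\alpha_i=1/2$ and the correction $h$ degenerates to $0$, so the ratio argument collapses too. The companion Proposition~\ref{lem_severB} explicitly requires $0<\alpha_j<\tfrac12$, and the paper points to the Liouville-comparison Lemma~\ref{min_growth_ends} (used in Theorem~\ref{lem_sever}) as the tool that covers the closed endpoint. Your proof would be complete if you either restricted to the open interval (matching what the paper's own appendix proof actually covers) or replaced the Dirichlet fallback with that Liouville comparison argument.
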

\begin{proof}
We may assume that $N>1$. The first part of the proof is general and apply to a general (not necessarily symmetric) subcritical operator $P$ in $\Omega$.

We know that $v$ is a positive solution of $(P-W)v=0$, and therefore, it is enough to prove that $v$ has minimal growth at infinity in $\Omega^\star$, that is $v$ has minimal growth at infinity in $\Omega$ and at each of the points $x_i$. By convention, we will set $x_0=\infty$.

Fix $i\in\{0,\ldots,N\}$. Denote $\alpha:=\alpha_i$. We want to show that $v$ has minimal growth at $x_i$. Denote $\hat{u}_i:=\left(\prod_{j\neq i}u_j^{\alpha_j}\right)^{\frac{1}{1-\alpha}},$ then $\hat{u}_i$ is a positive solution of $(P-V)u=0$, where

$$V:=\frac{1}{(1-\alpha)^2}\left(\sum_{\substack{k<l\\k,\l\neq i}}\alpha_k\alpha_l\left|\nabla\log\left(\frac{u_k(x)}{u_l(x)}\right)\right|^2\right).$$

Now if we apply the supersolution construction to $\hat{u}_i$ (that is a solution of $(P-V)u=0$),  and $u_i$ (a positive solution of $Pu=0$), with a weight $\beta\in(0,1)$, we obtain (see Remark~\ref{rem_conv_comb})
$$\left[P-(1-\beta) V-\beta(1-\beta)\tilde{W}\right]u_i^{\beta}\hat{u}_i^{1-\beta}=0.$$
For $\beta=\alpha$, we obtain

$$\left[P-(1-\alpha)V-\alpha(1-\alpha)\tilde{W}\right]v=0,$$
whence $W=(1-\alpha)V-\alpha(1-\alpha)\tilde{W}$. Similarly, for $\beta=1-\alpha$, we get

$$\left[P- \alpha V-\alpha(1-\alpha)\tilde{W}\right]u_i^{1-\alpha}\hat{u}_i^{\alpha}=0.$$
Write $L:=P-W$, and $w:=u_i^{1-\alpha}\hat{u}_i^{\alpha}$, then we have

$$Lv=0, \qquad \hbox{and } \left[L+(1-2\alpha)V\right]w=0,$$
and we want to deduce from it that $v$ has minimal growth at infinity in $\Omega^\star$. Notice that
$$\lim_{x\to x_i}\frac{v}{w}=\lim_{x\to x_i}\left(\frac{\hat{u}_i}{u_i}\right)^{1-2\alpha}=0.$$
If $w$ would be  a (super)positive solution of $Lu=(P-W)u=0$ near $x_i$, then Proposition \ref{minimal} will imply that $v$ has minimal growth at $x_i$. However, instead $w$ is only a subsolution of $L$ (notice that by hypothesis, $1-2\alpha\geq0$). We will show that in a neighborhood of $x_i$, we can find a positive solution $\tilde{w}$ of $L\tilde{w}=0$, such that $\tilde{w}(x)\geq w(x)$. Hence we will have

$$\lim_{x\to x_i}\frac{v}{\tilde{w}}=0,$$
and this will imply that $v$ has minimal growth at $x_i$.

\vskip 3mm

 Define
\begin{equation}\label{eq_GLU}
h(x):=(1-2\alpha)\int_UG_L^U(x,y)V(y)w(y)\dy,
\end{equation}
where $G_L^U$ is the minimal positive Green function of $L$ in a relatively compact neighborhood $U\subset \Omega\setminus\{x_0,\ldots,\hat{x}_i,\ldots,x_N\}$ of $x_i$ in $\Omega^\star$  (hence, a sequence in $U$ which goes to infinity in $\Omega^\star$ necessarily goes to $x_i$).

Let $\tilde{w}:=w+h$. Formally, it is obvious that $\tilde{w}$ is a positive solution of $Lu=0$ in a neighborhood of $x_i$, and that $\tilde{w}\geq w$ since $h\geq0$. So, it remains to show is that indeed that the integral in \eqref{eq_GLU} is finite.

Assume now that $P$ is symmetric. Since the singularity of $G_L^U(x,\cdot)$ is locally integrable, we only need to show is that $G_L^U(x,\cdot)Vw$ is in $L^1$ around $x_i$. Since $G_L^U(x,\cdot)$ is a positive solution of $Lu=0$ of minimal growth at $x_i$, $G_L^U$ is symmetric (i.e. $G_L^U(x,y)=G_L^U(y,x)$), and $v$ is a positive solution of $Lu=0$, we necessarily have

$$G_L^U(x,\cdot)\leq C(x)v$$
in a neighborhood of $x_i$.  Consequently,
\begin{equation}\label{GLUw}
  G_L^U(x,\cdot)w\leq C(x)u_i\hat{u}_i,
\end{equation}
in a neighborhood of $x_i$. We distinguish two cases:

\vskip 3mm

$\bullet$ First, assume that $i\neq 0$. Then $V(y)\sim 1$ and $\hat{u}_i(y)\sim 1$ when $y\to x_i$. Thus,
$$G_L^U(x,y)V(y)w(y)\leq C(x)u_i(y)\sim C(x)|y-x_i|^{2-n},$$
which is $L^1$ at $x_i$.

\vskip 3mm

$\bullet$ Assume now that $P=-\Gd$ and $\Gw=\R^n$. We treat the case $i=0$. Then for fixed $x$,  and $y$ near infinity we find that
$$G_L^U(x,y)V(y)w(y)\leq C(x) G_{-\Gd}^{\R^n}(x,y)u_0(y)V(y).$$ We already know that for an optimal Hardy-weight $\bar{W}$ with respect to the pair $(\mathbf{1},G)$, the function  $Gu_0\bar{W}$ is not integrable at infinity, but in many cases $Gu_0V$ is integrable at infinity. In particular, in the case of $P=-\Gd$ in $\R^n$, if $u_0=\mathbf{1}$, then at infinity $V\sim r^{-4}$, hence $G_{-\Gd}^{\R^n}u_0V\sim r^{-n-2}$, which is indeed integrable at infinity.
\end{proof}
\begin{remark}\label{rem_Caz_Zua}{\em
Recently, Cazacu and Zuazua \cite[Theorem~3.1]{CZ} used the supersolution construction with uniform weights $\alpha_i=\frac{1}{N}$,  $i=1,\ldots,N$, and the positive solutions $u_1,\cdots,u_N$, where $u_i=G_{-\Gd}^{\R^n}(\cdot,x_i)$, $i=1,\ldots, N$ (i.e. discarding $u_0$), and obtained the Hardy inequality $-\Delta-W_2(x)\geq 0$ in
$\Omega^\star=\Omega\setminus\{x_1,\ldots,x_N\}$  with the multipolar Hardy-weight
\begin{equation}\label{eq_CZ}
W_2(x):=\left(\frac{n-2}{N}\right)^2\left(\sum_{1\leq i<j\leq N}\frac{|x_i-x_j|^2}{|x-x_i|^2|x-x_j|^2}\right)\,.
\end{equation}
We note that the minimizing sequence used in \cite{CZ} for the proof of optimality of the constant $(\frac{n-2}{N})^2$ is clearly a null sequence, and therefore $-\Delta-W_2(x)$ is in fact {\em critical} in $\Omega^\star$. The criticality of $-\Delta-W_2(x)$ can be also proved using Lemma~\ref{min_growth_ends}.

Moreover, Lemma~\ref{min_growth_ends} can be applied also to the case of nonuniform weights. So, let $\ga=(\ga_0,\ldots,\ga_N)$ be a multi-index such that  $0<\alpha_i \leq 1/2$, $\sum_{i=1}^N\alpha_i=1$, and let $W_{\ga}$ be the Hardy potential obtained by the supersolution construction with respect to the Laplacian and the above positive solutions $u_i$. Then $-\Delta-W_{\ga}(x)$ is critical in $\Omega^\star=\Omega\setminus\{x_1,\ldots,x_N\}$.
 }
 \end{remark}

\begin{remark}\label{rem_Bosi_Zu}{\em
For each $j=1,\ldots,N$, the Hardy-weight  $W$ in \eqref{eq_weight_9} satisfies
$$ \lim_{x\to x_j}W(x)|x-x_j|^{2}=  C(n,N), \; \mbox{ where }\;C(n,N):=\frac{4N}{(N+1)^2}C_H,$$ (so, $C(n,N)< C_H$ if $N> 1$, and $W$  is  not optimal near $x_j$),  and
$$ \lim_{x\to \infty}W(x)|x|^{2}= C(n,N)\leq C_H ,$$
near infinity (so, for $N>1$ it is not optimal near infinity).

We note that in \cite{BDE} the obtained multipolar Hardy-weight
\begin{equation}\label{eq_BDE}
W_1(x):=\frac{C_H}{N}\left(\sum_{i=1}^N \frac{1}{|x-x_i|^2}\right)+\frac{C_H}{N^2}\left(\sum_{1\leq i<j\leq N}\frac{|x_i-x_j|^2}{|x-x_i|^2|x-x_j|^2}\right)\,.
\end{equation}
satisfies
$$ \lim_{x\to x_j}W_1(x)|x-x_j|^{2}= C_1(n,N),\; \mbox{ where }\; C_1(n,N):=\frac{2N-1}{N^2}C_H\,$$ (so, $C_1(n,N)<C(n,N) <C_H$ if $N> 1$, and $W_1$  is not optimal near $x_j$), but like $C_H|x|^{-2}$ near infinity.

On the other hand, in \cite{CZ} the obtained Hardy-weight $W_2$ (see\eqref{eq_CZ}) behaves asymptotically near each singular point $x_j$, $j=1,\ldots,N$  like
$$ \lim_{x\to x_j}W_2(x)|x-x_j|^{2}=  C_2(n,N), \; \mbox{ where }\;C_2(n,N):=\frac{4N-4}{N^2}C_H,$$ (so, $C_1(n,N)\leq C(n,N)<C_2(n,N)$ if $N> 1$),  but like $|x|^{-4}$ near infinity (so, it is not optimal near infinity).
 }
 \end{remark}
For general domains and not necessarily symmetric operators we have.
\begin{proposition}\label{lem_severB}
 Consider a subcritical operator $P$ in $\Gw\subset \R^n$, and let $\{x_i\}_{i=1}^N$  be distinct points in $\Gw$. Set $\Omega^\star:=\Omega\setminus\{x_i\}_{i=1}^N$. Let $u_i:=G_P^\Gw(\cdot,x_i)$ be the Green function with a pole at $x_i$, and $u_0$ be a positive solution of $Pu=0$ in $\Gw$ such that
$$\lim_{x\to \infty} \frac{G(x,0)}{u_0(x)}=0.$$

More generally, consider a manifold $\Gw^\star$ with ends $\{x_i\}_{i=0}^N$. Assume that  for $0\leq i\leq N$  and $j\neq i$, the function  $u_i$ are positive solutions of  the equation $Pu=0$ in $\Gw^\star$ of minimal growth near each end $x_j$,  and
$$\lim_{x\to x_i} \frac{u_j(x)}{u_i(x)}=0  \qquad \forall j\neq i.$$
Consider the supersolution construction and the corresponding Hardy-weight given by
\begin{equation*}
v_\ga:=\prod_{j=0}^N u_j^{\alpha_j}\,\quad
W_\ga := \sum_{i<j}\ga_i\ga_j\left|\nabla\log\left(\frac{u_i}{u_j}\right)\right|_A^2,
\end{equation*}
 where  $\ga=(\ga_0,\ldots,\ga_N)$ is a multi-index such that  $0<\alpha_j < 1/2$, and $\sum_{j=0}^N\alpha_j=1$.
Assume further that near each $x_i$ we have
\begin{equation}\label{eq_asympi}
W_\ga  \sim\ga_i\sum_{\substack{0\leq j\leq N\\[1mm]j \neq i}}  \ga_j \left|\nabla\log\left(\frac{u_i}{u_j}\right)\right|_A^2.
\end{equation}

Then the corresponding Hardy-weight is critical.
\end{proposition}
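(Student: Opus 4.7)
The plan is to establish the criticality of $P - W_\alpha$ in $\Omega^\star$ by showing that $v_\alpha := \prod_{j=0}^N u_j^{\alpha_j}$, which is already a positive solution of $(P - W_\alpha) u = 0$ in $\Omega^\star$ by Proposition~\ref{pro_ends}, has minimal growth in a neighborhood of infinity in $\Omega^\star$. Once this is granted, $v_\alpha$ is a ground state and Proposition~\ref{structure} delivers the criticality. Minimal growth at infinity in $\Omega^\star$ amounts to minimal growth at each of the $N+1$ ends $x_i$, so by part~(2) of Lemma~\ref{minimal_vanish} it suffices to establish it separately at each end, adapting the scheme from the proof of Proposition~\ref{Pro_appB}.

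Fix $i \in \{0, \ldots, N\}$, and set $\beta_j := \alpha_j/(1-\alpha_i)$ for $j \neq i$, so that $\sum_{j \neq i} \beta_j = 1$. Define the aggregated auxiliary solution $\hat{u}_i := \prod_{j \neq i} u_j^{\beta_j}$; by a second application of Proposition~\ref{pro_ends}, $\hat{u}_i$ solves $(P - V_i) u = 0$ in $\Omega^\star$, with
\begin{equation*}
V_i := \sum_{\substack{k < l \\ k, l \neq i}} \beta_k \beta_l \bigl|\nabla \log(u_k/u_l)\bigr|_A^2 \geq 0.
\end{equation*}
Applying the supersolution construction (Lemma~\ref{construction} with Remark~\ref{rem_conv_comb}) to the pair $(\hat{u}_i, u_i)$ with weight $\beta$ on $u_i$ shows that $\hat{u}_i^{1-\beta} u_i^\beta$ solves $(P - V_\beta) u = 0$ with $V_\beta = (1-\beta)V_i + \beta(1-\beta)\bigl|\nabla \log(u_i/\hat{u}_i)\bigr|_A^2$. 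The choice $\beta = \alpha_i$ recovers $v_\alpha$ and gives the identity $W_\alpha = (1-\alpha_i) V_i + \alpha_i(1-\alpha_i)\bigl|\nabla \log(u_i/\hat{u}_i)\bigr|_A^2$; the choice $\beta = 1 - \alpha_i \in (\tfrac12, 1)$, admissible because $\alpha_i < 1/2$, produces the ``sibling'' solution $w_i := \hat{u}_i^{\alpha_i} u_i^{1-\alpha_i}$. Subtracting the two equations yields
\begin{equation*}
(P - W_\alpha) w_i = (2\alpha_i - 1)\, V_i\, w_i \leq 0,
\end{equation*}
so that $w_i$ is a positive subsolution of $L := P - W_\alpha$, while a direct manipulation gives $v_\alpha/w_i = (\hat{u}_i/u_i)^{1-2\alpha_i} \to 0$ as $x \to x_i$, since $1 - 2\alpha_i > 0$ and $u_j/u_i \to 0$ at $x_i$ for all $j \neq i$.

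To upgrade $w_i$ to a genuine solution of $Lu = 0$ near $x_i$, pick a punctured neighborhood $U_i \subset \Omega^\star$ of $x_i$ in which $L$ is subcritical (since $v_\alpha$ solves $Lu=0$ globally on $\Omega^\star$, subcriticality holds in any proper subdomain with a sufficient compact boundary piece in $\Omega^\star$), and let $G_L^{U_i}$ be its minimal positive Green function. Set
\begin{equation*}
\tilde{w}_i(x) := w_i(x) + (1 - 2\alpha_i) \int_{U_i} G_L^{U_i}(x, y)\, V_i(y)\, w_i(y) \dy.
\end{equation*}
Formally $L\tilde{w}_i = Lw_i + (1-2\alpha_i) V_i w_i = 0$ in $U_i$, and $\tilde{w}_i \geq w_i > 0$. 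Granting that the integral converges absolutely, $v_\alpha/\tilde{w}_i \leq v_\alpha/w_i \to 0$ at $x_i$, and Proposition~\ref{minimal} (or Proposition~\ref{minimal_boundary} if $x_i \in \partial\Omega$) yields the minimal growth of $v_\alpha$ at $x_i$ with respect to $L$. Carrying this out at every end assembles into global minimal growth of $v_\alpha$ in $\Omega^\star$, completing the proof.

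The hardest step is the absolute convergence of the integral defining $\tilde{w}_i$, particularly in the nonsymmetric setting where the self-duality $G_L^{U_i}(x,y) = G_L^{U_i}(y,x)$ exploited in Proposition~\ref{Pro_appB} is unavailable. This is precisely the role of hypothesis~\eqref{eq_asympi}: it says that $V_i$ is of strictly lower order than the cross-term $W_\alpha^{(i)} := \alpha_i \sum_{j \neq i} \alpha_j |\nabla \log(u_i/u_j)|_A^2$ near $x_i$, which is the quantitative reflection of the two-end null-criticality from Theorem~\ref{thm_nsa} applied to the pair $(u_i, \hat{u}_i)$. Using that $G_L^{U_i}(x,\cdot) = G_{L^*}^{U_i}(\cdot, x)$ is a positive solution of $L^* u = 0$ of minimal growth toward $x_i$, and comparing it with a positive solution of $L^* u = 0$ built analogously from positive solutions of $P^* u = 0$ (whose existence is granted by the subcriticality of $P^*$ per Proposition~\ref{structure}), one obtains $G_L^{U_i}(x, \cdot) \leq C(x)\, v_\alpha$ near $x_i$. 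Combined with the identity $v_\alpha w_i = u_i \hat{u}_i$, this reduces the integrability to that of $V_i u_i \hat{u}_i$ near $x_i$, which \eqref{eq_asympi} is exactly tailored to secure.
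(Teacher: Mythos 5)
Your approach is genuinely different from the paper's, and it has a real gap in the step you yourself flag as "the hardest."

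The paper does not go through a subsolution-plus-Green-function correction at all. Instead, for each fixed end $x_i$ it introduces the one-parameter family of multi-indices
$\beta_i(t):=\alpha_i+t$, $\beta_j(t):=\alpha_j\big(1-\tfrac{t}{1-\alpha_i}\big)$ for $j\neq i$,
so that $\sum_j\beta_j(t)=1$ and $v_{\beta(t)}$ is a genuine global solution of $(P-W_{\beta(t)})u=0$ for every $t\in(0,1-\alpha_i)$. Each product $\beta_i(t)\beta_j(t)$ is maximized at $t_M:=\tfrac12-\alpha_i>0$, and in fact $\beta_i(t_M)\beta_j(t_M)=\tfrac{\alpha_j}{4(1-\alpha_i)}>\alpha_i\alpha_j$ because $(1-2\alpha_i)^2>0$. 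Hypothesis \eqref{eq_asympi} says precisely that near $x_i$ the cross-terms in $W_\gamma$ not involving $u_i$ are negligible compared with $\sum_{j\neq i}|\nabla\log(u_i/u_j)|_A^2$; this is a statement about the $u_j$'s alone, so the same asymptotic holds for $W_{\beta(t_M)}$, and the strict inequality $\beta_i(t_M)\beta_j(t_M)>\alpha_i\alpha_j$ forces $W_{\beta(t_M)}\geq W_\alpha$ in a punctured neighborhood of $x_i$. Consequently $v_{\beta(t_M)}$ is directly a positive \emph{supersolution} of $(P-W_\alpha)u=0$ near $x_i$, with $v_\alpha/v_{\beta(t_M)}\to 0$ there, and Proposition~\ref{minimal} (extended to supersolutions) concludes. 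No correction integral and no integrability hypothesis are needed, which is exactly why this argument works for a general, possibly nonsymmetric, $P$.

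Your route, modeled on Proposition~\ref{Pro_appB}, has two problems that \eqref{eq_asympi} alone does not resolve. First, in the nonsymmetric case $G_L^{U_i}(x,\cdot)$ solves $L^*u=0$, not $Lu=0$, so the comparison $G_L^{U_i}(x,\cdot)\leq C(x)v_\alpha$ is comparing a solution of $L^*$ with a solution of $L$; you would need to build and control a positive solution of $L^*u=0$ of minimal growth, and the sketch you give does not actually carry this out. Second, even granting such a bound (say in the symmetric case), what you need is integrability of $V_i\,u_i\hat{u}_i$ near $x_i$. Hypothesis \eqref{eq_asympi} gives only $V_i=o(\tilde W)$ near $x_i$ with $\tilde W:=\tfrac14|\nabla\log(u_i/\hat u_i)|_A^2$, and by the null-criticality of $P-\tilde W$ (Theorem~\ref{thm_nsa}) the product $u_i\hat u_i\tilde W$ is \emph{not} integrable at $x_i$ --- it is borderline. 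A little-$o$ improvement of a borderline divergence need not be integrable (e.g.\ $\tfrac{1}{t\log(1/t)}$ versus $\tfrac1t$). In Proposition~\ref{Pro_appB} the paper gets away with this because for the Laplacian the auxiliary potential is actually \emph{bounded} near each pole, a far stronger fact than \eqref{eq_asympi}, and it is verified by hand there. In the general setting of Proposition~\ref{lem_severB}, the convergence of your correction integral therefore does not follow from the stated hypotheses; the paper's parameter-family argument is what closes the gap.
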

\begin{proof}
For a fixed $i$ consider the one-parameter family
$$\beta_i(t) := \alpha_i + t, \quad \beta_j(t) := \alpha_j - \frac{\alpha_j}{1- \alpha_i} t \quad (j \neq i).$$
Then for every $t \in (0,\,1 - \alpha_i)$ we have $\sum \beta_j(t) = 1$ and
$$
(P - W_{\beta})v_\beta = 0.
$$

It can be easily checked that  for any $j\neq i$ the function $\beta_i(t) \beta_j(t)$ has maximum at the point $t_M = 1/2 -  \alpha_i$. In view of \eqref{eq_asympi}, the function
$v_{\beta(t_M)}$ is a positive supersolution of the equation $(P - W_\alpha)u=0$ near $x_i$. Furthermore, since $t_M > 0$, we have
$$
\lim_{x \to x_i} \frac{v_\alpha(t)}{v_{\beta(t_M)}(x)} = 0.
$$
Now notice that  Proposition~\ref{minimal} holds true even when $u_1$ is just a positive supersolution. Thus, $v_\alpha$ has minimal growth at the end $x_i$ and the lemma follows.
\end{proof}

\begin{center}{\bf Acknowledgments} \end{center}
The authors wish to thank K.~Tintarev who raised to our attention the main problem studied in this paper. Furthermore we thank M.~Marcus for pointing out his paper \cite{M65}, G.~Psaradakis for communicating results concerning Example~\ref{ex1a} to us, and for many valuable comments, and I.~Versano for providing (after the publication of the paper in JFA) a counterexample to Conjecture~\ref{13_8}.
The authors acknowledge the support of the Israel Science Foundation (grants No. 587/07 and 963/11) founded by the Israel Academy of
Sciences and Humanities. B.~D. was supported in part by a Technion fellowship.

\end{document}